\theoremstyle{plain}
\newtheorem{theo}{Theorem}[section]
\newtheorem{prop}[theo]{Proposition}
\newtheorem{lem}[theo]{Lemma}
\newtheorem{corol}[theo]{Corollary}
\newtheorem*{theo*}{Theorem}
\theoremstyle{definition}
\newtheorem{defi}[theo]{Definition}
\newtheorem{rem}[theo]{Remark}
\newtheorem{exa}[theo]{Example}
\newtheorem*{defi*}{Definition}
\newcommand{\Z}{\mathbb{Z}}
\newcommand{\Ps}{\mathbb{P}}
\newcommand{\Q}{\mathbb{Q}}
\newcommand{\OO}{\mathcal{O}}
\newcommand{\R}{\mathbb{R}}
\newcommand{\F}{\mathbb{F}}
\newcommand{\C}{\mathbb{C}}
\newcommand{\T}{\mathbb{T}}
\newcommand{\ga}{\gamma}
\newcommand{\G}{\mathbb{G}}
\newcommand{\be}{\beta}
\newcommand{\al}{\alpha}
\newcommand{\HH}{\mathbb{H}}
\newcommand{\Spec}{\operatorname{Spec}}
\newcommand{\Gal}{\text{Gal}}
\title{Hypergeometric Motives From Toric Hypersurfaces}
\author{Asem Abdelraouf}
\address{Kyiv School of Economics, Mykoly Shpaka St, 3, Kyiv, Ukraine, 02000}
\email{aabdelraouf@kse.org.ua}
\date{August 2025}
\begin{document}

\begin{abstract}
In this paper, we study two compactifications of general hypersurfaces defined by the vanishing of linear combinations of $d+2$ monomials in $d$-dimensional algebraic tori. 
We prove that the number of their $\F_q$-points is given by finite hypergeometric sums under certain general conditions. 
In the process, we introduce the notion of a gamma triple, which allows us to extend the classical definition of finite hypergeometric sums to prime powers corresponding to the cyclotomic field of definition of the associated monodromy representation. 
As a special case of our main results, we study the Dwork family and obtain a formula for the number of its $\F_q$-points. 
Our results generalise the work of Beukers, Cohen and Mellit for finite hypergeometric sums defined over $\Q$.
\end{abstract}
\maketitle
\section{Introduction}
Throughout this paper, we denote by $\F_q$ the finite field of $q$ elements.
We fix a non-trivial additive character $\psi: \F_q \to \C^\times$ and a multiplicative character $\chi: \F_q^\times \to \C^\times$ of order $q^\times \coloneqq q-1$. For any positive integer $k$, we write $\zeta_k$ for the primitive $k$-th root of unity $ e^{2\pi i/k} \in \C$. \hfill

By \emph{hypergeometric parameters} $(\al \, ; \be)$, we mean two vectors of rational numbers  $\al, \be \in \Q^{n}$ such that $\al_i-\be_j$ is not an integer for any $i$ or $j$.
Given hypergeometric parameters $(\al \, ; \be)$, let $q$ be a prime power such that  $q^\times \al,q^\times \be \in \Z^{n}$.  
For any $t \in \F_q^\times$, the \emph{finite hypergeometric sum}, or the \emph{hypergeometric trace}, is defined by
\begin{equation}\label{eq:intro-defi-hg}
F_q\left(\al \, ; \be \mid t\right) \coloneqq \frac{1}{1-q} \sum_{m=0}^{q^\times-1} \prod_{i=1}^n \frac{g(m+\alpha_iq^\times)g(-m-\beta_iq^\times)}{g(\alpha_iq^\times)g(-\beta_i q^\times)} \chi((-1)^nt)^m ,   
\end{equation}
where $g(m)$ is the Gauss sum defined in Section \ref{section:hypergeometric-function}. 
Such finite hypergeometric sums were introduced independently by Greene \cite{Greene-hypergeometric} and Katz \cite{Katz-expo-sums} around the same time, for different purposes and with different normalisations. We adopt the normalisation in
\cite[Definition 1.1]{BCM} and \cite[Equation 10.1]{hypergeometric-survey}.

The hypergeometric parameters $(\al\, ; \be)$ are said to be defined over a field $K$ if the coefficients of the polynomials
$\prod_{j=1}^n(T- e^{2\pi i \al_j}), \prod_{j=1}^n(T- e^{2\pi i \be_j})$ are in $K$. We call the smallest such $K$ which is a cyclotomic field the \emph{cyclotomic field of definition} of $(\al \,; \be)$.

\subsection{Motivation} Let $(\al \, ; \be)$ be hypergeometric parameters which are defined over $\Q$.
In \cite[Theorem 1.3]{BCM}, the authors prove that Definition \eqref{eq:intro-defi-hg} extends to all prime powers $q$ which are relatively prime to the common denominator of the $\al_i$'s and $\be_i$'s  as follows:
write 
\begin{equation}\label{eq:intro-defi-ga}
    \frac{\prod_{j=1}^n (T- e^{2\pi i \alpha_j})}{\prod_{j=1}^n (T- e^{2\pi i \beta_j}) } = \frac{ \prod_{\ga_j< 0} T^{-\ga_j} - 1}{ \prod_{\ga_j>0} T^{\ga_j} - 1} ,
\end{equation}
for some vector $\ga \in \Z^{d+2}_{\neq 0}$.
%The vector $\ga$ is called a \emph{gamma vector} (see \cite{hypergeometric-survey}).
Let $s(m)$ be the multiplicity of the zero $e^{2 \pi i m/q^\times}$ in the greatest common divisor of the numerator and the denominator of the right-hand side of Equation \eqref{eq:intro-defi-ga}. 
Then,  we have the following equality
\begin{equation}\label{eq:intro-equality-of-hg-sums}
F_q\left(\al \, ; \be \mid t\right) = \frac{(-1)^{d+2}}{1-q}\sum_{m=0}^{q^\times-1} \prod_{j=1}^{d+2} g\left(-\gamma_j m \right)  q^{s(m)-s(0)} \chi^m(\gamma^{\gamma}t),
\end{equation}
where $\ga^{\ga} \coloneqq \prod_{j=1}^{d+2} \ga_j^{\ga_j}$. The right-hand side of \eqref{eq:intro-equality-of-hg-sums} is defined for all prime powers $q$ which are coprime to $\ga_1\cdots\ga_{d+2}$. This gives the desired extension of the left-hand side of \eqref{eq:intro-equality-of-hg-sums}.
We note that $\sum_{j=1}^{d+2} \ga_j=0$, and that $\gamma$ can be chosen so that $\gcd(\ga_1, \dots,\ga_{d+2})=1$. We call such a vector $\gamma$ a \emph{gamma vector}.
We denote the right-hand side of \eqref{eq:intro-equality-of-hg-sums} by $F_q(\ga,0,1 \mid t)$. 
%We call such a vector $\gamma$ a \emph{gamma vector}.

In the same paper, the authors realise the finite hypergeometric sum $\eqref{eq:intro-equality-of-hg-sums}$ in the point count of certain toric hypersurfaces:
let $\ga \in \Z^{d+2}_{\neq 0}$ be a gamma vector.
For $t \in \F_q^\times$, consider the affine variety $V_{t}$ defined by the homogeneous equations
\begin{align*}
    w_1+ \dots +w_{d+2} &= 0, \\
    w_1^{\gamma_1} \cdots w_{d+2}^{\gamma_{d+2}}  &=t, 
\end{align*}
and $ w_1\cdots w_{d+2} \neq 0$ in the projective space $\Ps^{d+1}$ with homogeneous coordinates $w_1, \dots, w_{d+2}$. 
Let $r$, respectively $s$, be the number of negative, respectively positive, components of $\ga$.
\begin{theo}\cite[Theorem 1.5]{BCM}\label{theo:BCM}
There is a compactification $\overline{V_{t}}$ of $V_{t}$ such that the number of $\F_q$-points of $\overline{V_{t}}$ is given by
\begin{align*}
\#\overline{V_{t}} (\mathbb{F}_q) &= \sum_{k=0}^{\min{(r,s)-1}} {\binom{r-1}{k}}{\binom{s-1}{k}} \frac{q^{d-k} - q^{k} }{q-1}\\
&+ (-1)^{r+s-1} q^{\min(r,s)-1} F_q\left(\gamma,0,1 \mid t/\ga^{\ga}\right),
\end{align*}
whenever $q$ is a prime power which is coprime to $\ga_1\cdots\ga_{d+2}$.
\end{theo}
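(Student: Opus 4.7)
The plan is to compute $\#V_t(\F_q)$ directly on the torus part $V_t$ by character sum orthogonality, and then to identify a compactification $\overline{V_t}$ whose boundary strata account precisely for the combinatorial binomial sum. Detecting the two conditions $\sum_i w_i = 0$ and $\prod_i w_i^{\gamma_i} = t$ by Fourier duality with respect to $\psi$ and $\chi$, and dividing by $q-1$ to account for the projective equivalence, I can write
\[
\#V_t(\F_q) = \frac{1}{q(q-1)^2}\sum_{y \in \F_q}\sum_{m=0}^{q-2}\chi^{-m}(t)\sum_{w \in (\F_q^\times)^{d+2}}\psi\!\left(y\sum_i w_i\right)\prod_i\chi^{m\gamma_i}(w_i),
\]
and split the outer sum according to whether $y=0$ or not.

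For $y \neq 0$, the substitution $w_i \mapsto w_i/y$ combined with $\sum_i \gamma_i = 0$ eliminates the $y$-dependence, and the inner sum factorises as the product $\prod_j g(-\gamma_j m)$ of Gauss sums. After summing over $y \in \F_q^\times$ and changing $m \mapsto -m$, this contribution rearranges into $(-1)^{r+s-1}q^{\min(r,s)-1}F_q(\gamma,0,1\mid t/\gamma^\gamma)$. The powers of $q$ should come from iterated use of the pairing $g(n)g(-n)=q\,\chi^n(-1)$ for $n\not\equiv 0\pmod{q-1}$, which produces precisely the $q^{s(m)-s(0)}$ bookkeeping of \eqref{eq:intro-equality-of-hg-sums}; the constant $\gamma^\gamma$ and the sign $(-1)^{r+s-1}$ can be extracted by evaluating $\prod_j g(-\gamma_j m)$ at $m=0$ via an explicit Gauss sum product formula, together with the $(1-q)$ denominator and the $(-1)^{d+2}$ factor on the right-hand side of \eqref{eq:intro-equality-of-hg-sums}.

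For $y=0$, the inner sum factorises as $\prod_i \sum_{w_i \in \F_q^\times}\chi^{m\gamma_i}(w_i)$, which is nonzero only when $(q-1)\mid m\gamma_i$ for all $i$, yielding a polynomial in $q$. To compensate this and extend $V_t$ to a projective variety, I would take $\overline{V_t}$ to be a resolution of the naive projective closure of $V_t$ in $\Ps^{d+1}$ cut out by $\sum w_i = 0$ and $\prod_{\gamma_i>0}w_i^{\gamma_i} = t\prod_{\gamma_i<0}w_i^{-\gamma_i}$, obtained by successively blowing up the strata where both sides of the second equation vanish simultaneously. The added boundary is expected to decompose into strata indexed by pairs of subsets of the positive and negative index sets of size $k$, with $\binom{r-1}{k}\binom{s-1}{k}$ such strata at each level, each fibered over a linear subspace of $\Ps^{d+1}$ so that it contributes $(q^{d-k}-q^k)/(q-1)$ points.

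The main obstacle is this final geometric step: constructing $\overline{V_t}$ so that its boundary stratification matches the combinatorial sum in the statement, and verifying the bundle structure of each stratum over the expected projective space. The Gauss sum manipulations of Step~2, while intricate, are routine once the $q^{s(m)-s(0)}$ bookkeeping is in place; the core subtlety lies in the geometry of the compactification and in matching its boundary contributions with the sign $(-1)^{r+s-1}$ and the leading power $q^{\min(r,s)-1}$ that arise naturally from the Gauss sum side.
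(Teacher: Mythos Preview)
Your character-sum computation on the open torus part $V_t$ is essentially the same as the paper's Proposition~\ref{prop:main-count} (with $S=\emptyset$), and the Gauss-sum bookkeeping you describe does produce the hypergeometric term correctly. That part is fine.

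The genuine gap is the compactification. You propose to take ``a resolution of the naive projective closure \ldots\ obtained by successively blowing up the strata where both sides of the second equation vanish simultaneously,'' and then you \emph{expect} the boundary to decompose into strata indexed by pairs of subsets of size $k$, each contributing $(q^{d-k}-q^k)/(q-1)$. But you have not actually constructed this resolution, identified its strata, or counted points on them; you yourself flag this as ``the main obstacle.'' This is not a minor technicality: the entire content of the theorem beyond the torus count lies in exhibiting a specific $\overline{V_t}$ and matching its boundary to the binomial sum.

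The paper (following \cite{BCM}) takes a different and fully specified route. It realises $V_t$ as a hypersurface $Z$ in $\G_m^d$, builds a simplicial refinement $\Sigma'$ of the normal fan of the Newton polytope whose cones are indexed by \emph{staircase sequences} $C=((i_1,j_1),\dots,(i_l,j_l))$ with $i_1\le\dots\le i_l$ in $\gamma_-$ and $j_1\le\dots\le j_l$ in $\gamma_+$, and takes $\overline{V_t}=W$ to be the closure in the toric variety $\Ps_{\Sigma'}$. Each toric stratum $W_{\sigma_C}$ then has its point count given again by Proposition~\ref{prop:main-count} (now with $S=S_C$), and the binomial formula
\[
\sum_{k=0}^{\min(r,s)-1}\binom{r-1}{k}\binom{s-1}{k}\frac{q^{d-k}-q^k}{q-1}
\]
emerges only after summing over all staircase sequences and invoking the combinatorial identities of \cite[Propositions~5.5--5.7]{BCM}. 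The strata are \emph{not} simply indexed by pairs of $k$-subsets as you guess; the staircase indexing and the identities that collapse the sum are the substance of the argument. Your blow-up sketch, even if it could be made precise, would have to reproduce this combinatorics, and as written it does not.
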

Theorem \ref{theo:BCM} provides a realisation of the finite hypergeometric sum $F_q(\ga,0,1 \mid t)$ in the family of varieties $\overline{V_{t}}$. The variety $V_{t}$ is isomorphic to a hypersurface in the $d$-dimensional algebraic torus $\G_m^{d}$ defined by the vanishing of a Laurent polynomial, which is a linear combination of $d+2$ monomials. We study such hypersurfaces in general.
%Conjecturally, to the hypergeometric parameters $(\al \, ; \be)$, one expects to be able to attach a family of pure Chow motives  $H(\al \, ; \be \mid t)$ defined over $\Q$, such that the trace of Frobenius at a prime power $q$ acting on the $\ell$-adic realisation of $H(\al \, ; \be \mid t)$ is the finite hypergeometric sum $F_q(\al \, ; \be \mid t)$. 
\subsection{Our Results} In this paper, we generalise the results above for finite hypergeometric sums not necessarily defined over $\Q$.
We consider rational functions of the form
\begin{equation}\label{eq:intro-gamma-triples}
    Q = \frac{ \prod_{\ga_j< 0} \left(T^{-\ga_j} - \zeta_{N}^{\delta_j}\right) }{ \prod_{\ga_j>0}\left( T^{\ga_j} - \zeta_{N}^{-\delta_j}\right)},
\end{equation}
where $\ga \in \Z^{d+2}_{\neq0 }$ is a gamma vector, $\delta \in \Z^{d+2}$ is a vector of integers, and $N$ is a positive integer. We call the triple $(\ga, \delta,N)$ a \emph{gamma triple}. Gamma triples define hypergeometric parameters by writing 
\begin{equation}\label{eq:intro-ga-tr-2}
Q = \frac{\prod_{j=1}^n (T- e^{2\pi i \al_j} )}{\prod_{j=1}^n (T- e^{2\pi i \be_j})},  
\end{equation}
where the numerator and the denominator of the right-hand side of Equation \eqref{eq:intro-ga-tr-2} have no common factors. We say that $(\ga,\delta,N)$ represents the hypergeometric parameters $(\al \, ; \be)$. 

Let $(\ga,\delta,N)$ be a gamma triple. 
For any prime power $q$, let $D_{\delta}(T)=D_{\ga,\delta,N}(T)$ be the greatest common divisor of the numerator and the denominator of the right-hand side of \eqref{eq:intro-gamma-triples}. Define $s_{\delta}(m)=s_{\ga,\delta,N}(m)$ to be the multiplicity of the zero $e^{2 \pi i m/q^\times}$ in $D_{\delta}(T)$.
Suppose that $q$ is coprime to $\ga_1 \cdots \ga_{d+2}$ and $q^\times$ is divisible by $N$, the finite hypergeometric sum associated to the gamma triple $(\ga,\delta,N)$ is
\begin{equation}
F_q\left(\gamma, \delta,N \mid t\right) = \frac{1}{1-q}\sum_{m=0}^{q^\times-1} \prod_{j=1}^{d+2} \frac{g\left(-\gamma_j m +\frac{\delta_j}{N} q^\times\right)} {g\left(\frac{\delta_j}{N} q^\times\right)}  q^{s_{\delta}(-m)-s_{\delta}(0)} \chi^m(\gamma^{\gamma}t). \label{eq:intro-1}
\end{equation}
Our first result is the following:
\begin{theo} 
Suppose that $(\ga,\delta,N)$ represents the hypergeometric parameters $(\al \, ; \be)$. Then,
\[
F_q\left(\gamma, \delta,N \mid t\right)  = F_q( \al \, ; \be \mid t)
\]
for all prime powers $q$ for which both sides are defined.  
\end{theo}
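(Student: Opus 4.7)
The plan is to reduce $F_q(\gamma,\delta,N\mid t)$ to $F_q(\alpha;\beta\mid t)$ by applying the Hasse--Davenport multiplication formula to each of the $d+2$ Gauss sums $g(-\gamma_j m+\delta_j q^\times/N)$ in \eqref{eq:intro-1}, and then accounting for the common factors that are cancelled when passing from \eqref{eq:intro-gamma-triples} to \eqref{eq:intro-ga-tr-2}. This is the natural generalisation of the argument in \cite{BCM} deriving \eqref{eq:intro-equality-of-hg-sums} from \eqref{eq:intro-defi-hg}; the new ingredients beyond the classical case are the twist by $\zeta_N^{\delta_j}$ and the cancellation count $s_\delta$.

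First I would make explicit the dictionary from \eqref{eq:intro-ga-tr-2}. For $\gamma_j<0$, the zeros of $T^{-\gamma_j}-\zeta_N^{\delta_j}$ are the points $e^{2\pi i\alpha_l^{(j)}}$ with $\alpha_l^{(j)}=(\delta_j/N+l)/(-\gamma_j)$, $l=0,\dots,-\gamma_j-1$; for $\gamma_j>0$ the zeros of $T^{\gamma_j}-\zeta_N^{-\delta_j}$ give analogous points $-\beta_l^{(j)}$. As multisets, the collection of all $\alpha_l^{(j)}$ equals $\{\alpha_1,\dots,\alpha_n\}$ together with the roots of $D_\delta(T)$, and analogously the collection of all $\beta_l^{(j)}$ equals $\{\beta_1,\dots,\beta_n\}$ together with the same common roots.

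Next, I would apply the Hasse--Davenport multiplication formula
\[
g(\nu a)=\chi^a(\nu^\nu)\prod_{l=0}^{\nu-1}\frac{g(a+l\,q^\times/\nu)}{g(l\,q^\times/\nu)}\qquad(\nu\mid q^\times)
\]
with $\nu=-\gamma_j$ and $a=m+\delta_j q^\times/((-\gamma_j)N)$ when $\gamma_j<0$, and with $\nu=\gamma_j$ and $a=-m+\delta_j q^\times/(\gamma_j N)$ when $\gamma_j>0$; in either case $\nu a=-\gamma_j m+\delta_j q^\times/N$. Forming the ratio with the normaliser $g(\delta_j q^\times/N)$ (the $m=0$ version of the same formula), the $m$-independent residues $\prod_l g(l\,q^\times/\nu)$ cancel, leaving an overall character prefactor of $\chi^m(|\gamma_j|^{-\gamma_j})$ in both sign cases. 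Multiplying over all $j$, the integrand of \eqref{eq:intro-1} becomes
\[
\chi^m\!\Bigl(\prod_j|\gamma_j|^{-\gamma_j}\Bigr)\,q^{s_\delta(-m)-s_\delta(0)}\,\chi^m(\gamma^\gamma t)\,\prod_{(j,l)}\frac{g(m+\alpha_l^{(j)}q^\times)}{g(\alpha_l^{(j)}q^\times)}\prod_{(j,l)}\frac{g(-m-\beta_l^{(j)}q^\times)}{g(-\beta_l^{(j)}q^\times)},
\]
where the products run over the collections enumerated in the previous step.

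The final step is to collapse the Gauss-sum pairs corresponding to the common roots of $D_\delta(T)$: for each such root $e^{2\pi i m_0/q^\times}$, the numerator contributes $g(m+m_0)g(-m-m_0)$ and the denominator $g(m_0)g(-m_0)$. Using $g(a)g(-a)=q\,\chi^a(-1)$ for $a\not\equiv 0\pmod{q^\times}$ and $g(0)^2=1$ otherwise, and counting the degenerate cases ($m_0\equiv 0$ in the denominator and $m_0\equiv -m$ in the numerator), the combined contribution of all common roots is $q^{s_\delta(0)-s_\delta(-m)}\chi^m(-1)^{\deg D_\delta}$. The explicit factor $q^{s_\delta(-m)-s_\delta(0)}$ in \eqref{eq:intro-1} exactly cancels the $q$-power, leaving $\prod_{i=1}^n g(m+\alpha_i q^\times)g(-m-\beta_i q^\times)/(g(\alpha_i q^\times)g(-\beta_i q^\times))$, which matches the integrand of \eqref{eq:intro-defi-hg}. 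The remaining characters satisfy $\chi^m(\gamma^\gamma t)\cdot\chi^m(\prod_j|\gamma_j|^{-\gamma_j})\cdot\chi^m(-1)^{\deg D_\delta}=\chi((-1)^n t)^m$, using $\gamma^\gamma=(-1)^{d_+}\prod_j|\gamma_j|^{\gamma_j}$ with $d_+=\sum_{\gamma_j>0}\gamma_j=n+\deg D_\delta$.

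The hard part will be the bookkeeping in the last step: handling the degenerate cases $m_0\equiv 0$ and $m_0\equiv -m$ (which coincide only when $m\equiv 0$), correctly accumulating the $\chi^a(-1)$ signs from the $g(a)g(-a)$ identity over all common roots, and verifying the parity identity that converts $(-1)^{d_+}$ to $(-1)^n$ in the final character.
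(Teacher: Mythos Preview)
Your proposal is correct and takes essentially the same route as the paper's proof of Theorem~\ref{theo:equality-of-finite-hg-sums}: both apply the Hasse--Davenport product formula to pass between the $\gamma$-indexed Gauss sums $g(-\gamma_j m+\delta_j q^\times/N)$ and the $(\alpha;\beta)$-indexed ones, and both use the reflection identity $g(a)g(-a)=q\,\chi^a(-1)$ from Lemma~\ref{lem:gauss-sums-properties} to collapse the factors coming from the common roots of $D_\delta$. The only difference is cosmetic direction---the paper starts from the $(\alpha;\beta)$ side, splits the product as $B_1\cdot B_2\cdot B_3$, and builds towards the $\gamma$ side, whereas you go the other way---and your closing parity identity $d_+=n+\deg D_\delta$ is exactly the paper's final observation that $n+d-\sum_{\gamma_j<0}\gamma_j$ vanishes as an exponent of $\chi(-1)$.
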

If $(\al \, ; \be)$ are defined over $\Q(\zeta_N)$, then there exists (Proposition \ref{prop:extension-field-field-of-definition}) a gamma triple $(\ga,\delta,N)$ which represents them.  Thus, we get an extension of $F_q(\al \, ; \be \mid t)$ to all prime powers $q$ such that $q^\times$ is divisible by $N$, and $q$ is coprime to the common denominators of the $\al_i$'s and $\beta_i$'s.

Conjecturally, to any hypergeometric parameters $(\al \, ; \be)$ defined over $K$, one expects to be able to attach a family of pure Chow motives  $H(\al \, ; \be \mid t)$ defined over $K$, such that the trace of Frobenius at a prime power $q$ acting on the $\ell$-adic realisation of $H(\al \, ; \be \mid t)$ is\footnote{The choice of the generator $\chi$ of the multiplicative characters of $\F_q^\times$ is not canonical. To make sense of this equality, one must choose generators of the multiplicative characters of $\F_q^\times$ for each prime power $q$, which is a power of the norm of a prime ideal of $K$,  in a consistent way. See, for example, \cite[Section 5]{katz-another-look}.} the finite hypergeometric sum $F_q(\al \, ; \be \mid t)$.
Our result above provides the needed extension of the hypergeometric trace when the field of definition is a cyclotomic extension. We refer the reader to Section \ref{section:hypergeometric-function} for details. 

Let  $(\al \,; \be)$ be hypergeometric parameters such that $\sum_{i=1}^{n} \al_i - \be_i  \in \frac{1}{2} \Z$. Suppose that $(\al \, ; \be)$ are not defined over $\Q$. Then, there exists a gamma triple $(\ga,\delta,N)$ such that greatest common divisor of the maximal minors of
\[
G= \begin{bmatrix}
\ga_1 &\ga_2 & \dots &\ga_{d+2} \\
\delta_1& \delta_2 &  \dots & \delta_{d+2} 
\end{bmatrix}
\]
is $1$, $\sum_{j=1}^{d+2} \delta_j =0$, and  $(\al \, ; \be)$ are associated to $(\ga,k\delta,N)$ for some positive integer $k < N$.
For $t \in \F_q^\times$, consider the variety $V_{t}$ in $\Ps^{d+1} \times \G_m$ defined by the homogeneous (in the $w_i$'s) equations 
\begin{align*}
    w_1 + \dots +w_{d+2} &= 0, \\
    w_1^{\ga_1}\dots w_{d+2}^{\ga_{d+2}} &= t,\\
    w_1^{\delta_1} \dots w_{d+2}^{\delta_{d+2}} &= z^{N},
\end{align*}
and $ w_1\dots w_{d+2} \neq 0,$ where $w_1, \dots, w_{d+2}$ are homogeneous coordinates on $\Ps^{d+1}$ and $z$ is a coordinate on $\G_m$. 
Let $r$, respectively $s$, be the number of negative, respectively positive, components of $\ga$.
For a vector $v \in \Z^{k}$, we write $g(v) \coloneqq \prod_{j=1}^{k} g(v_i)$.
Our second result is the following:
\begin{theo}\label{theo:intro-realisation}
Let $q$ be a prime power which is coprime to $\ga_1 \cdots\ga_{d+2}$ and such that $N$ divides $q^\times$. There exists a compactification $\overline{V_{t}}$ of $V_{t}$ such that
   \begin{align*}
    \overline{V_{t}}(\F_q) &=  \sum_{k=0}^{\min{(r,s)-1}} {\binom{r-1}{k}}{\binom{s-1}{k}} \frac{q^{d-k} - q^{k} }{q-1} \\
    &-\sum_{j=0}^{N-1} q^{s_{j\delta}(0)-1} g\left(j\frac{q^\times}{N} \delta \right) F_q\left(\gamma, j \delta, N \mid \frac{t}{\gamma^\gamma}\right).
    \end{align*}
\end{theo}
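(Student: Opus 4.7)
The proof strategy is to adapt the Fourier/Gauss sum technique used by Beukers, Cohen and Mellit in their proof of Theorem \ref{theo:BCM}. The starting observation is that projection forgetting $z$ makes $V_t$ an $N$-fold cyclic cover of the open Beukers--Cohen--Mellit variety: over a point $w$ with $\prod w_i^{\delta_i} \in \F_q^\times$, the fibre $\{z\in\F_q^\times : z^N = \prod w_i^{\delta_i}\}$ has $N$ elements when the right-hand side is an $N$-th power and is empty otherwise. Because $N \mid q^\times$, this indicator decomposes via characters as $\#\{z\in\F_q^\times : z^N = a\} = \sum_{j=0}^{N-1} \chi^{jq^\times/N}(a)$. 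Summing over the open BCM locus, the $\F_q$-point count of $V_t$ breaks into $N$ twisted BCM-type sums indexed by $j = 0, 1, \dots, N-1$, and the $j = 0$ term is exactly the BCM count on its open part.

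To evaluate the $j$-th twisted sum I would follow the BCM approach: impose the linear constraint $\sum w_i = 0$ by integrating an additive character $\psi$ over an auxiliary variable $y \in \F_q$, and impose the monomial constraint $\prod w_i^{\gamma_i} = t$ by integrating over multiplicative characters $\chi^m$ for $m = 0, 1, \dots, q^\times - 1$. The sum over $w \in (\F_q^\times)^{d+2}$ then factorises into a product of one-variable sums $\sum_{w_i}\psi(yw_i)\chi^{m\gamma_i + jq^\times\delta_i/N}(w_i)$, each of which reduces (for $y\neq 0$) to a Gauss sum $g(m\gamma_i + jq^\times\delta_i/N)$ times a character of $y$; the vanishing of $\sum\gamma_i$ and $\sum\delta_i$ makes this character trivial, so the sum over $y \in \F_q^\times$ is just multiplication by $q-1$. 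Isolating the $y = 0$ contribution carefully yields the correction factor $q^{s_{j\delta}(-m) - s_{j\delta}(0)}$, where $s_{j\delta}$ records the number of coordinates at which the Gauss sum argument hits a common pole of the numerator and denominator of \eqref{eq:intro-gamma-triples} with $\delta$ replaced by $j\delta$. After collecting the constants, including the factor $1/q^\times$ from projectivisation and the substitution $m \mapsto -m$, the $j$-th contribution matches $-q^{s_{j\delta}(0)-1}\,g(jq^\times\delta/N)\,F_q(\gamma, j\delta, N \mid t/\gamma^\gamma)$ in the notation of \eqref{eq:intro-1}.

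Finally, one must construct the compactification $\overline{V_t}$ and account for the boundary contribution, which should yield the binomial sum $\sum_k \binom{r-1}{k}\binom{s-1}{k}(q^{d-k}-q^k)/(q-1)$ inherited from Theorem \ref{theo:BCM}. I would do this by adjoining, to the open part, the same toric strata as in the BCM compactification (letting subsets of the $w_i$ vanish) and extending the $N$-cyclic cover to these strata by normalisation. The key point is that on each BCM boundary stratum the cover should be tame and each $\F_q$-point should lift to exactly one $\F_q$-point of $\overline{V_t}$, so that the boundary contribution matches term-by-term with Theorem \ref{theo:BCM}. The main obstacle, in my view, is precisely this last step: constructing the compactification and verifying that its boundary $\F_q$-count agrees with the BCM binomial sum without extra twists. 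This is where the hypothesis that the maximal minors of $G$ have $\gcd$ equal to $1$ and that $\sum \delta_j = 0$ should enter, guaranteeing that the cyclic cover behaves nicely on every toric stratum; a secondary technical subtlety is the $j$-dependent bookkeeping of common poles encoded by $s_{j\delta}$ and the matching with the $y = 0$ contribution in the character sum.
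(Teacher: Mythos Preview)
Your direct character-sum computation for the open variety $V_t$ is correct in spirit and would indeed produce the $N$ twisted BCM-type sums indexed by $j$; this is essentially a special case of the paper's Proposition~\ref{prop:main-count} carried out by hand. However, the paper takes a different and more structural route that sidesteps the compactification difficulty you flag.

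Rather than compactifying the cyclic cover directly, the paper first shows that $V_t$ is isomorphic to a hypersurface $Z_t \subset \G_m^d$ cut out by a Laurent polynomial with exactly $d+2$ monomials. This is where the $\gcd$-of-minors hypothesis on $G$ is used: it guarantees that the map $\Z^{d+2}\to\Z^2$, $x\mapsto Gx$, is surjective, so one can choose a basis $(1,\dots,1),f_1,\dots,f_d$ of $\ker(\gamma\cdot{-})$ with $\sum_j f_{1j}\delta_j=1$ and $\sum_j f_{kj}\delta_j=0$ for $k\ge 2$. The change of variables $w_j=t^{\kappa_j}\prod_i x_i^{f_{ij}}$ then collapses the two monomial constraints into the single relation $x_1=z^N$, yielding $Z_t$. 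One then reads off the covering data: the matrix $\mathcal{N}=\operatorname{diag}(N,1,\dots,1)$, $\Lambda(q)=\{(jq^\times/N,0,\dots,0):0\le j<N\}$, $\rho_1=\delta$, and $\sigma_1=1$. The compactification $\overline{V_t}$ is then simply the closure of $Z_t$ in $\Ps_{\Sigma'}$, and the point count drops out of the already-established Theorem~\ref{theo:main-desingularization}.

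The gap you correctly identify in your own approach is real: your proposal to normalise the BCM compactification in the cyclic cover and hope that each boundary $\F_q$-point lifts uniquely is not justified, and there is no obvious reason the cover should be totally ramified along every toric stratum. The paper's reduction to the general toric framework avoids this issue entirely, since the boundary contribution in Theorem~\ref{theo:main-desingularization} depends only on the Gale dual $\gamma$ and the fan $\Sigma'$, both of which are unchanged from the BCM setting.
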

This gives a realisation of $F_q(\al \, ; \be \mid t)$ in $\overline{V}_t$ whenever $\sum_{i=1}^{n} \al_i - \be_i \in \frac{1}{2}\mathbb{Z}$. This condition is satisfied whenever the hypergeometric parameters $(\al \,; \be)$ are defined over a totally real number field. 
Furthermore, it is equivalent to the special eigenvalue of the monodromy matrix at $1$ of the irreducible hypergeometric differential equation of the same parameters being $\pm 1$ (see \cite[Equation (2.8), Proposition 2.10]{beukers-field-of-defin}).

We obtain Theorem \ref{theo:intro-realisation} as an application of a systematic study of hypersurfaces in algebraic tori defined by linear combinations of $d+2$ monomials:
Let $Z$ be the hypersurface in  the algebraic torus $\G_m^{d}= \Spec \F_q[x_1^{\pm},\dots,x_{d}^{\pm}]$ defined by the vanishing of a Laurent polynomial 
\[
f(x) = \sum_{j=1}^{d+2} \prod_{i=1}^{d} u_jx_i^{m_{ij}}, \qquad u_{j} \in \F_q^\times.
\]
We study such hypersurfaces in some detail in Section \ref{section:source-varieties}.
All the point counting results of our paper are stated in terms of the following data (see the beginning of Section \ref{section:compactifications} for a summary of their definitions): 
\begin{enumerate}
    \item The \emph{Gale dual} $\gamma \in \Z^{d+2}$ of the Laurent polynomial $f$.
    \item A subset $\Lambda(q) \subset (\Z/q^\times\Z)^{d}$, together with a vector $\delta(\lambda) \in (\Z/q^\times\Z)^{d+2}$ for each $\lambda \in \Lambda(q)$.
    \item The constant $t = \prod_{j=1}^{d+2} u_j^{\gamma_j} \in \F_q^\times$, and additional constants $\sigma_1,\dots,\sigma_{d} \in \F_q^\times$  which record the coefficients of $f$.
\end{enumerate}
%An important invariant of such hypersurfaces is the \emph{Gale dual} of the Laurent polynomial $f$ (Definition \ref{def:gale-dual}).
%We consider a non-empty set $\Lambda(q)$ which measures how 
%We prove that such a hypersurface is a covering of a variety of the form,  We refer the reader to the detailed summary of the definitions of these quantities.
We restrict to the cases when the Gale dual $\ga$ has no zero components, and write 
$r$, respectively $s$, for the number of its negative, respectively positive, components. We study two compactifications of the hypersurface $Z$.
%From this covering, we define a non-empty set $\Lambda(q) \subset (\Z/q^\times\Z)^{d}$. To each $\lambda \in \Lambda(q)$, we associate a vector $\delta(\lambda) \in \Z^{d+2}$. We also define "the family" parameter $t$ and the numbers $\sigma_k$, which record the coefficients of $f$. 
\subsection*{Compactification I }
Let $\Delta$ be the Newton polytope of the Laurent polynomial $f$, which we assume to be $d$-dimensional.  
We consider the closure $\overline{Z}$ of $Z$ in the toric variety $\Ps_{\Delta}$ associated to the Newton polytope $\Delta$. 
For a gamma triple $(\ga,\delta,N)$, define $\eta(m)=\eta_{\ga,\delta,N}(m)$ to be $1$ if $\exp(2\pi i m/q^{\times})$ is a root of $D_{\delta}(T)$ and $0$ otherwise. 
For two vectors $\lambda,\sigma \in (\F_q^\times)^{d}= \left(\Z/q^\times\Z\right)^{d},$ we write $\chi^{\lambda}(\sigma)= \prod_{i=1}^{d}\chi^{\lambda_i}(\sigma_i)$.
Our main result for this compactification, which we prove in Subsection \ref{subsection:compactification-1}, is the following:
\begin{theo}\label{theo:intro-compcatification-1}
Let $\overline{Z}$ be the closure of the variety $Z$ in the toric variety $\Ps_{\Delta}$. Then,
\begin{align*}
    \#\overline{Z}(\F_q)&= \frac{q^{r+s-1}-1}{q-1}+ (q-1)^{r+s-2} -q^{r-1}(q-1)^{s-1}- q^{s-1}(q-1)^{r-1}\nonumber\\
     &+\sum_{\lambda \in \Lambda(q)}  \frac{\chi^{\lambda}(\sigma)}{q-1}\sum_{m=0}^{q-2}q^{\eta_{\delta(\lambda)}(-m)-1} \prod_{j=1}^{d+2}g\left(-m\gamma_j + \delta(j,\lambda)\right) \chi^{m}\left(t\right).
\end{align*}
\end{theo}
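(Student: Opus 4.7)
The strategy is to stratify $\overline{Z}$ by the torus orbits of $\Ps_\Delta$ and compute each stratum's contribution via additive--multiplicative character sums, extending the Beukers--Cohen--Mellit argument to the gamma-triple setting. Every face $F$ of the Newton polytope $\Delta$ corresponds to a torus orbit $\OO_F\subset\Ps_\Delta$, with $\overline{Z}\cap\OO_F$ cut out by the face polynomial $f_F$, giving
\[
\#\overline{Z}(\F_q) = \sum_{F\preceq\Delta}\#(\overline{Z}\cap\OO_F)(\F_q).
\]
Since the Gale dual $\ga$ has no zero components, $\Delta$ has $d+2$ vertices in $\R^d$, and the combinatorics of its face lattice is determined entirely by the sign pattern $(r,s)$ of $\ga$.

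For each proper face $F$, the restricted polynomial $f_F$ involves strictly fewer than $d+2$ monomials, so the associated exponent matrix has no kernel vector with all nonzero entries. Applying the standard identity
\[
\#\{x\in(\F_q^\times)^{\dim F}: f_F(x)=0\} = \frac{(q-1)^{\dim F}}{q} + \frac{1}{q}\sum_{a\in\F_q^\times}\sum_x\psi(af_F(x)),
\]
expanding $\psi$ via Gauss sums and then invoking multiplicative orthogonality on the torus produces only elementary polynomial-in-$q$ expressions. Summing over all proper faces, organised by how many positive and how many negative components of $\ga$ their vertex subsets use, one obtains the four explicit toric terms
\[
\frac{q^{r+s-1}-1}{q-1} + (q-1)^{r+s-2} - q^{r-1}(q-1)^{s-1} - q^{s-1}(q-1)^{r-1}.
\]

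The interior contribution from $Z\subset\G_m^d$ is computed by the same identity and expansion. Writing $\chi_j=\chi^{k_j}$ and applying multiplicative orthogonality over $(\F_q^\times)^d$ forces $(k_1,\dots,k_{d+2})\in\Ker(M^T)\pmod{q^\times}$, where $M$ is the exponent matrix of $f$. Because $\ga$ generates $\Ker(M^T)$ over $\Z$, modulo $q^\times$ this kernel is naturally parametrized by pairs $(m,\lambda)$ with $m\in\Z/q^\times\Z$ and $\lambda\in\Lambda(q)$, the corresponding tuple being $(-m\ga_j+\delta_j(\lambda))_j$; the constants $u_j$ collect into $\chi^\lambda(\sigma)\chi^m(t)$. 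The main obstacle is tracking the degeneracies of the Gauss sums: whenever $-m\ga_j+\delta_j(\lambda)\equiv 0\bmod q^\times$, one has $g(\cdot)=-1$ and the sum over $a\in\F_q^\times$ contributes an extra factor of $q$ for each such degeneracy. Matching these corrections against the multiplicity $\eta_{\delta(\lambda)}(-m)$ of $e^{2\pi i m/q^\times}$ as a root of $D_{\delta(\lambda)}(T)$ yields the factor $q^{\eta_{\delta(\lambda)}(-m)-1}$, and the residual polynomial-in-$q$ pieces produced by these degenerate terms must then combine cleanly with the boundary-strata contributions from the previous paragraph to reproduce the stated formula.
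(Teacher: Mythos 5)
The high-level stratification by torus orbits is the right starting point and matches the paper's proof of Theorem \ref{theo:point-count-p-delta}, but there is a genuine gap in your understanding of where the boundary strata contribute.

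Your claim that for proper faces $F$ the restricted polynomial $f_F$, having fewer than $d+2$ monomials, gives "only elementary polynomial-in-$q$ expressions" is incorrect. The multiplicative orthogonality constraint is modulo $q^\times$, not over $\Z$, so even though the integer kernel of the reduced exponent matrix is trivial (by affine independence), the mod-$q^\times$ kernel is not: it consists precisely of the tuples $-m\gamma_j + \delta(j,\lambda)$ that vanish for $j \in S_F$. Concretely, Proposition \ref{prop:main-count} applied to $S = S_F$ shows that each proper face contributes, in addition to the polynomial term $q^{-1}(q-1)^{d-|S_F|+1}$, a character-sum term proportional to $\sum_{m,\lambda}\prod_{j\in S_F}\epsilon(-m\gamma_j+\delta(j,\lambda))\,G_{m,\lambda}$. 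This is nonzero in general, even when $Z$ is primitive (take any $j_1,j_2$ with $\gcd(\gamma_{j_1},\gamma_{j_2},q^\times) > 1$). Consequently, the four explicit polynomial terms you isolate do not come purely from the boundary, and your interior-only computation cannot produce the $q^{\eta_{\delta(\lambda)}(-m)-1}$ weight.

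The correct mechanism in the paper is the reverse of what you describe: the interior contributes the "bare" sum $q^{-1}(q-1)^{-1}\sum_{m,\lambda} G_{m,\lambda}$ with no $\eta$-weight at all, while summing the boundary character-sum terms over all faces, via the alternating binomial identity in Lemma \ref{lem:identities}, produces a factor of $\eta_{\delta(\lambda)}(-m)$ for each $(m,\lambda)$; adding the two gives $q^{-1}(q-1)^{-1}(1+(q-1)\eta) = (q-1)^{-1}q^{\eta-1}$. Finally, a terminological error: you call $\eta_{\delta(\lambda)}(-m)$ the "multiplicity" of $e^{2\pi i m/q^\times}$ in $D_{\delta(\lambda)}$; it is defined as the \emph{indicator} that this quantity is a root, and lies in $\{0,1\}$. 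The actual multiplicity is the distinct function $s_{\delta(\lambda)}(-m)$, which appears in the formula for the second compactification (Theorem \ref{theo:intro-compactification-II}), not here; conflating them would give the wrong count whenever the multiplicity exceeds one.
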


As corollaries, we find some point counting results in terms of finite hypergeometric sums: 
let $q$ be a prime power which is coprime to $\ga_1\dots \ga_{d+2}$.
\begin{corol}\label{theo:intro-corol-1}
Suppose that $r=1$ and $s={d+1}$. Then,
  \begin{align*}
      \#\overline{Z}(\F_q)&= \frac{q^{d}-1}{q-1}-\sum_{\lambda \in \Lambda(q)} \chi^{\lambda}(\sigma) q^{s_{\delta(\lambda)}(0)-1} g(\delta(\lambda))  F_q\left(\gamma,\delta(\lambda), q^\times \biggm\vert \frac{t}{\gamma^\gamma}\right).
  \end{align*}
\end{corol}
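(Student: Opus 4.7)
The plan is to derive the corollary as a direct specialisation of Theorem \ref{theo:intro-compcatification-1}: I would set $r=1$ and $s=d+1$ in its formula, simplify the four-term polynomial prefix, and then repackage the inner double sum using the definition \eqref{eq:intro-1} of the finite hypergeometric sum. For the prefix, plugging in $r=1$, $s=d+1$ immediately collapses the four terms to $\frac{q^{d+1}-1}{q-1}+(q-1)^d-(q-1)^d-q^d = \frac{q^d-1}{q-1}$, matching the first summand of the corollary.

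The heart of the argument is the identification $s_{\delta(\lambda)}=\eta_{\delta(\lambda)}$, which is where the hypothesis $r=1$ is used. Since only one $\gamma_j$ is negative, the numerator of \eqref{eq:intro-gamma-triples} is the single separable polynomial $T^{-\gamma_1}-\zeta_{N}^{\delta_1}$; hence $D_{\delta(\lambda)}(T)$ has only simple roots, so $s_{\delta(\lambda)}$ takes values in $\{0,1\}$ and coincides with $\eta_{\delta(\lambda)}$. This lets me rewrite $q^{\eta_{\delta(\lambda)}(-m)-1}=q^{s_{\delta(\lambda)}(0)-1}\cdot q^{s_{\delta(\lambda)}(-m)-s_{\delta(\lambda)}(0)}$ and pull the $m$-independent factor $q^{s_{\delta(\lambda)}(0)-1}$ out of the inner sum.

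Finally, I would compare the resulting expression with \eqref{eq:intro-1} specialised to $N=q^\times$, where $\tfrac{\delta(j,\lambda)}{N}q^\times=\delta(j,\lambda)$. Clearing denominators in the definition yields
\begin{align*}
g(\delta(\lambda))\,F_q(\gamma,\delta(\lambda),q^\times\mid t/\gamma^\gamma) &= \frac{1}{1-q}\sum_{m=0}^{q-2}\prod_{j=1}^{d+2}g(-\gamma_j m+\delta(j,\lambda))\,q^{s_{\delta(\lambda)}(-m)-s_{\delta(\lambda)}(0)}\,\chi^m(t),
\end{align*}
where I also used $\chi^m(\gamma^\gamma\cdot t/\gamma^\gamma)=\chi^m(t)$. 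Combining with the previous step identifies the $\lambda$-th inner sum in Theorem \ref{theo:intro-compcatification-1}, divided by $q-1$, with $-q^{s_{\delta(\lambda)}(0)-1}\,g(\delta(\lambda))\,F_q(\gamma,\delta(\lambda),q^\times\mid t/\gamma^\gamma)$; weighting by $\chi^{\lambda}(\sigma)$ and summing over $\lambda\in\Lambda(q)$ then gives the formula in the corollary. The argument is essentially bookkeeping of the definitions, and the only point requiring any care is the identification of $s_{\delta(\lambda)}$ with $\eta_{\delta(\lambda)}$, which is precisely what the hypothesis $r=1$ furnishes.
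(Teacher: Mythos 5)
Your proof is correct and follows essentially the same route as the paper's own proof (Corollary~\ref{cor:simplices-hg}): the key step in both is the observation that $r=1$ forces the numerator of \eqref{eq:intro-gamma-triples} to be the separable polynomial $T^{-\gamma_1}-\zeta_N^{\delta_1}$, so $s_{\delta(\lambda)}$ only takes values $0,1$ and hence equals $\eta_{\delta(\lambda)}$, after which the inner sum is repackaged into $-\chi^\lambda(\sigma)\,q^{s_{\delta(\lambda)}(0)-1}g(\delta(\lambda))\,F_q(\gamma,\delta(\lambda),q^\times\mid t/\gamma^\gamma)$ by pulling out the $m$-independent factor and flipping the sign of the $1/(q-1)$. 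The only cosmetic difference is that you explicitly simplify the polynomial prefix to $\tfrac{q^d-1}{q-1}$, which the paper leaves implicit.
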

\begin{corol}\label{theo:intro-corol-2}
Suppose that $r=s=2$. Then,
\begin{align*}
      \#\overline{Z}(\F_q)&= q+1 -\sum_{\lambda \in \Lambda(q)} \chi^{\lambda}(\sigma) q^{s_{\delta(\lambda)}(0)-1} g(\delta(\lambda))  F_q\left(\gamma,\delta(\lambda), q^\times\biggm\vert \frac{t}{\gamma^\gamma}\right).
  \end{align*}
\end{corol}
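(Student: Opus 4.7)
The plan is to specialise Theorem~\ref{theo:intro-compcatification-1} to the case $r = s = 2$ (forcing $d+2 = r+s = 4$, hence $d = 2$) and recast the inner Gauss-sum expression as a hypergeometric trace via \eqref{eq:intro-1}.

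First I evaluate the four combinatorial terms. With $r = s = 2$,
\[
\frac{q^{3} - 1}{q-1} + (q-1)^{2} - 2 q (q-1) = (q^{2} + q + 1) + (q^{2} - 2q + 1) - 2(q^{2} - q) = q + 2.
\]

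Next I match the inner sum to the hypergeometric trace. Specialising \eqref{eq:intro-1} with $N = q^{\times}$ gives $\delta_j q^{\times}/N = \delta_j$, so that $g\bigl(\tfrac{\delta_j}{N}q^{\times}\bigr) = g(\delta_j(\lambda))$; clearing the normalising denominator $g(\delta(\lambda)) = \prod_{j=1}^{4} g(\delta_j(\lambda))$ one obtains
\[
q^{s_{\delta(\lambda)}(0) - 1}\, g(\delta(\lambda))\, F_q\!\left(\gamma, \delta(\lambda), q^{\times} \mid t/\gamma^{\gamma}\right)
= -\frac{1}{q-1} \sum_{m=0}^{q^{\times}-1} \prod_{j=1}^{4} g\bigl(-\gamma_j m + \delta_j(\lambda)\bigr)\, q^{s_{\delta(\lambda)}(-m) - 1}\, \chi^{m}(t).
\]
Provided that, under the running hypotheses, every root of each $D_{\delta(\lambda)}(T)$ is simple so that $\eta_{\delta(\lambda)} = s_{\delta(\lambda)}$, the right-hand side matches the inner sum of Theorem~\ref{theo:intro-compcatification-1} up to the factor $-\chi^{\lambda}(\sigma)$. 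Summing over $\lambda \in \Lambda(q)$ and combining with the polynomial contribution yields the claimed formula.

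The main obstacle is the bookkeeping of the constant term: the polynomial part evaluates to $q+2$ while the corollary states $q+1$, so the argument must isolate a distinguished index in $\Lambda(q)$ (for instance $\lambda = 0$) whose hypergeometric contribution simplifies to an explicit constant that reabsorbs a $+1$ back into the polynomial part. Verifying the equality $\eta_{\delta(\lambda)} = s_{\delta(\lambda)}$ for every $\lambda \in \Lambda(q)$ is a secondary but essential checkpoint; beyond that, no new geometric input is required, as everything is driven by Theorem~\ref{theo:intro-compcatification-1} and the definition \eqref{eq:intro-1}.
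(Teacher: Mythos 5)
Your outline is on the right track, but there is a real gap at exactly the point you flag as a ``secondary checkpoint''; in fact, that checkpoint is the crux of the argument, and the proviso you state is false as written. When $r=s=2$ and $\lambda=0$, the root $T=1$ of $D_{0}(T)$ has multiplicity $s_{0}(0)=2$, not $1$, so the identity $\eta_{\delta(\lambda)}(-m)=s_{\delta(\lambda)}(-m)$ fails precisely at $(m,\lambda)=(0,0)$. If all roots were simple, as your proviso assumes, the polynomial part $q+2$ would survive unchanged and you would get the wrong constant.

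What the paper actually proves, and what your write-up leaves unfinished, is the statement: $s_{\delta(\lambda)}(-m)=\eta_{\delta(\lambda)}(-m)$ for all $(m,\lambda)$ \emph{except} $(0,0)$. The key observation is that if $s_{\delta(\lambda)}(-m)=2$ then all four congruences $-m\gamma_j+\delta(j,\lambda)\equiv 0 \pmod{q^\times}$ hold, and then the relations $\sum_{j=1}^{4}\rho_{ij}f_{kj}=\epsilon_k(i)$ (where $f_{kj}$ is the primitive data of Corollary \ref{cor:cover1}) force $\lambda_k\equiv 0$ for all $k$, which in turn forces $m\equiv 0$ because $\gcd(\gamma_1,\dots,\gamma_4)=1$. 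This is not a check on simplicity of roots; it is an arithmetic argument about the lattice structure of $\Lambda(q)$ and $\delta(\lambda)$. Without it, you cannot convert the $\eta$-weighted inner sum into the $s$-weighted sum defining $F_q(\gamma,\delta(\lambda),q^\times\mid\cdot)$.

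Once that lemma is in place, the finish is a single bookkeeping step, not a reabsorption of the $F_q$ contribution: replacing $\eta$ by $s$ introduces a correction term
\[
\frac{q^{\eta_{0}(0)-1}-q^{s_{0}(0)-1}}{q-1}\,G_{0,0} = \frac{1-q}{q-1}\cdot(-1)^{4} = -1,
\]
(using $G_{0,0}=\prod_{j=1}^{4}g(0)=(-1)^{4}=1$), and $q+2+(-1)=q+1$. The whole hypergeometric sum, including the $\lambda=0$ term, then carries over intact; nothing from $F_q(\gamma,0,q^\times\mid\cdot)$ is absorbed into the polynomial part. Your manipulation clearing $g(\delta(\lambda))$ from \eqref{eq:intro-1} is correct and matches the paper's; the missing content is precisely the arithmetic lemma and the explicit $-1$ correction.
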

\begin{corol}\label{theo:intro-corol-3}
Suppose $r=2$ and $s=3$. Then,
\begin{align*}
      \#\overline{Z}(\F_q)&= q^2+3q +1  -\sum_{\lambda \in \Lambda(q)} \chi^{\lambda}(\sigma) q^{s_{\delta(\lambda)}(0)-1} g(\delta(\lambda))  F_q\left(\gamma,\delta(\lambda), q^\times \biggm\vert \frac{t}{\gamma^\gamma}\right).
  \end{align*}
\end{corol}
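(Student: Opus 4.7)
The plan is to derive Corollary \ref{theo:intro-corol-3} as a direct specialisation of Theorem \ref{theo:intro-compcatification-1} to $r=2$ and $s=3$, followed by reorganising the $\lambda$-indexed hypergeometric sum into a sum of the normalised traces $F_q(\gamma,\delta(\lambda),q^\times\mid t/\gamma^\gamma)$ of definition \eqref{eq:intro-1}.

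I first collapse the polynomial part. With $r=2$ and $s=3$, so that $r+s-1=4$ and $r+s-2=3$, the four-term expression $\tfrac{q^{r+s-1}-1}{q-1}+(q-1)^{r+s-2}-q^{r-1}(q-1)^{s-1}-q^{s-1}(q-1)^{r-1}$ becomes $\tfrac{q^4-1}{q-1}+(q-1)^3-q(q-1)^2-q^2(q-1)$, which reduces after elementary polynomial manipulation to $q^2+3q+1$.

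I then rewrite the hypergeometric summand. Starting from definition \eqref{eq:intro-1} with $N=q^\times$: the shifts $\tfrac{\delta_j}{N}q^\times$ collapse to $\delta(j,\lambda)$; the character value $\chi^m(\gamma^\gamma)$ combines with the rescaling $t\mapsto t/\gamma^\gamma$ to leave $\chi^m(t)$; and the prefactor $g(\delta(\lambda))=\prod_j g(\delta(j,\lambda))$ cancels the product of $g(\delta(j,\lambda))$ in the denominator of \eqref{eq:intro-1}. Carrying out these cancellations gives, for each $\lambda\in\Lambda(q)$,
\[
-\chi^\lambda(\sigma)\,q^{s_{\delta(\lambda)}(0)-1}\,g(\delta(\lambda))\,F_q\!\left(\gamma,\delta(\lambda),q^\times\bigm|\tfrac{t}{\gamma^\gamma}\right)=\frac{\chi^\lambda(\sigma)}{q-1}\sum_{m=0}^{q-2} q^{s_{\delta(\lambda)}(-m)-1}\prod_{j=1}^{d+2} g(-\gamma_j m+\delta(j,\lambda))\,\chi^m(t),
\]
which matches the $\lambda$-summand of Theorem \ref{theo:intro-compcatification-1} after replacing $\eta_{\delta(\lambda)}(-m)$ with $s_{\delta(\lambda)}(-m)$.

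The main obstacle is reconciling $\eta$ with $s$. Since $\eta\le s$ pointwise, with equality exactly when $s\in\{0,1\}$, I have to show that for $r=2$ the polynomial $D_{\delta(\lambda)}(T)$ is separable at every relevant root. Having only two factors in the numerator of \eqref{eq:intro-gamma-triples}, a double root of $D_{\delta(\lambda)}$ would force both numerator factors to share a root simultaneously with at least two denominator factors; under the coprimality hypothesis $\gcd(q,\gamma_1\cdots\gamma_{d+2})=1$ and the combinatorial constraints on $\delta(\lambda)$ from Section~\ref{section:source-varieties}, such a coincidence does not arise, so $s_{\delta(\lambda)}(-m)=\eta_{\delta(\lambda)}(-m)\in\{0,1\}$. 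A careful bookkeeping of how the roots of $T^{-\gamma_j}-\zeta_{q^\times}^{\delta(j,\lambda)}$ distribute among the $q^\times$-th roots of unity closes this step, after which summing over $\lambda$ and combining with the polynomial part yields the stated formula.
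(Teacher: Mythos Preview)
Your approach matches the paper's in spirit, but two compensating errors make the argument as written incorrect.

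First, the polynomial arithmetic is off. With $r=2$, $s=3$ one has
\[
\frac{q^4-1}{q-1}+(q-1)^3-q(q-1)^2-q^2(q-1)=q^2+3q,
\]
not $q^2+3q+1$. (Check the constant term: $1-1-0-0=0$.)

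Second, and more seriously, your claim that $s_{\delta(\lambda)}(-m)=\eta_{\delta(\lambda)}(-m)\in\{0,1\}$ for \emph{all} $(m,\lambda)$ is false. Take $\lambda=0$, so $\delta(0)=0$; then every factor $T^{|\gamma_j|}-1$ vanishes at $T=1$, and since $r=2$ the polynomial $D_0(T)$ has a double root there, giving $s_0(0)=2$ while $\eta_0(0)=1$. Your hand-wave about ``such a coincidence does not arise'' overlooks exactly this case.

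The paper's proof isolates this single exception: it shows that $s_{\delta(\lambda)}(-m)\ge 2$ forces $-m\gamma_j+\delta(j,\lambda)\equiv 0$ for \emph{all five} $j$ (four of them vanish by hypothesis, and the fifth follows from $\sum_j(-m\gamma_j+\delta(j,\lambda))\equiv 0$), whence, by the argument of Corollary~\ref{cor:curves-hgm}, $m=0$ and $\lambda=0$. The correction at this single point is
\[
\frac{1}{q-1}\bigl(q^{\eta_0(0)-1}-q^{s_0(0)-1}\bigr)G_{0,0}=\frac{1-q}{q-1}\cdot(-1)^5=+1,
\]
which is precisely the missing $+1$ in the polynomial part. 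Your two errors cancel numerically, but neither step is justified.
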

In particular, whenever the hypersurface $Z$ is a curve or a surface, the point count of the compactification $\overline{Z}$ in $\Ps_{\Delta}$ is given by finite hypergeometric sums.  Furthermore, in the cases of the corollaries, for all but finitely many primes $p$, when $ t \neq \ga^{\ga} \pmod{p}$, the compactification $\overline{Z}$ is quasi-smooth, i.e. it only has finite quotient singularities. However, in general, neither the compactification $\overline{Z}$ is quasi-smooth nor can we write $\overline{Z}(\F_q)$ in terms of finite hypergeometric sums. To remedy this deficiency, we study a second compactification:
\subsection*{Compactification II} We define a partial desingularization $\Ps'$ of $\Ps_{\Delta}$. We consider the closure $W$ of $Z$ in $\Ps'$. 
Our main theorem for this compactification, which we prove in Subsection \ref{subsection:compactification-2}, is the following:
\begin{theo}\label{theo:intro-compactification-II}
Let $W$ be the closure of $Z$ in the toric variety $\Ps'$. Then,
\begin{align*}
\#W (\mathbb{F}_q) &= \sum_{k=0}^{\min{(r,s)-1}} {\binom{r-1}{k}}{\binom{s-1}{k}} \frac{q^{d-k} - q^{k} }{q-1}\\
&-\sum_{\lambda \in \Lambda(q)}\chi^{\lambda}(\sigma) q^{s_{\delta(\lambda)} (0)-1} g\left(\delta(\lambda)\right) F_q\left(\gamma,\delta(\lambda),q^\times \biggm\vert \frac{t}{\gamma^\gamma}\right),
\end{align*}
whenever $q$ is a prime power which is coprime to $\ga_1\dots \ga_{d+2}$.
\end{theo}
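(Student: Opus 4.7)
The plan is to stratify the closure $W \subset \Ps'$ along the torus orbits $O_\tau$ of $\Ps'$ (indexed by the cones $\tau$ of the defining fan $\Sigma'$) and sum the contributions orbit by orbit. Writing $I_+ = \{j : \ga_j > 0\}$ and $I_- = \{j : \ga_j < 0\}$, so that $|I_+| = s$, $|I_-| = r$, and $r+s = d+2$, the combinatorial invariants $r$ and $s$ govern both the description of $\Sigma'$ and the binomial shape of the final answer.

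First, the open orbit $\G_m^d \subset \Ps'$ yields $W \cap \G_m^d = Z$, which is the very same hypersurface appearing in Compactification~I. Consequently $\#Z(\F_q)$ can be imported directly from the open-stratum contribution in (the proof of) Theorem~\ref{theo:intro-compcatification-1}: the entire finite-hypergeometric part of Theorem~\ref{theo:intro-compactification-II}, including the minus sign, the Gauss-sum weights $\chi^\lambda(\sigma)\, q^{s_{\delta(\lambda)}(0)-1}\, g(\delta(\lambda))$, and the $F_q(\ga,\delta(\lambda),q^\times \mid t/\ga^\ga)$'s, comes from $\#Z(\F_q)$ alone. What remains is to compute $\#(W \setminus Z)(\F_q)$ and identify it with $\sum_{k=0}^{\min(r,s)-1} \binom{r-1}{k}\binom{s-1}{k}(q^{d-k}-q^k)/(q-1)$.

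For the boundary I would use an explicit combinatorial description of $\Sigma'$ in terms of the sign partition of $\ga$. The cones contributing nontrivially should correspond to pairs $(A,B)$ with $A \subsetneq I_+$ and $B \subsetneq I_-$ of equal size $k+1$, the equality $|A|=|B|$ being forced by the single linear relation $\sum_j \ga_j = 0$ among the monomials of $f$. For such a cone $\tau$, the restriction of $f$ to $O_\tau$ is a Laurent polynomial in the surviving monomials on a torus of dimension $d - \dim \tau$, and $W \cap O_\tau$ is cut out by this restriction. An inclusion--exclusion on an affine toric hypersurface of this form produces a point count of shape $(q^{d-k}-q^k)/(q-1)$ up to a sign; collecting these contributions and observing that the number of such pairs $(A,B)$ modulo the relation $\sum_j \ga_j = 0$ is exactly $\binom{r-1}{k}\binom{s-1}{k}$ assembles the desired polynomial. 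Consistency checks in the cases $(r,s)=(1,d+1),(2,2),(2,3)$ recover Corollaries~\ref{theo:intro-corol-1}, \ref{theo:intro-corol-2}, and \ref{theo:intro-corol-3} respectively, and the specialization $\delta \equiv 0$, $N=1$ recovers Theorem~\ref{theo:BCM}.

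The main obstacle will be the geometric verification underlying this counting: one must show that $\Ps'$ has been defined precisely so that $W$ meets every boundary orbit transversely in a reduced subvariety, with no correction terms from singularities or embedded components. This is exactly what fails for $\overline{Z} \subset \Ps_\Delta$ and is responsible for the less clean $(q-1)^{r+s-2} - q^{r-1}(q-1)^{s-1} - q^{s-1}(q-1)^{r-1}$ correction in Theorem~\ref{theo:intro-compcatification-1}; verifying that the partial desingularization removes precisely these pathologies, without introducing new ones, is where the essential work lies. Once the transversality and the combinatorics of $\Sigma'$ are pinned down, the remaining step is routine binomial bookkeeping.
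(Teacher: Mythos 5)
Your decomposition of the calculation into ``open stratum $\Rightarrow$ hypergeometric part, boundary $\Rightarrow$ polynomial part'' does not match how the formula actually assembles, and this is a genuine gap rather than a detail to be filled in. The open stratum $Z = W \cap \G_m^d$ contributes
\[
\#Z(\F_q) = q^{-1}(q-1)^d + \frac{1}{q(q-1)}\sum_{\lambda \in \Lambda(q)}\sum_{m=0}^{q-2} G_{m,\lambda}, \qquad G_{m,\lambda} = \prod_{j}g(-m\ga_j+\delta(j,\lambda))\,\chi^m(t)\,\chi^\lambda(\sigma),
\]
whereas the hypergeometric term in the claimed formula, once $F_q(\ga,\delta(\lambda),q^\times\mid t/\ga^\ga)$ is unwound, is $\frac{1}{q(q-1)}\sum_\lambda\sum_m q^{s_{\delta(\lambda)}(-m)}\,G_{m,\lambda}$. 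The crucial weight $q^{s_{\delta(\lambda)}(-m)}$ is absent from $\#Z(\F_q)$. It can only be produced by summing the Gauss-sum contributions of \emph{all} boundary strata: each $W_C$ contributes both a pure power $(q-1)^{d-l_C}$ and a sum $(-1)^{|S_C|}(q-1)^{|S_C|-l_C-1}\prod_{j\in S_C}\epsilon(-m\ga_j+\delta(j,\lambda))\,G_{m,\lambda}$ over $(m,\lambda)$ (Proposition \ref{prop:main-count} via Corollary \ref{cor:cover}), and the inclusion--exclusion over cones $C$ with $S_C$ contained in the zero set $S_{m,\lambda}$ is precisely what reassembles into $q^{s_{\delta(\lambda)}(-m)}$ via the identity of \cite[Proposition 5.5]{BCM}. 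So the boundary is emphatically not ``purely polynomial,'' and the plan to derive the polynomial part by bookkeeping on the boundary alone cannot go through. Relatedly, your claim that the binomial weight arises from pairs $(A,B)\subset I_-\times I_+$ with $|A|=|B|$ ``forced by $\sum_j\ga_j=0$'' does not describe the cones of $\Sigma'$: the staircase sequences $C=((i_1,j_1),\dots,(i_l,j_l))$ indexing $\Sigma'$ allow repeated indices, so $|S_C^-|\ne|S_C^+|$ in general, and the count $\binom{r-1}{k}\binom{s-1}{k}$ actually comes from \cite[Propositions 5.6, 5.7]{BCM} applied to the full sum $\sum_C (q-1)^{d-l_C}$.

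A secondary, conceptual misattribution: the messier correction terms in Theorem \ref{theo:intro-compcatification-1} (i.e., $(q-1)^{r+s-2}-q^{r-1}(q-1)^{s-1}-q^{s-1}(q-1)^{r-1}$) are not caused by a transversality failure in $\Ps_\Delta$; the orbit-by-orbit count (Proposition \ref{prop:main-count}) is exact without any smoothness hypothesis. The discrepancy between the two theorems is purely combinatorial: the inclusion--exclusion over faces of $\Delta$ produces the indicator $\eta_{\delta(\lambda)}(-m)$, while the richer inclusion--exclusion over cones of the refinement $\Sigma'$ produces the multiplicity $s_{\delta(\lambda)}(-m)$. Quasi-smoothness (Proposition \ref{prop:bcm-quasi-smooth}) enters only in interpreting the resulting variety, not in establishing the count.
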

Thus, $W(\F_q)$ is given in terms of finite hypergeometric sums and, unlike $\overline{Z}$, $W$ is quasi-smooth for all but finitely many primes. The finite hypergeometric sums appearing in Theorem \ref{theo:intro-compactification-II} and Corollaries \ref{theo:intro-corol-1},\ref{theo:intro-corol-2}, \ref{theo:intro-corol-3} above are no longer necessarily defined over $\Q$. It turns out that the only gamma triples $(\ga, \delta,N)$ which appear in such families satisfy
$ \sum_{j=1}^{d+2} \delta_j =0 \pmod{q^\times}$. Equivalently (Proposition \ref{prop:characterization-of-gamma-triple}), the associated hypergeometric parameters $(\al \, ; \be)$ satisfy that $\sum_{i=1}^n \al_i - \be_i $ is a half integer.

\subsection*{Application: The Dwork family}
In some cases, $\Ps_{\Delta}$ coincides with projective space. One such case is the renowned Dwork family $X_u$ defined by
\[
y_1^{d+1}+\dots + y_{d+1}^{d+1} - u^{-1} (d+1) y_1 \cdots y_{d+1} = 0
\]
in the projective space $\Ps^d$ with homogeneous coordinates $y_1, \dots, y_{d+1}$.
Our main result for this family is the following:
\begin{theo}\label{theo:Dwork}
Let $q$ be a prime power such that $\gcd(q,d+1)=1$, and let $e =\gcd(q-1,d+1)$. 
Let $\gamma= (-d-1,1,\dots,1) \in \Z^{d+2}$.
For a vector $\lambda =(\lambda_1,\dots,\lambda_{d-1}) \in (\Z/e\Z)^{d-1}$, let 
\[
\delta(\lambda) = (0,\lambda_1,\dots,\lambda_{d-1},-\lambda_1-\dots -\lambda_{d-1},0).
\]
Then, the number of $\F_q$-points of $X_u$ is given by
\[
\#X_u(\F_q) = \frac{q^{d}-1}{q-1} - \sum_{\lambda \in (\Z/e\Z)^{d-1}} g\left(\frac{q-1}{e}\delta(\lambda) \right) F_q\left(\gamma,\delta(\lambda), e \bigm\vert u^{d+1}\right).
\]   
\end{theo}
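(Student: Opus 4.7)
The strategy is to recognise the Dwork hypersurface $X_u$ as a compactification $\overline{Z}$ of a torus hypersurface with $d+2$ monomials and then to specialise Corollary~\ref{theo:intro-corol-1} to it.

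Dehomogenising $X_u$ by setting $y_{d+1}=1$ yields the Laurent polynomial
\[
f(x_1,\dots,x_d) = 1 + x_1^{d+1} + \dots + x_d^{d+1} - u^{-1}(d+1)\, x_1\cdots x_d
\]
on $\G_m^d$. Its Newton polytope is the $(d+1)$-fold dilate of the standard $d$-simplex, so $\Ps_\Delta = \Ps^d$ and the closure $\overline{Z}$ of the corresponding torus hypersurface coincides with $X_u$. Ordering the $d+2$ monomials so that $x_1\cdots x_d$ is first and the constant $1$ is last, a direct calculation produces the Gale dual $\gamma = (-d-1, 1, 1, \dots, 1)$, whose numbers of negative and positive components are $r=1$ and $s=d+1$. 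Moreover,
\[
\frac{t}{\gamma^\gamma} = \frac{\bigl(-u^{-1}(d+1)\bigr)^{-(d+1)}}{\bigl(-(d+1)\bigr)^{-(d+1)}} = u^{d+1},
\]
so Corollary~\ref{theo:intro-corol-1} applies and already produces the term $\frac{q^d-1}{q-1}$.

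Next, I would unwind the general recipe for $\Lambda(q)$, $\delta(\lambda)$ and the coefficient vector $\sigma$ in this highly symmetric toric setting. The exponent matrix of $f$ only has entries in $\{0,1,d+1\}$, so the kernel of the relevant exponent map modulo $q^\times$ is cut out by the congruences $(d+1)\lambda_i \equiv 0 \pmod{q-1}$ for $i=1,\dots,d$, which forces $\lambda_i \in \frac{q-1}{e}\Z/(q-1)\Z$. Imposing the single relation $\sum_j \delta_j \equiv 0$ forced by the constant monomial removes one degree of freedom, and reparametrising so that $\lambda \in (\Z/e\Z)^{d-1}$ gives
\[
\Lambda(q) = (\Z/e\Z)^{d-1}, \qquad \delta(\lambda) = \tfrac{q-1}{e}(0,\lambda_1,\dots,\lambda_{d-1},-\lambda_1-\dots-\lambda_{d-1},0).
\]
The two vanishing entries correspond to the monomials $x_1\cdots x_d$ and $1$: the coefficient of the former is absorbed into $t/\gamma^\gamma$, and the latter has coefficient $1$. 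Since the remaining monomials $x_i^{d+1}$ also have coefficient $1$ in $f$, the vector $\sigma$ is trivial along the support of $\delta(\lambda)$, so $\chi^\lambda(\sigma)=1$ for every $\lambda \in \Lambda(q)$.

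Finally, I would pass from the gamma triple $(\gamma,\delta(\lambda),q^\times)$ appearing in Corollary~\ref{theo:intro-corol-1} to the level-$e$ triple of the theorem, using the scaling invariance $F_q(\gamma,\tfrac{q^\times}{N}\delta',q^\times \vert \cdot) = F_q(\gamma,\delta',N\vert\cdot)$, which follows from the fact that both the multiplicity function $s_\delta$ and each quotient $g(-\gamma_j m + \delta_j q^\times/N)/g(\delta_j q^\times/N)$ in~\eqref{eq:intro-1} depend only on $\delta_j/N$. Under this identification, the prefactor $q^{s_{\delta(\lambda)}(0)-1}\, g(\delta(\lambda))$ of the corollary collapses to $g\bigl(\tfrac{q-1}{e}\delta(\lambda)\bigr)$ in the theorem, because the two vanishing entries of $\delta(\lambda)$ contribute factors of $g(0)=-1$ that exactly compensate the multiplicity defect. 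The main obstacle will be carrying out this last bookkeeping carefully---verifying that $s_{\delta(\lambda)}(0)$ has the predicted value coming from the roots shared between the numerator and denominator of the rational function $Q$ associated with the gamma triple at level $e$. Once this is in place, substitution into Corollary~\ref{theo:intro-corol-1} yields Theorem~\ref{theo:Dwork}.
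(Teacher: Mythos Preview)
Your approach is the same as the paper's: identify $X_u$ with the toric closure $\overline{Z}$ in $\Ps_\Delta=\Ps^d$, apply Corollary~\ref{cor:simplices-hg}, and then make the data $(\gamma,\Lambda(q),\delta(\lambda),\sigma,t)$ explicit. The identification of $\gamma$, of $t/\gamma^\gamma=u^{d+1}$, and the final rescaling from level $q^\times$ to level $e$ are all correct.

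The gap is in your determination of $\Lambda(q)$ and $\sigma$. The set $\Lambda(q)$ is the kernel of $\mathcal{N}^\top$ on $(\Z/q^\times\Z)^d$, where $\mathcal{N}$ comes from an explicit covering $Z\to Z^{\mathrm{prim}}$ (Proposition~\ref{prop:cover}); it cannot be read off directly from the exponent matrix of $f$. The paper chooses $y_1=x_1\cdots x_d$, $y_j=x_j^{d+1}$ for $j\ge 2$, giving $\mathcal{N}$ with first row $(1,\dots,1)$ and remaining rows $(d+1)e_j$. The equations defining $\Lambda(q)$ are then $\lambda_1=0$ together with $(d+1)\lambda_j\equiv 0$ for $j\ge 2$, whence $|\Lambda(q)|=e^{d-1}$. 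Your claim that the lost degree of freedom comes from the relation $\sum_j\delta_j\equiv 0$ is incorrect: that relation holds automatically for every $\lambda$ (each $\rho_k$ has coordinate sum zero) and constrains nothing. Similarly, the coefficient $-u^{-1}(d+1)$ is \emph{not} simply absorbed into $t$: with the paper's $\rho_1=(0,\dots,0,-1,1)$ one finds $\sigma_1=-u^{-1}(d+1)$, and it is only the fact $\lambda_1=0$ on all of $\Lambda(q)$ that forces $\chi^\lambda(\sigma)=1$. Your final bookkeeping is fine once these points are in hand: since $r=1$ one has $s_{\delta(\lambda)}(0)\le 1$, and the two vanishing entries of $\delta(\lambda)$ (paired with $\gamma_j=-(d+1)$ and with some $\gamma_j=1$) make $T-1$ a common factor of numerator and denominator, so $q^{s_{\delta(\lambda)}(0)-1}=1$.
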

For each $\lambda \in \left(\Z/e\Z\right)^{d-1}$, the hypergeometric parameters $(\al \,;\be)$ represented by the gamma triple $(\ga,\delta(\lambda),e)$ which appear in Theorem \ref{theo:Dwork} can be computed by the following appealing procedure: let 
\begin{equation}\label{intro-Q-quotient}
Q= \frac{T^{d+1}-1}{(T- \zeta_{e}^{-\lambda_1}) \dots (T- \zeta_{e}^{-\lambda_{d-1}}) (T- \zeta_{e}^{\lambda_1+\dots +\lambda_{d-1}}) (T-1)}.
\end{equation}
Then, after cancellation, 
\[
Q = \frac{\prod_{j=1}^{n} \left(T- e^{2\pi i \al_j}\right) }{\prod_{j=1}^{n} \left(T- e^{2\pi i \be_j} \right)}.
\]
We note that $Q$ can be equal to $1$ for some $\lambda$, in which case, $n=0.$ 
This only happens when $d+1$ is odd. Furthermore, if $e=d+1$, then the number $ N_{d+1}$ of different $\lambda$ for which this happens is $(d)!$. We remark that, heuristically, $N_{d+1}$ should be equal to the rank of the phantom cohomology $H^{d}_{ph}(X_1(\C),\C)$  for the restriction of the family to a small neighbourhood of $u=1$ (see \cite[Equation (0.2)]{matt-kerr}).

In fact, using our results, we can handle the more general family 
\[
 y_1^{N}+\dots + y_{d+1}^{N} - u^{-1} (d+1) y_1^{n_1} \cdots y_{d+1}^{n_{d+1}} = 0
\]
in the projective space $\Ps^{d}$, where $n_1,\dots,n_{d+1} \ge 1$ and $n_1+\dots + n_{d+1}=N$. 
However, the results become awkward to state. We refer the reader to \cite{Dermot-Mccarthy} for results in this direction obtained using different methods. 

We give a few examples of Theorem \ref{theo:Dwork} to illustrate its concreteness. We write the results in terms of hypergeometric parameters, rather than gamma triples, for elegance. We use the classical notation  $F_q\biggl(\begin{matrix} \al_1, \ \ldots, \ \al_n \\ \be_1, \ \ldots, \ \be_n \end{matrix} \:\bigg|\:\: t\biggr) = F_q(\al ; \be \mid t)$.
\begin{theo}[$d=2$]\label{theo:dwork-2}
Let $X_u$ be the curve in $\Ps^2$ defined by
\[
y_1^{3}+y_2^{3} + y_3^3 -3u^{-1} y_1y_2y_3 =0.
\]
\begin{enumerate}
    \item   If $q=2 \pmod{3}$, then
    \begin{align*}
    \#X_u(\F_q) &=  q+1 - F_q\biggl(\begin{matrix} \frac{1}{3},  \frac{2}{3} \\ 1,   1 \end{matrix} \:\bigg|\:\: u^{3} \biggr).
    \end{align*}
    \item If $q=1 \pmod{3}$, then 
    \begin{align*}
    \#X_u(\F_q) &=  q+1 - F_q\biggl(\begin{matrix} \frac{1}{3},  \frac{2}{3} \\ 1,  1 \end{matrix} \:\bigg|\:\: u^{3} \biggr) - 2qF_q\biggl(\begin{matrix} \emptyset \\ \emptyset \end{matrix} \:\bigg|\:\: u^{3} \biggr). \\
    &=  q+1 - F_q\biggl(\begin{matrix} \frac{1}{3},  \frac{2}{3} \\ 1,  1 \end{matrix} \:\bigg|\:\: u^{3} \biggr) -2q \begin{cases}
    -1 &\text{ if } u^3 =1,\\
    0 &\text{ if } u^3 \neq 1. 
    \end{cases}
    \end{align*}
\end{enumerate}
\end{theo}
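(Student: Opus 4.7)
The plan is to specialize Theorem \ref{theo:Dwork} to $d=2$ and evaluate each ingredient explicitly. With $d=2$, one has $\ga=(-3,1,1,1)$, $(q^d-1)/(q-1)=q+1$, and $\delta(\lambda)=(0,\lambda,-\lambda,0)$ for $\lambda\in\Z/e\Z$. The two stated cases, $q\equiv 2\pmod{3}$ and $q\equiv 1\pmod{3}$, correspond exactly to the two possible values $e=\gcd(q-1,3)=1$ and $e=3$, respectively.

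For $q\equiv 2\pmod{3}$, the sum in Theorem \ref{theo:Dwork} has a single term $\lambda=0$ with $\delta(0)=0$. Using the standard convention $g(0)=-1$, the Gauss sum factor equals $g(0)^4=1$. The rational function \eqref{intro-Q-quotient} simplifies to
\[
Q \;=\; \frac{T^3-1}{(T-1)^3} \;=\; \frac{(T-\zeta_3)(T-\zeta_3^2)}{(T-1)^2},
\]
so $(\al\,;\be)=(1/3,2/3\,;\,1,1)$, and the first theorem of the introduction identifies $F_q(\ga,0,1\,|\,u^3)$ with $F_q(1/3,2/3\,;\,1,1\,|\,u^3)$. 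This yields the first stated formula.

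For $q\equiv 1\pmod{3}$, three values $\lambda\in\{0,1,2\}$ contribute. The term $\lambda=0$ is handled as above and produces $F_q(1/3,2/3\,;\,1,1\,|\,u^3)$. For $\lambda\in\{1,2\}$, the identity $\zeta_3^{-1}=\zeta_3^2$ collapses the denominator of $Q$ to $T^3-1$, so $Q=1$ and the associated hypergeometric parameters are empty. The Gauss sum factor for $\lambda=1$ equals
\[
g(0)^2\,g\bigl(\tfrac{q-1}{3}\bigr)\,g\bigl(-\tfrac{q-1}{3}\bigr) \;=\; \chi(-1)^{(q-1)/3}\,q,
\]
via $g(0)=-1$ and the classical identity $g(m)g(-m)=\chi(-1)^m q$ for $m\not\equiv 0 \pmod{q-1}$; for $\lambda=2$ the analogous computation gives $\chi(-1)^{2(q-1)/3}q=q$. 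I then verify $\chi(-1)^{(q-1)/3}=1$ in every case at hand: if $q$ is a power of $2$, then $-1=1$ in $\F_q$ and $\chi(-1)=1$; otherwise $q$ is odd, and $q\equiv 1\pmod{3}$ combined with $q$ odd forces $q\equiv 1\pmod{6}$, so $(q-1)/3$ is even.

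It remains to evaluate $F_q(\emptyset\,;\emptyset\,|\,t)$ directly from \eqref{eq:intro-defi-hg}. With $n=0$, the product over $i$ is empty, so
\[
F_q(\emptyset\,;\emptyset\,|\,t) \;=\; \frac{1}{1-q}\sum_{m=0}^{q-2}\chi(t)^m \;=\; \begin{cases} -1,& t=1,\\ 0,& t\neq 1,\end{cases}
\]
since $\chi$ has order exactly $q-1$. Assembling the three $\lambda$-contributions then produces the second stated formula. The only genuine piece of bookkeeping is the parity check for $\chi(-1)^{(q-1)/3}$; everything else is routine specialization of Theorem \ref{theo:Dwork}.
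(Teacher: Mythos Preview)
Your proof is correct and follows exactly the approach indicated in the paper: the authors state that Theorem~\ref{theo:dwork-2} is a special case of Theorem~\ref{theo:Dwork} and only work out the $d=3$ case in detail, so your specialization to $d=2$---including the identification of $(\al\,;\be)$ via the quotient $Q$, the Gauss sum evaluation using Lemma~\ref{lem:gauss-sums-properties}(3), and the parity check showing $\chi(-1)^{(q-1)/3}=1$---is precisely the intended computation. The evaluation of $F_q(\emptyset\,;\emptyset\mid t)$ you give also matches the remark following Theorem~\ref{theo:equality-of-finite-hg-sums}.
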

The last formula can be explained by the fact that when $u^3=1$, the fibres $X_u$ are reducible with three components. For instance, if $u=1$, then
\[
y_1^{3}+y_2^{3} + y_3^3 -3 y_1y_2y_3= (y_1+y_2+y_3)(y_1+\zeta y_2+\zeta ^2y_3)(y_1+\zeta ^2y_2+\zeta y_3),
\]
where $\zeta $ is a primitive third root of unity in $\F_q$, which exists since $q=1 \pmod{3}$. Thus, $X_1$ is the union of three projective lines. It is easy to check that any two of these lines meet at a single point. Moreover, the three points of intersection are $[1,1,1],[\zeta ,\zeta^{2},1]$ and $[\zeta^{2},\zeta ,1]$, which are $\F_q$-points. Thus, we have that $X_1(\F_q) = 3 (q+1) -3 = 3q$. Note that this also gives the following evaluation
\[
F_q\biggl(\begin{matrix} \frac{1}{3},  \frac{2}{3} \\ 1,  1 \end{matrix} \:\bigg|\:\: 1 \biggr) =1.
\]

\begin{theo}[$d=3$]\label{theo:dwork-3}
Let $X_u$ be the $K3$ surface in $\Ps^{3}$ defined by
\begin{align*}
  y_1^{4}+y_2^{4}+ y_3^{4} + y_{4}^{4} - 4 u^{-1} y_1y_2y_3y_{4}=0.
\end{align*}
\begin{enumerate}
    \item If $q=3 \pmod{4}$, then
    \begin{align*}
    \#X_u(\F_q)
    &= q^2+q+1 +  F_q\biggl(\begin{matrix} \frac{1}{4}, \frac{2}{4}, \frac{3}{4} \\ 1,  1, 1 \end{matrix} \:\bigg|\:\: u^{4} \biggr)- 3 q \cdot  F_q\biggl(\begin{matrix} \frac{1}{4},  \frac{3}{4} \\ \frac{1}{2},  1 \end{matrix} \:\bigg|\:\: u^{4} \biggr).
    \end{align*}
    \item If $q=1 \pmod{4}$, then
    \begin{align*}
     \#X_u(\F_q) 
     &= q^2+q+1 + F_q\biggl(\begin{matrix} \frac{1}{4}, \frac{2}{4}, \frac{3}{4} \\ 1,  1, 1 \end{matrix} \:\bigg|\:\: u^{4} \biggr)+ 3q \cdot F_q\biggl(\begin{matrix} \frac{1}{4},  \frac{3}{4} \\ \frac{1}{2},  1 \end{matrix} \:\bigg|\:\: u^{4} \biggr) \\
     &+12\chi(-1)^{q^{\times}/4} q\cdot  F_q\biggl(\begin{matrix} \frac{1}{2} \\ 1 \end{matrix} \:\bigg|\:\: u^{4} \biggr).
    \end{align*}
\end{enumerate}
\end{theo}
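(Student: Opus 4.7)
The plan is to derive the formula directly from Theorem \ref{theo:Dwork} with $d=3$, grouping the summands by the hypergeometric parameters that the gamma triple $(\gamma, \delta(\lambda), e)$ represents. Since $e = \gcd(q-1, 4)$ equals $2$ or $4$ depending on whether $q \equiv 3$ or $1 \pmod 4$, I would split into the two stated cases and, in each one, enumerate the values of $\lambda \in (\Z/e\Z)^2$ and carry out the reduction procedure of equation \eqref{intro-Q-quotient} to identify the associated parameters.

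For $q \equiv 3 \pmod 4$, only four $\lambda$'s occur. The trivial vector $\lambda = (0,0)$ yields the full parameters $(1/4, 1/2, 3/4; 1, 1, 1)$, while the three remaining $\lambda$'s all collapse after cancellation in $Q$ to $(1/4, 3/4; 1/2, 1)$. The Gauss sum prefactors $g\bigl(\tfrac{q-1}{2}\delta(\lambda)\bigr)$ factor into products of $g(0) = -1$ and quadratic Gauss sums, which combine via $g(\chi_2)^2 = \chi_2(-1) q = -q$ (writing $\chi_n \coloneqq \chi^{(q-1)/n}$) to give the stated first formula.

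For $q \equiv 1 \pmod 4$, the sixteen values of $\lambda$ partition under the same reduction into five classes: $(1/4, 1/2, 3/4; 1, 1, 1)$ from $\lambda = (0,0)$; $(1/4, 3/4; 1/2, 1)$ from three $\lambda$'s with $\lambda_1, \lambda_2 \in \{0, 2\}$ not both zero; $(1/2; 1)$ from the six $\lambda$'s for which exactly one of $\lambda_1, \lambda_2, \lambda_1 + \lambda_2$ vanishes modulo $4$; and the two conjugate classes $(1/4; 3/4)$ and $(3/4; 1/4)$ from three $\lambda$'s each. The prefactors in the first three classes simplify using $g(\chi_2)^2 = q$ and $g(\chi_4)g(\bar\chi_4) = \chi_4(-1) q$ to recover the first three terms of the claim together with a first contribution of $6\chi(-1)^{(q-1)/4}q F_q(1/2; 1 \mid u^4)$ from the $(1/2;1)$ class.

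The main obstacle is combining the remaining six $\lambda$'s, whose parameters $(1/4; 3/4)$ and $(3/4; 1/4)$ are defined only over $\Q(i)$, into the second contribution of $6\chi(-1)^{(q-1)/4}q F_q(1/2; 1 \mid u^4)$. This reduces to verifying the identity
\[
g(\chi_4)^2 g(\chi_2)\, F_q\!\left(\tfrac{1}{4}; \tfrac{3}{4} \bigm\vert u^4\right) + g(\bar\chi_4)^2 g(\chi_2)\, F_q\!\left(\tfrac{3}{4}; \tfrac{1}{4} \bigm\vert u^4\right) = -2\chi(-1)^{(q-1)/4} q\, F_q\!\left(\tfrac{1}{2}; 1 \bigm\vert u^4\right),
\]
which I would establish by expanding each $F_q$ via the definition \eqref{eq:intro-1}, applying the Hasse--Davenport product relation $g(\chi_4)^2 = J(\chi_4, \chi_4) g(\chi_2)$ to turn the three-factor prefactors into Jacobi-sum multiples of $g(\chi_2)^2 = q$, and matching terms with the Gauss sum expansion of $F_q(1/2; 1 \mid u^4)$ after the index substitution $m \mapsto 2m$ that realises the quartic-to-quadratic character reduction.
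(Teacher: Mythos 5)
Your plan for the main count --- apply Theorem \ref{theo:Dwork} with $d=3$, split on $e=\gcd(q-1,4)$, enumerate $\lambda\in(\Z/e\Z)^2$, cancel in $Q$, and group by the resulting parameters --- is exactly what the paper does, and your enumeration of the classes and the Gauss-sum prefactor simplifications are correct. A small imprecision: the condition that exactly one of $\lambda_1,\lambda_2,\lambda_1+\lambda_2$ vanishes modulo $4$ also admits $(0,2)$, $(2,0)$, $(2,2)$, which lie in the $(1/4,3/4;1/2,1)$ class; you must also require the two non-vanishing entries to be $\pm1$. Your count of six is nonetheless correct.

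The genuine gap is in your proposed proof of the last identity. The substitution $m\mapsto 2m$ is $2$-to-$1$ on $\Z/q^\times\Z$, not a reindexing, so it does not let you match terms with the expansion of $F_q(1/2;1\mid u^4)$; and $g(\chi_4)^2=J(\chi_4,\chi_4)g(\chi_2)$ is the Jacobi-sum relation rather than the Hasse--Davenport product, and it does not lead cleanly to the claim. What the paper does is simpler: it proves the two individual identities
\begin{align*}
F_q\left(\tfrac14;\tfrac34\mid u^4\right)&=-\chi(-1)^{q^\times/4}\,\frac{g(q^\times/2)}{g(q^\times/4)^2}\,F_q\left(\tfrac12;1\mid u^4\right),\\
F_q\left(\tfrac34;\tfrac14\mid u^4\right)&=-\chi(-1)^{q^\times/4}\,\frac{g(q^\times/2)}{g(-q^\times/4)^2}\,F_q\left(\tfrac12;1\mid u^4\right),
\end{align*}
each by the elementary index \emph{shift} $m\mapsto n+q^\times/4$ (respectively $m\mapsto n-q^\times/4$) inside Definition \ref{defi:finite-hg-function}. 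The shift is a genuine bijection on $\Z/q^\times\Z$ which, up to the displayed prefactor (the extra $\chi(-1)^{q^\times/4}$ arising from $\chi(-u^4)^{\pm q^\times/4}$), turns the summand of $F_q(1/4;3/4\mid u^4)$ into that of $F_q(1/2;1\mid u^4)$. Combining with $g(q^\times/4)\,g(-q^\times/4)=\chi(-1)^{q^\times/4}q$ then gives the stated coefficient $12\chi(-1)^{q^\times/4}q$. Replace your doubling substitution by this shift and the argument closes.
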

Theorem \ref{theo:dwork-3}, in this form, appeared before in \cite[Theorem 1.1]{goodson-hypergeometric-functions} when $q$ is a prime number, and for general prime powers in \cite[Proposition 3.4.1]{Salerno-Kelly}.

The next case ($d=4$) is more difficult to state, since the number of finite hypergeometric sums appearing in the point count is $5^3$. Nonetheless, Theorem \ref{theo:Dwork} provides the parameters of the finite hypergeometric sums that appear in the point counts for all prime powers coprime to $5$. The number of $\F_q$-points in this case was studied extensively in the literature in relation to Physics and Mirror Symmetry (see, for instance, \cite{villegas-candelas-i} and \cite{Villegas-Candelas-Ossa}). 

We prove Theorem \ref{theo:Dwork} in Section \ref{section:dwork}. 
Theorems \ref{theo:dwork-2} and \ref{theo:dwork-3} are special cases of Theorem \ref{theo:Dwork}. To illustrate the computations involved, we give the proof of Theorem \ref{theo:dwork-3} in Section \ref{section:dwork}.

The question of realising finite hypergeometric sums as point counting functions of families of algebraic varieties has been studied extensively in recent years (see, for instance, \cite{BCM}, \cite{ams-finite-fields-hypergeometric}, \cite{voight-kelly}). 
The significance of this problem lies partly in the fact that computing $L$-functions of algebraic varieties, especially those of high dimension, is computationally expensive, which makes it difficult to test conjectures for these $L$-functions in practice. 
Finite hypergeometric sums offer access to a big class of $L$-functions in a computationally feasible manner. Software packages for computing hypergeometric $L$-functions have been developed in PARI \cite{PARI2}, MAGMA \cite{MAGMA}, Sage \cite{sagemath} through the contributions of Watkins \cite{watkins}, Rodriguez-Villegas, Cohen, Costa, Kedlaya, Roe \cite{kedlaya-costa-roe-1}, \cite{kedlaya-costa-roe-2} and others. For a more detailed discussion of the motivic perspective and its relation to $L$-functions, we refer the reader to \cite{hypergeometric-survey}.

The computations of this paper have been implemented by the author in 
Sage \cite{sagemath}.
\subsection*{Structure of the Paper}

In Section \ref{section:hypergeometric-function}, we recall the definitions of Gauss and finite hypergeometric sums.  
We introduce the notion of gamma triples to encode hypergeometric parameters, and study some of their properties. 
We define finite hypergeometric sums associated to gamma triples and relate them to the classical definition of finite hypergeometric sums.
In Section \ref{section:source-varieties}, we study a general hypersurface in an algebraic torus of dimension $d$ defined by a linear combination of $d+2$ monomials. We show that such a hypersurface is a cover of a "primitive" hypersurface of the same form.  We count the number of $\F_q$-points on the restrictions of such a hypersurface to the faces of its Newton polytope. We end the section by studying some of its regularity properties.
In Section \ref{section:combinatorics}, we describe the combinatorics of the Newton polytope of such a hypersurface, describe the normal fan of the polytope, and construct a refinement of the normal fan based on the construction of Beukers, Cohen and Mellit.
In Section \ref{section:compactifications}, we count the number of $\F_q$-points of the completion of such a hypersurface in the toric variety associated to its Newton polytope, thus proving Theorem \ref{theo:intro-compcatification-1}. We also count the number of $\F_q$-points on the toric variety associated to the refinement constructed in Section \ref{section:combinatorics}, thus proving Theorem \ref{theo:intro-compactification-II}.
In Section \ref{section:reverse-engineering}, we prove Theorem \ref{theo:intro-realisation} and give two examples to illustrate how gamma triples give rise to different realisations of the same hypergeometric sum.
In Section \ref{section:dwork}, we apply the results of this paper to the Dwork family. 
\section*{Acknowledgments} 
I am grateful to my PhD supervisor, Fernando Rodriguez Villegas, for many enlightening and inspiring conversations throughout my PhD. 
I am grateful to John Voight for reading parts of an earlier version of this manuscript and offering some corrections as well as valuable suggestions that improved the overall exposition of the paper. 
I want to thank Giulia Gugiatti, Matt Kerr, Tolibjon Ismoilov, Sohaib Khalid, and Vadym Kurylenko for many helpful discussions.  During my PhD, I was supported by the Simons Foundation through Award 284558FY19 to the ICTP.

\section{Finite hypergeometric sums Over Finite Fields}\label{section:hypergeometric-function}
Recall that we have fixed a non-trivial additive character $\psi: \F_q \to \C^\times$, and a multiplicative character $\chi: \F_q^\times \to \C^\times$ of order $q^\times$. 

\begin{lem} \label{lem:orthogonal} We have the following orthogonality relations:
\begin{equation*}
    \sum_{t \in \F_q} \psi(tx) = \begin{cases}
        q \quad & \text{ if } x =0,\\
        0 \quad & \text{ if } x\neq 0.
    \end{cases}
\end{equation*}
\begin{equation*}
   \sum_{m=0}^{q^\times-1} \chi^{m}(x) =   \begin{cases}
        q^\times \quad & \text{ if }x =1,\\
        0 \quad & \text{ if } x \neq 1.
        \end{cases}
\end{equation*}
\end{lem}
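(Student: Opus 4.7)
The plan is to treat both orthogonality relations by the same template: separate out the trivial case, where the sum visibly equals the size of the summation range, from the non-trivial case, where a short multiply-and-reindex argument exploits the non-triviality of the character to force the sum to vanish.

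For the additive relation, when $x = 0$ every summand is $\psi(0) = 1$, giving $q$. When $x \neq 0$, the map $t \mapsto tx$ is a bijection of $\F_q$, so the sum equals $S \coloneqq \sum_{s \in \F_q} \psi(s)$. I would pick any $y \in \F_q$ with $\psi(y) \neq 1$, which exists by the hypothesis that $\psi$ is non-trivial, and reindex via $s \mapsto s - y$ to get $\psi(y) S = \sum_{s} \psi(s) = S$, forcing $(\psi(y) - 1)S = 0$ and hence $S = 0$.

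For the multiplicative relation, the case $x = 1$ is immediate since every summand equals $1$. For $x \neq 1$, the key preliminary observation is that since $\chi$ has order exactly $q^\times = |\F_q^\times|$, it generates the dual group of $\F_q^\times$; in particular its kernel is trivial, so $\chi(x) \neq 1$. Writing $T \coloneqq \sum_{m=0}^{q^\times - 1} \chi^m(x)$, the reindexing $m \mapsto m - 1 \pmod{q^\times}$, together with $\chi^{q^\times}(x) = 1$, gives $\chi(x) T = T$, hence $T = 0$.

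There is no real obstacle here; this is the foundational orthogonality lemma for finite Fourier analysis on $(\F_q, +)$ and $(\F_q^\times, \cdot)$, and will be invoked repeatedly in later sections. The only point meriting a line of care is the justification that $\chi(x) \neq 1$ when $x \neq 1$, which uses the standing assumption that $\chi$ has order exactly $q^\times$ rather than merely being non-trivial.
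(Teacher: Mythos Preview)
Your proof is correct and entirely standard. The paper, however, states this lemma without proof, treating it as a well-known foundational fact; there is no argument in the paper to compare against. Your multiply-and-reindex approach is the canonical one, and your remark that the condition $\chi(x)\neq 1$ for $x\neq 1$ genuinely uses the hypothesis that $\chi$ has order exactly $q^\times$ (not merely that $\chi$ is non-trivial) is a worthwhile observation.
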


\begin{defi}[Gauss Sums] \label{defi:gauss-sums}
For $m \in \Z$, the \emph{Gauss sum} $g(m)$ is defined by
\[
\sum_{u \in \F_q^\times } \psi(u) \chi^m(u).
\]
For a vector $v \in \mathbb{Z}^{k}$, we define $g(v)\coloneqq \prod_{i=1}^k g(v_i)$.
\end{defi}
Note that the Gauss sum $g(m)$ lies in the field $\Q(\zeta_{p},\zeta_{q^\times})$, where $q= p^{k}$ for $p$ prime.
\begin{rem}[Dependence on $\psi$] \label{rem:dependence-gauss-sums}
The Gauss sum $g(m)$ depends on the additive character $\psi$ as follows:
Let $\psi':\F_q \to \C^\times$ be another non-trivial additive character. By  \cite[Proposition 2.5.4]{Cohen-book}, there is $a \in \F_q^{\times}$ such that $\psi'(x) = \psi(ax)$ for all $x$. Thus,
    \begin{align*}
        \sum_{u \in \F_q^\times } \psi'(u) \chi^m(u) &= \sum_{u \in \F_q^\times } \psi(au) \chi^m(u)\\
        &= \chi^{-m}(a)\sum_{u \in \F_q^\times } \psi(au) \chi^m(au) \\
        &= \chi^{-m}(a) \sum_{x \in \F_q^\times } \psi(x) \chi^m(x) =\chi^{-m}(a)g(m).
    \end{align*}

The sums we will consider in this paper do not depend on the choice of additive character $\psi,$ so we have chosen to suppress it from our notation for the Gauss sum.
\end{rem}

\begin{lem}\label{lem:gauss-sums-properties} Gauss sums satisfy the following properties:
    \begin{enumerate}
        \item $g(0)= -1$,
        \item $g(-m)= \overline{g(m)} \chi^{m}(-1)$,
        \item For $m \neq 0$, we have $g(m)g(-m) = \chi^m(-1) q$. 
        %In particular,  we have $|g(m)|^2 =q$ whenever $m \neq 0$.
    \end{enumerate}
\end{lem}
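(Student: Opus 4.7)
The plan is to verify the three identities by direct manipulation of the defining sum, invoking the orthogonality relations of Lemma \ref{lem:orthogonal} at the key moments. All three parts are essentially one-line calculations, so I would present them in order with minimal fuss.

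For part (1), I would simply note that $g(0) = \sum_{u \in \F_q^\times} \psi(u)$ since $\chi^0 \equiv 1$ on $\F_q^\times$. Splitting off the $u=0$ term from the full sum $\sum_{u \in \F_q} \psi(u)$, which vanishes by Lemma \ref{lem:orthogonal} applied with $x=1$, leaves $g(0) + \psi(0) = 0$, i.e.\ $g(0) = -1$.

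For part (2), I would apply complex conjugation inside the sum. Since $\psi$ takes values in roots of unity, $\overline{\psi(u)} = \psi(-u)$, and similarly $\overline{\chi^m(u)} = \chi^{-m}(u)$. Hence
\[
\overline{g(m)} = \sum_{u \in \F_q^\times} \psi(-u) \chi^{-m}(u).
\]
The change of variable $v = -u$ turns the sum into $\chi^{-m}(-1) \sum_{v \in \F_q^\times} \psi(v)\chi^{-m}(v) = \chi^{-m}(-1) g(-m)$, and solving for $g(-m)$ yields the claim (using $\chi^{-m}(-1) = \chi^{m}(-1)$ since $\chi(-1) = \pm 1$).

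For part (3), the standard trick is to expand the product and diagonalise. I would write
\[
g(m)g(-m) = \sum_{u,v \in \F_q^\times} \psi(u+v)\chi^m(u/v),
\]
then substitute $w = u/v$ (so $u = wv$) to get $\sum_{w \in \F_q^\times} \chi^m(w) \sum_{v \in \F_q^\times} \psi(v(w+1))$. The inner sum equals $-1$ when $w \neq -1$ (by orthogonality on $\F_q$, subtracting the $v=0$ contribution) and equals $q-1$ when $w = -1$. Substituting back gives
\[
g(m)g(-m) = -\sum_{w \in \F_q^\times} \chi^m(w) + q\,\chi^m(-1),
\]
and the first sum vanishes because $m \not\equiv 0 \pmod{q^\times}$ makes $\chi^m$ a non-trivial character. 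I do not anticipate any real obstacle here; the main thing to be careful about is bookkeeping the $v=0$ and $w=-1$ contributions correctly, and remembering that part (3) requires $\chi^m$ non-trivial, which is why the hypothesis $m \neq 0$ is needed.
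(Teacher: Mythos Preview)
Your proof is correct; each of the three computations is the standard one and goes through as written. The paper does not actually prove this lemma but simply cites Cohen's textbook (Lemma 2.5.8 and Proposition 2.5.9), so your direct argument is more self-contained than what the paper provides.
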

\begin{proof}
    See Lemma 2.5.8 and Proposition 2.5.9 in \cite{Cohen-book}.
\end{proof}
\begin{theo}[Hasse-Davenport]\label{Hasse-Davenport}
    Let $N$ be a positive integer dividing $q^\times$. For any integer $m,$ we have
    \[
    g(Nm) = - \chi(N)^{Nm} \prod_{j=0}^{N-1} \frac{g\left(m+j \frac{q^{\times}}{N}\right)}{g\left(j \frac{q^{\times}}{N}\right)} .
    \]
\end{theo}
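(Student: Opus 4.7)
This is the classical Hasse-Davenport product (or multiplication) formula for Gauss sums, for which I would follow the standard Jacobi-sum route. Setting $\eta \coloneqq \chi^{q^\times/N}$, a multiplicative character of exact order $N$, one has $g(jq^\times/N) = g(\eta^j)$ and $g(m+jq^\times/N) = g(\chi^m\eta^j)$. The $j=0$ factor in the product on the right-hand side contributes $g(\chi^m)/g(0) = -g(\chi^m)$ by Lemma \ref{lem:gauss-sums-properties}, which accounts for the overall minus sign in the statement; after cancelling it, the claim reduces to the assertion
\[
g(\chi^{Nm}) \prod_{j=1}^{N-1} g(\eta^j) \;=\; \chi(N)^{-Nm} \prod_{j=0}^{N-1} g(\chi^m\eta^j),
\]
up to the sign convention that has to be tracked carefully.

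The core step is the generalised Gauss--Jacobi identity: for multiplicative characters $\chi_0,\ldots,\chi_{N-1}$ with non-trivial product,
\[
\prod_{j=0}^{N-1} g(\chi_j) \;=\; g\!\left(\textstyle\prod_j \chi_j\right)\cdot J(\chi_0,\ldots,\chi_{N-1}),
\]
where $J(\chi_0,\ldots,\chi_{N-1}) = \sum_{u_0+\cdots+u_{N-1}=1} \prod_j \chi_j(u_j)$ is the generalised Jacobi sum; this follows by opening up the Gauss sums and collecting terms according to the value of $u_0+\cdots+u_{N-1}$, using the orthogonality relation of Lemma \ref{lem:orthogonal}. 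Applying it with $\chi_j = \chi^m\eta^j$, we have $\prod_j\chi_j = \chi^{Nm}\cdot\eta^{N(N-1)/2}$, which equals $\chi^{Nm}$ (routinely for $N$ odd, and after a minor parity check for $N$ even). The problem then reduces to evaluating the Jacobi sum $J(\chi^m,\chi^m\eta,\ldots,\chi^m\eta^{N-1})$.

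To finish, I would evaluate this Jacobi sum by the rescaling $u_j = v_j/(v_0+\cdots+v_{N-1})$, which parametrises the affine hyperplane $\sum u_j = 1$ by $(\F_q^\times)^N$ modulo diagonal scaling. This change of variables factors the Jacobi sum into the classical product $\prod_{j=1}^{N-1} g(\eta^j)$ together with a normalisation constant; a direct computation, together with the multiplicativity of $\chi^m$, extracts precisely the factor $\chi(N)^{\pm Nm}$. Reassembling these pieces yields the stated identity. The main obstacle is purely bookkeeping: tracking the exact exponent of $\chi(N)$ and the overall sign through the rescaling, as well as checking the parity subtlety for $\eta^{N(N-1)/2}$ when $N$ is even. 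As an alternative, this identity is the classical Hasse-Davenport multiplication formula and can be cited from a standard reference on Gauss and Jacobi sums, after reconciling the sign and normalisation conventions there with Definition \ref{defi:gauss-sums}.
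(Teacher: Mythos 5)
The paper's proof of Theorem \ref{Hasse-Davenport} is a one-line citation of \cite[Theorem 3.7.3]{Cohen-book}, so the alternative you offer at the end (cite a standard reference and reconcile normalisations) is exactly what the paper does.

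Your from-scratch sketch has a gap beyond the sign bookkeeping you flag. With $\eta=\chi^{q^\times/N}$ and $\chi_j=\chi^m\eta^j$, one has $\prod_{j=0}^{N-1}\chi_j=\chi^{Nm}\eta^{N(N-1)/2}$. For $N$ odd this is $\chi^{Nm}$ as you say; but for $N$ even $N(N-1)/2\equiv N/2\pmod{N}$, so the product is $\chi^{Nm}\eta^{N/2}$, where $\eta^{N/2}=\chi^{q^\times/2}$ is the nontrivial quadratic character. That is not a ``minor parity check'' that restores $\chi^{Nm}$: the Gauss--Jacobi identity then produces $g(\chi^{Nm}\eta^{N/2})$ rather than $g(\chi^{Nm})$, and you would have to show separately --- using, for instance, $g(\eta^{N/2})^2=\eta^{N/2}(-1)q$ from Lemma \ref{lem:gauss-sums-properties} --- that the stray quadratic twist cancels against the normalisation constant coming from your rescaling. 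Second, the identity $\prod_j g(\chi_j)=g\bigl(\prod_j\chi_j\bigr)\,J(\chi_0,\ldots,\chi_{N-1})$ you invoke requires $\prod_j\chi_j$ nontrivial; for the $m$ where it is trivial you need a separate (easy, but required) argument. Third, as you suspected, the exponent of $\chi(N)$ in your reduced identity is flipped: unwinding the theorem using $g(0)=-1$ gives $g(\chi^{Nm})\prod_{j=1}^{N-1}g(\eta^j)=\chi(N)^{Nm}\prod_{j=0}^{N-1}g(\chi^m\eta^j)$, with exponent $+Nm$. The Jacobi-sum route can be made to work, but as written the $N$-even and degenerate cases are not handled, whereas the paper sidesteps the whole question with a citation.
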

\begin{proof}
  See \cite[Theorem 3.7.3]{Cohen-book}.
\end{proof}
Next, we define finite hypergeometric sums. For a review of the history, we refer the reader to the introductions of \cite{BCM} and \cite{voight-kelly}. Recall that we call $(\al \, ; \be)$ \emph{hypergeometric parameters}, if  $\al, \be \in \Q^{n}$, for $n \ge 0$, such that $\al_i-\be_j$ is not an integer for any $i$ or $j$.

\begin{defi}[Finite hypergeometric sums]\label{defi:finite-hg-function}
Let $(\alpha \, ; \beta) \in \Q^n \times \Q^n$ for $n \ge 0$ be hypergeometric parameters and suppose that $q^\times \alpha, q^\times \beta \in \Z^n$.
For any $t \in \F_q^\times$, define
\[
F_q(\al\,;\be \mid t) \coloneqq \frac{1}{1-q} \sum_{m=0}^{q^\times-1} \prod_{i=1}^n \frac{g(m+\alpha_iq^\times)g(-m-\beta_iq^\times)}{g(\alpha_iq^\times)g(-\beta_i q^\times)} \chi((-1)^nt)^m .
\]
The sum $F_q(\al\,;\be \mid t)$ is called the \emph{finite hypergeometric sum} with parameters $(\al \, ;\beta)$.
\end{defi}
The finite hypergeometric sum $F_q(\al \, ; \be \mid t)$ does not depend on the choice of additive character $\psi$. Indeed, by Remark \ref{rem:dependence-gauss-sums}, changing the additive character $\psi$ to $x \mapsto \psi(ax)$, for some $a \in \F_q^\times,$ changes
\[
\frac{g(m+\alpha_iq^\times)g(-m-\beta_iq^\times)}{g(\alpha_iq^\times)g(-\beta_i q^\times)}
\]
by multiplication by
\[
\frac{\chi^{-r-\alpha_iq^\times}(a)\chi^{+r+\beta_iq^\times}(a)}{\chi^{-\alpha_iq^\times}(a)\chi^{\beta_i q^\times}(a)} =1.\]
Thus, for any $t \in \F_q^\times$, we have $F_q(\al,\be \mid t) \in \mathbb{Q}(\zeta_{q^\times}\!)$.

\begin{rem}
Definition \ref{defi:finite-hg-function} above is equivalent to the definition of \cite[Equation 10.1] {hypergeometric-survey}. Indeed, by part 2 of Lemma \ref{lem:gauss-sums-properties}, we have 
\begin{align*}
    \prod_{i=1}^n \frac{g(m+\alpha_iq^\times)\overline{g(m+\beta_iq^\times)}}{g(\alpha_iq^\times)\overline{g(\beta_i q^\times)}}= \chi((-1)^n)^m \prod_{i=1}^n \frac{g(m+\alpha_iq^\times)g(-m-\beta_iq^\times)}{g(\alpha_iq^\times)g(-\beta_i q^\times)} .
\end{align*}
\end{rem}

The conditions on the prime powers $q$ in Definition \ref{defi:finite-hg-function} are quite restrictive. One of the main goals of this section is to find a different formula for this sum which eliminates some of the congruence conditions on $q$.

\subsection{Gamma Triples}
We introduce a way to encode the hypergeometric parameters $(\alpha \, ;  \beta)$.  
\begin{defi}\label{def:gammma-triple}
    A \emph{gamma triple}\footnote{
    The notion of gamma triples is a generalisation of the notion of partial twists, which is the subject of a work in progress, joint with Giulia Gugiatti and Fernando Rodriguez Villegas \cite{partial-twists}.} is a triple $(\gamma,\delta,N)$ such that 
    \begin{enumerate}
    \item $\gamma \in \Z^{d+2}_{\neq0}$  is a gamma vector,
    \item $\delta \in \Z^{d+2}$ is any vector, and
    \item $N$ is a positive integer.
\end{enumerate}
\end{defi}

\begin{defi}
    To a gamma triple $(\gamma,\delta,N)$, we associate hypergeometric parameters $(\al \,;\be) \in (\Q \cap (0,1])^n\times (\Q \cap (0,1])^n $, by writing
    \begin{equation}\label{eq:gamma-delta}
    \frac{\prod_{j=1}^n (T- e^{2\pi i \alpha_j})}{\prod_{j=1}^n (T- e^{2\pi i \beta_j}) } = \frac{ \prod_{\ga_j< 0} T^{-\ga_j} - \zeta_N^{\delta_j}}{ \prod_{\ga_j>0} T^{\ga_j} - \zeta_N^{-\delta_j}} ,
    \end{equation}
    where the greatest common divisor of the numerator and the denominator of the left-hand side is $1$. 
    The parameters $(\al \,;\be)$ are called the hypergeometric parameters associated to the gamma triple $(\gamma,\delta,N)$, and we say that $(\ga,\delta,N)$ represents $(\al \,;\be)$.
\end{defi}

We note that Equation \eqref{eq:gamma-delta} defines $(\al\, ;\be)$ only modulo $\Z$, we choose $(\al\, ;\be)$ in $(\Q \cap (0,1])^n\times (\Q \cap (0,1])^n$, for convenience. We also note that $(\al \, ; \be)$ only depend on $\delta$ modulo $N$.

\begin{exa}\label{example-1}
Given any irreducible hypergeometric parameters $(\al \,;\be)$, there exists a gamma triple $(\ga,\delta,N)$ that represents them. 
Indeed, let $N$ be the least common multiple of the denominators of $\al_1,\dots,\al_n, \be_1,\dots,\be_n$, and let
\begin{align*}
    \ga&=(-1,\dots,-1;1,\dots,1) \in \Z^{n}\times \Z^n, \\
    \delta&= (N\al_1,\dots,N\al_n,-N\be_1,\dots,-N\be_n) \in \Z^{2n}.
\end{align*}
Then, $(\ga,\delta,N)$ represents $(\al \,;\be)$.
\end{exa}

The representation from Example \ref{example-1} is not unique.  For instance, since we have defined $\zeta_k$ to be the complex number $e^{2\pi i/k}$ rather than any primitive $k$-th root of unity, the gamma triples $(\gamma,\delta,N)$ and $ (\gamma,k \delta,kN)$ define the same hypergeometric parameters $(\al \,;\be)$ for any positive integer $k$. In some cases, we have more interesting representations:

\begin{exa}\label{exa:example-2}
The hypergeometric parameters $((1/3,2/3);(1,1))$ can be represented by the triples
\begin{align*}
    ((-3,1,1,1),(0,0,0,0),1), \\
    ((-1,-1,1,1),(1,-1,0,0),3).
\end{align*}
\end{exa} 
Let $\HH(\al \,;\be)$ be the irreducible hypergeometric local system (or monodromy representation) on $\Ps^1(\C)-\{0,1,\infty\}$ associated to the irreducible hypergeometric parameters $(\al \,;\be)$ (see \cite[Equation~2.14]{Beukers-heckman}). The local system $\HH(\al \,;\be)$ is defined over the field obtained from $\Q$ by adjoining the coefficients of the numerator and the denominator of 
\begin{equation}
    Q= \frac{\prod_{j=1}^n (T- e^{2\pi i \alpha_j})}{\prod_{j=1}^n (T- e^{2\pi i \beta_j}) } \label{eq:quotient-Q}
\end{equation}
(see \cite[Corollary 3.6]{Beukers-heckman}). This motivates the following definition:

\begin{defi}
    Let $(\al\,;\be)$ be irreducible hypergeometric parameters. Let $K$ be the field obtained from $\Q$ by adjoining the coefficients of the numerator and the denominator of the right-hand side of Equation \eqref{eq:quotient-Q}. We call $K$ the field of definition of $(\al \, ; \be)$.
    For any field $L$ which contains $K$, we say that $(\al \, ; \be)$ can be defined over $L$.
\end{defi}

The third component of a gamma triple $(\gamma, \delta, N)$ records the cyclotomic field of definition of the associated hypergeometric parameters in the following sense:

\begin{prop}\label{prop:extension-field-field-of-definition}
If $(\al \,;\be)$ are associated to a gamma triple $(\gamma,\delta,N)$, then $(\al \,;\be)$ can be defined over $\Q(\zeta_N)$.  Conversely, if $(\al \,;\be)$ are irreducible hypergeometric parameters which are defined over $\Q(\zeta_N)$ for some $N$, then there exists a gamma triple $(\gamma,\delta,N)$ which represents them.
\end{prop}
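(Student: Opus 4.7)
The forward direction is immediate from unique factorisation. If $(\ga,\delta,N)$ represents $(\al,\be)$, both numerator and denominator on the right of Equation~\eqref{eq:gamma-delta} lie in $\Q(\zeta_N)[T]$; since $\Q(\zeta_N)[T]$ is a UFD, the coprime reduced form $\prod(T-e^{2\pi i\al_j})/\prod(T-e^{2\pi i\be_j})$ retains coefficients in $\Q(\zeta_N)[T]$, so $(\al,\be)$ is defined over $\Q(\zeta_N)$.

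For the converse, the plan is to work in the multiplicative group
\[
\mathcal{G}=\{f\in\Q(\zeta_N)(T)^\times : f \text{ has only roots of unity as zeros and poles}\}\big/\Q(\zeta_N)^\times,
\]
and to show that its subgroup $\mathcal{H}$ generated by the classes $[T^k-\zeta_N^c]$, for $k\ge 1$ and $c\in\Z$, equals all of $\mathcal{G}$. Once this is established, the rational function $Q$ in Equation~\eqref{eq:quotient-Q} lies in $\mathcal{G}$ by hypothesis and hence can be written as $\prod_i(T^{k_i}-\zeta_N^{c_i})\big/\prod_j(T^{k'_j}-\zeta_N^{c'_j})$ up to a scalar in $\Q(\zeta_N)^\times$; matching monic leading coefficients of numerator and denominator forces the scalar to be $1$. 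Reading off the exponents and characters yields a gamma triple $(\ga,\delta,N)$: the $k_i$, $k'_j$ become $|\ga_j|$ (signed by numerator versus denominator), and the $c_i$, $-c'_j$ become the $\delta_j$. The balancing condition $\sum_j\ga_j=0$ follows from the equality of numerator and denominator degrees of $Q$; if $\gcd(\ga_j)>1$, one appends a cancelling pair of components $\ga=-1,+1$ with $\delta=0,0$ (adding a factor $(T-1)$ to both numerator and denominator, which leaves $Q$ unchanged) to force $\gcd(\ga_j)=1$.

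To prove $\mathcal{H}=\mathcal{G}$, I would induct on the order $m$ of the roots of an irreducible monic polynomial $f\in\Q(\zeta_N)[T]$ with cyclotomic roots. The base case $m=1$ is $T-1=T-\zeta_N^0\in\mathcal{H}$. For the inductive step, pick any root $\zeta_m^s$ of $f$ and set $e=\gcd(m,N)$, $k=m/e$, and $c\equiv sN/e\pmod N$. A direct computation shows that the $k$ roots of $T^k-\zeta_N^c$ are exactly $\zeta_m^{s+je}$ for $j=0,\ldots,k-1$, of order $m/\gcd(m,s+je)$; those of order precisely $m$, i.e.\ with $\gcd(m,s+je)=1$, form the Galois orbit of $\zeta_m^s$ under $\Gal(\Q(\zeta_m)/\Q(\zeta_m)\cap\Q(\zeta_N))$, and are thus the roots of $f$. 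Hence $T^k-\zeta_N^c=f\cdot h$ for some $h\in\Q(\zeta_N)[T]$ whose irreducible factors all have roots of order strictly less than $m$, so by induction $[h]\in\mathcal{H}$ and therefore $[f]=[T^k-\zeta_N^c]\cdot[h]^{-1}\in\mathcal{H}$.

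The main obstacle is the bookkeeping in the induction step, namely verifying that the set $\{\zeta_m^{s+je}:\gcd(m,s+je)=1\}$ coincides with the Galois orbit of $\zeta_m^s$. This reduces to the explicit description of the image of $\{t\in(\Z/\lcm(m,N))^\times : t\equiv 1\pmod N\}$ in $(\Z/m)^\times$ as $\{t\in(\Z/m)^\times : t\equiv 1\pmod e\}$ and a direct calculation of orders.
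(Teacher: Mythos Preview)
Your proposal is correct and follows essentially the same approach as the paper: the forward direction is the paper's Galois-invariance argument rephrased via unique factorisation, and the converse is precisely the content of the paper's Lemma~\ref{lem:generalizing-cyclotomics}, whose proof also inducts on the order of the roots by peeling off a factor $T^k-\zeta_N^c$ whose extraneous roots have strictly smaller order. Your single induction on $m$ with $k=m/\gcd(m,N)$ is slightly more streamlined than the paper's double induction (first on $N$ to reduce to $N\mid m$, then on $m$), but the key step---identifying the Galois orbit of $\zeta_m^s$ over $\Q(\zeta_N)$ with the primitive $m$-th roots among the zeros of a single $T^k-\zeta_N^c$---is identical.
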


\begin{proof}
To prove the first statement, suppose that $(\ga,\delta,N)$ is a gamma triple and $(\al \, ; \be)$ are the hypergeometric parameters associated to it. Let $P/Q$ and $P'/Q'$ denote the left-hand and right-hand sides of Equation \eqref{eq:gamma-delta}, respectively. For any $\sigma \in \Gal(\overline{\Q}/\Q(\zeta_N))$, we have
\[
\sigma(P)/\sigma(Q)=\sigma(P/Q) = \sigma(P'/Q') = P'/Q' = P/Q.
\]
Since $\gcd(P, Q) = 1$ and $\gcd(\sigma(P), \sigma(Q)) = 1$, it follows that $\sigma(P) = P$ and $\sigma(Q) = Q$. Thus, the field of definition $K$ is a subset of $\Q(\zeta_N)$.

The second statement follows by applying the next lemma to the numerator and the denominator of the right-hand side of Equation \eqref{eq:quotient-Q}.
\end{proof}

\begin{lem}\label{lem:generalizing-cyclotomics}
Let $A(x) \in \Q(\zeta_n)[x]$ be a monic polynomial whose roots are roots of unity. Then,
\begin{equation}\label{eq:lem-decompos}
    A(x) = \frac{\prod_{j=1}^{l} x^{k_j}-\zeta_n^{r_j}}{\prod_{j=l+1}^{l+l'} x^{k_j}-\zeta_n^{r_j}}.
\end{equation}
for some non-negative integer $l'$ and positive integers $l, k_1,\dots, k_{l+l'},r_1, \dots,r_{l+l'}$ such that 
\begin{enumerate}
    \item $\sum_{j=1}^{l} k_j -  \sum_{j=l+1}^{l'} k_j= \deg A(x)$ \label{condition-1},
    \item $\gcd(k_1,\dots, k_{l+l'})=1$.\label{condition-2}
\end{enumerate}
\end{lem}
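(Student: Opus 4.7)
I plan to prove the lemma by strong induction on the pair $(d_{\max}(A), N_{\max}(A))$ in lexicographic order, where $d_{\max}(A)$ is the largest multiplicative order of any root of $A$ and $N_{\max}(A)$ is the number of such top-order roots (counted with multiplicity). The base case $d_{\max}(A) = 1$ forces $A = (x-1)^{\deg A}$, which is handled by the presentation $\prod_{j=1}^{\deg A}(x - \zeta_n^n)$ when $\deg A \ge 1$, and by the trivial identity $(x - \zeta_n^n)/(x - \zeta_n^n)$ when $A = 1$.

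For the inductive step, assume $d := d_{\max}(A) > 1$ and fix a root $\zeta$ of $A$ of order exactly $d$ with multiplicity $m \ge 1$. Set $g := \gcd(d,n)$ and $k_0 := d/g$. Since $\ord(\zeta^{k_0}) = g$ divides $n$, we have $\zeta^{k_0} \in \Q(\zeta_n)$ and may write $\zeta^{k_0} = \zeta_n^r$, giving the cyclic polynomial $B(x) := x^{k_0} - \zeta_n^r$, whose roots are $\zeta \cdot \zeta_{k_0}^j$ for $j = 0,\dots,k_0-1$, all of order dividing $d$. The key algebraic input, relying on the classical identity $\Q(\zeta_n) \cap \Q(\zeta_d) = \Q(\zeta_g)$, is that the roots of $B$ of order exactly $d$ coincide with the Galois conjugates of $\zeta$ over $\Q(\zeta_n)$: writing $\zeta = \zeta_d^a$, both sets are parametrised by $\zeta_d^c$ with $c \equiv a \pmod g$ and $\gcd(c,d) = 1$.

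Let $P_\zeta \in \Q(\zeta_n)[x]$ denote the minimal polynomial of $\zeta$. By the observation above, $P_\zeta$ divides $B$, so $C := B/P_\zeta \in \Q(\zeta_n)[x]$ has all roots of order strictly less than $d$. Since $A$ inherits the full Galois orbit of $\zeta$ with multiplicity $m$, $P_\zeta^m$ divides $A$, and we write $A = P_\zeta^m \cdot A_1$ with $A_1 \in \Q(\zeta_n)[x]$ having strictly fewer roots of order $d$ than $A$. Applying the induction hypothesis to $C$ (strictly smaller $d_{\max}$) and to $A_1$ (same $d_{\max}$ but strictly smaller $N_{\max}$), write $C = N_C/D_C$ and $A_1 = N_1/D_1$ in the desired form. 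Substituting gives
$$A = \frac{B^m \cdot N_1 \cdot D_C^m}{N_C^m \cdot D_1},$$
a ratio of products of cyclic polynomials; condition \eqref{condition-1} follows automatically by equating degrees in this identity. The main obstacle I anticipate is condition \eqref{condition-2}: the gcd of the combined list of exponents $k_j$ may exceed $1$. I will resolve this by appending a trivial factor $(x - \zeta_n^n)/(x - \zeta_n^n) = 1$ to both the numerator and the denominator, which introduces $k = 1$ into the exponent list and forces the overall gcd to be $1$ without altering $A$.
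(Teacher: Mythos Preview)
Your proof is correct and rests on the same algebraic core as the paper's: write the minimal polynomial $P_\zeta$ of a top-order root as $(x^{k_0}-\zeta_n^r)/C$ with $k_0=d/\gcd(d,n)$, use $\Q(\zeta_n)\cap\Q(\zeta_d)=\Q(\zeta_g)$ to identify the order-$d$ roots of $x^{k_0}-\zeta_n^r$ with the Galois orbit of $\zeta$, and recurse on the lower-order quotient $C$; the gcd condition is fixed at the end by inserting $(x-1)/(x-1)$, exactly as the paper does.

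The only real difference is in the bookkeeping of the induction. The paper first reduces to the irreducible factors $f_{\zeta,n}$ and then runs a nested induction, outer on $n$ (reducing $f_{\zeta,n}$ to $f_{\zeta,\gcd(m,n)}$) and inner on $m$ in the case $n\mid m$. You instead induct directly on the pair $(d_{\max}(A),N_{\max}(A))$ in lex order, which avoids the case split and the reduction to irreducible factors. Your scheme is slightly cleaner and handles the general $A$ and the recursion on $C$ in one stroke, whereas the paper's organisation makes the role of the base field $\Q(\zeta_n)$ more visible. Either way the substance is the same.
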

\begin{proof}
Note that if a decomposition of the form \eqref{eq:lem-decompos} exists, then condition \eqref{condition-1} is automatically true. Furthermore, if such a decomposition exists but $\gcd(k_1,\dots, k_{l+l'})\neq 1$, then by multiplying the numerator and denominator of the right-hand side of Equation \eqref{eq:lem-decompos} by $(x-1)$, we get a decomposition which satisfies \eqref{condition-2}.

Now, let $n$ be a positive integer, and let $\zeta$ be a primitive $m$-th root of unity, for some $m\ge 1$.
Let $G_{n,\zeta}$ be the Galois group $ \operatorname{Gal}(\Q(\zeta,\zeta_n)/\Q(\zeta_n))$, and let $S_{n,\zeta}$ be the stabiliser of $\zeta$ in $G_{n,\zeta}$. 
Consider the polynomial 
\[
f_{\zeta,n}(x):=\prod_{\sigma \in G_{n,\zeta}/S_{n,\zeta}} (x- \sigma(\zeta)).
\]
Note that if $A(x) \in \Q(\zeta_n)[x]$ is a monic polynomial whose roots are roots of unity, then it is a product of polynomials of the form 
$f_{\zeta,n}(x)$ (for different $\zeta$'s). 
Thus, it suffices to prove that for every primitive $m$-th root of unity $\zeta$, we have
\[
f_{\zeta,n}(x)= \frac{\prod_{j=1}^{l} x^{k_j}-\zeta_n^{r_j}}{\prod_{j=l+1}^{l+l'} x^{k_j}-\zeta_n^{r_j}}
\]
for some non-negative integer $l'$ and positive integers $l, k_1,\dots, k_{l+l'}, r_1,\dots r_{l+l'}$.
We argue by induction on $n$.
If $n=1,$ then $f_{\zeta,n}(x)$ is the $m$-th cyclotomic polynomial, and the result is classical. Now, suppose that the result is true for all $d < n$.  Let $d= \gcd(m,n),$ then $\Q(\zeta) \cap \Q(\zeta_n) = \Q(\zeta_d)$. We have two cases.
\begin{enumerate}
    \item If $d<n$, then since $G_{n,\zeta} \cong \Gal(\Q(\zeta)/\Q(\zeta_d)) = \Gal(\Q(\zeta,\zeta_d)/\Q(\zeta_d))  = G_{d,\zeta}$, we have
    \[
    f_{\zeta,n}(x) = f_{\zeta,d}(x).
    \]
    Thus, the result is true by induction, since $d<n$.
    \item If $d=n$, then $m$ is a multiple of $n$.
    We argue by induction on $m$. If $m=n$, there is nothing to prove. Now, suppose that the result is true for all $m'=k'n <m$. 
    Let $m=kn$. There is an integer $i$ with $\gcd(i,n)=1$ such that $\zeta_m$ is a root of the polynomial $x^{k} - \zeta_n^i$. It suffices to prove that
    \begin{equation}\label{eq:proof-galois-stable}
    f_{\zeta,n}(x) = \frac{x^{k} - \zeta_n^i }{g(x)},
    \end{equation}
    where the roots of $g(x)$ are roots of unity of order strictly less than $m$, and $g(x) \in \Q(\zeta_n)[x]$. 
    Indeed, if $\zeta'$ is a root of $g(x)$, then $\zeta'$ is a primitive $l$-th root of unity for some $l< kn=m$ and $ \gcd(l,n) \le n$. 
    To prove \eqref{eq:proof-galois-stable}, we note that $f_{\zeta,n}(x)$ is a factor of $g(x)$, since $\zeta$ is a root of $g(x)$ and both polynomials are stable under the action of $G_{n,\zeta}$. 
    Furthermore, every root of $x^{k} - \zeta_n^i$ is of order smaller than or equal to $m$.  Thus, it suffices to prove that $G_{\zeta,n}$ acts transitively on
    any two roots $\zeta_m^{i+an}, \zeta_m^{i+a'n}$ of $x^{k} - \zeta_n^i $ which are primitive $m$-th roots of unity. 
    The Galois group $G_{\zeta,n}$ is the subgroup of $(\Z/m\Z)^{\times}$ of elements of the form $1+rn$ for some $r \in \Z$. Thus, it suffices to show that there is an $r \in \Z$ such that $1+rn \in \left(\Z/m\Z\right)^\times$ and
     \[
      i+a'n = (1+rn)(i+an) \pmod{m}.
     \]
    Since  $\zeta_m^{i+an}, \zeta_m^{i+a'n}$ are primitive $m$-th roots of unity, we have that $\gcd(i+an,m)= \gcd(i+a'n,m)=1$. Thus, such $1+rn$, if it exists, is necessarily in $(\Z/m\Z)^\times$. Choosing $rn= \frac{a'-a}{i+an} \pmod{m}$ gives us such an element. Thus, by induction,
    \[
     f_{\zeta,n}(x) = \frac{\prod_{\ga_j<0} x^{-\ga_j}-\zeta_n^{\delta_j}}   {\prod_{\ga_j>0} x^{\ga_j}-\zeta_n^{-\delta_j}},
    \]
    for all multiples of $n$.
\end{enumerate}
By induction on $n$, the result is true for all $n$.
\end{proof}

We end this subsection by considering gamma triples $(\gamma, \delta, N)$ which satisfy $\sum_{j=1}^{d+2} \delta_j = 0 \pmod{N}$. Such triples are precisely the ones that arise in the families of algebraic varieties we will consider in this paper.

\begin{prop}\label{prop:characterization-of-gamma-triple}
Let $(\gamma,\delta,N)$ be a gamma triple such that $\sum_{j} \delta_j =0 \pmod{N}$. If $(\al \,;\be)$ are the hypergeometric parameters associated to $(\gamma,\delta,N)$, then $\sum_{i} \al_i - \be_i \in \frac{1}{2} \Z$. 
Conversely, if $(\al \,;\be)$ are irreducible hypergeometric parameters such that $\sum_{i} \al_i - \be_i \in \frac{1}{2}\Z$, then $(\al \,;\be)$ are represented by a gamma triple $(\ga,\delta,N)$ with $\sum_{j} \delta_j=0 \pmod{N}$.
\end{prop}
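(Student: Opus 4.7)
The plan is to extract a single scalar identity from the defining relation \eqref{eq:gamma-delta} by comparing the products of all roots on both sides. Writing the left-hand side as $P(T)/Q(T)$ with $\gcd(P,Q)=1$ and the right-hand side as $P'(T)/Q'(T)$, the equality $P/Q = P'/Q'$ forces $P' = DP$ and $Q' = DQ$ for a common polynomial factor $D$, so the ratio of the products of roots of numerator and denominator is the same on both sides.

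For the left-hand side, this ratio equals $\prod_i e^{2\pi i \alpha_i}/\prod_i e^{2\pi i \beta_i} = e^{2\pi i \sum_i(\alpha_i-\beta_i)}$. For the right-hand side, each factor $T^k - c$ contributes $(-1)^{k+1}c$ to the product of its roots; collecting contributions from the factors $T^{-\gamma_j}-\zeta_N^{\delta_j}$ in the numerator and $T^{\gamma_j}-\zeta_N^{-\delta_j}$ in the denominator, and using $\sum_j \gamma_j = 0$, a direct calculation gives
\[
e^{2\pi i \sum_i (\alpha_i-\beta_i)} \;=\; (-1)^{r-s}\,\zeta_N^{\sum_j \delta_j},
\]
where $r$ and $s$ denote the numbers of negative and positive components of $\gamma$. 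The forward direction is then immediate: if $\sum_j \delta_j \equiv 0 \pmod N$, the right-hand side equals $\pm 1$, and hence $\sum_i(\alpha_i-\beta_i) \in \tfrac{1}{2}\Z$.

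For the converse, by Proposition \ref{prop:extension-field-field-of-definition} there exists some gamma triple $(\gamma,\delta,N)$ representing $(\alpha;\beta)$. The identity above together with $\sum_i(\alpha_i-\beta_i) \in \tfrac{1}{2}\Z$ forces $\zeta_N^{\sum_j \delta_j} = \pm 1$, so either $\sum_j \delta_j \equiv 0 \pmod N$ (and we are done) or $\sum_j \delta_j \equiv N/2 \pmod N$ (which requires $N$ even). In the second case I modify the triple by appending three components, setting
\[
\gamma' = (\gamma_1,\ldots,\gamma_{d+2},-2,1,1), \qquad \delta' = (\delta_1,\ldots,\delta_{d+2},0,0,N/2).
\]
This multiplies both the numerator and denominator of the right-hand side of \eqref{eq:gamma-delta} by $T^2-1 = (T-\zeta_N^0)(T-\zeta_N^{N/2})$, which leaves the reduced quotient $Q$ unchanged, so $(\gamma',\delta',N)$ still represents $(\alpha;\beta)$. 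The new $\gamma'$ remains a gamma vector: its components are nonzero, sum to zero, and have $\gcd$ equal to $1$ thanks to the appended $\pm 1$'s. Finally $\sum_j \delta'_j = \sum_j \delta_j + N/2 \equiv 0 \pmod N$, as required.

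The only delicate point is the second subcase in the converse; the explicit augmentation above resolves it cleanly, so no substantive obstacle arises.
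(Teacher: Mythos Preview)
Your proof is correct and essentially equivalent to the paper's, though packaged differently. For the forward direction, the paper writes out the ``uncancelled'' parameters $\alpha_{j,i}$, $\beta_{j,i}$ explicitly and sums them to obtain $\sum_i(\alpha_i-\beta_i) \equiv \tfrac{1}{N}\sum_j\delta_j + \tfrac{r-s}{2} \pmod{\Z}$; your product-of-roots identity $e^{2\pi i \sum_i(\alpha_i-\beta_i)} = (-1)^{r-s}\zeta_N^{\sum_j\delta_j}$ is precisely the exponential of this relation, arrived at without writing the roots individually. Your argument is a bit slicker and makes the identity easier to remember.

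For the converse, the paper starts from the concrete gamma triple of Example~\ref{example-1} with modulus $2N$ (where $N$ is the common denominator), so that $\sum_j\delta_j = 2N\sum_i(\alpha_i-\beta_i)$ is automatically a multiple of $N$; you instead invoke Proposition~\ref{prop:extension-field-field-of-definition} to get an arbitrary representing triple and use your identity to force $\zeta_N^{\sum_j\delta_j}=\pm 1$. The fix in the bad case is the same idea in both proofs: append $(-2,1,1)$ to $\gamma$ and a suitable tail to $\delta$ so as to multiply numerator and denominator of \eqref{eq:gamma-delta} by $T^2-1$. The paper appends $(0,0,N)$ at modulus $2N$, you append $(0,0,N/2)$ at modulus $N$; both shift $\sum_j\delta_j$ by exactly half the modulus. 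Your version has the mild advantage of not needing to double the modulus, and your forward identity does the bookkeeping for you in the converse as well.
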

\begin{proof}
Assume that $\ga \in \Z_{\neq0}^{r+s}$. Suppose, without loss of generality, that $\ga_j <0$ for $j\le r$ and $\ga_j> 0 $ for $j > r$.
To prove the first statement, note that $\sum_{i} \al_i - \be_i$ is equal to $\sum_{j,i} \al_{j,i}- \be_{j,i}$ where $\{\al_{j,i}\}$ and $\{\beta_{j,i}\}$ are the parameters "without cancellation" given by 
\begin{align*}
\al_{j,i} &= \frac{-\delta_j}{\ga_j N} + \frac{i}{-\ga_j}, \qquad j = 1,\dots,r, \ i = 1,\dots,-\ga_j, \\
\be_{j,i} &= \frac{-\delta_j}{\ga_j N} + \frac{i}{\ga_j}, \qquad j = r+1,\dots,r+s, \ i = 1,\dots,\ga_j.
\end{align*}

We compute:
\begin{align*}
    \sum_{j,i} \al_{j,i} &= \sum_{j=1}^r \sum_{i=1}^{-\ga_j} \left( \frac{-\delta_j}{\ga_j N} + \frac{i}{-\ga_j} \right) \\
    &= \sum_{j=1}^r \frac{\delta_j}{N} + \sum_{j=1}^r \frac{1}{-\ga_j} \cdot \frac{(-\ga_j)(-\ga_j + 1)}{2} \\
    &= \sum_{j=1}^r \frac{\delta_j}{N} + \sum_{j=1}^r \frac{-\ga_j + 1}{2},
\end{align*}

and
\begin{align*}
    \sum_{j,i} \be_{j,i} &= \sum_{j=r+1}^{r+s} \sum_{i=1}^{\ga_j} \left( \frac{-\delta_j}{\ga_j N} + \frac{i}{\ga_j} \right) \\
    &= \sum_{j=r+1}^{r+s} \frac{-\delta_j}{N} + \sum_{j=r+1}^{r+s} \frac{1}{\ga_j} \cdot \frac{\ga_j(\ga_j + 1)}{2} \\
    &= \sum_{j=r+1}^{r+s} \frac{-\delta_j}{N} + \sum_{j=r+1}^{r+s} \frac{\ga_j + 1}{2}.
\end{align*}
Now, since $\sum_j \ga_j = 0$ and $\sum_j \delta_j = 0 \pmod{N}$, we have
\[
\sum_{j,i} \al_{j,i} - \be_{j,i} = \frac{1}{N} \sum_{j=1}^{r+s} \delta_j - \frac{1}{2} \sum_j \ga_j + \frac{r - s}{2} = \frac{r - s}{2} \pmod {\Z}.
\]

To prove the converse, first note that the gamma triple $((-2,1,1),(0,0,N),2N)$ represents the empty parameters $(\emptyset,\emptyset)$, for any positive integer $N$.
Now, suppose that $(\al \,; \be) \in \left(\Q \cap (0,1]\right)^{n} \times \left(\Q \cap (0,1]\right)^{n}$ are irreducible hypergeometric parameters with $\sum_i \al_i - \be_i \in \frac{1}{2} \Z$. Let $N$ be the least common multiple of the denominators of the $\al_i$'s and $\be_i$'s. 
Let
\begin{align*}
    \ga &= (-1, \dots, -1; 1, \dots, 1) \in \Z^{n}\times \Z^n, \\
    \delta&= (2N\al_1, \dots, 2N\al_n, -2N\be_1, \dots, -2N\be_n).
\end{align*}
Then, the gamma triple $(\ga, \delta, 2N)$ represents the parameters $(\al \, ; \be)$ and satisfies that the integer $\sum_j \delta_j$ is a multiple of $N$. If $\sum_j \delta_j$ is an even multiple of $N$, we are done. Otherwise, we consider the triple 
$(\tilde{\ga},\tilde{\delta},2N)= ((\ga;-2,1,1),(\delta;0,0,N),2N)$, which represents $(\al \, ; \be)$ and satisfies that $\sum_{j} \tilde{\delta}_j $ is an even multiple of $N$.
\end{proof}

\subsection{Finite Hypergeometric Sums from Gamma Triples}
In what follows, we would like to define the finite hypergeometric sum associated to a gamma triple $(\ga,\delta,N)$. The definition extends the definition of the finite hypergeometric sum associated to $(\al \,; \be)$ as we shall see below. First, we need some notation.

\begin{defi}\label{defi:s-delta}
    Let $(\ga,\delta,N)$ be a gamma triple, and let $q$ be a prime power. Let $D_{(\ga,\delta,N)}(T)$  be the greatest common divisor of the numerator and the denominator of the right-hand side of Equation \eqref{eq:gamma-delta}. 
    For any integer $m$, define $s_{(\ga,\delta,N)}(m)$ to be the multiplicity of  $e^{2 \pi i m/ q^\times}$ in $D_{\ga,\delta,N}(T)$. We will abuse notation and write $s_{\delta}(m)$ instead of $s_{(\ga,\delta,N)}(m)$.
\end{defi}

\begin{defi}\label{defi:finite-hg-function-gamma}
Let $(\ga,\delta, N)$ be a gamma triple. 
Let $q$ be a prime power which is coprime to $\ga_1 \dots \ga_{d+2}$ and such that $N$ divides $q^\times$. 
The \emph{finite hypergeometric sum} associated to  $(\ga,\delta,N)$ is
\begin{equation}
F_q\left(\gamma, \delta,N \mid t\right) = \frac{1}{1-q}\sum_{m=0}^{q^\times-1}\frac{g\left(-\gamma m +\frac{\delta}{N} q^\times\right)} {g\left(\frac{\delta}{N} q^\times\right)}  q^{s_{\delta}(-m)-s_{\delta}(0)} \chi^m(\gamma^{\gamma}t).
\end{equation}
More generally, this definition makes sense if $\delta_{j}q^\times/N $ is an integer for all $j$
\end{defi}
We have the following result.
\begin{theo}\label{theo:equality-of-finite-hg-sums}
Let $(\ga,\delta,N)$ be a gamma triple, and let $(\al \,;\be) \in \Q^{2n}$ be the hypergeometric parameters associated to it. 
Let $q$ be a prime power such that $q^{\times} \al, q^{\times} \be \in \Z^{n}$. 
Then, we have the following equality
\begin{equation}\label{eq:equality-of-different-finit-hgs}
F_q\left(\gamma, \delta,N \mid t\right) = F_q\biggl(\begin{matrix} \al_1, \ \ldots, \ \al_n \\ \be_1, \ \ldots, \ \be_n \end{matrix} \:\bigg|\:\: t\biggr).
\end{equation}
\end{theo}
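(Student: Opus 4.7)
The plan is to apply the Hasse--Davenport relation (Theorem~\ref{Hasse-Davenport}) to each factor $g(-\gamma_j m+\delta_jq^\times/N)/g(\delta_jq^\times/N)$ appearing on the left-hand side of~\eqref{eq:equality-of-different-finit-hgs}, expanding it as a product of Gauss sums indexed by the pre-cancellation roots of the numerator or denominator in~\eqref{eq:gamma-delta}. Every zero of $D_{\ga,\delta,N}(T)$ will then appear $s_\delta(m_0)$ times in both the assembled numerator and denominator; each such cancelled pair collapses, via the identity $g(k)g(-k)=\chi^k(-1)q$ of Lemma~\ref{lem:gauss-sums-properties}, into an explicit factor that conspires with the prefactor $q^{s_\delta(-m)-s_\delta(0)}$ and the character $\chi^m(\gamma^\gamma t)$ to reproduce the summand of $F_q(\al\,;\be\mid t)$.

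First, one verifies the divisibility conditions $|\gamma_j|\mid q^\times$ and $|\gamma_j|\mid \delta_jq^\times/N$ that legitimise Hasse--Davenport; these follow from the assumption that both sides of~\eqref{eq:equality-of-different-finit-hgs} are defined together with the explicit parametrisation of the pre-cancellation roots as $\al_{j,i}=(\delta_j+Ni)/(N|\gamma_j|)$ (for $\gamma_j<0$) and $\be_{j,i}=(-\delta_j+Ni)/(N\gamma_j)$ (for $\gamma_j>0$). Applying Hasse--Davenport with parameter $|\gamma_j|$ and dividing by its $m=0$ specialisation then yields, for $\gamma_j<0$,
\[
\frac{g(-\gamma_j m+\delta_jq^\times/N)}{g(\delta_jq^\times/N)}=\chi(|\gamma_j|)^{|\gamma_j|m}\prod_{i=0}^{|\gamma_j|-1}\frac{g(m+\al_{j,i}q^\times)}{g(\al_{j,i}q^\times)},
\]
and the analogous identity for $\gamma_j>0$ produces factors $g(-m-\be_{j,i}q^\times)/g(-\be_{j,i}q^\times)$ (after using the periodicity of $g$ modulo $q^\times$ to reindex) together with a character prefactor $\chi(\gamma_j)^{-\gamma_jm}$. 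Taking the product over $j$, and using $\gamma_j^{\gamma_j}=(-1)^{|\gamma_j|}|\gamma_j|^{-|\gamma_j|}$ for negative $\gamma_j$ together with $\sum_j\gamma_j=0$, the assembled character prefactor equals $\chi^m((-1)^{r_-}/\gamma^\gamma)$, where $r_-=\sum_{\gamma_j<0}|\gamma_j|$; multiplying by the original $\chi^m(\gamma^\gamma t)$ leaves $\chi^m((-1)^{r_-}t)$.

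Next, one separates the pairs $(\al_{j,i},\be_{j',i'})$ whose values coincide modulo~$\Z$ --- exactly the zeros of $D_{\ga,\delta,N}(T)$ counted with multiplicity --- from the complementary pairs, which by construction yield the reduced parameters $(\al\,;\be)$. For each cancelled pair at residue $m_0/q^\times$, the factor $g(m+m_0)g(-m-m_0)/[g(m_0)g(-m_0)]$ equals $\chi^m(-1)$ generically by Lemma~\ref{lem:gauss-sums-properties}(3), degenerates to $\chi^m(-1)q$ when $m_0=0$ and $m\neq 0$, to $\chi^m(-1)/q$ when $m_0\neq 0$ and $m+m_0\equiv 0\pmod{q^\times}$, and to $1$ in the overlap case $m=m_0=0$. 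Summing multiplicities over residues, the net power of $q$ produced is $s_\delta(0)-s_\delta(-m)$, which precisely cancels the prefactor $q^{s_\delta(-m)-s_\delta(0)}$ built into Definition~\ref{defi:finite-hg-function-gamma}; the cumulative power of $\chi^m(-1)$ is $\deg D_{\ga,\delta,N}$, and since $n=r_--\deg D_{\ga,\delta,N}$, the accumulated character factor becomes $\chi^m((-1)^n t)$, matching $F_q(\al\,;\be\mid t)$. The remaining product is precisely the summand of the right-hand side of~\eqref{eq:equality-of-different-finit-hgs}.

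The main obstacle is the case analysis in this cancellation step: the boundary behaviours at $m=0$, $m\equiv -m_0\pmod{q^\times}$, and $m_0=0$ must be tracked uniformly so that the $q$- and $\chi(-1)^m$-contributions collapse to the clean expressions claimed. Once this bookkeeping is handled, the remaining character algebra relies only on $\sum_j\gamma_j=0$ and a short computation with $\gamma_j^{\gamma_j}$ for negative $\gamma_j$, and the proof concludes by direct comparison with Definition~\ref{defi:finite-hg-function}.
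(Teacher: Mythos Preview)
Your proposal is correct and follows essentially the same approach as the paper's proof: both arguments hinge on applying the Hasse--Davenport relation to pass between the single Gauss sums $g(-\gamma_j m+\delta_j q^\times/N)$ and the products indexed by the pre-cancellation parameters $\al_{j,i},\be_{j,i}$, then using the reflection identity $g(k)g(-k)=\chi^k(-1)q$ to handle the $\deg D_{\gamma,\delta,N}$ cancelled pairs and produce the $q^{s_\delta(-m)-s_\delta(0)}$ factor, with the sign bookkeeping closed by the parity identity $n=r_{-}-\deg D_{\gamma,\delta,N}$. The only cosmetic difference is the direction of the computation---the paper starts from the $(\al\,;\be)$ side and collapses via Hasse--Davenport, whereas you start from the $(\gamma,\delta,N)$ side and expand---but the ingredients and the case analysis are the same.
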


\begin{proof}
The proof is a direct generalisation of the proof of \cite[Theorem 1.3]{BCM}. For convenience, we make the necessary modifications to their argument.

Denote by $A_m$ the $m$-th coefficient of $(1-q)F_q\left(\gamma, \delta,N \mid t\right) $ (as a polynomial in $\chi(t)$), and suppose that $D_{\ga,\delta,N}(T) = \prod_{j=1}^d (T- e^{2 \pi i c_j})$. We have
\begingroup
\allowdisplaybreaks
\begin{align*}
    A_m &=\prod_{i=1}^n \frac{g(m+ \al_i q^\times)g(-m- \be_i q^\times)}{g(\al_i q^\times)g(-\be_i q^\times)} \chi(-1)^{nm}\\
    &=B_1 \cdot B_2 \cdot B_3, \text{ where } \\
    B_1 &\coloneqq \prod_{\gamma_i<0} \prod_{j=0}^{-\gamma_i-1} \frac{g\left(m+ \frac{\delta_i }{N(-\gamma_i)}q^\times +\frac{j}{-\gamma_i} q^\times\right)}{g\left( \frac{\delta_i }{N(-\gamma_i)}q^\times +\frac{j}{-\gamma_i} q^\times\right)} &\text {(Hasse-Davenport)}\\
    &= \prod_{\gamma_i< 0}\chi(-\gamma_i)^{\gamma_i m} \cdot \frac{g\left(-\gamma_i m + \frac{\delta_i}{N}q^\times\right)}{g\left(\frac{\delta_i}{N}q^\times\right)}, \\
    B_2 &\coloneqq \prod_{\gamma_i>0} 
    \prod_{j=0}^{\gamma_i-1} 
    \frac{g\left(-m+ \frac{\delta_i }{N\gamma_i}q^\times +\frac{j}{\gamma_i} q^\times\right)}{g\left(\frac{\delta_i }{N\gamma_i}q^\times +\frac{j}{\gamma_i} q^\times\right)} &\text {(Hasse-Davenport)}\\
    &=\prod_{\gamma_i >0 } \chi(\gamma_i)^{\gamma_im} \cdot \frac{g\left(-\gamma_i m + \frac{\delta_i}{N}q^\times\right)}{g\left(\frac{\delta_i}{N}q^\times\right)},\\
    B_3 &\coloneqq \prod_{j=1}^d \frac{g(c_j)g(-c_j)}{g(m+c_j)g(-m-c_j)}\cdot \chi(-1)^{nm}  &\text{(Lemma \ref{lem:gauss-sums-properties})}\\
    &= \frac{ \prod_{c_j =0} \chi(-1)^{c_j} \cdot \prod_{c_j \neq 0} \chi(-1)^{c_j}q } {\prod_{m+c_j =0} \chi(-1)^{m+c_j} \cdot \prod_{m+c_j \neq 0} \chi(-1)^{m+c_j}q} \chi(-1)^{mn}\\
    &= \chi(-1)^{-dm} q^{\#\{j \mid c_j \neq 0 \}-\#\{j \mid m+c_j \neq 0 \} }\chi(-1)^{mn} & (\chi(-1)^{dm} = \chi(-1)^{-dm})\\
    &= \chi(-1)^{dm} q^{s_\delta(-m)-s_\delta(0)}\cdot \chi(-1)^{nm}.
\end{align*}
\endgroup
Thus,
\begin{align*}
   A_m &= \chi(-1)^{\left(n+d -\sum_{\ga_j<0} \ga_j \right)m}  q^{s_\delta(-m)-s_\delta(0)}\cdot \frac{g\left(-\gamma m +\frac{\delta}{N} q^\times\right)} {g\left(\frac{\delta}{N} q^\times\right)}\chi(\gamma^{\gamma})^m.
\end{align*}
Finally, we have $n+d -\sum_{\ga_j<0} \ga_j=0$, which proves the theorem.
\end{proof}
Note that if $\delta$ is the zero vector, then $ (\al \, ;  \beta) $ is defined over $\Q$, and this result recovers \cite[Theorem 1.3]{BCM}.
Indeed, the denominator $g\left(\frac{\delta}{N} q^\times\right)$ is equal to $(-1)^{r+s}$, and $s(m) = s(-m)$, since the parameters $(\al \, ; \be)$ are defined over $\Q$.
\begin{rem}
Note that if $n=0$ in Theorem \ref{theo:equality-of-finite-hg-sums}, then
\[
F_q(\ga,\delta, N \mid t) = F_q (\emptyset\, ; \emptyset \mid t) = 
\begin{cases}
-1 &  \text{ if } t=1 , \\
0 & \text{ otherwise,}
\end{cases}
\]
whenever the left-hand side is defined. 

\end{rem}
\begin{prop}\label{prop:uniqueness-extensions}
    Suppose that $(\al \,;\be)$ are irreducible hypergeometric parameters associated to two different gamma triples $(\ga,\delta,N)$ and $(\ga',\delta',N').$ Let $M$ be the least common multiple of $N$ and $N'$. 
    Then,
    \begin{equation}\label{eq:cor-well-defined-extensions}
      F_q(\ga,\delta,N \mid t) = F_q(\ga',\delta',N' \mid t)  
    \end{equation}
    for all prime powers $q$ such that $q^\times$ is divisible by $M$ and $q$ is coprime to the $\ga_j$'s and $\ga_j'$'s.
\end{prop}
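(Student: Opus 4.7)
My plan is to reduce Proposition \ref{prop:uniqueness-extensions} to Theorem \ref{theo:equality-of-finite-hg-sums} via a rescaling invariance, and then handle the remaining case by a direct term-wise analysis.

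First I would prove the following rescaling invariance: for any positive integer $k$ with $kN \mid q^\times$ and $q$ coprime to the $\ga_j$'s, one has $F_q(\ga, k\delta, kN \mid t) = F_q(\ga, \delta, N \mid t)$. Indeed $\zeta_{kN}^{k\delta_j} = \zeta_N^{\delta_j}$, so the rational function in Equation \eqref{eq:gamma-delta} coincides for both triples; hence the polynomial $D(T)$ and the multiplicity function $s_\delta$ are unchanged. Moreover $(k\delta_j)q^\times/(kN) = \delta_j q^\times/N$, so the Gauss sums appearing in Definition \ref{defi:finite-hg-function-gamma} are identical. Applying this observation with $k = M/N$ and $k = M/N'$ respectively, I may replace the two triples by $(\ga,(M/N)\delta,M)$ and $(\ga',(M/N')\delta',M)$, reducing the statement to the case $N = N' = M$.

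In the reduced setting, if the common denominator of the $\al_i$'s and $\be_i$'s divides $q^\times$, then $q^\times\al, q^\times\be \in \Z^n$, and Theorem \ref{theo:equality-of-finite-hg-sums} applies to each of the two triples, giving
\[
F_q(\ga,\delta,M \mid t) = F_q(\al\,;\be \mid t) = F_q(\ga',\delta',M \mid t).
\]
The main obstacle is the remaining case, where $M \mid q^\times$ but the denominators of $(\al\,;\be)$ do not all divide $q^\times$ — so that the classical sum $F_q(\al\,;\be\mid t)$ is not even defined, and one cannot simply route through it. To handle this, I would revisit the Hasse--Davenport manipulations in the proof of Theorem \ref{theo:equality-of-finite-hg-sums}. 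The key observation is that for each $m$, the summand
\[
\frac{g(-\ga m + \delta q^\times/M)}{g(\delta q^\times/M)} \, q^{s_\delta(-m) - s_\delta(0)} \, \chi^m(\ga^\ga)
\]
depends only on the rational function $Q$ \emph{after cancellation}: the factor $q^{s_\delta(-m)-s_\delta(0)}$ precisely absorbs the contributions of the common factors of the numerator and denominator of $Q$, so that what remains is intrinsic to $(\al\,;\be)$. Since both gamma triples represent the same $(\al\,;\be)$, the corresponding summands should coincide term by term, yielding the required equality. The subtle point to execute carefully is that the intermediate Hasse--Davenport factorisations formally require $|\ga_i| \mid q^\times$, which is not guaranteed under the hypotheses; I would circumvent this either by arguing directly at the level of the rational function $Q$ and its multiplicities, or by passing to an auxiliary extension $\F_{q^L}$ in which the factorisations are valid and then descending using the compatibility of Gauss sums under the Hasse--Davenport relation.
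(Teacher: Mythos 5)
Your first reduction (rescaling $(\ga,\delta,N)$ to $(\ga, k\delta, kN)$ and thereby bringing both triples to a common $N=N'=M$) is correct, and the observation that Theorem~\ref{theo:equality-of-finite-hg-sums} settles the case where $q^\times\al, q^\times\be \in \Z^n$ is also fine. But you have correctly placed your finger on the obstacle without actually removing it: when $M \mid q^\times$ yet the denominators of the $\al_i$'s and $\be_i$'s do not all divide $q^\times$, neither of your two suggested strategies is carried through, and it is not clear either would work as stated. Strategy~(a) --- that the summand ``depends only on the rational function $Q$ after cancellation'' --- is essentially a restatement of the conclusion rather than a proof of it; the individual factors $g(-\ga_j m + \delta_j q^\times/N)$, $g(\delta_j q^\times/N)$, $\chi^m(\ga^\ga)$, and $q^{s_\delta(-m)-s_\delta(0)}$ all depend visibly on the triple, and showing that their product is intrinsic to $(\al\,;\be)$ is precisely what the Hasse--Davenport manipulations in Theorem~\ref{theo:equality-of-finite-hg-sums} achieve, which is exactly the argument you cannot invoke here. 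Strategy~(b) --- base change to $\F_{q^L}$ --- raises the problem of how to descend the resulting identity of sums over $\F_{q^L}$ back to the claimed identity over $\F_q$, and you do not indicate how this descent would go.

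The paper sidesteps the integrality issue entirely by a different device: it forms the \emph{concatenated} gamma triple
\[
\left((\ga,-\ga'),\left( \tfrac{N'}{g}\delta, -\tfrac{N}{g}\delta'\right), \tfrac{NN'}{g} \right), \qquad g=\gcd(N,N'),
\]
whose modulus is $M=NN'/g$. Because $(\ga,\delta,N)$ and $(\ga',\delta',N')$ represent the \emph{same} parameters $(\al\,;\be)$, the rational function of the concatenated triple is $Q\cdot Q'^{-1} = 1$, so its associated parameters are $(\emptyset\,;\emptyset)$. The integrality hypothesis of Theorem~\ref{theo:equality-of-finite-hg-sums} is then vacuous (there are no $\al_i$ or $\be_i$ to constrain), and the theorem applies for every $q$ with $M\mid q^\times$ coprime to all the $\ga_j$'s and $\ga_j'$'s, yielding $F_q(\text{concatenated}\mid t) = F_q(\emptyset\,;\emptyset\mid t)$. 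Expanding the left-hand side term by term and writing $F_q(\ga,\delta,N\mid t) = \tfrac{1}{1-q}\sum_m A_m\chi^m(t)$ and $F_q(\ga',\delta',N'\mid t) = \tfrac{1}{1-q}\sum_m A_m'\chi^m(t)$, one finds the concatenated sum is $\tfrac{1}{1-q}\sum_m (A_m/A_m')\chi^m(t)$, while $F_q(\emptyset\,;\emptyset\mid t) = \tfrac{1}{1-q}\sum_m \chi^m(t)$; uniqueness of finite Fourier coefficients (Lemma~\ref{lem:fourier-series}) then forces $A_m=A_m'$ for all $m$. This concatenation-and-cancellation trick is the missing idea in your argument.
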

\begin{proof}
    Write $\sum_{m=0}^{q^\times-1} A_m \chi^{m}(t)$ for the the left-hand side of Equation \eqref{eq:cor-well-defined-extensions} and $\sum_{m=0}^{q^\times-1} A_m' \chi^{m}(t)$ for the right-hand side. 
    If $g=\gcd(N,N'), $ then $M= NN'/g.$
    Consider the gamma triple
    \[
    \left((\ga,-\ga'),\left( \frac{N'}{g}\delta, -\frac{N}{g}\delta'\right), \frac{NN'}{g} \right).
    \]
    By definition, we have
    \begin{equation}
    F_q\left((\ga,-\ga'),\left( \frac{N'}{g}\delta, -\frac{N}{g}\delta' \right), \frac{NN'}{g}  \,  \bigg | \, t\right) = \frac{1}{1-q}\sum_{m=0}^{q^\times-1} \frac{A_m}{A_m'} \chi^{m}(t). \label{eq:cor-extensions-unique-1}
    \end{equation}
    On the other hand, by hypothesis, the irreducible hypergeometric parameters associated to 
    \[
    \left((\ga,-\ga'),\left( \frac{N'}{g}\delta, -\frac{N}{g}\delta' \right), \frac{NN'}{g} \right)
    \]
    are $(\emptyset \,;\emptyset)$, since $\left(\ga, \frac{N'}{g} \delta, \frac{NN'}{g}\right)$ represents $(\al \,;\be)$ while  $\left(-\ga', -\frac{N}{g} \delta', \frac{NN'}{g}\right)$ represents $(\be\, ;\al)$. 
    Thus, by Theorem \ref{theo:equality-of-finite-hg-sums}, we have 
    \begin{equation}
    F_q\left((\ga,-\ga'),\left( \frac{N'}{g}\delta, -\frac{N}{g}\delta' \right), \frac{NN'}{g}  \,  \bigg | \, t\right) = \frac{1}{1-q}\sum_{m=0}^{q^\times-1} 1\cdot  \chi^{m}(t). \label{eq:cor-extensions-unique-2}
    \end{equation}
    Equations \eqref{eq:cor-extensions-unique-1} and \eqref{eq:cor-extensions-unique-2} give two Fourier series (see Lemma \ref{lem:fourier-series} below) for the same function. By uniqueness of Fourier coefficients, we have $\frac{A_m}{A_m'}=1$ for all $m$. In other words, $A_m =A_m'$ for all $m$, which proves the proposition.
\end{proof}

\begin{defi}
Let $q$ be a prime power. We redefine the finite hypergeometric sum $F_q(\al \, ; \be \mid t)$ associated to irreducible hypergeometric parameters $(\al \, ;\be)$ to be the hypergeometric function $F_q(\ga,\delta,N \mid t)$ for any gamma triple $(\ga,\delta,N)$ which represents $(\al \, ; \be)$ and is defined at $q$. 
\end{defi}
This definition makes sense since there is at least one gamma triple which represents $(\al \,; \be)$ (Example \ref{example-1}), and for any two gamma triples which represent $(\al\, ;\be)$ the associated finite hypergeometric sums coincide whenever they are defined, by Proposition \ref{prop:uniqueness-extensions}. 

This definition provides an extension of Definition \ref{defi:finite-hg-function} since for the prime powers $q$ for which $F_q(\al \, ; \be \mid t)$ is already defined, the two definitions coincide by Theorem \ref{theo:equality-of-finite-hg-sums} and Example \ref{example-1}.

Let $\Q(\zeta_N)$ be the smallest cyclotomic extension over which some irreducible hypergeometric parameters $(\al \,; \be)$ can be defined. By Proposition \ref{prop:extension-field-field-of-definition}, there is a gamma triple $(\ga,\delta,N)$ which represents $(\al \, ; \be)$. 
The associated finite hypergeometric sum $F_q(\ga,\delta,N \mid t)$ is defined for all prime powers $q$ such that $q^\times$ is divisible by $N$, minus, possibly, the powers of the finitely many primes which divide the denominators of the $\al_i$'s and $\beta_i$'s.

We summarise the results of this subsection in the following theorem:

\begin{theo}\label{theo:extension-of-finite-hg-function}
Let  $(\al \,; \be)$ be irreducible hypergeometric parameters, and let $\Q(\zeta_N)$ be the smallest cyclotomic extension over which they can be defined.
The finite hypergeometric sum $F_q(\al \,; \be \mid t)$ is defined for all prime powers $q$ such that $q^\times$ is divisible by $N$, minus, possibly, the powers of the finitely many primes which divide the denominators of the $\al_i$'s and $\beta_i$'s.
\end{theo}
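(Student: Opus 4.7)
The plan is to derive the theorem as a corollary of the preceding results in the subsection. First I would invoke Proposition \ref{prop:extension-field-field-of-definition} to produce a gamma triple $(\gamma,\delta,N)$ which represents $(\alpha\,;\beta)$, with $N$ equal to the conductor of the smallest cyclotomic field of definition. By the redefinition preceding the theorem, $F_q(\alpha\,;\beta\mid t) \coloneqq F_q(\gamma,\delta,N\mid t)$ for any representing triple, and by Definition \ref{defi:finite-hg-function-gamma} this is meaningful whenever $N \mid q^\times$ and $\gcd(q,\gamma_1 \cdots \gamma_{d+2}) = 1$. Proposition \ref{prop:uniqueness-extensions} guarantees consistency across different representing triples, so the full domain of definition of $F_q(\alpha\,;\beta\mid t)$ is the union, over all such triples, of the sets just described.

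The substantive step is to show that this union excludes only powers of primes that divide the denominators of the $\alpha_i$'s and $\beta_j$'s. For each prime $p$ not dividing any such denominator, I would construct a representing gamma triple $(\gamma,\delta,N)$ with $p \nmid \gamma_1 \cdots \gamma_{d+2}$. Since every root of the right-hand side of \eqref{eq:gamma-delta} has order dividing $|\gamma_j|N$ for some $j$, while every uncancelled root (that is, each $e^{2\pi i \alpha_i}$ or $e^{2\pi i \beta_j}$) has order dividing the denominators of the $\alpha_i, \beta_j$, any prime factor of a $\gamma_j$ not appearing in a denominator can only contribute roots living inside the common factor $D_{\gamma,\delta,N}(T)$. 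One can then trade such a factor by re-writing the affected sub-product $T^{|\gamma_j|} - \zeta_N^{\mp\delta_j}$ via an alternative decomposition produced by the construction in the proof of Lemma \ref{lem:generalizing-cyclotomics}, thereby strictly reducing the $p$-adic valuation of the resulting $\gamma$-vector without enlarging $N$. Iterating eliminates the prime $p$ from $\gamma_1 \cdots \gamma_{d+2}$ entirely; Proposition \ref{prop:uniqueness-extensions} then splices this new triple's domain onto the previous one.

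The main obstacle will be this combinatorial manipulation: although the idea that cancelled roots can be repackaged is intuitive, giving a clean procedure that reduces the $p$-valuation while keeping $N$ fixed requires a careful induction paralleling the argument in Lemma \ref{lem:generalizing-cyclotomics}, including the treatment of edge cases such as $(\alpha\,;\beta)=(\emptyset\,;\emptyset)$. Once this flexibility is established, the theorem follows immediately by forming the union of the resulting domains and invoking Proposition \ref{prop:uniqueness-extensions} to verify that the extended function is well-defined.
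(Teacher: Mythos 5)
Your high-level framework matches the paper's: invoke Proposition \ref{prop:extension-field-field-of-definition} for a representing gamma triple, use the redefinition of $F_q(\al\,;\be\mid t)$ via gamma triples, and invoke Proposition \ref{prop:uniqueness-extensions} for consistency. The divergence is in the ``substantive step.'' You propose to take an arbitrary representing gamma triple and iteratively trade away prime factors of the $\ga_j$'s that do not divide the denominators of the $\al_i$'s and $\be_j$'s, splicing the resulting domains together. You acknowledge yourself that this trading procedure remains unspecified; as written, that is a real gap, and it is also more work than is needed.

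The paper's route is direct and avoids trading entirely: the gamma triple produced by Proposition \ref{prop:extension-field-field-of-definition} \emph{already} has every prime factor of $\ga_1 \cdots \ga_{d+2}$ dividing a denominator of some $\al_i$ or $\be_j$. This follows by tracing the inductive proof of Lemma \ref{lem:generalizing-cyclotomics}: at each stage, the exponents $k_j$ introduced divide the order $m$ of the primitive root of unity $\zeta$ being eliminated. In the base case $n=1$, one uses the classical M\"obius decomposition of the $m$-th cyclotomic polynomial as a ratio of factors $x^d-1$ with $d\mid m$; when $\gcd(m,n)<n$, the decomposition descends to a smaller cyclotomic field with the same $k_j$'s; and when $m=kn$, the new factor $x^k-\zeta_n^i$ has $k=m/n\mid m$, while the quotient polynomial $g(x)$ has roots whose orders divide $m$, so the strong induction on $m$ applies. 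Applying the lemma separately to $\prod_j(T-e^{2\pi i\al_j})$ and $\prod_j(T-e^{2\pi i\be_j})$, the resulting $\ga$-vector has prime support inside the least common multiple of the denominators, and the single domain of definition $\{q : N\mid q^\times,\ \gcd(q,\ga_1\cdots\ga_{d+2})=1\}$ is precisely the set claimed in the theorem. No iteration or union over triples is required; replacing your proposed trading step with this observation removes the obstacle you identified and reproduces the paper's argument.
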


\begin{exa}\label{example-extension-of-primes}
The finite hypergeometric sum $F_q((1/3,2/3) \, ; (1,1) \mid t)$ is defined for all prime powers $q$ which are coprime to $3$, since $((-3,1,1,1),(0,0,0,0),1)$ represents $((1/3,2/3)\, ; (1,1))$. This sum was previously defined (Definition \ref{defi:finite-hg-function}) only when $q=1 \pmod{3}$. 
\end{exa}

\begin{rem}
The flexibility in choosing different gamma triples representing some hypergeometric parameters $(\al \, ; \be)$ allows for the realisation of the finite hypergeometric sum $F_q(\al \, ; \be \mid t)$ in different families of varieties (see Section \ref{section:reverse-engineering}).
\end{rem}

\section{Hypersurfaces of Algebraic Tori}\label{section:source-varieties}
Let $k$ be an arbitrary field.
In this section, we consider affine hypersurfaces of the $d$-dimensional algebraic torus over $k$, defined by the vanishing of Laurent polynomials which are linear combinations of exactly $d+2$ monomials. Throughout, $Z$ is the hypersurface of $\G_m^d = \Spec(k[x_1^{\pm},\dots, x_d^{\pm}])$ defined by the vanishing of the Laurent polynomial
\begin{equation}\label{eq:main-laurent-polynomial}
f(x) = \sum_{j=1}^{d+2} u_j \prod_{i=1}^d x_i^{m_{ij}} ,
\end{equation}
where $u_1,\dots,u_{d+2} \in k^\times$. 
Assume that the exponent vectors $m_{*j}=(m_{1j},\dots,m_{dj}) \in \Z^{d}$ for $j=1, \dots , d+2$ are distinct, and do not lie in any affine hyperplane of $\R^{d}$. We denote by $\Delta$ the Newton polytope of $f$ in $\R^{d}$. It is $d$-dimensional by the condition on the vectors $\{m_{*j}\}$.

Consider the matrix
\begin{equation}\label{eq:matrix-M}
M=\begin{pmatrix}
1 & 1 &\cdots &1 \\
m_{11} & m_{12} & \cdots & m_{1,d+2} \\
m_{21} & m_{22} & \cdots & m_{2,d+2} \\
\vdots & \vdots & \ddots & \vdots \\
m_{d1} & m_{d2} & \cdots & m_{d,d+2}
\end{pmatrix}. 
\end{equation}
We denote its first row by $\mathbf{1}$, and its $(i+1)$-th row by $m_i$. The columns of $M$ are $d+2$ vectors in $\R^{d+1}$, thus they are linearly dependent. Moreover, since the exponent vectors $m_{*j}$ do not lie in any affine hyperplane in $\R^{d}$, the rank of $M$ is $d+1$. It follows that the lattice of linear relations between the columns of $M$ is of rank $1$. Thus, there is a vector  $ \gamma =(\gamma_1, \dots, \gamma_{d+2})\in \mathbb{Z}^{d+2}$, unique up to sign, such that  
\begin{align*}
    \sum_{j=1}^{d+2} \gamma_j &= 0, \\
    \sum_{j=1}^{d+2} m_{ij} \gamma_j &= 0 \text{, }\qquad  i = 1, \dots, d , \\
    \gcd(\gamma_1, \dots, \gamma_{d+2}) &= 1 .
\end{align*}
We will disregard the sign ambiguity in the choice of $\ga$, as it does not affect the results under consideration.

\begin{defi}\label{def:gale-dual}
The vector $\gamma \in \Z^{d+2}$ is called the \emph{Gale dual} of $f$ or $Z$, or $\Delta$. 
\end{defi}

%The Gale dual $\gamma$ might have a zero component. 
%This happens if and only if, after a change of variables, $f$ can be written in the form $x_d^{r} - g(x_1,\dots ,x_{d-1})$ for some Laurent polynomial $g$ and a positive integer $r$.  Indeed, if this were the case, then $d+1$ of the monomials would lie in a hyperplane. 

%Thus the hypersurface $Z$ is an étale covering of the complement of a hypersurface $Z' \subset \G_m^{d-1}$. Furthermore, this hypersurface is given by the vanishing of $(d-1)+2$ monomials in $\G_m^{d-1}$. Thus, we exclude this case and always assume that the Gale dual does not contain any $0$ components.
%Note also that if $k= \mathbb{C}$, and some component of $\gamma$ is $0$, then the top weight of the compactly supported middle cohomology of $Z$ is trivial. This is the reason we have excluded this case from our study in Chapter 1.
Let $K$ be the kernel of the map $\mathbb{Z}^{d+2} \to \mathbb{Z}$ given by $x \mapsto \sum_{j=1}^{d+2} \gamma_j x_j$. Since $\ga$ is non-zero, $K$ is of rank $d+1$. Let $K'$ be the lattice spanned by the row vectors of the matrix $M$ in $\mathbb{Z}^{d+2}$. Since $M$ is of rank $d+1$, the lattice $K'$ is a finite index sub-lattice of $K$.

\begin{defi}
We call the index $[K\!:\!K']$ the \emph{covering degree} of $Z$, and denote it by $\deg(Z)$.
If $\deg(Z)=1$, equivalently if $K'=K$, we say that $f$, $Z$ or $\Delta$ is \emph{primitive}. 
\end{defi}

\begin{rem}
  Gale duality is an important concept in the combinatorics of polytopes. It is defined for general \emph{vector configurations} (see \cite[\S 4.1]{triangulations-book}).  The cases we are interested in in this paper are those whose Gale dual is one-dimensional. 
\end{rem}

\begin{lem}\label{lem:projective-to-toric}
   If $Z$ is primitive, then it is isomorphic to the subvariety of $\Ps^{d+1}$ given by 
   \begin{align*}
    w_1+\dots +w_{d+2} &=0, \quad     w_1\cdots w_{d+2} \neq 0,\\
    \prod_{j=1}^{d+2} w_j^{\gamma_j} &=t,
    \end{align*}
    where  $w_1,\dots, w_{d+2}$ are homogeneous coordinates on $\Ps^{d+1}$ and $t = \prod_{j=1}^{d+2} u_j^{\gamma_j}$. Furthermore, such an isomorphism is given by 
    \[
    w_j = u_j \prod_{i=1}^{d} x_i^{m_{ij}},  \quad j=1,\dots, {d+2} .
    \]
\end{lem}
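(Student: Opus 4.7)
The plan is to exhibit the stated map $\phi : Z \to \Ps^{d+1}$, $\phi(x)_j = u_j \prod_{i=1}^{d} x_i^{m_{ij}}$, as an isomorphism onto the prescribed variety $V$. First I would verify that $\phi$ lands in $V$: the identity $\sum_j \phi(x)_j = f(x)$ gives the equation $\sum w_j = 0$ on $Z$; the relations $\sum_j \gamma_j m_{ij} = 0$ together with $t = \prod_j u_j^{\gamma_j}$ give $\prod_j \phi(x)_j^{\gamma_j} = t$, which is well-defined on $\Ps^{d+1}$ since $\sum \gamma_j = 0$; and $\phi(x)_j \neq 0$ is automatic.

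Next I would drop the equation $\sum w_j = 0$ and pass to the ambient $(d{+}1)$-dimensional torus $T_0 = \{w \in \Ps^{d+1} : w_1 \cdots w_{d+2} \neq 0\}$, whose character lattice is $L = \{a \in \Z^{d+2} : \sum a_j = 0\}$. The locus $T_1 = \{w \in T_0 : \prod w_j^{\gamma_j} = t\}$ is a coset of the subtorus $T' = \ker \chi_\gamma$, whose character lattice is $L/\Z\gamma$. The morphism $\phi$ extends to $\Phi : \G_m^{d} \to T_0$, which factors through $T_1$ by the previous step. Since $Z = \Phi^{-1}(\{\sum w_j = 0\})$, it is enough to show $\Phi : \G_m^{d} \to T_1$ is an isomorphism. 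By the equivalence between algebraic tori and their character lattices, this reduces to showing that the pullback
\[
\Phi^* : L/\Z\gamma \longrightarrow \Z^{d}, \qquad a \longmapsto \Bigl(\sum_j a_j m_{ij}\Bigr)_{i=1}^{d},
\]
is an isomorphism.

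Injectivity is immediate: an element of $L$ in the kernel is an $a \in \Z^{d+2}$ with $Ma = 0$, which by the very definition of $\gamma$ (the unique primitive generator of $\ker M$) lies in $\Z\gamma$. Surjectivity is where primitivity is crucial. From the exact sequence
\[
0 \to K/K' \to \Z^{d+2}/K' \to \Z^{d+2}/K \to 0,
\]
together with $\Z^{d+2}/K \cong \Z$ (since $\gcd(\gamma_j) = 1$), I would deduce $\Z^{d+2}/K' \cong \Z \oplus (K/K')$; but $K' = \mathrm{image}(M^T)$, so by Smith normal form the torsion of $\Z^{d+2}/K'$ is exactly the cokernel of $M : \Z^{d+2} \to \Z^{d+1}$. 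Hence primitivity ($K = K'$) is equivalent to the integral surjectivity of $M$. Granted this, each standard basis vector $e_i \in \Z^{d}$ lifts to some $a^{(i)} \in \Z^{d+2}$ with $\sum_j a^{(i)}_j = 0$ (so $a^{(i)} \in L$) and $\sum_j a^{(i)}_j m_{kj} = \delta_{ki}$, proving surjectivity. As a bonus, the vectors $a^{(i)}$ yield an explicit inverse $x_i = c_i^{-1} \prod_j w_j^{a^{(i)}_j}$ with $c_i = \prod_j u_j^{a^{(i)}_j}$, each factor being well-defined on $V$ because $\sum_j a^{(i)}_j = 0$.

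The main obstacle is precisely the surjectivity step: the substantive content of the lemma is the translation of the primitivity hypothesis $K = K'$ into the integral surjectivity of $M : \Z^{d+2} \to \Z^{d+1}$, since once one has this, producing both the isomorphism and its inverse is a formal consequence.
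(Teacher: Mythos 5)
The paper does not give its own proof of this lemma; it only cites the hypergeometric survey of Roberts and Rodriguez~Villegas (together with the remark that the argument extends to the case when some $\gamma_j=0$). Your write-up therefore supplies a self-contained argument rather than paralleling or diverging from an internal proof, and as such is a genuine contribution. The argument is correct. The reduction of the torus map to its map of character lattices, the identification of the kernel $\Z\gamma$ (injectivity), and the translation of primitivity $K=K'$ into integral surjectivity of $M:\Z^{d+2}\to\Z^{d+1}$ via Smith normal form (surjectivity) are all sound; the explicit inverse at the end confirms the isomorphism and is well-defined on $\Ps^{d+1}$ precisely because $\sum_j a^{(i)}_j=0$.

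One small point of precision you should tighten: $T_1=\{w\in T_0:\prod w_j^{\gamma_j}=t\}$ is a $T'$-torsor, not a torus, so one cannot directly speak of the pullback on character lattices of $\Phi:\G_m^d\to T_1$. The standard fix is to translate by the basepoint $\Phi(1)=(u_1,\dots,u_{d+2})\in T_1$: the map $\Phi'(x)=\Phi(x)\cdot\Phi(1)^{-1}$ is a bona fide homomorphism $\G_m^d\to T'$, and $\Phi$ is an isomorphism of torsors if and only if $\Phi'$ is an isomorphism of tori. Your formula $a\mapsto\bigl(\sum_j a_j m_{ij}\bigr)_i$ is precisely the pullback of $\Phi'$ (the coefficient twist by the $u_j$'s drops out), so the computation is already correct as stated; only the wording ``the pullback $\Phi^*$'' should be adjusted to reflect the translation.
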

\begin{proof}
A proof can be found in \cite[\S 3]{hypergeometric-survey} when $\ga_j \neq 0$ for any $j$. However, the same argument works if $\ga_j =0$ for some $j$.
\end{proof}

\begin{rem}\label{rem:primitive-dehomogenization}
The proof of Lemma \ref{lem:projective-to-toric} implies that the variety in $\Ps^{d+1}$ defined by $w_1\cdots w_{d+2} \neq 0$ 
and $\prod_{j=1}^{d+2} w_j^{\gamma_j} =t$ is isomorphic to $\G_m^{d} = \Spec k[x_1^{\pm},\dots,x_d^{\pm}]$ via the coordinate map
\[
w_j = u_j \prod_{i=1}^{d} x_i^{m_{ij}}, \quad j=1,\dots, d+2.
\]
\end{rem}

If $Z$ is not primitive, then it is a covering of a primitive hypersurface:
%\begin{rem}\label{rem:projective-to-toric}
%Recall \cite[Section 3]{hypergeomretric-survey} that if $Z$ is primitive, then it is isomorphic to the variety in $\Ps^{d+1}$ given by 
%\begin{align*}
%    w_1+\dots +w_{d+2} &=0, \quad     w_1\cdots w_{d+2} \neq 0 \\
%    \prod_{i=1}^{d+2} w_i^{\gamma_i} &=t,
%\end{align*}
%where $t = \prod_{j=1}^{d+2} u_i^{\gamma_i}$.  
%\end{rem}
\begin{prop}\label{prop:cover}
Let $Z^{\mathrm{prim}}$ be any primitive hypersurface with Gale dual $\ga$. Then, there is a covering map 
\[
Z \to Z^{\mathrm{prim}}.
\]
This map is étale if the degree of the map is prime to the characteristic of $k.$ Furthermore, the degree of the covering map is the covering degree $\deg(Z)$.
\end{prop}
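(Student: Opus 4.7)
The plan is to exhibit an explicit morphism $\phi\colon Z \to Z^{\mathrm{prim}}$ in coordinates and analyse it as an isogeny of torsors under tori. First, by Lemma \ref{lem:projective-to-toric} I realise $Z^{\mathrm{prim}}$ as the subvariety of $\Ps^{d+1}$ cut out by
\[
w_1+\dots+w_{d+2}=0, \qquad \prod_{j=1}^{d+2} w_j^{\ga_j} = t, \qquad w_1\cdots w_{d+2} \neq 0,
\]
where $t:=\prod_{j=1}^{d+2} u_j^{\ga_j}$; because $\gcd(\ga_1,\dots,\ga_{d+2})=1$, every primitive hypersurface with Gale dual $\ga$ can be brought into this form by a monomial rescaling of the $w_j$. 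Define $\phi\colon \G_m^d \to \Ps^{d+1}$ by $\phi(x) = \bigl[u_1 x^{m_{*1}} : \dots : u_{d+2} x^{m_{*,d+2}}\bigr]$. The relation $\sum_j \ga_j m_{*j}=0$ gives $\prod_j(u_j x^{m_{*j}})^{\ga_j}=t$, and the point $\phi(x)$ satisfies $\sum_j w_j =0$ exactly when $f(x)=0$; hence $\phi^{-1}(Z^{\mathrm{prim}})=Z$ and $\phi$ restricts to the required $\phi|_Z\colon Z\to Z^{\mathrm{prim}}$.

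Next, let $V(t)\subset \Ps^{d+1}$ denote the open hypersurface $\{\prod w_j^{\ga_j}=t,\ \prod w_j\neq 0\}$, so that $Z^{\mathrm{prim}}=V(t)\cap\{\sum w_j=0\}$ and $Z=\phi^{-1}(Z^{\mathrm{prim}})$. Since $\phi|_Z$ is the base change of $\phi\colon \G_m^d\to V(t)$ along the closed immersion $Z^{\mathrm{prim}}\hookrightarrow V(t)$, it suffices to prove that this larger map is finite surjective of degree $[K:K']$, and étale when this index is coprime to $\operatorname{char}(k)$. The target $V(t)$ is a torsor under the torus
\[
T:=\ker\bigl(\chi_\ga\colon\G_m^{d+2}/\G_m\to\G_m\bigr),
\]
whose cocharacter lattice is $K/\Z\mathbf{1}\subset \Z^{d+2}/\Z\mathbf{1}$ (using $K=\ga^\perp$ and $\mathbf{1}\in K$).

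Fix the basepoint $\phi(1)=[u_1:\dots:u_{d+2}]\in V(t)$; translating by its inverse turns $\phi$ into a torus homomorphism $\tilde\phi\colon \G_m^d\to T$. Unwinding the definitions, the induced map on cocharacter lattices is
\[
\Z^d \longrightarrow K/\Z\mathbf{1},\qquad (k_1,\dots,k_d)\longmapsto \sum_{i=1}^d k_i m_i \pmod{\Z\mathbf{1}},
\]
where $m_i$ is the $(i+1)$-th row of $M$. This map is injective because $\mathbf{1},m_1,\dots,m_d$ are $\Z$-linearly independent rows of the rank-$(d+1)$ matrix $M$, and its image is exactly $K'/\Z\mathbf{1}$: the generator $\mathbf{1}$ of $K'=\Z\mathbf{1}+\sum_i \Z m_i$ dies in the quotient, so a generating set of $K'$ collapses to the images of the $m_i$. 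Consequently the cokernel has order $[K/\Z\mathbf{1}:K'/\Z\mathbf{1}]=[K:K']=\deg(Z)$. By the standard dictionary between isogenies of tori and injections of cocharacter lattices with finite cokernel, $\tilde\phi$ is a finite surjective morphism of this exact degree, and is étale iff $\deg(Z)$ is prime to $\operatorname{char}(k)$.

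The main obstacle is verifying that the image of the cocharacter map is \emph{all} of $K'/\Z\mathbf{1}$ rather than a proper sublattice; the argument above turns on the observation that although $\mathbf{1}$ is needed to generate $K'$ inside $\Z^{d+2}$, only the $m_i$'s are required after quotienting by $\Z\mathbf{1}$, which matches precisely the generators produced by $\tilde\phi_*$. A secondary subtlety, resolved cleanly by choosing the basepoint $[u_1:\dots:u_{d+2}]$, is that $V(t)$ is only a $T$-torsor rather than a torus, so one must rigidify before invoking the cocharacter-lattice dictionary.
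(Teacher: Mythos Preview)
Your argument is correct and takes a genuinely different route from the paper. You recognise $\phi:\G_m^d\to V(t)$, after rigidifying the torsor at the basepoint $[u_1:\dots:u_{d+2}]$, as a homomorphism of tori, and read off its degree and étaleness directly from the cokernel $K/K'$ of the induced map $\Z^d\to K/\Z\mathbf{1}$ on cocharacter lattices. The paper instead works in explicit coordinates: it expresses the rows $m_i$ in a chosen basis $(\mathbf{1},f_1,\dots,f_d)$ of $K$, brings the coefficient matrix to Hermite normal form $\mathcal{N}$, performs the corresponding monomial change of variables, and exhibits $Z$ concretely as the fibre product of $Z^{\mathrm{prim}}$ with the torus cover $z_k=\prod_i y_i^{\mathcal{N}_{ik}}$. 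Your approach is cleaner and makes transparent why the degree equals $[K:K']$; the paper's explicit computation, on the other hand, produces as a by-product the matrix $\mathcal{N}$ and the presentation of $Z$ that feed into Corollaries~\ref{cor:cover1} and~\ref{cor:cover} and then into the point-count of Proposition~\ref{prop:main-count}, so its concreteness is not incidental but essential for what follows.

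One minor caveat: your sentence about bringing an arbitrary primitive hypersurface ``into this form by a monomial rescaling of the $w_j$'' does not work as stated. A rescaling $w_j\mapsto c_jw_j$ that changes the value of $\prod_j w_j^{\ga_j}$ also replaces the linear equation $\sum_j w_j=0$ by $\sum_j c_j w_j=0$; the only rescalings preserving the linear form are the diagonal ones, which leave $t$ fixed. The paper's proof has the same limitation (it constructs $Z^{\mathrm{prim}}$ with the \emph{same} coefficients $u_j$ as $Z$ and only remarks that the exponent structure is arbitrary), and in the applications this is all that is needed, so the point is harmless.
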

\begin{proof}
Fix an isomorphism of the kernel $K$ with $\mathbb{Z}^{d+1}$ by choosing an integral basis $f_0,f_1,\dots,f_{d}$ of $K$ such that $f_0 = \mathbf{1}$.
Let $Z^{\mathrm{prim}}$ be the hypersurface in $\G_m^{d}$ defined by
\begin{equation}\label{eq:primitive-coefficients}
\sum_{j=1}^{d+2} u_j \prod_{i=1}^d {x_i}^{f_{ij}}=0.   
\end{equation}
By definition, $Z$ is primitive with Gale dual $\gamma$. We note that any primitive hypersurface arises by choosing such a basis.  
Write $m_i = \sum_{k=1}^d c_{ik} f_k +r_i f_0$ for $i=1,\dots,d$, and consider coefficient matrix
\[
C=\begin{pmatrix}
1 & 0 & 0 & \cdots & 0 \\
r_1&c_{11} & c_{12} & \cdots & c_{1d} \\
r_2 & c_{21} & c_{22} & \cdots & c_{2d} \\
\vdots & \vdots & \vdots & \ddots & \vdots \\
r_d & c_{d1} & c_{d2} & \cdots & c_{dd}
\end{pmatrix}.
\]
Let $\mathcal{N}$ be the Hermite normal form of $C$. Then, $\mathcal{N} = A C$ for some $A \in \operatorname{GL}(d+1,\mathbb{Z})$. Furthermore,
\[
A=\begin{pmatrix}
1 & 0 & 0 & \cdots & 0 \\
s_1& A_{11} & A_{12} & \cdots & A_{1d} \\
s_2 & A_{21} & A_{22} & \cdots & A_{2d} \\
\vdots & \vdots & \vdots & \ddots & \vdots \\
s_d & A_{d1} & A_{d2} & \cdots & A_{dd}
\end{pmatrix}\text{, and }
\mathcal{N}=\begin{pmatrix}
1 & 0 & 0 & \cdots & 0 \\
0& \mathcal{N}_{11} & \mathcal{N}_{12} & \cdots & \mathcal{N}_{1d} \\
0 & 0 & \mathcal{N}_{22} & \cdots & \mathcal{N}_{2d} \\
\vdots & \vdots & \vdots & \ddots & \vdots \\
0 & 0 & 0 & \cdots & \mathcal{N}_{dd}
\end{pmatrix}.
\]
In particular, for all $l,k \in \{1,\dots,d\}$, we have $\sum_{i=1}^d A_{ki}c_{il} = \mathcal{N}_{kl}$. 
Note that $\det(\mathcal{N})$ is equal to the covering degree of $Z$.
We make the change of variables
\[
x_i= \prod_{k=1}^d y_k^{A_{ki}}, \quad i=1, \dots, d.
\]
\begingroup
\allowdisplaybreaks
Then,
\begin{align*}
\prod_{i=1}^d x_i^{m_{ij}} &= \prod_{i=1}^d \prod_{k=1}^d y_k^{A_{ki}m_{ij}} =\prod_{k=1}^d y_k^{\sum_{i=1}^d A_{ki} m_{ij}}\\
&=\prod_{k=1}^d y_k^{\sum_{i=1}^d \left(\sum_{l=1}^d A_{ki}c_{il} f_{lj} +A_{ki}r_i \right) }   &(m_{ij} = \sum_{l=1}^d c_{il} f_{lj} +r_i )\\
&=  \prod_{k=1}^d y_k^{\sum_{l=1}^d\sum_{i=1}^dA_{ki} c_{il} f_{lj} } \cdot\prod_{k=1}^d y_k^{\sum_{i=1}^d r_i A_{ki}} \\
&= \prod_{k=1}^d y_k^{\sum_{l=1}^d \mathcal{N}_{kl} f_{lj}}\cdot\prod_{k=1}^d y_k^{\sum_{i=1}^d r_i A_{ki}} .
\end{align*}
\endgroup
We set $U = \prod_{k=1}^d y_k^{\sum_{i=1}^d r_i A_{ki}}$, and note that it is independent of $j$. Thus,
\[
U^{-1}f(x) = \sum_{j=1}^{d+2} u_j \prod_{i=1}^d  y_i ^{\sum_{l=1}^d \mathcal{N}_{il}f_{lj}} \eqqcolon F(y).
\]
Now, note that the hypersurface defined by $F=0 $ in $\G_m^{d} \coloneqq \Spec k[y_1^\pm,\dots, y_d^\pm]$ is isomorphic to the variety in $\G_m^d \times \G_m^d \coloneqq \Spec k[y_1^\pm,\dots, y_d^\pm] \times \Spec k[z_1^\pm,\dots, z_d^\pm]$ defined by the vanishing of the equations
\begin{align}
    \sum_{j=1}^{d+2} u_j \prod_{i=1}^d z_i^{f_{ij}}&=0 \label{eq:primitive},\\
     z_k -\prod_{i=1}^k y_i^{\mathcal{N}_{ik}}  &= 0,\qquad    k=1,\dots,d.\label{eq:primitive-other}
\end{align}
Equation \eqref{eq:primitive} defines a hypersurface in $\Spec k[z_1^\pm,\dots, z_d^\pm]$ that is isomorphic to $Z^{\mathrm{prim}}$. Furthermore, Equations \eqref{eq:primitive-other} define a covering of $\Spec k[z_1^\pm,\dots, z_d^\pm]$, since the matrix $\mathcal{N}$ has non-zero determinant. Furthermore, it is an étale covering if and only if $\det(\mathcal{N})=[K:K']$ is prime to the characteristic of $k$.
\end{proof}

\begin{corol}\label{cor:cover1}
There exist $d$ vectors $\rho_1,\dots,\rho_d \in \mathbb{Z}^{d+2}$ such that for $k=1,\dots,d$,
\[
z_k = \prod_{j=1}^{d+2} \left( \prod_{i=1}^d z_i^{f_{ij}} \right) ^{\rho_{kj}}, \text{ and }  \quad \sum_{j=1}^{d+2} \rho_{kj}=0 .\]
Furthermore, if we define
\[
\sigma_k \coloneqq \prod_{j=1}^{d+2} u_j^{\rho_{kj}}, \qquad k=1,\dots, d,
\] then the hypersurface $Z$ is isomorphic to the variety given by
\begin{align*}
    w_1+\dots +w_{d+2} &= 0, \quad w_1\cdots w_{d+2} \neq 0,\\
    \prod_{j=1}^{d+2} w_j^{\gamma_j} &= t, \quad  t=\prod_{j=1}^{d+2}u_j^{\gamma_j},\\
    \prod_{j=1}^{d+2} w_j^{\rho_{kj}} &= \sigma_k \prod_{i=1}^d y_i^{_{ki}},    \qquad k=1,\dots,d 
\end{align*}
in $\mathbb{P}^{d+1} \times \G_m^{d}$.  Here $w_1,\dots,w_{d+2}$ are projective coordinates on $\mathbb{P}^{d+1}$ and $y_1,\dots ,y_{d}$ are coordinates on $\G_m^{d}$.
\end{corol}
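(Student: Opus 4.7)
The plan is to split the claim into two parts: an existence statement for the integer vectors $\rho_k$, which is pure lattice theory, followed by a substitution argument using Proposition~\ref{prop:cover} and Lemma~\ref{lem:projective-to-toric}. The main obstacle is the integrality in the first part; the second part is essentially bookkeeping once the lattice-theoretic step is in place.

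Taking logarithms in the claimed monomial identity $z_k = \prod_j \bigl(\prod_i z_i^{f_{ij}}\bigr)^{\rho_{kj}}$ reduces it to the linear system $\sum_j f_{ij}\rho_{kj} = \delta_{ik}$ for $i=1,\dots,d$. Combined with $\sum_j \rho_{kj}=0$ (the $i=0$ case, since $f_0=\mathbf{1}$), this reads $F\rho_k = e_k$, where $F$ is the $(d+1)\times(d+2)$ integer matrix with rows $f_0,f_1,\dots,f_d$ and $e_k$ is the $k$-th standard basis vector of $\Z^{d+1}$ (with coordinates indexed from $0$). Integral solvability for every $k$ is equivalent to $F\colon\Z^{d+2}\to\Z^{d+1}$ being surjective, i.e.\ the $\gcd$ of the maximal minors of $F$ being $1$. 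Because $\gcd(\gamma_1,\dots,\gamma_{d+2})=1$, the lattice $K=\ker\gamma$ is saturated in $\Z^{d+2}$, so the basis $f_0,\dots,f_d$ of $K$ extends to a $\Z$-basis $f_0,\dots,f_d,v$ of $\Z^{d+2}$. Cofactor expansion of the resulting unimodular $(d+2)\times(d+2)$ matrix along the row $v$ writes $\pm 1$ as an integer linear combination of the maximal minors of $F$, yielding the required surjectivity.

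For the isomorphism, Proposition~\ref{prop:cover} identifies $Z$ with the subvariety of $\G_m^d\times\G_m^d$ (coordinates $y$ and $z$) cut out by $\sum_j u_j\prod_i z_i^{f_{ij}}=0$ together with the cover relations $z_k=\prod_i y_i^{\mathcal{N}_{ik}}$. Applying Lemma~\ref{lem:projective-to-toric} to the primitive factor in the $z$-coordinates, the substitution $w_j = u_j\prod_i z_i^{f_{ij}}$ identifies that subvariety with the open subset of $\Ps^{d+1}$ defined by $\sum_j w_j=0$, $\prod_j w_j^{\gamma_j}=t$, and $\prod_j w_j\ne 0$. Using the monomial identity from the previous paragraph, $z_k = \prod_j(w_j/u_j)^{\rho_{kj}} = \sigma_k^{-1}\prod_j w_j^{\rho_{kj}}$, so each cover equation $z_k=\prod_i y_i^{\mathcal{N}_{ik}}$ translates to the stated relation $\prod_j w_j^{\rho_{kj}} = \sigma_k\prod_i y_i^{\mathcal{N}_{ik}}$, completing the identification.
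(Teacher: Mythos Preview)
Your proof is correct and follows the same route as the paper: the existence of the $\rho_k$ reduces to surjectivity of the matrix $F$ with rows $f_0,\dots,f_d$ onto $\Z^{d+1}$, which the paper states tersely as ``the columns of $M$ span $\Z^{d+1}$'' (since the rows form a basis of the saturated kernel of $\gamma$), and the isomorphism is assembled from Proposition~\ref{prop:cover} and Lemma~\ref{lem:projective-to-toric} exactly as you describe. Your cofactor-expansion argument is a perfectly good way to justify that one-line surjectivity claim.
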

\begin{proof}
    The first statement follows since the rows of the matrix $M$ above (with $m_{ij}=f_{ij}$) span the kernel of $\ga$. Thus, the columns of  $M$ span $\Z^{d+1}$.
    The second statement follows from Proposition \ref{prop:cover} and Lemma \ref{lem:projective-to-toric}. 
\end{proof}
For a Laurent polynomial $g(x)= \sum_{n \in \Z^{d}} c_{n} \prod_{i=1}^{d} x_i^{n_i}$ and a subset $S \subset \R^d$,  denote by  $g_{|S}$ the restriction of $g$ to $S$ defined as  
\[
g_{|S}(x)= \sum_{n\in S\cap \Z^{d}} c_{n} \prod_{i=1}^{d} x_i^{n_i}.
\]
A slight modification of the proof of Corollary \ref{cor:cover1} gives:
\begin{corol}\label{cor:cover}
Let $F$ be a face of $\Delta$. Let $S= \left\{ j \in \{1,\dots, d+2\} \mid m_{*j} \notin F \right\}$; in particular, $ F= \operatorname{conv}( m_{*j} \mid j \notin S )$.  Then, the subvariety of $V(f_{|F}) \subset \G_m^{d}$ is isomorphic to the variety in $\mathbb{P}^{d+1} \times \G_m^{d}$ defined by
\begin{align*}
   \sum_{j\notin S} w_j&= 0, \quad  w_1\cdots w_{d+2} \neq 0,\\
    \prod_{j=1}^{d+2} w_j^{\gamma_j} &=t, \quad  t=\prod_{j=1}^{d+2}u_j^{\gamma_j},\\
    \prod_{j=1}^{d+2} w_j^{\rho_{kj}} &=  \sigma_k \prod_{i=1}^d z_i^{\mathcal{N}_{ki}}  \qquad k=1,\dots,d.
\end{align*}
\end{corol}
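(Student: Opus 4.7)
The plan is to show that the isomorphism from Corollary \ref{cor:cover1} restricts, when we restrict the defining polynomial to $f_{|F}$, to the isomorphism claimed. The key observation is that the multiplicative data of the embedding $\G_m^d \hookrightarrow \Ps^{d+1} \times \G_m^d$ depend only on the monomial structure of $f$ (the Gale dual $\gamma$ and the auxiliary vectors $\rho_k$ from Corollary \ref{cor:cover1}), not on the particular linear combination being set to zero.

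Concretely, I would start by recalling the map
\[
(x_1, \dots, x_d) \longmapsto \bigl((w_1 : \dots : w_{d+2}),\, (z_1, \dots, z_d)\bigr), \qquad w_j := u_j \prod_{i=1}^d x_i^{m_{ij}},
\]
together with the invertible change of coordinates between the $x_i$'s and the $z_i$'s coming from the Hermite normal form, as in the proof of Proposition \ref{prop:cover}. Corollary \ref{cor:cover1} established that this map is an isomorphism from $\G_m^d$ onto the locus in $\Ps^{d+1}\times\G_m^d$ cut out by $w_1\cdots w_{d+2}\neq 0$, $\prod_j w_j^{\gamma_j}=t$, and $\prod_j w_j^{\rho_{kj}}=\sigma_k\prod_i z_i^{\mathcal{N}_{ki}}$. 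Crucially, none of this construction refers to any particular Laurent polynomial: the multiplicative equations are forced solely by the Gale relations $\sum_j \gamma_j m_{*j} = 0$ and $\sum_j \gamma_j = 0$, together with the definitions of the $\rho_k$.

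Under this isomorphism, each Laurent monomial $u_j\prod_i x_i^{m_{ij}}$ pulls back to the coordinate $w_j$, so the restricted polynomial $f_{|F}$ corresponds to the homogeneous linear form $\sum_{j \notin S} w_j$, which is well-defined on $\Ps^{d+1}$. Intersecting the image of $\G_m^d$ with the hyperplane cut out by $\sum_{j\notin S} w_j=0$ therefore produces exactly the variety described in the statement, and its preimage in $\G_m^d$ is by construction $V(f_{|F})$. Since the only modification compared to Corollary \ref{cor:cover1} is replacing the single equation $\sum_j w_j = 0$ by its face restriction $\sum_{j\notin S} w_j = 0$, there is no substantive obstacle to overcome; the remainder of the proof is a verbatim reading of Corollary \ref{cor:cover1}.
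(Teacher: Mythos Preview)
Your proposal is correct and matches the paper's own approach: the paper simply remarks that ``a slight modification of the proof of Corollary~\ref{cor:cover1}'' gives the result, and what you have written is precisely that modification spelled out. The only change from Corollary~\ref{cor:cover1} is replacing the linear equation $\sum_j w_j=0$ by $\sum_{j\notin S} w_j=0$, while the multiplicative part of the isomorphism is unchanged, exactly as you say.
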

\subsection{Main Count}
In what follows, we assume that $k= \F_q$ for some prime power $q.$ 
We want to count the number of $\F_q$-points of the hypersurfaces defined by the various restrictions of the Laurent polynomial $f$ to the faces of the Newton polytope $\Delta$ of $f$.  

Let $S$ be any subset of the set $\{1,\dots,d+2\}$, and let $W_{S}$ be the subvariety of $\mathbb{P}^{d+1} \times \G_m^{d}$ defined by $ w_1\cdots w_{d+2} \neq 0$ and the equations
\begin{align*}
    \sum_{j \notin S} w_{j} &= 0,\\
    w^{\gamma}\coloneqq\prod_{j=1}^{d+2} w_j^{\gamma_j} &=t, \\
    w^{\rho_k} z^{-\mathcal{N}_k} \coloneqq \prod_{j=1}^{d+2} w_j^{\rho_{kj}} \prod_{i=1}^d z_i^{-\mathcal{N}_{ki}} &= \sigma_k, \qquad k=1,\dots,d,
\end{align*}
where $t, \sigma_1,\dots,\sigma_d \in \F_q^\times$. 
The number of $\F_q$-points of $W_S$ is ultimately related to the set of solutions $\Lambda(q)$ of the equations 
\begin{align*}
    \lambda_1 \mathcal{N}_{11}+ \dots +\lambda_d \mathcal{N}_{d1}&=0,\\
    \vdots \\
    \lambda_1 \mathcal{N}_{1d}+ \dots +\lambda_d \mathcal{N}_{dd}&=0
\end{align*}
in $\left(\mathbb{Z}/q^\times\mathbb{Z}\right)^{d}$. 
For every solution $\lambda \in \Lambda(q)$, define 
\begin{align*}
    \delta(j,\lambda) &\coloneqq \sum_{i=1}^d \lambda_i \rho_{ij} \in \Z/q^\times\Z \quad \text{ for } j=1,\dots, d+2 ,\\
    \delta(\lambda) &\coloneqq (\delta(1,\lambda),\dots,\delta(d+2,\lambda)) \in \left(\Z/q^\times \Z\right)^{d+2}.
\end{align*}
For $x \in \Z/q^\times\Z$, define 
\begin{align*}
    \epsilon(x) &\coloneqq 1  \qquad \text{ if } x=0 , \\
    \epsilon(x) &\coloneqq 0  \qquad \text{ if } x\neq 0.
\end{align*}
We have the following result.
\begin{prop}\label{prop:main-count}
The number of $\F_q$-points of $W_S$ is given by
\begin{align*}
     \#W_{S} (\mathbb{F}_q) &= q^{-1}(q-1)^{d} + (-1)^{|S|}q^{-1}(q-1)^{|S|-1} \\
     &\cdot \sum_{\lambda \in \Lambda(q)}   \sum_{m=0}^{q^\times-1} \prod_{j\in S} \epsilon\left( -m \gamma_j -\delta(j,\lambda) \right) g\left(-m\gamma - \delta(\lambda)\right) \chi^{m}(t) \prod_{k=1}^d \chi^{\lambda_k}(\sigma_k).
\end{align*}
\end{prop}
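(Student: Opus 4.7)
The plan is to express each defining condition of $W_S \subset \mathbb{P}^{d+1} \times \G_m^d$ as a character indicator via Lemma \ref{lem:orthogonal}, and to evaluate the resulting multi-sum. Since the three constraints are homogeneous of appropriate degree in the projective variable $w$, passing to the affine cover gives
\[
\#W_S(\F_q) \cdot q^\times = \sum_{(w,z) \in (\F_q^\times)^{d+2+d}} \mathbf{1}\Bigl[\sum_{j\notin S} w_j = 0\Bigr] \mathbf{1}[w^\gamma = t] \prod_{k=1}^{d} \mathbf{1}[w^{\rho_k} z^{-\mathcal{N}_k} = \sigma_k].
\]
Applying additive orthogonality (introducing $u \in \F_q$) for the linear relation and multiplicative orthogonality for the $d+1$ multiplicative relations (introducing $m$ for $w^\gamma = t$ and $\lambda_k$ for each $w^{\rho_k} z^{-\mathcal{N}_k} \sigma_k^{-1} = 1$, with sign choices so that $\chi^m(t)$ and $\chi^{\lambda_k}(\sigma_k)$ appear as in the claim), the inner sums over $(w,z)$ factor completely into one-variable sums.

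The $z_l$-sum is $\sum_{z_l \in \F_q^\times} \chi^{\sum_k \lambda_k \mathcal{N}_{kl}}(z_l)$, which vanishes unless $\sum_k \lambda_k \mathcal{N}_{kl} \equiv 0 \pmod{q^\times}$; this restricts $\lambda$ to $\Lambda(q)$ and contributes $(q^\times)^d$. Setting $a_j \coloneqq -m\gamma_j - \delta(j,\lambda)$, the $w_j$-sum equals $q^\times \epsilon(a_j)$ for $j \in S$, while for $j \notin S$ it equals $q^\times \epsilon(a_j)$ when $u = 0$ and $\chi^{-a_j}(u) g(a_j)$ when $u \neq 0$ (the latter by the substitution $w_j \mapsto u^{-1} w_j$ and Definition \ref{defi:gauss-sums}). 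Splitting the $u$-sum into its $u = 0$ and $u \neq 0$ parts decomposes the count as $\#W_S(\F_q) = C_0 + C_1$.

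For the nontrivial block $C_1$, observe that $\sum_{j=1}^{d+2} a_j = -m \sum_j \gamma_j - \sum_j \delta(j,\lambda) = 0$ since $\sum_j \gamma_j = 0$ and $\sum_j \rho_{kj} = 0$ (Corollary \ref{cor:cover1}). On the support of $\prod_{j \in S} \epsilon(a_j)$ all $a_j$ with $j \in S$ vanish, so $\sum_{j \notin S} a_j = 0$, making $\chi^{-\sum_{j \notin S} a_j}(u) \equiv 1$ and $\sum_{u \in \F_q^\times} 1 = q^\times$. Since $g(0) = -1$, on this support $\prod_{j \notin S} g(a_j) = (-1)^{|S|} g(-m\gamma - \delta(\lambda))$. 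After collecting the normalisation factors from the orthogonality relations and the passage from the affine to the projective count, one recovers exactly the second line of the claimed formula, including the sign $(-1)^{|S|}$ and the prefactor $q^{-1}(q-1)^{|S|-1}$.

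The main obstacle is to show $C_0 = q^{-1}(q-1)^d$. The $u=0$ contribution is a multiple of
\[
\sum_{m}\sum_{\lambda \in \Lambda(q)} \chi^m(t) \prod_{k=1}^{d} \chi^{\lambda_k}(\sigma_k) \prod_{j=1}^{d+2} \epsilon(-m\gamma_j - \delta(j,\lambda)),
\]
and I need this sum to equal $1$. The constraint $\prod_j \epsilon(\cdot) = 1$ is exactly $m\gamma + \sum_k \lambda_k \rho_k \equiv 0 \pmod{q^\times}$ in $(\Z/q^\times)^{d+2}$. Dot-producting this identity with each basis vector $f_l$ of the lattice $K = \{v \in \Z^{d+2} : \sum_j v_j \gamma_j = 0\}$, and using $\sum_j f_{lj} \gamma_j = 0$ together with $\sum_j f_{lj} \rho_{kj} = \delta_{kl}$ (forced by the relation $z_k = \prod_j (\prod_i z_i^{f_{ij}})^{\rho_{kj}}$ of Corollary \ref{cor:cover1}), gives $\lambda_l \equiv 0 \pmod{q^\times}$ for every $l$. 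Then $m\gamma_j \equiv 0 \pmod{q^\times}$ for every $j$, combined with $\gcd(\gamma_1, \dots, \gamma_{d+2}) = 1$, forces $m \equiv 0 \pmod{q^\times}$. Hence only $(m,\lambda) = (0,0)$ contributes, at which $\chi^0(t) \prod_k \chi^0(\sigma_k) = 1$, delivering the claimed constant term.
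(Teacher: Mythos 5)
Your argument is correct. Structurally it is close to the paper's: both apply additive orthogonality to handle the linear condition, encode the multiplicative conditions through characters in $t$ and $\sigma$ (the paper phrases this as a finite Fourier expansion via Lemma~\ref{lem:fourier-series}, which is the same device as your direct multiplicative orthogonality), and arrive at the identical factorisation of the inner $(w,z)$-sum, with the same $g(0)=-1$ trick converting $\prod_{j\notin S}g(a_j)$ into $(-1)^{|S|}\prod_{j=1}^{d+2} g(a_j)$ on the support of $\prod_{j\in S}\epsilon(a_j)$. Where you genuinely diverge is in producing the constant $q^{-1}(q-1)^d$. The paper dehomogenises first (setting $y_j=w_j/w_{d+2}$), so the additive dummy is $w_{d+2}$ itself, and it identifies the $w_{d+2}=0$ block geometrically: the multiplicative equations cut out a torus $V$ of dimension $d+1$, whose dehomogenisation has $(q-1)^d$ points. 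You instead work with the affine cone (introducing a fresh dummy $u$) and argue combinatorially that the $u=0$ block isolates the single index $(m,\lambda)=(0,0)$; the ingredients are the dualities $\sum_j f_{lj}\gamma_j=0$ and $\sum_j f_{lj}\rho_{kj}=\delta_{lk}$ together with $\gcd(\gamma_1,\dots,\gamma_{d+2})=1$. (A small imprecision: for $l=0$, i.e.\ $f_0=\mathbf{1}$, one has $\sum_j\rho_{kj}=0$ rather than a Kronecker delta, so pairing with $f_0$ is vacuous and only $f_1,\dots,f_d$ are needed.) Both arguments are correct; the paper's is quicker, while yours makes explicit that the constant term is literally the $(m,\lambda)=(0,0)$ Fourier coefficient, without invoking the torus description of $V$.
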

We need the following well-known lemma \cite[Chapter VI, Section 1.2]{serre}.
\begin{lem}[Finite Fourier Transform]\label{lem:fourier-series}
Let $g: \left(\F_{q}^{\times}\right)^{d} \to \mathbb{C}$ be a complex valued function. Then, for all $x \in \left(\F_{q}^{\times}\right)^{d}$, we have
\[
g(x) = \sum_{i=1}^d \sum_{k_i =0}^{q^\times -1} a_k \prod_{i=1}^d \chi^{k_i}(x_i),
\]
where 
\[
a_k = \frac{1}{(q-1)^{d}}\sum_{x \in (\F_q^{\times})^{d} } g(x) \prod_{i=1}^d\chi^{-k_i}(x_i).
\]
Furthermore, the \emph{Fourier coefficients} $a_k$ are unique. 
\end{lem}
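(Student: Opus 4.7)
The plan is to deduce the statement from the scalar orthogonality relation already recorded as Lemma \ref{lem:orthogonal}, by viewing the product characters $x \mapsto \prod_{i=1}^{d} \chi^{k_i}(x_i)$, indexed by $k=(k_1,\dots,k_d) \in (\Z/q^\times\Z)^{d}$, as a full set of characters of the finite abelian group $G = (\F_q^\times)^d$. Since $\chi$ has exact order $q^\times = q-1$, its powers exhaust the dual of $\F_q^\times$, and tensoring exhausts the dual of $G$; the number of such characters is $(q-1)^d$, which equals $|G|$.

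First I would establish the tensorised orthogonality: for any $k,k' \in (\Z/q^\times\Z)^d$,
\[
\sum_{x \in G} \prod_{i=1}^{d} \chi^{k_i - k_i'}(x_i) = \prod_{i=1}^d \sum_{x_i \in \F_q^\times} \chi^{k_i - k_i'}(x_i),
\]
and apply Lemma \ref{lem:orthogonal} to each factor, obtaining $(q-1)^d$ when $k=k'$ and $0$ otherwise. Next, defining $a_k$ by the stated formula and substituting into the Fourier expansion, I would swap the order of summation to obtain
\[
\sum_{k} a_k \prod_{i=1}^{d} \chi^{k_i}(x_i) = \frac{1}{(q-1)^d} \sum_{y \in G} g(y) \sum_{k} \prod_{i=1}^d \chi^{k_i}(x_i y_i^{-1}),
\]
and then the orthogonality relation forces the inner sum to vanish unless $y=x$, in which case it contributes $(q-1)^d$. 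This recovers $g(x)$ and proves the expansion formula.

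For uniqueness, suppose $\sum_k b_k \prod_i \chi^{k_i}(x_i) = 0$ for every $x \in G$. Multiplying by $\prod_i \chi^{-k'_i}(x_i)$, summing over $x$, and applying the same orthogonality relation isolates $(q-1)^d b_{k'} = 0$, so every $b_{k'}$ vanishes. Equivalently, the $(q-1)^d$ characters are linearly independent in the $(q-1)^d$-dimensional space of complex-valued functions on $G$, so they form a basis. There is no real obstacle here: the result is the standard character-theoretic Fourier inversion for a finite abelian group, and all the arithmetic input is already contained in Lemma \ref{lem:orthogonal}.
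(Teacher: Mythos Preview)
Your proof is correct; it is the standard Fourier inversion argument for finite abelian groups, deduced cleanly from the scalar orthogonality in Lemma~\ref{lem:orthogonal}. The paper itself does not give a proof at all but only cites \cite[Chapter~VI, Section~1.2]{serre}, so your write-up is in fact more explicit than what appears in the paper, while following exactly the approach one would find in that reference.
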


\begin{proof}[Proof of Proposition \ref{prop:main-count}]
Let $V$ be the subvariety in $\G_m^{d+2} \times \G_m^d$ defined by the equations
\begin{align}
f_0(w,z)&\coloneqq w^{\gamma} =t\eqqcolon \sigma_0, \label{eq:proof-main-count-1}\\
f_k(w,z)&\coloneqq w^{\rho_k} z^{-\mathcal{N}_k}=  \sigma_k, \qquad k=1,\dots,d.\label{eq:proof-main-count-2}
\end{align}
Note that $V$ is a cone over the variety $V'$ defined by the same homogeneous (in $w_1, \dots, w_{d+2}$) equations in $\Ps^{d+1} \times \G_m^d$. Setting 
\[
w_j = u_j \prod_{i=1}^{d} x_i^{f_{ij}}, \qquad j=1,\dots,d+2,
\]
Equation \eqref{eq:proof-main-count-1} becomes a tautology, and Equations \eqref{eq:proof-main-count-2} become:
\[
x_k z^{-\mathcal{N}_k}=  1, \qquad k=1,\dots,d.
\]
Thus, the coordinates $x_1,\dots,x_d$ are determined by $z_1,\dots,z_d$. Furthermore, the morphism defined by this substitution is an isomorphism by Remark \ref{rem:primitive-dehomogenization}. 
Thus, $V'$ is a torus of dimension $d$ and $V$ is a torus of dimension $d+1$.

We dehomogenise the equations of $W_S$ by setting $y_j = w_{j}/w_{d+2}$ for $j=1,\dots,d+1$. For $k=0,\dots,d$, we define $g_k$ by the equation
\[
g_k(y_1,\dots,y_{d+1},z)= f_k(y_1,\dots,y_{d+1},1,z).
\]
By Lemma \ref{lem:orthogonal}, we have:
\begin{align*}
     q  \# W_{S} (\mathbb{F}_q)  &= \sum_{
     \substack{w_{d+2}\in \F_q \\ y_1,\dots,y_{d+1},z_1,\dots,z_d \in \F_q^{\times} \\ g_k(y,z)=\sigma_k, \ k=0,\dots,d }} \psi\left(w_{d+2}\sum_{j \notin S} y_j\right) \\
     &= \sum_{
     \substack{y_1,\dots,y_{d+1},z_1,\dots,z_d \in \F_q^{\times} \\ g_k(y,z)=\sigma_k, \ k=0,\dots,d}} 1 + \sum_{
     \substack{w_{d+2}\in \F_q^\times \\ y_1,\dots,y_{d+1},z_1,\dots,z_d \in \F_q^\times \\g_k(y,z)=\sigma_k, \ k=0,\dots,d}} \psi\left(w_{d+2}\sum_{j \notin S} y_j\right).
\end{align*}
We make the change of variables $w_j = w_{d+2}y_j$,  $j=1,\dots, d+1$, then
\begin{align*}
      q  \# W_{S} (\mathbb{F}_q)  &= \sum_{
     \substack{y_1,\dots,y_{d+1},z_1,\dots,z_d \in \F_q^{\times} \\ g_k(y,z)=\sigma_k, \ k=0,\dots,d}} 1 + \sum_{
     \substack{w_1,\dots, w_{d+2}, z_1, \dots, z_d \in \F_q^\times \\f_k(w,z)=\sigma_k, \ k=0,\dots,d}} \psi\left(\sum_{j\notin S} w_j\right)\!.
\end{align*}
The first term is the number of points of the dehomogenisation of the equations defining $V$, thus it is the number of points of a torus of dimension $d$. Thus, 
\begin{align*}
       q  \# W_{S} (\mathbb{F}_q)  &= (q-1)^{d} + \sum_{
     \substack{w_1,\dots, w_{d+2}, z_1, \dots, z_d \in \F_q^\times \\f_k(w,z)=\sigma_k, \ k=0,\dots,d}} \psi\left(\sum_{j\notin S} w_j\right)\!\\
    &= (q-1)^{d} + \sum_{(w,z) \in V(\F_q) } \psi\left(\sum_{j \notin S} w_j\right).
    \end{align*}
The last sum depends on $t, \sigma_1,\dots, \sigma_d$, so we consider $q \# W_{S}(\F_q)$ as a function of variables $t, \sigma_1,\dots, \sigma_d$ and compute its Fourier expansion.
By Lemma \ref{lem:fourier-series}, we have 
\begin{align*}
 q \# W_{S}(\F_q) = (q-1)^{d} + \frac{1}{ (q-1)^{d+1}}\sum_{(m,\lambda) \in \left(\F_q^{\times}\right)^{d+1}} c_{m,\lambda} \cdot   \chi^{m}(t) \prod_{k=1}^d \chi^{\lambda_k} (\sigma_k),
\end{align*}
   where 
\begingroup
\allowdisplaybreaks
\begin{align*}
    c_{m,\lambda}  &= \sum_{(t,\sigma) \in \left(\F_q^\times\right)^{d+1}} \sum_{(w,z) \in V(\F_q)} \psi \left(\sum_{j\notin S} w_j\right) \chi^{-m}(t) \prod_{k=1}^d \chi^{-\lambda_k} (\sigma_k) \\
    &=  \sum_{(w,z) \in \left(\F_q^\times\right)^{2d+2} } \psi \left(\sum_{j\notin S} w_j\right) \chi^{-m}(w^\gamma) \prod_{k=1}^d \chi^{-\lambda_k}\left( w^{\delta_k}z^{-\mathcal{N}_k}\right) \\
    &=  \prod_{j \notin S}  \sum_{w_j \in \F_q^\times} \psi(w_j) \chi^{-m \gamma_j -\sum_{k=1}^d \lambda_k\delta_{kj}}(w_j) \cdot \prod_{j \in S}  \sum_{w_j \in \F_q^\times} \chi^{-m \gamma_j -\sum_{k=1}^d \lambda_k\delta_{kj}}(w_j) \\
    &\cdot  \prod_{i=1}^d \sum_{z_i \in \F_q^\times} \chi^{ - \sum_{k=1}^d \lambda_k\mathcal{N}_{ki}}(z_i) \\
    &=  \prod_{j \notin S}   g\left(-m\gamma_j -  \sum_{k=1}^d \lambda_k \delta_{kj}\right) \cdot \prod_{j\in S} (q-1)\epsilon\left( -m \gamma_j -\sum_{k=1}^d \lambda_k\delta_{kj} \right) \\
    &\cdot  \prod_{i=1}^d (q-1) \epsilon \left( \sum_{k=1}^d \lambda_k\mathcal{N}_{ki} \right) \\
    &= (-1)^{|S|} (q-1)^{d+|S|}\cdot \prod_{j=1}^{d+2} g\left(-m\gamma_j -  \sum_{k=1}^d \lambda_k \delta_{kj}\right)   \\
    &\cdot\prod_{j\in S} \epsilon\left( -m \gamma_j -\sum_{k=1}^d \lambda_k\delta_{kj} \right) \cdot \prod_{i=1}^d \epsilon \left( \sum_{k=1}^d \lambda_k\mathcal{N}_{ki} \right).
\end{align*}
\endgroup
The product $\prod_{i=1}^d \epsilon \left( \sum_{k=1}^d \lambda_k\mathcal{N}_{ki} \right)$ is $1$ if $\lambda \in \Lambda(q)$, and is $0$ if $\lambda \notin \Lambda(q)$.  Thus, we have 
\begin{align*}
    q \# W_{S} (\mathbb{F}_q) &= (q-1)^{d} + (-1)^{|S|}(q-1)^{|S|-1}\\
    &\cdot\sum_{\lambda \in \Lambda(q)}  \sum_{m=0}^{q^\times -1}\prod_{j\in S} \epsilon\left( -m \gamma_j -\delta(j,\lambda) \right)\prod_{j=1}^{d+2}g\left(-m\gamma_j - \delta(j,\lambda)\right) \chi^{m}(t) \chi^{\lambda}(\sigma),
\end{align*}
as desired.
\end{proof}

\begin{rem}
The size of the set $\Lambda(q)$ varies with $q$. For instance, if  $\det(Z)$ divides $q^\times$, then the maximum possible size of $|\Lambda(q)|$ is attained, namely $\det(Z)$. The computation done in this way allows us to count the number of $\F_q$-points on $W_S$ for all primes power $q$ without any congruence restrictions on $q.$
\end{rem}

\begin{rem}\label{rem:zero-components}
Note that for any $\lambda$, we have
\begin{align*}
&\sum_{m=0}^{q^\times -1}\prod_{j\in S} \epsilon\left( -m \gamma_j -\delta(j,\lambda) \right)\prod_{j=1}^{d+2}g\left(-m\gamma_j - \delta(j,\lambda)\right) \chi^{m}(t) \chi^{\lambda}(\sigma)\\
&= \prod_{\ga_j =0} g(\delta(j,\lambda))\cdot  \sum_{m=0}^{q-2}\prod_{j\in S} \epsilon\left( -m \gamma_j -\delta(j,\lambda) \right)\prod_{\ga_j \neq 0}g\left(-m\gamma_j - \delta(j,\lambda)\right) \chi^{m}(t) \chi^{\lambda}(\sigma).
\end{align*}
Thus, the zero components of the Gale dual $\ga$ contribute to the point count of $W_S$ by multiplication by fixed Gauss sums. 
\end{rem}

\subsection{Regularity} We discuss some of the regularity properties of the hypersurface $Z$, which are useful to study the smoothness properties of the compactifications we will consider.

\begin{defi}\label{def:Delta}\cite[Definition 2.6]{Denef-Loeser}
 The Laurent polynomial $f$ and the hypersurface $Z$ are called \emph{$\Delta$-regular} if, for each face $F$ of $\Delta$, the variety $V(f_{|F}) \subset\G_m^{d}$ is a smooth (over $k$).  
\end{defi}

\begin{prop}\label{prop:cover-smooth}
Suppose that $\gcd(q,\deg(Z))=1$, and suppose that $\ga_j \neq 0$ for any $j.$ Then, we have the following properties.
\begin{enumerate}
    \item If $ \prod_{j=1}^{d+2} u_j^{\gamma_j} \neq \prod_{j=1}^{d+2} \gamma_j^{\gamma_j}$, then the hypersurface $Z$ is $\Delta$-regular and smooth.
    \item If $ \prod_{j=1}^{d+2} u_j^{\gamma_i} = \prod_{j=1}^{d+2} \gamma_j^{\gamma_j}$, then the hypersurface $Z$ has exactly $\deg(Z)$ ordinary double points.
\end{enumerate}
\end{prop}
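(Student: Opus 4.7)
By Proposition \ref{prop:cover}, the hypothesis $\gcd(q,\deg(Z))=1$ makes $Z \to Z^{\mathrm{prim}}$ an étale cover of degree $\deg(Z)$, and the same construction applied face-by-face yields étale covers between the corresponding face hypersurfaces. Since étale morphisms preserve smoothness and the analytic type of isolated hypersurface singularities, it suffices to establish both parts in the primitive case. In case (2), the unique ODP of $Z^{\mathrm{prim}}$ will then lift to exactly $\deg(Z)$ unramified preimages on $Z$, accounting for the stated count.

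\textbf{Step 2: Locating the singular points in the primitive case.}
Applying Lemma \ref{lem:projective-to-toric}, I view $Z^{\mathrm{prim}}$ as the complete intersection in the torus of $\Ps^{d+1}$ cut out by $F_1 = \sum_j w_j$ and $F_2 = \prod_j w_j^{\gamma_j} - t$. The Jacobian has rows $(1,\ldots,1)$ and $(\gamma_j t / w_j)_j$, so linear dependence forces $w_j = c\gamma_j$ for some scalar $c\neq 0$. Using $\sum_j \gamma_j = 0$, the equation $F_1=0$ is automatic and $F_2=0$ reduces to $t = \prod_j \gamma_j^{\gamma_j}$. Hence $Z^{\mathrm{prim}}$ is smooth whenever $t \neq \gamma^\gamma$, and otherwise has the unique singular point $[\gamma_1 : \cdots : \gamma_{d+2}]$.

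\textbf{Step 3: Verifying the ordinary double point.}
On the chart $v_j = w_j / w_{d+2}$, I parametrize near the singular point by $v_j = (\gamma_j/\gamma_{d+2})(1+\epsilon_j)$ for $j \le d+1$. The equation $F_1=0$ becomes the linear constraint $L := \sum_{j=1}^{d+1}\gamma_j\epsilon_j = 0$, while expanding $\prod_{j=1}^{d+1}(1+\epsilon_j)^{\gamma_j} = 1$ through second order yields $L + \tfrac12 L^2 - \tfrac12 Q + O(\epsilon^3) = 0$ with $Q := \sum_{j=1}^{d+1}\gamma_j \epsilon_j^2$. Modulo $L$, the local defining equation on $\{L=0\}$ is therefore the quadratic form $Q$ plus higher-order corrections. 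Eliminating $\epsilon_{d+1}$ via $L$, the restricted $Q$ has matrix $\mathrm{diag}(\gamma_1,\ldots,\gamma_d) + \gamma_{d+1}^{-1} vv^T$ with $v = (\gamma_1,\ldots,\gamma_d)^T$; the matrix determinant lemma combined with $\sum_{j=1}^{d+2}\gamma_j = 0$ gives determinant $-\prod_{j=1}^{d+2}\gamma_j / \gamma_{d+1}^2$, which is nonzero under the implicit genericity condition on $p$. Hence the tangent cone is a nondegenerate quadric and the singularity is an ODP.

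\textbf{Step 4: $\Delta$-regularity, and the main obstacle.}
The hypothesis $\gamma_j \neq 0$ for all $j$ is equivalent to the $d+2$ exponent vectors being in general affine position, so $\Delta$ is simplicial. Each proper face $F$ of $\Delta$ is then a simplex whose $\dim F + 1$ vertex exponents are affinely independent, and a monomial change of coordinates on $\G_m^d$ turns $f_{|F}$, up to a non-vanishing monomial, into an affine-linear polynomial whose torus zero locus is manifestly smooth. Together with Step 2, this yields $\Delta$-regularity when $t \neq \gamma^\gamma$. The principal obstacle is Step 3: one must extract the quadratic tangent cone cleanly from the transcendental equation $F_2 = 0$ after the linear constraint is accounted for, and verify the resulting Hessian remains nondegenerate, which is the only place combinatorial data about $\gamma$ enters in an essential way.
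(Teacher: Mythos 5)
Your reduction in Step~1 to the primitive case via the étale cover of Proposition~\ref{prop:cover} is exactly the paper's argument; the paper then delegates the primitive case to a citation (\cite[Proposition~1.36]{my-thesis}), whereas you supply a direct computation. Your Steps~2 and~3 are correct and a worthwhile addition: the Jacobian criterion correctly isolates the locus $t=\gamma^\gamma$ and the point $[\gamma_1:\cdots:\gamma_{d+2}]$, and the tangent cone and matrix determinant lemma computation does give $\det = -\prod_j \gamma_j / \gamma_{d+1}^2$, nondegenerate precisely when $p$ is coprime to every $\gamma_j$ (a condition the proposition leaves implicit, as you observe).

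The genuine gap is in Step~4. It is not true that for a primitive $Z$ a monomial change of coordinates turns $f_{|F}$ into an affine-linear polynomial (up to a unit monomial). Affine independence of the vertices of a face $F$ gives linear independence of the exponent differences, but the sublattice they generate need not be saturated in $\Z^d$; the Smith normal form yields $f_{|F} \sim u_0 + u_1 x_1^{a_1} + \cdots + u_k x_k^{a_k}$ with possibly $a_i>1$, and smoothness of its torus zero locus requires $p\nmid a_i$ for each $i$. As a concrete primitive example in $d=2$: exponents $(0,0),(2,0),(0,1),(1,1)$ have Gale dual $(1,-1,-2,2)$ and a primitive $M$, yet the face $\{(0,0),(2,0)\}$ gives $u_1+u_2x_1^2$, which is singular in characteristic~$2$. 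So $\Delta$-regularity of the proper faces does \emph{not} follow from the simplicial structure alone; one must relate the elementary divisors of each face to $\gamma$ (or invoke the face-by-face étale covering with explicit control on its degree, as in Corollary~\ref{cor:cover}) and then use $\gcd(q,\gamma_1\cdots\gamma_{d+2})=1$. This coprimality hypothesis is assumed throughout the corollaries where the proposition is applied, but your Step~4 neither states nor uses it, so the argument as written would certify $\Delta$-regularity in cases where it actually fails.
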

\begin{proof}
    If $Z$ is primitive, a proof\footnote{This proof will appear in the work in progress \cite{asem-giulia}.} can be found in \cite[Proposition 1.36]{my-thesis}.
    If $Z$ is not primitive, then by the condition on the degree, the covering 
    \[
    Z \to Z^{\operatorname{prim}} = V(f^{\operatorname{prim}}) 
    \]
    from Propositions \ref{prop:cover} is étale. Furthermore, the hypersurface $V(f_{|F})$, where $f_{|F}$ is the restriction of $f$ to any face $F$ of $\Delta$, is an étale covering of the hypersurface defined by the restriction of $f^{\operatorname{prim}}$ to a face of its Newton polytope.
    The proposition follows from the fact that étale morphisms are smooth and preserve the nature of singularities.
\end{proof}
We will often appeal to the following well-known Lemma.
\begin{lem}\label{lem:quasi-smooth-compactification}
Let $f$ be a Laurent polynomial with Newton polytope $\Delta$, and let $Z= V(f)$ be the hypersurface it defines in $\mathbb{G}_m^d$. Suppose that $f$ is $\Delta$-regular.  Let $\Sigma$ be the normal fan of the Newton polytope $\Delta$, and let $\Sigma'$ be any refinement of $\Sigma$.  Denote by $\mathbb{P}_{\Sigma'}$ the toric variety associated to the fan $\Sigma'$. 
Let $\overline{Z}$ be the closure of $Z$  in $\mathbb{P}_{\Sigma'}.$ If $\mathbb{P}_{\Sigma'}$ is quasi-smooth, i.e. has only finite quotient singularities, then $\overline{Z}$ is quasi-smooth. 
\end{lem}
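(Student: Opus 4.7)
The plan is to verify quasi-smoothness of $\overline{Z}$ chart-by-chart using the toric stratification of $\mathbb{P}_{\Sigma'}$. Quasi-smoothness of $\mathbb{P}_{\Sigma'}$ is equivalent to $\Sigma'$ being simplicial, a standard fact about toric varieties. For each $k$-dimensional cone $\sigma \in \Sigma'$ with primitive ray generators $v_1,\dots,v_k$, the affine chart $U_\sigma$ admits a presentation $U_\sigma = \widetilde{U}_\sigma / G_\sigma$ as a finite abelian quotient, with $\widetilde{U}_\sigma \cong \mathbb{A}^k \times \mathbb{G}_m^{d-k}$. Since the $U_\sigma$ cover $\mathbb{P}_{\Sigma'}$, it suffices to show that the preimage $\widetilde{Z}_\sigma \subset \widetilde{U}_\sigma$ of $\overline{Z}\cap U_\sigma$ is smooth for each $\sigma$: then $\overline{Z}\cap U_\sigma = \widetilde{Z}_\sigma/G_\sigma$ has only finite quotient singularities.

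To analyse $\widetilde{Z}_\sigma$, choose coordinates $(t_1,\dots,t_k,s_1,\dots,s_{d-k})$ on $\widetilde{U}_\sigma$ so that each $t_i$ cuts out the toric divisor corresponding to the ray $v_i$. Pulling $f$ back through the associated monomial map and multiplying by a suitable monomial $\prod t_i^{a_i}$ to clear poles yields a regular function $\widetilde{f}$ cutting out $\widetilde{Z}_\sigma$, with the $a_i$ chosen minimally so that $\widetilde{f}$ is not divisible by any $t_i$. The combinatorial heart of the argument is the identity
\begin{equation*}
\widetilde{f}|_{O_\tau} \;=\; \mu_\tau(s)\cdot f_{|F_\tau}(s),
\end{equation*}
valid for every face $\tau \preceq \sigma$, where $\mu_\tau$ is an invertible monomial in the torus coordinates of $O_\tau$ and $F_\tau$ is the face of $\Delta$ whose normal cone contains $\tau$. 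The face $F_\tau$ is well defined because $\Sigma'$ refines $\Sigma$, so $\tau$ lies in a unique cone of $\Sigma$. The identity holds because the monomial clearing is tuned precisely so that the exponent vectors of $f$ contributing nontrivially along $O_\tau$ are those minimising the linear functionals in the relative interior of $\tau$, i.e.\ those supported on $F_\tau$.

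Given this identity, smoothness of $\widetilde{Z}_\sigma$ is immediate. Any closed point $p \in \widetilde{Z}_\sigma$ lies in a unique orbit $O_\tau$ for some $\tau \preceq \sigma$. By $\Delta$-regularity, $V(f_{|F_\tau}) \subset \mathbb{G}_m^d$ is smooth, so at $p$ some partial derivative of $f_{|F_\tau}$ with respect to an $s$-coordinate is nonzero. The identity above, together with the fact that the $s$-partial of $\widetilde{f}$ at a point of $O_\tau$ equals the $s$-partial of its restriction $\widetilde{f}|_{O_\tau}$, then forces the corresponding $s$-partial of $\widetilde{f}$ to be nonzero at $p$. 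Hence $\widetilde{Z}_\sigma$ is smooth at $p$, and gluing the quasi-smooth pieces $\overline{Z}\cap U_\sigma$ over the open cover $\{U_\sigma\}$ yields the lemma.

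The main obstacle is the restriction identity. It is a local, chart-by-chart calculation, but it requires careful bookkeeping of which exponent vectors of $f$ survive the monomial clearing inside $\widetilde{U}_\sigma$. The hypothesis that $\Sigma'$ refines $\Sigma$ is essential here: if a cone of $\Sigma'$ straddled two cones of $\Sigma$, then no single face of $\Delta$ could be attached to $\tau$, and the restriction $\widetilde{f}|_{O_\tau}$ would fail to match any single facial restriction $f_{|F}$, breaking the link between $\Delta$-regularity and smoothness of the closure.
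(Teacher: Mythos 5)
Your argument is correct and matches the paper's approach: the paper's entire proof is a one-line citation to Denef--Loeser (Lemma 2.3), and what you have written out is precisely the local toric computation underlying that lemma. The chart-by-chart passage to the smooth finite cover of each simplicial chart, the monomial-unit identity $\widetilde{f}|_{O_\tau} = \mu_\tau \cdot f_{|F_\tau}$ (with $F_\tau$ the first meet locus, well defined because $\Sigma'$ refines $\Sigma$), and the appeal to $\Delta$-regularity via the free subtorus action on $V(f_{|F_\tau})$ are exactly the ingredients Denef--Loeser use.
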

\begin{proof}
This follows immediately from the proof of \cite[Lemma 2.3]{Denef-Loeser}.
\end{proof}

\section{Combinatorics of the Newton Polytope } \label{section:combinatorics}
To ease the exposition of the material of this section, we postpone all proofs to the end of the section.  Recall that we are considering a Laurent polynomial 
\[f(x) =\sum_{j=1}^{d+2} u_j \prod_{i=1}^d x_i^{m_{ij}},\]
with Gale dual $\gamma \in \mathbb{Z}^{d+2}$, which satisfies $\sum_{j=1}^{d+2} \gamma_j=0$ and $\gcd(\ga_1,\dots,\ga_{d+2})=1$. 
We assume that the Newton polytope $\Delta$ of $f$ is $d$-dimensional. 
One of the nice aspects of this situation is that the combinatorics of $\Delta$ can be described in terms of the Gale dual $\ga$. Recall that the matrix $M$ was defined by
\begin{equation}\label{eq:matrix-M-combinatorics}
M=\begin{pmatrix}
1 & 1 &\cdots &1 \\
m_{11} & m_{12} & \cdots & m_{1,d+2} \\
m_{21} & m_{22} & \cdots & m_{2,d+2} \\
\vdots & \vdots & \ddots & \vdots \\
m_{d1} & m_{d2} & \cdots & m_{d,d+2}
\end{pmatrix}. 
\end{equation}

For simplicity, we will assume that $\ga_j \neq 0$ for any $j$ (See Remark \ref{rem:zero-components}).  By reordering the monomials, we may suppose that  $\gamma_1,\dots, \gamma_r < 0$ and $\gamma_{r+1},\dots,\gamma_{r+s} >0$, where $r+s =d+2.$ Denote by $\ga_- = \{ j \mid \ga_j <0\} =\{1, \dots,r\} $ and  $\ga_+ = \{ j \mid \ga_j >0\}  = \{r+1,\dots,r+s\}.$ 
For any subset $S \subset \{1,\dots,d+2\}$, we define $S^{+}\coloneqq S \cap \ga_{+}$ and $S^{-} \coloneqq S \cap \ga_{-}$.
Let $\mathcal{S}$ be the set of proper subsets $S$ of $\{1,\dots,d+2\}$ such that $S^+$ and $S^{-}$ are both non-empty.  

We have the following description of the proper faces\footnote{
We adopt the convention that the empty set is not a face of  $\Delta$.
} of the polytope $\Delta$.
\begin{prop}\label{prop:structure-polytope}
The maps
\begin{align*}
S &\mapsto F_S\coloneqq \operatorname{conv} \big((m_{1j},\dots,m_{dj}) \mid j \notin S \big),\\
F &\mapsto S_F \coloneqq\{ j \mid (m_{1j},\dots.,m_{dj}) \notin F \}.
\end{align*}
are bijections between the set $\mathcal{S}$ and the set of proper faces of $\Delta$, and are inverse to one another. Furthermore, if $S \subset S'$ are elements of $\mathcal{S}$,  then $F_S \supset F_{S'}$, and vice versa.
%Under these bijections, a union of sets in $\mathcal{S}$ is mapped to intersections of the corresponding faces of $\Delta$, and vice versa.
\end{prop}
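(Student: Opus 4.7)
The plan is to translate the face condition for $\Delta$ into a condition on the Gale dual $\gamma$ via a supporting-hyperplane argument. Since $M$ has rank $d+1$, the image of the transpose map $\phi \mapsto \phi^{\top} M$ from $\R^{d+1}$ to $\R^{d+2}$ is exactly the hyperplane $\{u \in \R^{d+2} : \sum_j \gamma_j u_j = 0\}$. A subset $T \subset \{1,\dots,d+2\}$ indexes the vertex set of a proper face of $\Delta$ if and only if there exist $\phi_0,\phi_1,\dots,\phi_d \in \R$ such that the vector $u_j = \phi_0 + \sum_{i} \phi_i m_{ij}$ satisfies $u_j = 0$ for $j \in T$ and $u_j < 0$ for $j \in S := T^c$; equivalently, there exists $u \in \R^{d+2}$ with $u_j = 0$ on $T$, $u_j < 0$ on $S$, and $\sum_j \gamma_j u_j = 0$.

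Given such a configuration, the balance identity $\sum_{j \in S^+} \gamma_j u_j + \sum_{j \in S^-} \gamma_j u_j = 0$ forces both summands to vanish unless each contains at least one term of opposite sign; since $u_j < 0$ throughout $S$, this forces both $S^+$ and $S^-$ to be non-empty, which is exactly the condition $S \in \mathcal{S}$. Conversely, given $S \in \mathcal{S}$, I would produce the required $u$ explicitly: set $A := \sum_{j \in S^+} \gamma_j > 0$ and $B := -\sum_{j \in S^-} \gamma_j > 0$, and define $u_j = -B$ for $j \in S^+$, $u_j = -A$ for $j \in S^-$, and $u_j = 0$ for $j \notin S$. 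Then $u$ lies in the image of $\phi \mapsto \phi^{\top} M$, so the associated affine functional $\ell$ supports $F_S$ and strictly separates every $m_{*j}$ with $j \in S$ from $F_S$. This simultaneously shows that $F_S$ is a proper face and that $S_{F_S} = S$.

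With both directions established, verifying that the maps $S \mapsto F_S$ and $F \mapsto S_F$ are mutually inverse is immediate: $F_{S_F} = F$ since any face is the convex hull of its vertices, and $S_{F_S} = S$ by the strict separation in the previous step. The inclusion-reversal is a direct consequence of the definitions: $S \subset S'$ gives $S^c \supset (S')^c$, and hence $F_S = \operatorname{conv}(m_{*j} : j \in S^c) \supset \operatorname{conv}(m_{*j} : j \in (S')^c) = F_{S'}$, and the converse follows from the injectivity of $S \mapsto F_S$ already proved.

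The most delicate step is the strict separation in the forward direction, i.e.\ ensuring that $u_j < 0$ (not just $\leq 0$) for every $j \in S$, because this is what forces the complement of a face to recover $S$ exactly and not a strictly larger set. The explicit choice of $u$ above circumvents this because every coordinate indexed by $S$ is strictly negative; no additional perturbation argument is needed. Everything else is bookkeeping around the single Gale-duality identity $\operatorname{Im}(M^{\top}) = \gamma^{\perp}$.
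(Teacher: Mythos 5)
Your proof is correct and uses the same supporting-hyperplane/Gale-duality argument as the paper (both rest on the identity $\operatorname{row\ span}(M)=\gamma^{\perp}$). You do go one step further than the paper's write-up: the paper establishes the bijection between faces and equivalence classes of nonnegative relations and then checks that supports of relations lie in $\mathcal{S}$, but leaves implicit that every $S\in\mathcal{S}$ actually arises as such a support; your explicit construction $u_j=-B$ on $S^{+}$, $u_j=-A$ on $S^{-}$ with $A=\sum_{j\in S^{+}}\gamma_j$, $B=-\sum_{j\in S^{-}}\gamma_j$ supplies exactly that surjectivity step cleanly.
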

We illustrate Proposition \ref{prop:structure-polytope} in dimension $2$. 
Let $m_{*1},\dots,m_{*4} \in \Z^{2}$ be four distinct lattice points in the plane, and let $\Delta$ be their convex hull. 
We assume that the four points do not lie on a line, and that the associated Gale dual $\ga$ has no zero components.
The following are the only possible cases.
\begin{enumerate}
    \item The four points are vertices of $\Delta$, in which case $\Delta$ is a quadrilateral.
    \item Only three of the points are vertices of $\Delta$, so $\Delta$ is a triangle, and the last point lies in $\Delta$.  We have two sub-cases:
    \begin{enumerate}
        \item the last point lies in the interior of the triangle; or
        \item the last point lies in the relative interior of one of the edges. In this case, there would be an affine linear relation between the points lying on this edge, and the Gale dual would contain a 0; thus, this case does not occur.
    \end{enumerate}
\end{enumerate}
The signs of the Gale dual in the two cases above can be chosen (black and white) as in the following figure:
\begin{figure}[H]
    \centering
    \includegraphics[width=6cm, height=5cm]{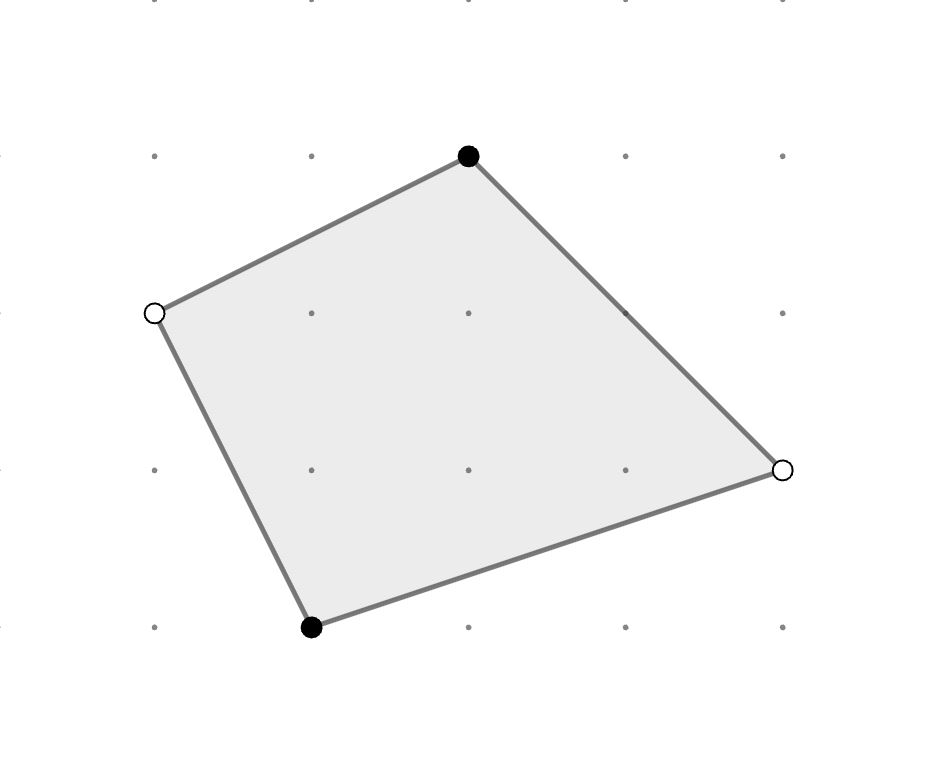}
    %<= or no space between includegraphics or \hfill
    \includegraphics[width=6cm, height=5cm]{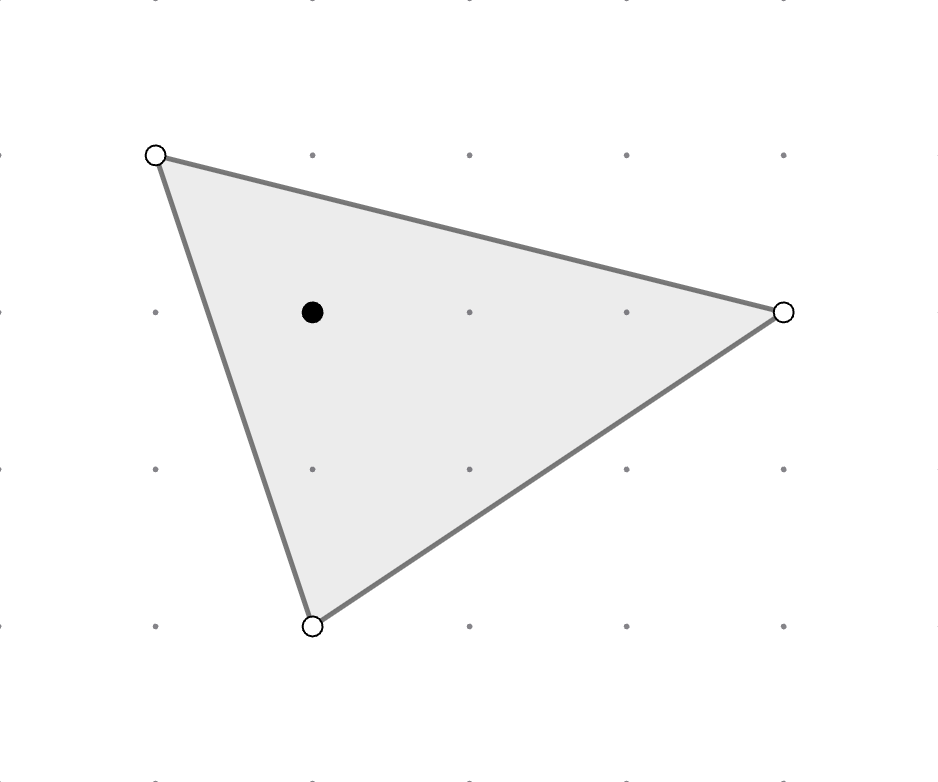}
    \caption{$d=2$}
\end{figure}
In both cases of the diagram, it is clear that the convex hull of two points is an edge if and only if their complement contains both a white and a black dot. Furthermore, a point $m_{*k}$ is a vertex if and only if its complement contains white and black dots.

\begin{rem}
Many nice properties of the polytope $\Delta$ can be deduced from Proposition \ref{prop:structure-polytope}. For example, the polytope $\Delta$ is simplicial, i.e. all its faces are simplices. Indeed, the facets are given by the convex hull of complements of sets $S \in \mathcal{S}$ of cardinality $2$.
Thus, the $(d-1)$-dimensional faces are generated by $d+2-2$ points, so they are simplices. Since any face of $\Delta$ is a face of some facet, all faces of $\Delta$ are simplices.

\end{rem}
\subsection*{The Normal Fan} 
Our goal now is to describe the normal fan $\Sigma=\Sigma(\Delta)$ of the polytope $\Delta$. A good reference for the normal fan of a lattice polytope is \cite[\S 2.3]{cox}.
We recall that to each face $F$ of $\Delta$ of dimension $k$, there is an associated cone $\sigma_F$ of $\Sigma$ of dimension $d-k$, called the normal cone to $F$. 
Conversely, each cone $\sigma$ of $\Sigma$ is the normal cone of some face $F_{\sigma}$ of $\Delta$.
The normal cone to the whole polytope $\Delta$ is, by definition, the zero cone.
Furthermore, the rays of $\Sigma$, i.e. the one-dimensional cones, are the (inward) normals to the facets of the polytope $\Delta$. 
By Proposition \ref{prop:structure-polytope}, they are in a one-to-one correspondence with pairs $(i,j)$, where $1 \le i \le r$ and $r+1 \le j \le r+s$. 
We abuse notation and write $(i,j)$ for such a ray. Denote by $e_i$ the $i$-th standard basis vector of $\R^{d+2}$.  
We have the following description of the inward normals to the facets of $\Delta$.

\begin{lem}\label{lem:normals}
Let $i$ and $j$ be such that $1 \le i \le r$ and $r+1 \le j \le r+s$, and define $x^{(i,j)} \coloneqq -\ga_i e_j + \ga_j e_i$.
Then, there is a unique solution $(\al_0^{(i,j)},\al^{(i,j)}) \in \R\times\R^{d}$ to the matrix equation $(\al_0^{(i,j)},\al^{(i,j)}) M = x^{(i,j)}.$  Furthermore, $\al^{(i,j)}$ is an inward normal to the facet $(i,j)$.

%There is a unique solution $(\al_0^{(i,j)},\al^{(i,j)}_1,\dots,\al_d^{(i,j)}) \in \R^{d+1}$ to the matrix equation
%\[
%(\al_0^{(i,j)},\al^{(i,j)}_1,\dots,\al_d^{(i,j)}) M = x^{(i,j)}.
%\]
%Furthermore, $(\al_1^{(i,j)},\dots,\al_{d}^{(i,j)})$ is an inward normal to the facet $(i,j)$.
\end{lem}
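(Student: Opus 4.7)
The plan is to verify the solvability and uniqueness claim first, and then simply unpack the defining equation $(\alpha_0^{(i,j)},\alpha^{(i,j)})M = x^{(i,j)}$ coordinate-wise to see that $\alpha^{(i,j)}$ has the defining property of an inward facet normal.

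For existence and uniqueness: because the Gale dual $\gamma$ spans $\ker M$ (viewing vectors in $\Z^{d+2}$ as columns), the row span of $M$ is exactly the hyperplane $H_\gamma = \{x \in \R^{d+2} : \sum_k \gamma_k x_k = 0\}$, and the map $(\alpha_0,\alpha) \mapsto (\alpha_0,\alpha)M$ is injective since $M$ has rank $d+1$. So I just need to check that $x^{(i,j)} \in H_\gamma$: the sum $\sum_k \gamma_k x^{(i,j)}_k = \gamma_j(-\gamma_i) + \gamma_i \gamma_j = 0$, giving both existence and uniqueness at once.

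For the normal property: writing out $(\alpha_0^{(i,j)},\alpha^{(i,j)})M = x^{(i,j)}$ column by column yields
\[
\alpha_0^{(i,j)} + \alpha^{(i,j)}\cdot m_{*k} = x^{(i,j)}_k = -\gamma_i\,\delta_{kj} + \gamma_j\,\delta_{ki},
\]
so $\alpha^{(i,j)} \cdot m_{*k} = -\alpha_0^{(i,j)}$ for every $k \notin \{i,j\}$, while $\alpha^{(i,j)} \cdot m_{*i} = -\alpha_0^{(i,j)} + \gamma_j$ and $\alpha^{(i,j)} \cdot m_{*j} = -\alpha_0^{(i,j)} - \gamma_i$. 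By Proposition \ref{prop:structure-polytope}, the facet labeled $(i,j)$ is $F_{\{i,j\}} = \operatorname{conv}(m_{*k} : k \notin \{i,j\})$, so $\alpha^{(i,j)}$ is constant on the affine hull of this facet. Because $\gamma_i < 0$ and $\gamma_j > 0$, both differences $\alpha^{(i,j)}\cdot m_{*i} - (-\alpha_0^{(i,j)}) = \gamma_j$ and $\alpha^{(i,j)}\cdot m_{*j} - (-\alpha_0^{(i,j)}) = -\gamma_i$ are strictly positive, which means the two vertices $m_{*i}$ and $m_{*j}$ lie strictly on one side of the supporting hyperplane of $F_{\{i,j\}}$ defined by $\alpha^{(i,j)}$. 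These are the only remaining vertices of $\Delta$, so $\alpha^{(i,j)}$ is an \emph{inward} normal to the facet.

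The main potential pitfall is simply a sign/orientation check: I need to remember the convention that the normal fan uses inward normals, and confirm that the two distinguished vertices sit on the strictly positive side of the hyperplane $\{\alpha \cdot m = -\alpha_0^{(i,j)}\}$ — but this is exactly what the signs $\gamma_j > 0 > \gamma_i$ deliver. No real obstacle beyond this bookkeeping.
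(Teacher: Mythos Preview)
Your proof is correct and follows essentially the same approach as the paper: both arguments identify the row space of $M$ with the hyperplane $\{x:\sum_k\gamma_kx_k=0\}$ to get existence and uniqueness, and then read off from $(\alpha_0,\alpha)M=x$ that $\alpha\cdot m_{*k}+\alpha_0=x_k$, using the signs $x_k^{(i,j)}\ge 0$ (strict exactly at $k\in\{i,j\}$) to conclude that $\alpha^{(i,j)}$ is an inward normal to $F_{\{i,j\}}$. The paper packages this inside a general bijection between faces and ``relations'' before specialising to facets, whereas you go straight to the facet case; one minor terminological slip is calling $m_{*i},m_{*j}$ ``vertices'' (in the $r=1$ or $s=1$ case one of them need not be a vertex of $\Delta$), but your argument only uses that they are points of $\Delta$, so nothing is affected.
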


By construction of the normal fan, the normal cone to a face $F$ is the convex hull of the normal rays to the facets which contain $F$. 
Let $F$ be a face of $\Delta$, and let $S_F$ be the corresponding set.
By Proposition \ref{prop:structure-polytope}, the set of facets which contain $F$ is in bijection with the set of pairs $\{ (i,j) \mid i \in S_F^{-} \text{ and } j \in S_F^{+}\}$. 
Finally, note that $ \sigma_F \subset \sigma_E$ if and only if  $E \subset F$, which is true if and only if $S_F \subset S_E$. To summarise, we have the following proposition:
\begin{prop}\label{prop:normal-fan} \hfill
\begin{enumerate}
    \item The rays of the normal fan are in one-to-one correspondence with pairs $(i,j)$, where $1 \le i \le r$ and $r+1 \le j \le r+s$. 
    \item The proper faces of $\Delta$ are in bijection with the cones of $\Sigma$ via
    \begin{align*}
        F \mapsto \operatorname{conv} \big( (i,j) \mid i \in S_F^{-} \text{ and } j \in S_F^{+} \big).
    \end{align*}
    \item If $\sigma_F$ and $\sigma_E$ are two cones of $\Sigma$ corresponding to faces $F$ and $E$, respectively. Then,
    \[
    \sigma_F \subset \sigma_E \iff S_F \subset S_E.
    \] 
\end{enumerate}
\end{prop}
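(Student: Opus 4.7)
The plan is to deduce all three parts from Proposition \ref{prop:structure-polytope}, Lemma \ref{lem:normals}, and the standard description of the normal fan of a polytope (e.g.\ \cite[\S 2.3]{cox}): the cone $\sigma_F$ associated to a face $F$ of $\Delta$ is the convex cone generated by the inward normals of the facets containing $F$, and in particular the rays of $\Sigma$ are precisely the rays spanned by the inward normals to facets.

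For part (1), I would first note that the facets of $\Delta$ are its codimension-one proper faces, hence by Proposition \ref{prop:structure-polytope} they correspond to elements $S \in \mathcal{S}$ of cardinality two. Because every $\gamma_j$ is assumed nonzero, the constraint $S^+,S^-\neq\emptyset$ forces such an $S$ to be a pair $\{i,j\}$ with $i\in\gamma_-$ and $j\in\gamma_+$. Lemma \ref{lem:normals} then produces, for each such pair, an inward normal $\alpha^{(i,j)}$ to the facet $F_{\{i,j\}}$; the ray it spans is the ray of $\Sigma$ corresponding to this pair, and distinct facets give distinct rays, so we obtain the bijection in (1).

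For part (2), I would apply the general characterisation of $\sigma_F$, which reduces the problem to identifying the facets of $\Delta$ that contain $F$. A facet $F_{\{i,j\}}$ contains $F$ iff $F\subset F_{\{i,j\}}$, which by the order-reversing correspondence in Proposition \ref{prop:structure-polytope} is equivalent to $\{i,j\}\subset S_F$. Combined with $i\in\gamma_-$ and $j\in\gamma_+$, this becomes $i\in S_F^-$ and $j\in S_F^+$, which under the ray labelling from (1) gives exactly the convex hull description in the statement. Part (3) then follows immediately: from (2), $\sigma_F\subset\sigma_E$ iff every generating pair $(i,j)$ of $\sigma_F$ also generates $\sigma_E$, i.e.\ $S_F^{\pm}\subset S_E^{\pm}$, i.e.\ $S_F\subset S_E$; alternatively, one invokes the standard reversal $\sigma_F\subset\sigma_E\iff F\supset E$ and combines it with the reversal of Proposition \ref{prop:structure-polytope}.

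The technical content has already been packaged into Proposition \ref{prop:structure-polytope} and Lemma \ref{lem:normals}, so I do not anticipate a serious obstacle; the remaining work is essentially bookkeeping to track facet containment through these identifications. The only mildly delicate point is confirming that the inward normal produced by Lemma \ref{lem:normals} genuinely spans the ray of $\Sigma$ corresponding to its facet (rather than, e.g., a negative multiple); this is immediate from the inwardness statement in that lemma.
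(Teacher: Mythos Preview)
Your proposal is correct and follows essentially the same route as the paper: the proposition is stated there as a summary of the preceding discussion, which derives all three parts from Proposition~\ref{prop:structure-polytope}, Lemma~\ref{lem:normals}, and the standard description of the normal fan exactly as you outline. The only cosmetic difference is that for part~(3) the paper invokes the general order-reversal $\sigma_F \subset \sigma_E \iff E \subset F$ directly, which is your ``alternatively'' option.
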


  The descriptions of the normal fan $\Sigma$ and the Newton polytope $\Delta$ above do not depend on whether the polytope is primitive or not. This is key to generalising the results of \cite{BCM}, as it allows us to describe, as we shall see below, the boundary components for refinements of $\Sigma$ when $\Delta$ is not a primitive polytope. 

\begin{rem}
From the description above, it is not hard to see that the fan $\Sigma(\Delta)$ is not simplicial\footnote{Recall that a fan is called simplicial if all of its cones are simplicial.} unless the Gale dual of $\Delta$ is a vector $\gamma$ such that $r=1$, $s=1$, or $r=s=2$. 
The latter case corresponds to $\Delta$ being a quadrilateral in the plane, and the former cases correspond to $\Delta$ being a simplex. 

To give an example in which the fan $\Sigma$ is not simplicial, consider a polytope $\Delta$ with Gale dual $(\ga_1,\dots,\ga_{5})=(-30,-1,6,10,15)$ (see \cite[\S 3]{villegas-integral-factorial}).
The normal cone to the vertex $F$ with $S_F= \{1,2,3,4\}$ is a $3$-dimensional cone, but its minimal generators are the four rays $(1,3),(1,4),(2,3)$ and $(2,4)$. Thus, the fan is not simplicial.
\end{rem}

\subsection*{Refinement of the Normal Fan} \label{subsection:refinemnet}
Now, we want to describe a simplicial refinement of $\Sigma$. 
We use the indexing of the "staircase" refinement from \cite{BCM} to construct a refinement of $\Sigma$, which we denote by $\Sigma'$.  Let $\mathcal{C}$ be the set of all sequences $C=((i_1,j_1),\dots, (i_l,j_l))$ of distinct elements satisfying $1\le i_1 \le \dots \le i_l \le r$ and $r+1\le j_1 \le \dots \le j_l \le r+s$.  
The length of a sequence $C \in \mathcal{C}$ is $l_C=l$. The support $S_C$ of $C$ is the set $\{i_1, \dots i_l,j_1,\dots, j_l\}$.  
Associated to a sequence $C \in \mathcal{C}$ is a cone $\sigma_C \subset \R^{d}$ which is given by the convex hull of the rays $(i_1,j_1),\dots, (i_l,j_l)$. We associate the zero cone to the empty sequence.
A sequence $C$, or cone $\sigma_C$, is said to be maximal if $S_C =\{1,\dots,d+2\}$.
\begin{defi}\label{defi:fan-refinement}
     We define $\Sigma'$ to be the collection of cones $\sigma_C$ that are not maximal.
\end{defi}

\begin{prop}\label{prop:sigma-is-a-fan}
    The collection of cones $\Sigma'$ is a simplicial fan.
\end{prop}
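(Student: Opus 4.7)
The plan is to verify the three defining properties of a simplicial fan for $\Sigma'$: (a) every $\sigma_C$ is simplicial, (b) every face of a cone in $\Sigma'$ lies in $\Sigma'$, and (c) the intersection of any two cones is a common face of each.

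The crucial preliminary observation is a lifting principle: since $M$ has rank $d+1$ with $\ker M = \R \cdot \ga$, the map $(\al_0, \al) \mapsto (\al_0, \al) M$ from $\R^{d+1}$ to $\R^{d+2}$ is injective and sends $(1, 0, \ldots, 0)$ to $\mathbf{1}$. Hence any linear relation $\sum_k c_k \al^{(i_k, j_k)} = 0$ among rays lifts uniquely to $\sum_k c_k x^{(i_k, j_k)} = A \cdot \mathbf{1}$ in $\R^{d+2}$ for some $A \in \R$. When $S_C \subsetneq \{1, \ldots, d+2\}$, reading this equation at any coordinate outside $S_C$ forces $A = 0$.

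For (a), I would induct on the length $l$ of $C = ((i_1, j_1), \ldots, (i_l, j_l))$. The combinatorial heart is: at the last pair $(i_l, j_l)$, either $i_l$ does not appear as a first coordinate in any other pair of $C$, or $j_l$ does not appear as a second coordinate in any other pair. Indeed, if both were repeated, the weak monotonicity of $(i_k)$ and $(j_k)$ would force $i_{l-1} = i_l$ and $j_{l-1} = j_l$, contradicting distinctness of pairs. Reading the relation $\sum_k c_k x^{(i_k, j_k)} = 0$ at that unique index yields $c_l = 0$, and inductively the truncation $C' = ((i_1, j_1), \ldots, (i_{l-1}, j_{l-1}))$ is still non-maximal, so all $c_k = 0$. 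Axiom (b) is an immediate corollary: any subsequence of the pairs of $C$ in the induced order is a staircase $C'$ with $S_{C'} \subseteq S_C$ proper, so $\sigma_{C'} \in \Sigma'$.

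Axiom (c) is the main obstacle. I would prove the stronger statement $\sigma_C \cap \sigma_{C'} = \sigma_{C \cap C'}$, where $C \cap C'$ denotes the sub-staircase of pairs common to both (this is a staircase since it is the intersection of two chains in the partial order on $\ga_- \times \ga_+$ given by coordinatewise $\le$). The inclusion $\supseteq$ is immediate. For $\subseteq$, writing $x = \sum_k c_k \al^{(i_k, j_k)} = \sum_{k'} c'_{k'} \al^{(i'_{k'}, j'_{k'})}$ with all coefficients non-negative and subtracting yields, after lifting, $\sum_k c_k x^{(i_k, j_k)} - \sum_{k'} c'_{k'} x^{(i'_{k'}, j'_{k'})} = A \cdot \mathbf{1}$. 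The aim is to show $c_k = 0$ whenever $(i_k, j_k) \notin C'$. I would select such a pair that is maximal with respect to the staircase order; any pair of $C'$ sharing one of its indices must then lie strictly above it in this order, by the chain condition on $C'$ together with distinctness. A careful bookkeeping of positive contributions at the coordinate corresponding to a uniquely extremal index of that pair, together with the sign constraints $c_k, c'_{k'} \ge 0$ and $\ga_j > 0 > \ga_i$, will force $c_k = 0$; iterating this peeling argument completes the proof.
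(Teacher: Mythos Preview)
Your parts (a) and (b) are correct and match the paper (which peels from the first pair rather than the last, but either end works). The gap is in (c). First, you never establish $A=0$ for the intersection relation: here $S_C\cup S_{C'}$ may be all of $\{1,\dots,d+2\}$, so you cannot read off $A$ at a missing coordinate as you did in (a). The remedy is to use non-maximality of \emph{each} staircase together with the fact that every entry of every $x^{(i,j)}$ is non-negative: reading at some $m\notin S_C$ gives $A\le 0$, while reading at some $m'\notin S_{C'}$ gives $A\ge 0$.

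Second, and more seriously, your peeling claim is false. With $r=2$, $s=3$ take $C=((1,3),(2,3),(2,4))$ and $C'=((1,3),(1,4),(2,4))$; both are non-maximal (index $5$ is absent), and $C\setminus C'=\{(2,3)\}$. The pair $(1,3)\in C'$ shares the index $3$ with $(2,3)$ yet lies strictly \emph{below} it in the staircase order, so the ``uniquely extremal index'' you need does not exist, and your bookkeeping cannot isolate $c_{(2,3)}$. A route that does work, once $A=0$: positivity of the $c_p$ and of all entries of the $x^p$ forces $c_p=0$ whenever $p$ meets $S_C\setminus S_{C'}$ (and symmetrically), reducing to the case of equal support; one then shows that two equal-support staircases whose cones share a relative-interior point must coincide, by analysing the first step at which they diverge. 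The paper's own treatment of this step is the single phrase ``Similarly, for intersections'', so its proof is no more explicit than yours---but the specific mechanism you propose does not go through as written.
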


\begin{rem}\label{rem:first-meet}
Given a cone $\sigma_{C}$ of $\Sigma'$ associated to a sequence $C=((i_1,j_1),\dots, (i_l,j_l))$, in part (3) of the proof of Proposition \ref{prop:sigma-is-a-fan} below, we show that $\sigma_C$ is contained in the cone $\sigma_F$ with $S_F = \{i_1,\dots,i_l,j_1,\dots,j_l\}$. 
In fact, every cone of $\Sigma$ which contains $\sigma_{C}$ contains $\sigma_F$, thus $\sigma_F$ is the smallest cone which contains $\sigma_C$.
Note that this implies that the first meet locus of $\sigma_C$, as defined in \cite[\S 2.4]{Denef-Loeser}, is the face $F$ of $\Delta$.  
%Indeed, the relative interior of $\sigma_C$ is contained in the relative interior of $\sigma_F$, and the first meet locus of a point in the relative interior of $\sigma_F$ is $F$. 
\end{rem}

\begin{prop}\label{prop:sigma-prime-is-a-refinement}
   The fan $\Sigma'$ is a refinement of $\Sigma$.
\end{prop}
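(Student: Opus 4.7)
The plan is to verify directly the two defining conditions for $\Sigma'$ to refine $\Sigma$: (i) every cone of $\Sigma'$ is contained in some cone of $\Sigma$, and (ii) the supports agree, $|\Sigma'|=|\Sigma|$.

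Condition (i) is precisely the content of Remark~\ref{rem:first-meet}: for a non-maximal sequence $C=((i_1,j_1),\dots,(i_l,j_l))$, the cone $\sigma_C$ is contained in $\sigma_F$, where $F$ is the proper face of $\Delta$ with $S_F=\{i_1,\dots,i_l,j_1,\dots,j_l\}$.

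For (ii), since the normal fan $\Sigma$ of $\Delta$ is complete, $|\Sigma|=\R^d$, and (i) gives $|\Sigma'|\subseteq|\Sigma|$. It therefore suffices to show that each cone $\sigma_F\in\Sigma$ is the union of the cones of $\Sigma'$ contained in it. Fix a proper face $F$ with $S_F^-=\{i_1<\dots<i_a\}$ and $S_F^+=\{j_1<\dots<j_b\}$. By Remark~\ref{rem:first-meet}, the cones of $\Sigma'$ contained in $\sigma_F$ are exactly those $\sigma_C$ with $S_C\subseteq S_F$, and the top-dimensional ones among these correspond to monotone lattice paths from $(i_1,j_1)$ to $(i_a,j_b)$ in the grid $S_F^-\times S_F^+$. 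Since $F$ is proper, $S_F\subsetneq\{1,\dots,d+2\}$, so any such $C$ is automatically non-maximal and $\sigma_C\in\Sigma'$.

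What remains is the combinatorial claim that these staircase cones cover $\sigma_F$. The cone $\sigma_F$ is generated by the $ab$ rays $\al^{(i,j)}$ with $(i,j)\in S_F^-\times S_F^+$ and has dimension $a+b-1$, so it carries exactly $(a-1)(b-1)$ independent linear relations among its generators; this identifies $\sigma_F$ with a cone over the product of simplices $\Delta^{a-1}\times\Delta^{b-1}$, and the $\sigma_C$ with the cones of the classical staircase triangulation of that product (see, e.g., \cite{triangulations-book}). The main obstacle is proving this covering from first principles: a self-contained argument would proceed by induction on $a+b$, the base cases $a=1$ or $b=1$ being immediate since the unique monotone path uses every ray of $\sigma_F$; the inductive step applies a greedy ``peel off the corner'' reduction that, given $v=\sum c_{ij}\al^{(i,j)}$ with $c_{ij}\ge 0$, uses the non-trivial linear relations among the $\al^{(i,j)}$ (which depend on the Gale dual $\ga$) to remove inversions from the support of $(c_{ij})$, ultimately confining the support to a sub-grid $S_F^-\times(S_F^+\setminus\{j_k\})$ or $(S_F^-\setminus\{i_k\})\times S_F^+$ where the inductive hypothesis applies. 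Controlling the non-negativity of the coefficients through these redistributions is the delicate point.
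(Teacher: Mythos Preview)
Your strategy matches the paper's: condition (i) is Remark~\ref{rem:first-meet}, and for (ii) the paper also proves $|\Sigma'|=\R^d$ via a greedy peeling. The one piece you flag as ``delicate'' is exactly what the paper resolves, and the trick is a coordinate change rather than an induction on $a+b$. To $\al\in\R^d$ the paper associates the unique $x=(\al_0,\al)M\in\R^{d+2}_{\ge 0}$ with $\min_j x_j=0$ and $\sum_j\ga_jx_j=0$; in these coordinates the ray generators become $x^{(i,j)}=-\ga_i e_j+\ga_j e_i$, each with exactly two positive entries. The greedy step is then: let $i(x),j(x)$ be the least indices in $\ga_-,\ga_+$ with nonzero entry and subtract the largest multiple of $x^{(i(x),j(x))}$ that keeps $x\ge 0$; this zeros at least one more entry and weakly increases $i(\cdot),j(\cdot)$, so the process terminates in a monotone sequence $C$ with $\al\in\sigma_C$, automatically non-maximal since $x$ had a zero entry from the start. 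This coordinate change is precisely the linear isomorphism you invoke between $\sigma_F$ and the cone over $\Delta^{a-1}\times\Delta^{b-1}$ (rescale the $j$-th coordinate by $|\ga_j|$), and the greedy algorithm \emph{is} the staircase decomposition in those coordinates---so your two proposed routes are really one route, and passing to $x$-coordinates is what makes the non-negativity bookkeeping trivial.
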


\begin{rem}
  The fan $\Sigma'$ being simplicial implies that the proper toric variety  $\Ps_{\Sigma'}$ associated to $\Sigma'$ is quasi-smooth, i.e. it only has finite quotient singularities (see \cite{batyrev-cox}). 
  This is not so far from being smooth; for instance, over $\C$, the cohomology group $H^k(\Ps_{\Sigma'}(\C),\C)$ is a pure Hodge structure of weight $k$, for each $k$ (see \cite{steenbrink-vanishing}). 
\end{rem}

The rest of this section is devoted to proving the results of this section.
We note that Proposition \ref{prop:structure-polytope} is well-known; it is a special case of \cite[Lemma 4.1.38 (iii)]{triangulations-book} whose proof uses \emph{circuits and co-circuits}. We provide our own elementary proof since we need the notation and ideas to prove the rest of the statements. 

\begin{proof}[Proof of Propositions \ref{prop:structure-polytope} and Lemma \ref{lem:normals}]
Let $F$ be a face of $\Delta$, then $F= \Delta \cap H$ for some supporting hyperplane $H$ which is given by
\[
 \al \cdot (y-m_{*j_0}) =0
\]
for some $\al \in \R^{d},$ which we assume to be an inward normal to $\Delta$, and $m_{*j_0} \in F$. Define $x_j \coloneqq \al \cdot (m_{*j}-m_{*i_0})$ for $j=1,\dots,d+2$. Then,
\begin{align*}
    x_j= 0\quad &\text{ if and only if } m_{*j} \in F, \\
    x_j> 0\quad  &\text{ if and only if } m_{*j} \notin F.
\end{align*}
Furthermore, 
\begin{align*}
\sum_{j=1}^{d+2} \ga_j x_j &= \sum_{j=1}^{d+2} \ga_j \al \cdot (m_{*j}-m_{*j_0})\\
&= \sum_{i=1}^d \al_i \sum_{j=1}^{d+2}  \ga_j (m_{ij}-m_{ij_0})=0.
\end{align*}
Conversely, let $x \in \R^{d+2}_{\ge 0}$ be such that $x_{j_0}=0$ for some $j_0$ and $\sum_{j=1}^{d+2} \ga_jx_j =0$. We call such a vector $x$ a relation. 
Since the rows $\mathbf{1}, m_1,\dots, m_d$ of $M$ are an $\R$-basis of the kernel of $x \mapsto \ga\cdot x$,  there exist a unique $(\al_0^x,\al^x) \in \R\times \R^{d}$ such that
\[
(\al_0^x,\al^{x}) M = x.
\]
We associate to $x$ the supporting hyperplane
\[
H_x:  \al^x \cdot (y-m_{*j_0}) =0.
\]
Then, 
\begin{align*}
    \al^x \cdot (y-m_{*j_0})= 0\quad &\text{ if and only if } m_{*j} \in H_x \cap \Delta, \\
    \al^x \cdot (y-m_{*j_0})>0\quad  &\text{ if and only if } m_{*j} \notin H_x\cap \Delta.
\end{align*}
Note that two relations $x$ and $y$ define the same hyperplane if and only if  $x$ and $y$ have the same zero components. 
Two such relations are said to be equivalent. Thus, there is a one-to-one correspondence between the set of non-zero relations up to equivalence and the set of proper faces of $\Delta$  given by 
$
x \mapsto H_x \cap \Delta.
$
We denote the inverse of this map by $F \mapsto [x_F]$.
Now, note that since $x_j \ge 0$ for all $j$ and $\sum_{j=1}^{d+2} \ga_j x_j =0,$ we have $x_j > 0 $ for some $j \le r$ and $x_j >0$ for some $j >r$.
Furthermore, if $F \subset E$ are faces of $\Delta$, then the set of indices for which $(x_F)_i=0$ is a subset of the set of indices for which $(x_E)_i=0$, and vice versa. This proves Proposition \ref{prop:structure-polytope}.

Finally, note that since the facets of the polytope are maximal with respect to inclusion, we get that facets correspond to relations $x$ with exactly two non-zero components. The facet $(i,j)$ is the facet corresponding to the class $[x^{(i,j)}]$. 
Altogether, this proves Lemma \ref{lem:normals}.
\end{proof}
\begin{proof}[Proof of Proposition \ref{prop:sigma-is-a-fan}]
The proof is divided into a few steps.
\begin{enumerate}
    \item $\Sigma'$ is simplicial:
    let $\sigma_C \in \Sigma'$ be the cone associated to the sequence 
    \[
    C=((i_1,j_1), \dots, (i_l,j_l)).
    \]
    We prove that $\sigma_C$ is $l$-dimensional. To do so, we prove that the rays $(i_1,j_1),\dots,(i_l,j_k)$ are linearly independent.
    Let $\al^{(i_k,j_k)}$ be the inward normal to the facet $(i_k,j_k)$ corresponding to the relation $x^{(i_k,j_k)}$. Recall that there exists a unique $\al_0^{(i_k,j_k)} \in \R$ such that $(\al_0^{(i_k,j_k)}, \al^{(i_k,j_k)})\in \R^{d+1}$ is the unique vector such that 
    \[
    \al_0^{(i_k,j_k)} \mathbf{1} + \sum_{r=1}^{d} \al^{(i_k,j_k)}_rm_r = x^{(i_k,j_k)}.
    \]
    Suppose for contradiction that $\al^{(i_1,j_1)},\dots,\al^{(i_l,j_l)}$ are linearly dependent. Without loss of generality, we can assume that there exist non-zero $r_1,\dots,r_l$ such that
    \[
    \sum_{k=1}^{l} r_k \al_r^{(i_k,j_k)}=0.
    \]
    Then, we have
    \begin{align*}
         \sum_{k=1}^l r_k \left(\al_0^{(i_k,j_k)} \mathbf{1} + \sum_{r=1}^{d} \al^{(i_k,j_k)}_r m_r\right) &=  \sum_{k=1}^{l} r_kx^{(i_k,j_k)}\\
         \sum_{k=1}^l r_k \al_0^{(i_k,j_k)} \mathbf{1}&= \sum_{k=1}^{l} r_kx^{(i_k,j_k)}.
    \end{align*}
    Thus, for each $s \in \{1,\dots,d+2\}$, we have 
    \begin{equation}\label{eq:simplicial-proof}
        \sum_{k=1}^{l} r_kx^{(i_k,j_k)}_s = c \coloneqq \sum_{k=1}^l r_k \al_0^{(i_k,j_k)} .
    \end{equation}
    We have two cases:
    \begin{enumerate}
        \item If $c=0,$ then since $(i_1,j_1) \neq (i_2,j_2)$, either $i_2>i_1$ or $j_2>j_1$. Suppose, without loss of generality, that $i_2>i_1$. Then, the index $i_1$ only appears once in the sequence $C$.  Thus,
        for $s=i_1$ in Equation \eqref{eq:simplicial-proof}, we have 
        \[
        \sum_{k=1}^{l} r_kx^{(i_k,j_k)}_{i_1} =  r_{i_1} x^{(i_1,j_1)}_{i_1} = 0.
        \]
        Since $x^{(i_1,j_1)}_{i_1} \neq 0,$ we have $r_{i_1}=0$, which is a contradiction.
        \item If $c \neq 0$, then for each $s \in
         \{1 , \dots , d+2\}$, there is $k$ such that $x_s^{(i_k,j_k)} \neq 0$. This implies that $\bigcup_{k=1}^{l} \{i_k,j_k\} = \{1,\dots,d+2\}$ i.e. $C$ is maximal, which is a contradiction.
    \end{enumerate}
    \item $\Sigma'$ is closed under taking intersections and faces: observe that any subsequence of $C \in \mathcal{C}$ is also in $\mathcal{C}$. Furthermore, a face of $\sigma_C$ is given by $\sigma_{D}$ for some subsequence $D$ of $C$. Similarly, for intersections.
    \item Each cone of $\Sigma$ is strongly convex: we prove this by proving that each cone $\sigma_C$ of $\Sigma'$ is contained in some cone of $\Sigma$. This proves that $\sigma_C$ is strongly convex since it is a subset of a strongly convex cone. Suppose that $C= ((i_1,j_1), \dots, (i_l,j_l))$. We claim that $\sigma_C \subset \sigma_F$, where $F$ is the face of $\Delta$ defined by $S_F= \{i_1,\dots,i_l,j_1, \dots,j_k\}$ (note that $S_F$ is a proper subset of $\{1,\dots,d+2\}$ since $C$ is not maximal). 
    By Proposition \ref{prop:normal-fan}, for each $k$, the ray $(i_k,j_k)$ is contained in $\sigma_F$, thus $\sigma_{C} \subset \sigma_{F}$.
\end{enumerate}
\end{proof}

\begin{proof}[Proof of Proposition \ref{prop:sigma-prime-is-a-refinement}]
We need to show that each cone of $\Sigma$ is a union of cones of $\Sigma'$.
Since $\Sigma$ is the normal fan of a polytope, we have $\bigcup_{\sigma_C \in \Sigma} \sigma_C = \R^{d}$.
By the first part of Remark \ref{rem:first-meet}, it suffices to show that
\[
\bigcup_{\sigma_C \in \Sigma'} \sigma_C =\R^{d}.
\]
Let $\al \in \R^{d}$ be a non-zero vector, we want to show that $\al$ is in some cone $\sigma_C$. There exists a unique $\al_0$ such that 
\begin{equation}\label{eq:proof-refinement-1}
    \al_0 \mathbf{1} + \sum_{i=1}^{d} \al_i m_i = x \in \R^{d+2}_{\ge 0}
\end{equation}
with $x_j \ge 0$ for all $j$, $\min_{j} x_j =0$, and $ \sum_{j=1}^{d+2}\ga_j x_j=0$.
Suppose that
\begin{equation} \label{eq:proof-refinement-2}
x= c_1 x^{(i_1,j_1)}+ \dots +c_{l} x^{(i_l,j_l)},
\end{equation}
for $c_i \ge 0 ,i=1,\dots, l$, and $i_1 \le \dots \le i_l$ and $j_1 \le \dots \le j_l$. We claim that $\al \in \sigma_C$, where $C = ((i_1,j_1),\dots,(i_l,j_l)).$
Indeed, writing the right-hand side in terms of the basis elements $ \mathbf{1}, m_1, \dots,m_d$, we get
\[
\al_k= \sum_{r=1}^{l} c_r \al_k ^{(i_r,j_r)}  \quad \text{ for } k=1,\dots, d. 
\]
Thus, $\al = \sum_{r=1}^l c_r \al^{(i_r,j_r)}$, as desired.
To prove Equation \eqref{eq:proof-refinement-2}, for any such $x$, let
\begin{align*}
    i(x) &= \min \{ i \le r \mid x_i \neq 0\}, \text{ and}\\
    j(x) &= \min \{ j \ge r+1  \mid x_j \neq 0\}.
\end{align*}
Let $x_f= \min\left\{ \frac{x_{i(x)}}{|\ga_{i(x)}|}, \frac{x_{j(x)}}{|\ga_{j(s)}|}\right\}$. Then, 
\[
x= x_f x^{(i(x),j(x))} + x- x_f x^{(i(x),j(x))}  = x_f x^{(i(x),j(x))} + y.
\]
Note that $y_j \ge 0$ for all $j$ and $y_{i(x)}=0$ or $y_{i(y)}=0$.
Thus, $y$ has one or two more zero components than $x$. 
If $y=0,$ we are done. If not, then the sets $ \{ i \le r \mid x_i \neq 0\}$ and $ \{ j \ge r+1  \mid x_j \neq 0\}$ are non-empty, since $\sum_{j=1}^{r+s} \ga_j y_j=0$. Furthermore, $i(x) \le i(y)$ and $j(x) \le j(y)$. Repeating this process, we get \eqref{eq:proof-refinement-2}.
\end{proof}

\section{Compactifications}\label{section:compactifications}
As before, let $Z$ be the hypersurface defined by a Laurent polynomial
\[f(x) =\sum_{j=1}^{d+2} u_j \prod_{i=1}^d x_i^{m_{ij}}=0.\]
Let $q$ be a prime power. 
We associated to $f$ the following data:
\begin{enumerate}
    \item The gale dual $\ga \in \Z ^{d+2} =\Z^{r+s}$ which we assume satisfies 
    \[\ga_1\le \dots\ga_r <0 < \ga_{r+1}\le\dots\le \ga_{r+s}.\]
    \item A matrix $\mathcal{N}$ and vectors $\rho_{1}, \dots, \rho_d \in\Z^{d+2}$ such that $Z$ is isomorphic to the variety given by
    \begin{align*}
    \sum_{j=1}^{d+2} w_j&= 0, \quad  w_1\cdots w_{d+2} \neq 0,\\
        \prod_{j=1}^{d+2} w_j^{\gamma_j} &=t,\quad  t \coloneqq \prod_{j=1}^{d+2} u_j^{\gamma_j},\\
        \prod_{j=1}^{d+2} w_j^{\rho_{kj}} &=  \prod_{i=1}^d z_i^{\mathcal{N}_{ki}},  \qquad k=1,\dots,d,
    \end{align*}
    in $\mathbb{P}^{d+1} \times \G_m^{d}$.
    \item The set $\Lambda(q)$ of solutions of the system of equations \begin{align*}
    \lambda_1 \mathcal{N}_{11}+ \dots +\lambda_d \mathcal{N}_{d1}&=0,\\
    \vdots \\
    \lambda_1 \mathcal{N}_{1d}+ \dots +\lambda_d \mathcal{N}_{dd}&=0
    \end{align*}
    in $(\Z/q^\times\Z)^{d}$.
    \item For each $\lambda \in \Lambda(q)$, we defined $\delta(\lambda) \in \left(\Z/q^\times\Z\right)^{d+2}$ by
    \begin{align*}
        \delta(j,\lambda) &\coloneqq \sum_{i=1}^d \lambda_i \rho_{ij} \in \Z/q^\times\Z \quad \text{ for } j=1,\dots, d+2 ,\\
        \delta(\lambda) &\coloneqq (\delta(1,\lambda),\dots,\delta(d+2,\lambda)) .
    \end{align*}
    \item  The numbers $\sigma_k \in \F_q^\times$, $k=1,\dots,d$, defined by 
    \[
    \sigma_k= \prod_{j=1}^{d+2} u_j^{\rho_{kj}}.
    \]  
\end{enumerate}
We want to count the number of $\F_q$-points of two compactifications of $Z$.

\subsection{Compactification I}\label{subsection:compactification-1}
The toric variety $\Ps_{\Delta}$  (see \cite{Denef-Loeser} and \cite{Danilov-Khovanskii}) is the toric variety associated to the normal fan $\Sigma$ of $\Delta$. 
It is a compactification of $\G_m^d$ and admits a decomposition into disjoint locally closed subvarieties
\begin{equation}\label{eq:toric-strati}
\Ps_{\Delta}= \bigcup_{F \preceq \Delta } \T_F,
\end{equation}
where $\T_F$ is a torus of dimension $\dim(F)$,  and the union is over all faces of the polytope, including $\Delta$ itself.

Let $\overline{Z}$ be the closure of $Z$ in $\Ps_{\Delta}$, and, for each face $F$, define $Z_F\coloneqq \overline{Z} \cap \T_F $. Then, $\overline{Z}$ admits a decomposition into disjoint subvarieties
\[
\overline{Z} = \bigcup_{F \preceq \Delta }  Z_F.
\]
Furthermore, if $f_{|F}$ is the restriction of the Laurent polynomial $f$ to the face $F$, then the variety $V(f_{|F}) \subset \G_m^{d}$ is isomorphic to the product  $\G_m^{d-\dim(F)} \times Z_F$ over $\F_q$.
Thus, to count the number of $\F_q$-points of $Z_F,$ we count the number of $\F_q$-points of $V(f_{|F})$ and divide it by $(q-1)^{d-\dim(F)}$.

%$Z_F$ is given as follows:
%\begin{equation}\label{eq:prod-Z-f}
%    \G_m^{d} \supset V(f_{|F}) \cong  \G_m^{d-\dim(F)} \times Z_F.
%\end{equation}
Now, let $F$ be a proper face of $\Delta$ corresponding to a set $S_F$. The restriction of $f$ to the face $F$ is given by
\[
f_{|F} = \sum_{j \notin S_F} u_j \prod_{i=1}^d x_i^{m_{ij}}.
\]
Thus, by Corollary \ref{cor:cover}, $V(f_{|F})$ is isomorphic to the variety given by
\begin{align*}
   \sum_{j\notin S_F} w_j&= 0, \quad  w_1\cdots w_{d+2} \neq 0,\\
    \prod_{j=1}^{d+2} w_j^{\gamma_j} &=t, \quad t = \prod_{j=1}^{d+2} u_j^{\ga_j},\\
    \prod_{j=1}^{d+2} w_j^{\rho_{kj}} &= \sigma_k \prod_{i=1}^d z_i^{\mathcal{N}_{ki}},  \qquad k=1,\dots,d,
\end{align*}
in $\mathbb{P}^{d+1} \times \G_m^{d}$.

To ease notation, we set $ G_{m,\lambda}\coloneqq \prod_{j=1}^{d+2}g\left(-m\gamma_j + \delta(j,\lambda)\right) \chi^{m}(t) \chi^{\lambda}(\sigma)$.
If $F$ is  a proper face of $\Delta$, then, by Proposition \ref{prop:main-count}, we have
\begin{align}
   \#Z_F(\F_q) &= \#V(f_{|{F}})(\F_q) (q-1)^{-|S_F|+1} \nonumber\\
    &= q^{-1} (q-1)^{d-|S_F|+1} + q^{-1}\sum_{\lambda \in \Lambda(q)}  \sum_{m=0}^{q-2} G_{m,\lambda }(-1)^{|S_F|} \prod_{j\in S_F}\epsilon\left( -m \gamma_j + \delta(j,\lambda) \right).
\end{align}
Furthermore, the variety $Z_{\Delta}$ is equal to $Z$, since it is the intersection of $\overline{Z}$ with the maximal torus of $\Ps_{\Delta}$. Thus, by Proposition \ref{prop:main-count} (with $S$ the empty set), the number of $\F_q$-points of $Z_{\Delta}$ is given by
\begin{align}
    \#Z_\Delta(\F_q) &= q^{-1}(q-1)^{d} +q^{-1}(q-1)^{-1} \sum_{\lambda \in \Lambda(q)}  \sum_{m=0}^{q-2} G_{m,\lambda }.
\end{align}
Since the varieties $Z_F$ are disjoint, we have
\begin{equation}\label{eq:index-compactification-I}
  \#\overline{Z}(\F_q) = \sum_{F} \#Z_F(\F_q)= \#Z_{\Delta}(\F_q)+ \sum_{F\precneq\Delta} \#Z_F(\F_q).  
\end{equation}
Recall that a proper set $S \subsetneq \{1,\dots,d+2\}$ defines a face of $\Delta$ if and only if $S^+ = S \cap \{1,\dots,r\}$ and $S^{-} =S \cap \{r+1,\dots,r+s\}$ are non-empty. Thus,
\begin{align*}
\sum_{F\precneq\Delta} \#Z_F(\F_q) &=  \sum_{ \substack{S \subsetneq \{1,\dots,d+2\}\\  2 \le|S| \le d+1 \\ |S^{+}|,|S^{-}|>0} } \#Z_{F_S}(\F_q).
\end{align*}
The number of faces $F$ of $\Delta$ for which $|S_F|=N$ is given by
\[
\sum_{\substack{m,n\ge1\\ m+n =N}} {\binom{r}{m}}{\binom{s}{n}}.
\]
Thus, we have
\begin{align}
\sum_{F\precneq\Delta} \#Z_F(\F_q) &= A_1 +A_2, \text{ where } \\
A_1 &\coloneqq\sum_{ |S|=2 }^{d+1} q^{-1} (q-1)^{d-|S|+1}\sum_{\substack{m,n\ge1\\ m+n =|S|}} {\binom{r}{m}}{\binom{s}{n}},\nonumber \\
A_2 &\coloneqq q^{-1}\sum_{\lambda \in \Lambda(q)}  \sum_{m=0}^{q-2} G_{m,\lambda }
\sum_{ \substack{S \subsetneq \{1,\dots,d+2\}\\  2 \le|S| \le d+1 \\ |S^{+}|,|S^{-}|>0} } 
\prod_{j\in S}\epsilon\left( -m \gamma_j + \delta(j,\lambda) \right) (-1)^{|S|} .
\end{align}
For fixed $m$ and $\lambda$, let $S_{m,\lambda}$  be the set of all $j$ for which  $-m \gamma_j +\delta(j,\lambda)=0 \pmod{q^\times}$. Then,
\begin{align}
A_2 &= q^{-1}\sum_{\lambda \in \Lambda(q)}  \sum_{m=0}^{q-2} G_{m,\lambda } \sum_{\substack{2\le |S|\le d+1, \\ S \subset S_{m,\lambda}, \\ |S^{+}|,|S^{-}|>0  }}(-1)^{|S|} \nonumber\\
&= q^{-1}\sum_{\lambda \in \Lambda(q)}  \sum_{m=0}^{q-2} G_{m,\lambda }\sum_{\substack{|S|=2}}^{|S_{m,\lambda}^+|+|S_{m,\lambda}^-|} (-1)^{|S|} \sum_{\substack{m,n\ge1\\ m+n =|S|}} { \binom{|S_{m,\lambda}^-| }{m}}{\binom{|S_{m,\lambda}^+|}{n}}.\nonumber
\end{align}
Note that if $S_{m,\lambda}^{+}$ or $S_{m,\lambda}^{-}$ is empty, then the sum over $|S|$ is $0$. We define 
\begin{equation}\label{eq:definition-eta}
\eta_{\delta(\lambda)}(-m) \coloneqq \begin{cases}
    0 &  \text{ if } \min( |S_{m,\lambda}^{-}|,|S_{m,\lambda}^{+}|) = 0,\\
    1 & \text{ otherwise.}
\end{cases}
\end{equation}
Then, by Lemma \ref{lem:identities} below, we have
\begin{align}
A_2 &=  q^{-1}\sum_{\lambda \in \Lambda(q)}  \sum_{m=0}^{q-2}G_{m,\lambda} \cdot \eta_{\delta(\lambda)}(-m).
\end{align}
On the other hand, by Lemma \ref{lem:identities} below, we have:
\begin{align*}
    A_1 &= \sum_{ |S|=2 }^{d+1} q^{-1} (q-1)^{d-|S|+1}\sum_{\substack{m,n\ge1\\ m+n =|S|}} {\binom{r}{m}}{\binom{s}{n}}\nonumber \\
    &= q^{-1} \big(q^{r}-(q-1)^{r}\big)\big(q^{s}-(q-1)^{s}\big) - q^{-1}(q-1)^{-1}.
\end{align*}
Putting everything together, we get:
\begin{align*}
    \#\overline{Z}(\F_q) &=  \#Z_\Delta(\F_q) + \sum_{F\precneq\Delta} \#Z_F(\F_q)\\
    &=q^{-1}(q-1)^{-1} \sum_{\lambda \in \Lambda(q)}  \sum_{m=0}^{q-2} G_{m,\lambda }+q^{-1}\sum_{m=0}^{q-2}G_{m,\lambda} \cdot \eta_{\delta(j,\lambda)}(-m)\\
    &+q^{-1}(q-1)^{d} +q^{-1} \big(q^{r}-(q-1)^{r}\big)\big(q^{s}-(q-1)^{s}\big) - q^{-1}(q-1)^{-1}\\
    &=  \sum_{\lambda \in \Lambda(q)} \sum_{m=0}^{q-2}  q^{-1}(q-1)^{-1}\cdot \left( 1+  (q-1) \cdot\eta_{\delta(j,\lambda)}(-m) \right) G_{m,\lambda} \\
    &+(q-1)^{r+s-2}+ q^{r+s-1}(q-1)^{-1} -q^{r-1}(q-1)^{s-1}- q^{s-1}(q-1)^{r-1}\\ 
    &- q^{-1}(q-1)^{-1}\\
    &=\sum_{\lambda \in \Lambda(q)} \sum_{m=0}^{q-2}(q-1)^{-1}  q^{\eta_{\delta(j,\lambda)}(-m)-1} G_{m,\lambda}\\
    &+\frac{q^{r+s-1}-1}{q-1}+ (q-1)^{r+s-2} -q^{r-1}(q-1)^{s-1}- q^{s-1}(q-1)^{r-1}.
\end{align*}
Thus, we have proven the following:
\begin{theo}\label{theo:point-count-p-delta}
    Let $\overline{Z}$ be the closure of the variety $Z$ in the toric variety $\Ps_{\Delta}$. Then,
\begin{align}
    \#\overline{Z}(\F_q)&= \frac{q^{r+s-1}-1}{q-1}+ (q-1)^{r+s-2} -q^{r-1}(q-1)^{s-1}- q^{s-1}(q-1)^{r-1}\nonumber\\
     &+\sum_{\lambda \in \Lambda(q)}  \frac{\chi^{\lambda}(\sigma)}{q-1}\sum_{m=0}^{q-2}q^{\eta_{\delta(\lambda)}(-m)-1} \prod_{j=1}^{d+2}g\left(-m\gamma_j + \delta(j,\lambda)\right) \chi^{m}\left(t\right). \label{eq:point-count-p-delta}
\end{align}
Recall that $\eta$ was defined in Equation \eqref{eq:definition-eta}.
\end{theo}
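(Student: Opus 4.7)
The plan is to use the toric stratification $\Ps_\Delta = \bigsqcup_{F \preceq \Delta} \T_F$ and the induced decomposition $\overline{Z} = \bigsqcup_F Z_F$ with $Z_F = \overline{Z} \cap \T_F$, so that $\#\overline{Z}(\F_q) = \sum_F \#Z_F(\F_q)$. The key input is that the $\F_q$-points of $V(f_{|F}) \subset \G_m^d$ are related to those of $Z_F$ via $\#V(f_{|F})(\F_q) = (q-1)^{d - \dim F} \#Z_F(\F_q)$, while Corollary \ref{cor:cover} identifies $V(f_{|F})$ with the variety $W_{S_F}$ (in the notation preceding Proposition \ref{prop:main-count}) whose point count is given by Proposition \ref{prop:main-count}. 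Proposition \ref{prop:structure-polytope} parametrises the proper faces by those $S \subsetneq \{1,\dots,d+2\}$ with $S^+ \coloneqq S \cap \gamma_+$ and $S^- \coloneqq S \cap \gamma_-$ both non-empty, and I will treat the open stratum $F = \Delta$ (i.e.\ $S = \emptyset$) separately.

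Using these inputs, I will substitute the formula from Proposition \ref{prop:main-count} into $\#Z_F(\F_q)$ for each face $F$, arriving at two types of contributions: a ``toric'' term depending only on $|S_F|$ and the dimension, and a ``Gauss sum'' term of the shape $G_{m,\lambda} \coloneqq \prod_j g(-m\gamma_j + \delta(j,\lambda)) \chi^m(t) \chi^\lambda(\sigma)$ multiplied by $(-1)^{|S_F|} \prod_{j \in S_F} \epsilon(-m\gamma_j + \delta(j,\lambda))$. After summing over $F$, the toric contribution splits into a $d$-dimensional main term $q^{-1}(q-1)^d$ from $Z_\Delta$ plus a sum over proper faces which, by the face-counting identity from Proposition \ref{prop:structure-polytope}, becomes
\begin{equation*}
\sum_{N=2}^{d+1} q^{-1}(q-1)^{d-N+1} \sum_{\substack{m,n\ge 1 \\ m+n = N}} \binom{r}{m}\binom{s}{n},
\end{equation*}
which I will collapse via the standard binomial identity (Lemma \ref{lem:identities}) to the closed form $q^{-1}\bigl(q^r - (q-1)^r\bigr)\bigl(q^s - (q-1)^s\bigr) - q^{-1}(q-1)^{-1}$, producing the polynomial prefactor stated in the theorem.

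For the Gauss-sum part, I will interchange the sums over $F$ (equivalently $S$) and over $(m,\lambda)$. For fixed $m, \lambda$, the factor $\prod_{j \in S} \epsilon(-m\gamma_j + \delta(j,\lambda))$ forces $S \subseteq S_{m,\lambda}$, where $S_{m,\lambda} = \{j : -m\gamma_j + \delta(j,\lambda) \equiv 0 \pmod{q^\times}\}$; so the inner sum reduces to $\sum_{S \subseteq S_{m,\lambda}, \, S^\pm \neq \emptyset, \, 2 \le |S| \le d+1} (-1)^{|S|}$. Using the inclusion-exclusion identity $\sum_{S \subseteq T,\, S^\pm \neq \emptyset}(-1)^{|S|} = (1 - [|T^+| = 0])(1 - [|T^-|=0]) = \eta_{\delta(\lambda)}(-m)$ (again from Lemma \ref{lem:identities}, with the $|S| \in \{0,1\}$ contributions cancelling), I obtain exactly the factor $\eta_{\delta(\lambda)}(-m)$. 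Combining this with the separate $Z_\Delta$ contribution $q^{-1}(q-1)^{-1}\sum_{\lambda,m} G_{m,\lambda}$ gives the coefficient $q^{-1}(q-1)^{-1}(1 + (q-1)\eta_{\delta(\lambda)}(-m)) = (q-1)^{-1} q^{\eta_{\delta(\lambda)}(-m)-1}$, which is precisely the factor appearing in the stated formula.

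The main obstacle is the combinatorial bookkeeping in the alternating sum over faces: one must verify carefully that the $|S| = 0, 1$ boundary terms and the condition $|S| \le d+1$ (excluding the maximal sets) are compatible with the inclusion-exclusion identity, and that the product decomposition $\eta = (1 - [S_{m,\lambda}^+ = \emptyset])(1 - [S_{m,\lambda}^- = \emptyset])$ matches Definition \eqref{eq:definition-eta}. Once this bookkeeping is in place, the rest is direct substitution and collection of terms.
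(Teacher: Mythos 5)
Your decomposition and strategy mirror the paper's own proof (toric stratification, Corollary~\ref{cor:cover} plus Proposition~\ref{prop:main-count} for each stratum, then Lemma~\ref{lem:identities} and inclusion--exclusion). However, the two bookkeeping points you flag as ``to be verified'' are exactly where your stated closed forms go wrong, and they must both be corrected to land on the theorem's formula.

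First, the toric term. The face sum is $A_1 = q^{-1}\sum_{k=2}^{r+s-1}(q-1)^{r+s-1-k}\sum_{m+n=k}\binom{r}{m}\binom{s}{n}$, whose exponent is $r+s-1-k$, not $r+s-k$. Applying Lemma~\ref{lem:identities} (which has exponent $r+s-k$ and upper limit $k=r+s$) gives $A_1 = q^{-1}(q-1)^{-1}\bigl[(q^r-(q-1)^r)(q^s-(q-1)^s)-1\bigr]$, \emph{not} $q^{-1}(q^r-(q-1)^r)(q^s-(q-1)^s)-q^{-1}(q-1)^{-1}$ as you wrote (try $q=3$, $r=s=2$: the correct $A_1$ is $4$, your closed form gives $49/6$). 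Second, the Gauss-sum term. The identity $\sum_{S\subseteq T,\,S^\pm\neq\emptyset}(-1)^{|S|}=\eta$ holds for the \emph{unrestricted} sum, but the actual sum over proper faces imposes $|S|\le d+1$. These differ only when $S_{m,\lambda}=\{1,\dots,d+2\}$, which happens precisely at $(m,\lambda)=(0,0)$ (by the argument used later in the proof of Corollary~\ref{cor:curves-hgm}). In that case you must subtract the excluded maximal term $(-1)^{d+2}$, and since $G_{0,0}=\prod_j g(0)=(-1)^{d+2}$, the net correction to $A_2$ is exactly $-q^{-1}$. Once you use the correct $A_1 = q^{-1}(q-1)^{-1}(X-1)$ (where $X=(q^r-(q-1)^r)(q^s-(q-1)^s)$) and append the $-q^{-1}$ to $A_2$, the polynomial part $q^{-1}(q-1)^{r+s-2}+q^{-1}(q-1)^{-1}(X-1)-q^{-1}$ does collapse to the stated prefactor $\frac{q^{r+s-1}-1}{q-1}+(q-1)^{r+s-2}-q^{r-1}(q-1)^{s-1}-q^{s-1}(q-1)^{r-1}$, and the $G$-part gives $(q-1)^{-1}q^{\eta-1}$ as you computed. (For what it's worth, the paper's displayed algebra elides these same two corrections in a compensating way, so cross-checking against the displayed intermediate steps will be confusing; check against the final formula numerically instead.)
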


\begin{rem}
The function $\eta$ defined above is easily computed as follows:
Recall that $\delta(j,\lambda)\coloneqq \sum_{i=1}^d \lambda_i \rho_{ij}$.
For a fixed $\lambda$, write 
\begin{equation}\label{eq:defi-eta}
 \frac{ \prod_{\gamma_j<0} \left(T^{-\ga_j}- \zeta_{q^\times}^{\delta(j,\lambda)}\right)}{ \prod_{\gamma_j>0} \left(T^{\ga_j}- \zeta_{q^\times}^{-\delta(j,\lambda)}\right)}.   
\end{equation}
If $e^{2\pi i (-m)/q^\times}$ is a common zero of the numerator and the denominator of Equation \eqref{eq:defi-eta}, then $\eta_{\delta(\lambda)}(-m)=1$. Otherwise, it is equal to $0$.
\end{rem}

\begin{lem}\label{lem:identities} We have the following identities. 
    \begin{equation}\label{eq:alternate}
        \sum_{|S|=2}^{r+s} (-1)^{|S|} \sum_{\substack{m,n\ge 1\\ m+n=|S|}} {\binom{r}{m}}{\binom{s}{n}} =1
   \end{equation}
   \begin{equation}\label{eq:normal}
       \sum_{|S|=2}^{r+s} (q-1)^{r+s-|S|}  \sum_{\substack{m,n\ge 1\\ m+n=|S|}} {\binom{r}{m}}{\binom{s}{n}} = \big(q^{r}-(q-1)^{r}\big)\big(q^{s}-(q-1)^{s}\big)
   \end{equation}  
\end{lem}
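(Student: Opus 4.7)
The plan is to prove both identities by factoring the double sum over $m,n$ (with $m+n=|S|$) as a product of two independent sums, and then recognising each factor via the binomial theorem.

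For the first identity, I would rewrite the double sum by setting $k = |S|$, so that
\begin{equation*}
\sum_{k=2}^{r+s} (-1)^{k} \sum_{\substack{m,n\ge 1\\ m+n=k}} \binom{r}{m}\binom{s}{n} = \sum_{m=1}^{r}\sum_{n=1}^{s} (-1)^{m+n} \binom{r}{m}\binom{s}{n} = \Bigl(\sum_{m=1}^{r}(-1)^{m}\binom{r}{m}\Bigr)\Bigl(\sum_{n=1}^{s}(-1)^{n}\binom{s}{n}\Bigr).
\end{equation*}
Since $\sum_{m=0}^{r}(-1)^{m}\binom{r}{m} = 0$ for $r\ge 1$, each factor equals $-1$ (recall $r,s \ge 1$ because $\ga$ has both a positive and a negative component), and the product is $1$.

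For the second identity, the same manipulation gives
\begin{equation*}
\sum_{k=2}^{r+s}(q-1)^{r+s-k}\sum_{\substack{m,n\ge 1\\ m+n=k}}\binom{r}{m}\binom{s}{n} = \Bigl(\sum_{m=1}^{r}(q-1)^{r-m}\binom{r}{m}\Bigr)\Bigl(\sum_{n=1}^{s}(q-1)^{s-n}\binom{s}{n}\Bigr).
\end{equation*}
By the binomial theorem, $\sum_{m=0}^{r}\binom{r}{m}(q-1)^{r-m} = q^{r}$, so the $m=0$ term isolates as $(q-1)^{r}$ and each factor equals $q^{r}-(q-1)^{r}$ (resp.\ $q^{s}-(q-1)^{s}$), yielding the claimed product.

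The only subtlety is justifying the reindexing: one must check that the constraint $|S|=k \in \{2,\dots,r+s\}$ together with $m,n \ge 1$ and $m+n=k$ is equivalent to $(m,n) \in \{1,\dots,r\}\times\{1,\dots,s\}$ with no further restriction. Since $\binom{r}{m} = 0$ for $m>r$ and $\binom{s}{n}=0$ for $n>s$, the sums over $m,n$ can be freely extended to $\{1,\dots,r\}$ and $\{1,\dots,s\}$ respectively, which makes the factorisation valid. This is the only non-routine step, and it is entirely formal, so no serious obstacle is expected.
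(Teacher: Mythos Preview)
Your proof is correct and is essentially the same as the paper's: the paper packages the factorisation into the single polynomial identity $P(x)=((1+x)^{r}-1)((1+x)^{s}-1)=\sum_{k=2}^{r+s}\bigl(\sum_{m+n=k,\;m,n\ge1}\binom{r}{m}\binom{s}{n}\bigr)x^{k}$ and then specialises $x=-1$ and $x=(q-1)^{-1}$, which is precisely your binomial-theorem factorisation written in generating-function form.
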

\begin{proof}
Let $P(x)=((1+x)^{r}-1)\cdot ((1+x)^{s}-1)$. Note that 
\begin{equation}\label{eq:lem-identity}
 P(x)=\sum_{k=2}^{r+s} \sum_{\substack{m,n\ge 1\\ m+n=k}} {\binom{r}{m}}{\binom{s}{n}}x^{k}.   
\end{equation}
Substituting $x=-1$ in Equation \eqref{eq:lem-identity} gives \eqref{eq:alternate}. 
Substituting $x=(q-1)^{-1}$ in Equation \eqref{eq:lem-identity} and multiplying by $(q-1)^{r+s}$ gives \eqref{eq:normal}.
\end{proof}

We have a few corollaries of Theorem \ref{theo:point-count-p-delta}.
\begin{corol}\label{cor:simplices-hg}
    Suppose that $r=1$ and $s={d+1}$.
    If $q$ is coprime to $\ga_1,\dots,\ga_{d+2}$, then
  \begin{align*}
      \#\overline{Z}(\F_q)&= \frac{q^{d}-1}{q-1}-\sum_{\lambda \in \Lambda(q)} \chi^{\lambda}(\sigma) q^{s_{\delta(\lambda)}(0)-1} g(\delta(\lambda))  F_q\left(\gamma,\delta(\lambda),q^\times \mid \frac{t}{\gamma^\gamma}\right).
  \end{align*}
\end{corol}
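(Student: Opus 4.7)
The strategy is to deduce Corollary \ref{cor:simplices-hg} as a direct specialization of Theorem \ref{theo:point-count-p-delta}, plugging in $r=1$ and $s=d+1$ and then rewriting the resulting hypergeometric-type sum using Definition \ref{defi:finite-hg-function-gamma}. The work splits into a scalar simplification of the ``geometric" terms and a repackaging of the sum over $\lambda$ and $m$ as a finite hypergeometric sum.

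\textbf{Step 1: Simplify the constant piece.} With $r=1$ and $s=d+1$, so that $r+s-1=d+1$, the non-hypergeometric contribution in Theorem \ref{theo:point-count-p-delta} becomes
\[
\frac{q^{d+1}-1}{q-1}+(q-1)^{d}-q^{0}(q-1)^{d}-q^{d}(q-1)^{0}=\frac{q^{d+1}-1}{q-1}-q^{d}=\frac{q^{d}-1}{q-1},
\]
which matches the leading term of the corollary.

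\textbf{Step 2: Identify $\eta$ with $s$ when $r=1$.} The key observation is that when $r=1$, the numerator of the right-hand side of \eqref{eq:intro-gamma-triples} is the single polynomial $T^{-\ga_1}-\zeta_{q^\times}^{\delta_1(\lambda)}$, whose $-\ga_1$ roots are distinct. Hence the gcd $D_{\delta(\lambda)}(T)$, being a divisor of this numerator, has only simple roots. Therefore $s_{\delta(\lambda)}(-m)\in\{0,1\}$ and coincides with the indicator $\eta_{\delta(\lambda)}(-m)$ from \eqref{eq:definition-eta}. Consequently
\[
q^{\eta_{\delta(\lambda)}(-m)-1}=q^{s_{\delta(\lambda)}(-m)-1}.
\]

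\textbf{Step 3: Rewrite the inner sum as a finite hypergeometric sum.} Using $g(v)=\prod_j g(v_j)$ and $N=q^\times$, so that $\tfrac{\delta(\lambda)}{N}q^\times=\delta(\lambda)$, Definition \ref{defi:finite-hg-function-gamma} gives
\begin{align*}
q^{s_{\delta(\lambda)}(0)-1}g(\delta(\lambda))\,F_q\!\left(\ga,\delta(\lambda),q^\times\bigm|\tfrac{t}{\ga^{\ga}}\right)
&=\frac{1}{1-q}\sum_{m=0}^{q^\times-1}q^{s_{\delta(\lambda)}(-m)-1}\,g(-m\ga+\delta(\lambda))\,\chi^{m}(t).
\end{align*}
Multiplying by $-1$ and comparing with Step 2, the entire $\lambda$-indexed sum in Theorem \ref{theo:point-count-p-delta} equals
\[
-\sum_{\lambda\in\Lambda(q)}\chi^{\lambda}(\sigma)\,q^{s_{\delta(\lambda)}(0)-1}\,g(\delta(\lambda))\,F_q\!\left(\ga,\delta(\lambda),q^\times\bigm|\tfrac{t}{\ga^{\ga}}\right),
\]
which together with Step 1 gives the desired formula.

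\textbf{Main obstacle.} The proof is essentially a bookkeeping exercise rather than a new combinatorial or geometric argument; the only genuinely non-routine point is the equality $\eta_{\delta(\lambda)}(-m)=s_{\delta(\lambda)}(-m)$ in Step 2, which relies crucially on $r=1$ so that the numerator of \eqref{eq:intro-gamma-triples} is a single cyclotomic-type polynomial with simple roots. In the analogous corollaries where $r\geq 2$ (e.g.\ Corollaries \ref{theo:intro-corol-2}, \ref{theo:intro-corol-3}) this identification is not automatic, and the passage from $\eta$ to $s$ requires additional care; in the $r=1$ case it is immediate, which is what makes this corollary clean.
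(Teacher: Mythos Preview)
Your proof is correct and follows essentially the same approach as the paper's: specialize Theorem \ref{theo:point-count-p-delta} to $r=1$, use that the single negative factor forces $s_{\delta(\lambda)}(-m)\le 1$ so that $\eta_{\delta(\lambda)}=s_{\delta(\lambda)}$, and then recognize the inner sum as $-q^{s_{\delta(\lambda)}(0)-1}g(\delta(\lambda))F_q(\gamma,\delta(\lambda),q^\times\mid t/\gamma^\gamma)$ via Definition \ref{defi:finite-hg-function-gamma}. The only difference is that you spell out the simplification of the polynomial-in-$q$ constant term, which the paper leaves implicit.
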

\begin{proof}
Since $r=1$, for each $m$, we have $s_{\delta(\lambda)}(-m)$ is at most $1$. Thus, $\eta_{\delta(\lambda)}(-m)= s_{\delta(\lambda)}(-m)$. 
Recall that 
\[
 G_{m,\lambda}\coloneqq \prod_{j=1}^{d+2}g\left(-m\gamma_j + \delta(j,\lambda)\right) \chi^{m}(t) \chi^{\lambda}(\sigma).
\]
We have
\begin{align*}
     \frac{1}{q-1}\sum_{m=0}^{q-2}q^{\eta_{\delta(\lambda)}(-m)-1} G_{m, \lambda} &= G_{0,\lambda} q^{s_{\delta(\lambda)}(0)-1} \frac{1}{q-1}\sum_{m=0}^{q-2}q^{s_{\delta(\lambda)}(-m)-s_{\delta(\lambda)}(0)} \frac{G_{m, \lambda}}{G_{0,\lambda}}\\
   &= - G_{0,\lambda} q^{s_{\delta(\lambda)}(0)-1} \frac{1}{1-q}\sum_{m=0}^{q-2}q^{s_{\delta(\lambda)}(-m)-s_{\delta(\lambda)}(0)} \frac{G_{m, \lambda}}{G_{0,\lambda}}\\
   &= - \chi^{\lambda}(\sigma) q^{s_{\delta(\lambda)}(0)-1} g(\delta(\lambda)) F_q\left(\gamma,\delta(\lambda),q^\times \mid \frac{t}{\gamma^\gamma}\right).
\end{align*}
\end{proof}

\begin{rem}
In the case of the Corollary \ref{cor:simplices-hg}, when $\deg(Z)$ is coprime to $q$, the toric variety $\Ps_{\Delta}$ is quasi-smooth. In fact, it is a weighted projective space. If we assume that $\prod_{i=1}^{d+2}u_i^{\gamma_i} \neq \prod_{i=1}^{d+2} \gamma_i^{\gamma_i}$, then $\overline{Z}$ is also quasi-smooth (see Proposition \ref{prop:cover-smooth} and Lemma \ref{lem:quasi-smooth-compactification}). 
In fact, $\overline{Z}$ is an ample hypersurface since it is a section of the ample line bundle $\OO_{\Ps_\Delta}(\Delta)$ \cite{Danilov-geometry-of-toric}. 
Thus, by the Lefschetz hyperplane theorem, 
$H^{2k}_{et}(\overline{Z}, \mathbb{Q}_\ell) \cong H^{2k}_{et}(\Ps_{\Delta}, \mathbb{Q}_\ell) =\Q_\ell$,  for $0\le k \le (d-1)$ and $k \neq (d-1)$.
Furthermore, $H^{2k+1}_{et}(\overline{Z}, \mathbb{Q}_\ell) \cong H^{2k+1}_{et}(\Ps_{\Delta}, \mathbb{Q}_\ell) =0$, for $ 2k+1 \neq d-1$.
Thus, by the Grothendieck-Lefschetz trace formula \cite{grothendieck-lefschetz-stacks}, we have
\[
(-1)^{d-1}\operatorname{Tr} ( \operatorname{Fr} | H^{d-1}_{et}(\overline{Z} \otimes \overline{\F_q},\mathbb{Q}_\ell)) =  -\sum_{\lambda \in \Lambda(q)} \chi^{\lambda}(\sigma) q^{s_{\delta(\lambda)}(0)-1} g(\delta(\lambda))  F_q\left(\gamma,\delta(\lambda),q^\times \mid \frac{t}{\gamma^\gamma}\right),
\]
where $\operatorname{Fr}$ is the induced morphism on cohomology of a geometric Frobenius element.  
Thus, we think of these finite hypergeometric sums as (twisted) traces of the Frobenius automorphism acting on the middle cohomology group $H^{d-1}_{et}(\overline{Z}\otimes \overline{\F}_q, \Q_\ell)$.
\end{rem}
\begin{rem}\label{rem:hecke-grossen}
The factors
\[
q^{s_{\delta(\lambda)}(0)-1} g(\delta(\lambda)) 
\]
which appear in Corollary \ref{cor:simplices-hg} are essentially harmless. Indeed, we have 
\[
\sum_{j=1}^{d+2} \delta(j,\lambda) = \sum_{i=1}^d \lambda_i \rho_{ij}=0 \pmod{q^\times}.
\]
Therefore, the product $q^{s_{\delta(\lambda)}(0)-1} g(\delta(\lambda))$ can be identified, as a Hecke-Grossencharacter over $\Q(\zeta_n)$ for some $n$, when varying $q$ (see \cite[Theorem 4.3.4 and Remark 4.4.4]{voight-kelly}).

If the Newton polytope $\Delta$ of $f$ is primitive, then $\Lambda(q)=\{0\}$ and 
\[q^{s_{\delta(\lambda)}(0)-1} g(\delta(\lambda)) = (-1)^{r+s} q^{s_{0}(0)-1}.\]
If the $\Delta$ is not primitive, then it is not hard to write down examples where $g(\delta(\lambda))$ is a non-trivial product of Gauss sums. Thus, to interpret the sum, one would need to appeal to Hecke-Grossencharacters.
\end{rem}

Note that in the case $r=1,s=3$, when $\deg(Z)$ is coprime to $q$, the corresponding toric variety $\Ps_{\Delta}$ is smooth away from the points corresponding to the vertices of the polytope. If $Z$ is smooth and $\Delta$-regular, then the curve $\overline{Z}$ is smooth, since it does not pass through the vertices of $\Ps_\Delta$. More generally, we get a smooth compactification when $Z$ is a smooth curve (see Proposition \ref{prop:cover-smooth} and Lemma \ref{lem:quasi-smooth-compactification}). Furthermore, we have the following:
\begin{corol} \label{cor:curves-hgm}
Suppose that $r=s=2$.
If $q$ is coprime to $\ga_1,\dots,\ga_{d+2}$, then 
\begin{align*}
      \#\overline{Z}(\F_q)&= q+1 -\sum_{\lambda \in \Lambda(q)} \chi^{\lambda}(\sigma) q^{s_{\delta(\lambda)}(0)-1} g(\delta(\lambda))  F_q\left(\gamma,\delta(\lambda),q^\times \mid \frac{t}{\gamma^\gamma}\right).
  \end{align*}
\end{corol}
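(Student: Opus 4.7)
The plan is to specialise Theorem~\ref{theo:point-count-p-delta} to $r=s=2$ (so $d=2$) and then convert the Gauss-sum sum into a finite hypergeometric sum. A direct simplification of the prefactor yields
\[
\frac{q^{3}-1}{q-1}+(q-1)^{2}-2q(q-1) \;=\; (q^{2}+q+1)+(q-1)(-q-1) \;=\; q+2,
\]
so the theorem gives $\#\overline{Z}(\F_q) = q+2 + S$, where $S$ is the Gauss-sum sum appearing on the right side. Since the target formula has prefactor $q+1$, the conversion of $S$ must produce an extra $-1$ beyond the expected hypergeometric sum.

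Because $r=s=2$, the numerator and the denominator of the rational function defining $D_{\delta(\lambda)}$ are each a product of two polynomials with distinct roots, so $s_{\delta(\lambda)}(-m) \in \{0,1,2\}$ and $\eta_{\delta(\lambda)}(-m) = \min(1,s_{\delta(\lambda)}(-m))$. This gives the identity $q^{\eta-1} = q^{s-1} - (q-1)\mathbf{1}[s=2]$, which splits the inner sum as
\[
\frac{1}{q-1}\sum_m q^{\eta_{\delta(\lambda)}(-m)-1} H_{m,\lambda} \;=\; \frac{1}{q-1}\sum_m q^{s_{\delta(\lambda)}(-m)-1} H_{m,\lambda} \;-\; \sum_{m:\,s_{\delta(\lambda)}(-m)=2} H_{m,\lambda},
\]
where $H_{m,\lambda} := \prod_{j} g(-m\gamma_j + \delta(j,\lambda))\,\chi^{m}(t)$. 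The first sum converts into a finite hypergeometric sum by exactly the same manipulation used in the proof of Corollary~\ref{cor:simplices-hg}: pulling out $H_{0,\lambda} = g(\delta(\lambda))$ and matching Definition~\ref{defi:finite-hg-function-gamma} yields
\[
\frac{1}{q-1}\sum_m q^{s_{\delta(\lambda)}(-m)-1} H_{m,\lambda} \;=\; -g(\delta(\lambda))\, q^{s_{\delta(\lambda)}(0)-1}\, F_q\!\left(\gamma,\delta(\lambda),q^\times \,\bigg|\,\frac{t}{\gamma^\gamma}\right).
\]

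It remains to prove the key claim that the correction
\[
C \;\coloneqq\; \sum_{\lambda \in \Lambda(q)}\chi^{\lambda}(\sigma) \sum_{m:\,s_{\delta(\lambda)}(-m)=2} H_{m,\lambda} \;=\; 1,
\]
so that $q+2-C = q+1$. When $s_{\delta(\lambda)}(-m)=2$, each of the four quantities $-m\gamma_j + \delta(j,\lambda)$ vanishes modulo $q^{\times}$, so $H_{m,\lambda} = g(0)^{4}\chi^{m}(t) = \chi^{m}(t)$, and the vector congruence $m\gamma \equiv \sum_i \lambda_i \rho_i \pmod{q^{\times}}$ holds in $(\Z/q^{\times}\Z)^{d+2}$. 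I will then apply the matrix $F$ with rows $\mathbf{1}, f_1, \dots, f_d$ coming from the integral basis of $K$ chosen in the proof of Proposition~\ref{prop:cover}: by that construction together with the defining properties of $\rho_k$ in Corollary~\ref{cor:cover1}, one has $F\gamma = 0$ and $F\rho_k = e_{k+1}$. Applying $F$ to the congruence forces $(0,\lambda_1,\dots,\lambda_d) \equiv 0 \pmod{q^{\times}}$, hence $\lambda = 0$; the residual congruence $m\gamma \equiv 0$, combined with $\gcd(\gamma_1,\dots,\gamma_{d+2}) = 1$, forces $m \equiv 0 \pmod{q^\times}$. Therefore $(m,\lambda) = (0,0)$ is the unique contributor, giving $C = \chi^{0}(\sigma)\chi^{0}(t) = 1$.

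The step I expect to be the main obstacle is this last uniqueness argument: it must invoke the ``primitive'' structural relations $F\gamma = 0$ and $F\rho_k = e_{k+1}$, which do not come from the matrix $M$ of the original hypersurface directly but rather from the Hermite-form cover of Proposition~\ref{prop:cover}. Once the correction $C = 1$ is established, combining everything yields $\#\overline{Z}(\F_q) = q+2 - C - \sum_{\lambda}\chi^{\lambda}(\sigma)q^{s_{\delta(\lambda)}(0)-1}g(\delta(\lambda))F_q(\gamma,\delta(\lambda),q^\times \mid t/\gamma^\gamma)$, which is the stated formula.
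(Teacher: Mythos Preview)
Your proof is correct and follows essentially the same approach as the paper's: both reduce to showing that $s_{\delta(\lambda)}(-m)=2$ forces $(m,\lambda)=(0,0)$, which supplies exactly the correction $-1$ needed to turn $q+2$ into $q+1$. Your explicit identity $q^{\eta-1}=q^{s-1}-(q-1)\mathbf{1}[s=2]$ and your matrix formulation $F\gamma=0$, $F\rho_k=e_{k+1}$ are tidy repackagings of the same computations the paper carries out entrywise using the relations $\sum_j\rho_{ij}f_{kj}=\epsilon_k(i)$ from Corollary~\ref{cor:cover1}.
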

\begin{proof}
Note that in this case, we have $s_{0}(0)= 2$. We show that $s_{\delta(\lambda)}(-m) = \eta_{\delta(\lambda)}(-m)$, otherwise.
Since $s_{\delta(\lambda)}(-m) \le 2$ for all $\lambda$ and $m$, it suffices to show that $s_{\delta(\lambda)}(-m) \neq 2$ unless $m=0$ and $\lambda=(\lambda_1,\lambda_2)=0$. 
Suppose that
\begin{equation}\label{eq:proof-corol-curves}
-m \gamma_j +\sum_{i=1}^2 \lambda_i\rho_{ij}= 0 \pmod{q^\times} \text{ for } j=1,\dots,4.    
\end{equation}
Recall that  
\[
\sum_{j=1}^{4} \rho_{ij}f_{kj} = \epsilon_{k}(i)\coloneqq \begin{cases}
    1 & \text{ if } i=k,\\
    0 & \text{ if } i \neq k,
\end{cases}
\]
where $f_{kj}$ are as in \eqref{eq:primitive-coefficients}. Thus, $k=1,2$, we have  
\begin{align*}
 \lambda_k = \sum_{i=1}^2 \lambda_i \epsilon_{k}(i)  &=\sum_{j=1}^{4} -m \gamma_j f_{kj} +\sum_{i=1}^2 \lambda_i\sum_{j=1}^{4}  \rho_{ij} f_{kj} \\
 &= \sum_{j=1}^{4} \left( -m \ga_j + \lambda_i \sum_{i=1}^{2}\rho_{ij} \right) f_{kj} = 0 \pmod{q^\times}.
\end{align*}
By equation \eqref{eq:proof-corol-curves}, we have $-m \gamma_j =0 \pmod{q^\times}$ for $j=1, \dots,4$. Thus, $m=0 \pmod{q^\times}$ since $\gcd(\gamma_1,\dots,\gamma_{4})=1$.

Now, note that 
\[
  \frac{q^{r+s-1}-1}{q-1}+ (q-1)^{r+s-2} -q^{r-1}(q-1)^{s-1}- q^{s-1}(q-1)^{r-1}  =q+2.
\]
Thus, by Theorem \ref{theo:point-count-p-delta}, we have
\begin{align*}
 \#\overline{Z}(\F_q) &= q+2 + \sum_{\lambda \in \Lambda(q)}  \frac{1}{q-1}\sum_{m=0}^{q-2}q^{\eta_{\delta(\lambda)}(-m)-1} G_{m, \lambda}\\
 &= q+1 + 1 + \frac{q^{\eta_{\delta(\lambda)}(0)-1}-q^{s_{\delta(\lambda)}(0)-1}}{q-1} +\sum_{\lambda \in \Lambda(q)}  \frac{1}{q-1}\sum_{m=0}^{q-2}q^{s_{\delta(\lambda)}(-m)-1} G_{m, \lambda}\\
 &=q+1 +1 +\frac{q^0-q^1}{q-1} -\sum_{\lambda \in \Lambda(q)} q^{s_{\delta(\lambda)}(0)-1} g(\delta(\lambda))  F_q\left(\gamma,\delta(\lambda), q^\times \mid \frac{t}{\gamma^\gamma}\right)\\
 &=q+1 -\sum_{\lambda \in \Lambda(q)} q^{s_{\delta(\lambda)}(0)-1} g(\delta(\lambda))  F_q\left(\gamma,\delta(\lambda),q^\times \mid \frac{t}{\gamma^\gamma}\right).
\end{align*}
\end{proof}
Next, we would like to consider surfaces, that is, $r+s=5$. We have already considered the case $r=1,s=4$ in Corollary \ref{cor:simplices-hg}, so we consider the case $r=2,s=3$. The remaining cases are handled by symmetry in $r$ and $s$.

\begin{corol}\label{cor:surfaces-hgm}
Suppose that $r=2, s=3$.  If $q$ is coprime to $\ga_1,\dots,\ga_{d+2}$, then
\begin{align*}
      \#\overline{Z}(\F_q)&= q^2+3q +1  -\sum_{\lambda \in \Lambda(q)} \chi^{\lambda}(\sigma) q^{s_{\delta(\lambda)}(0)-1} g(\delta(\lambda))  F_q\left(\gamma,\delta(\lambda), q^\times \mid \frac{t}{\gamma^\gamma}\right).
  \end{align*}
\end{corol}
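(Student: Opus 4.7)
The plan is to mirror the proof of Corollary \ref{cor:curves-hgm}: substitute $q^{\eta-1}$ by $q^{s-1}$ inside the sum of Theorem \ref{theo:point-count-p-delta}, convert the resulting expression into finite hypergeometric sums, and absorb the single correction term into the constant. Plugging $r=2,\, s=3$ into the formula of Theorem \ref{theo:point-count-p-delta} gives
\[
\frac{q^{4}-1}{q-1} + (q-1)^{3} - q(q-1)^{2} - q^{2}(q-1) \;=\; q^{2}+3q,
\]
so
\[
\#\overline{Z}(\F_q) \;=\; q^{2}+3q \;+\; \sum_{\lambda\in\Lambda(q)} \frac{1}{q-1}\sum_{m=0}^{q-2} q^{\eta_{\delta(\lambda)}(-m)-1}\, G_{m,\lambda},
\]
where $G_{m,\lambda}=\chi^{\lambda}(\sigma)\chi^{m}(t)\prod_{j=1}^{5} g(-m\gamma_j+\delta(j,\lambda))$.

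Because $s_{\delta(\lambda)}(-m)=\min(|S^{-}_{m,\lambda}|,|S^{+}_{m,\lambda}|)\le \min(r,s)=2$ while $\eta_{\delta(\lambda)}(-m)=1$ whenever $s_{\delta(\lambda)}(-m)\ge 1$, the values of $\eta$ and $s$ disagree only at pairs $(m,\lambda)$ with $s_{\delta(\lambda)}(-m)=2$, which forces $|S^{-}_{m,\lambda}|=2$ and $|S^{+}_{m,\lambda}|\in\{2,3\}$, i.e.\ $|S_{m,\lambda}|\in\{4,5\}$. The key claim I will establish is that such a discrepancy happens only at $(m,\lambda)=(0,0)$.

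First suppose $|S_{m,\lambda}|=5$, so that $-m\gamma_j+\delta(j,\lambda)\equiv 0\pmod{q^{\times}}$ for every $j$. Using the identity $\sum_j \rho_{ij}f_{kj}=\epsilon_k(i)$ of Corollary \ref{cor:cover1} and $\sum_j \gamma_j f_{kj}=0$ (since $f_k\in K=\ker\gamma^{T}$), we obtain for $k=1,2,3$
\[
\lambda_k \;=\; \sum_j f_{kj}\,\delta(j,\lambda) \;=\; \sum_j f_{kj}\bigl(-m\gamma_j+\delta(j,\lambda)\bigr) \;\equiv\; 0 \pmod{q^{\times}},
\]
so $\lambda=0$; the remaining equations then read $m\gamma_j\equiv 0$ for all $j$, and $\gcd(\gamma_1,\dots,\gamma_5)=1$ forces $m\equiv 0$. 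Next suppose $|S_{m,\lambda}|=4$; since $|S^{-}|=2$, the missing index $j_0$ necessarily lies in $\{3,4,5\}$, and $a:=-m\gamma_{j_0}+\delta(j_0,\lambda)$ is nonzero in $\Z/q^{\times}\Z$. The same identities now give $\lambda_k\equiv f_{kj_0}\,a$, hence $\delta(j,\lambda)\equiv a\,v_j$ with $v_j:=\sum_k f_{kj_0}\rho_{kj}$. Substituting back produces $m\gamma_j\equiv a v_j$ for $j\neq j_0$ and $m\gamma_{j_0}\equiv a(v_{j_0}-1)$; summing over all $j$ and using $\sum_j\gamma_j=0$ together with
\[
\sum_j v_j \;=\; \sum_k f_{kj_0}\sum_j\rho_{kj} \;=\; 0
\]
(the cancellation $\sum_j\rho_{kj}=0$ is the other half of Corollary \ref{cor:cover1}) yields $0\equiv -a\pmod{q^{\times}}$, contradicting $a\neq 0$. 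Thus the case $|S|=4$ is vacuous.

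Replacing $q^{\eta(-m)-1}$ by $q^{s(-m)-1}$ therefore modifies only the term $(m,\lambda)=(0,0)$, where $\eta_0(0)=1$, $s_0(0)=2$ and $G_{0,0}=(-1)^{5}=-1$; the modification contributes
\[
\frac{q^{0}-q^{1}}{q-1}\cdot G_{0,0} \;=\; \frac{1-q}{q-1}\cdot(-1) \;=\; 1
\]
to the constant, bringing it to $q^{2}+3q+1$. The surviving sum $\sum_{\lambda}\frac{1}{q-1}\sum_{m} q^{s_{\delta(\lambda)}(-m)-1}G_{m,\lambda}$ repackages, exactly as in the derivation of Corollary \ref{cor:simplices-hg}, into
\[
-\sum_{\lambda\in\Lambda(q)} \chi^{\lambda}(\sigma)\, q^{s_{\delta(\lambda)}(0)-1}\, g(\delta(\lambda))\, F_q\bigl(\gamma,\delta(\lambda),q^{\times}\bigm|t/\gamma^{\gamma}\bigr),
\]
yielding the claimed formula. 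The main obstacle is the $|S|=4$ sub-case, which has no analogue in the curve setting and requires the combinatorial cancellation $\sum_j v_j=0$ coming from $\sum_j\rho_{kj}=0$.
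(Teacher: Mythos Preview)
Your proof is correct and follows the same outline as the paper's: show $\eta_{\delta(\lambda)}(-m)=s_{\delta(\lambda)}(-m)$ except at $(m,\lambda)=(0,0)$, then absorb the single correction into the constant and repackage the remaining sum as finite hypergeometric sums. The only difference is your treatment of the $|S_{m,\lambda}|=4$ sub-case. The paper dispatches it in one line: since $\sum_j\gamma_j=0$ and $\sum_j\rho_{kj}=0$ give $\sum_{j=1}^{5}\bigl(-m\gamma_j+\delta(j,\lambda)\bigr)\equiv 0\pmod{q^\times}$, if four of the five terms vanish then so does the fifth, reducing immediately to the $|S|=5$ case already handled. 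Your argument unwinds this same cancellation through the auxiliary vectors $v_j$ and the identity $\sum_j v_j=0$, reaching the identical contradiction $a\equiv 0$ by a longer route; what you call ``the main obstacle'' is in fact a one-line consequence of the sum constraint.
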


\begin{proof}
Again, we would like to prove that $s_{\delta(\lambda)}(-m) = \eta_{\delta(\lambda)}(-m)$ unless $m=0$, and $\lambda=0$. 
Note that if $s_{\delta(\lambda)}(-m)=2$, then it must be that $-m \ga_j + \delta(j,\lambda)=0 \pmod{q^\times}$ for all $j$. Indeed, we have
\[
\sum_{j=1}^5 \big( -m \ga_j + \delta(j,\lambda) \big) =  0 \pmod{q}
\]
(see Remark \ref{rem:hecke-grossen}),
and $s_{\delta(\lambda)}(-m)=2$  implies that four of the terms in the sum are $0$, thus the last term is also $0$.
The argument in the proof of Corollary \ref{cor:curves-hgm} then shows that $m=0$ and $\lambda=0$.
Simple algebraic manipulation, similar to that in the proof of Corollary \ref{cor:curves-hgm}, gives the result. 
\end{proof}

\begin{rem}
    When $r=2$ and $s=3$. If $\prod_{j=1}^{d+2} u_j^{\gamma_j} \neq \prod \gamma_j^{\gamma_j}$ and $\deg(Z)$ is coprime to $q$,
    then the closure $\overline{Z}$ is, in fact, quasi-smooth. This can be seen as follows:
    The normal fan of the Newton polytope $\Delta$ has $3$ maximal cones that are not simplicial. By $\Delta$-regularity (Proposition \ref{prop:cover-smooth}), $\overline{Z}$ does not intersect the points corresponding to the vertices of the Newton polytope $\Delta$. Blowing up these points gives a toric variety which is quasi-smooth.  The strict transform $W$ of $\overline{Z}$ in this variety is quasi-smooth by Lemma \ref{lem:quasi-smooth-compactification}. 
    Since the hypersurface $\overline{Z}$ does not meet the points which were blown-up, it is isomorphic to the strict transform $W$ via the blow-up map. This proves the claim.
\end{rem}

\subsection{Compactification II}\label{subsection:compactification-2}
We consider the toric variety $\Ps_{\Sigma'}$ (see \cite{Denef-Loeser}) associated to the fan $\Sigma'$ (Definition \ref{defi:fan-refinement}).
It is again a compactification of $\G_m^d$, and admits a decomposition by the cones of $\Sigma'$: every $l$-dimensional cone $\sigma$ defines a subvariety $\T_\sigma$ of $\Ps_{\Sigma'}$ which is a torus of dimension $d-l$. Furthermore, there is a decomposition of $\Ps_{\Sigma'}$ into disjoint subvarieties
\[
\Ps_{\Sigma'} = \bigcup_{\sigma} \T_{\sigma}.
\]
Let $W$ be the completion of $Z$ in $\Ps_{\Sigma'}$, and define $W_\sigma\eqqcolon \T_\sigma \cap W $. Then, we have a decomposition of $W$ into the union of disjoint subvarieties:
\[
\bigcup_{ \sigma \text{ cone of }  \Sigma' } W_\sigma.  
\]
Furthermore, if $F_\sigma$ is the first meet locus of the cone $\sigma$,
and $f_{|F_\sigma}$ is the restriction of $f$ to the face $F_{\sigma}$. Then, the variety $V(f_{|F_\sigma})\subset \G_m^d$ is isomorphic to the product $ \G_m^{l} \times W_\sigma$ over $\F_q$.
Thus, the number of $\F_q$-points of $W_{\sigma}$ is given by
\[
\#W_{\sigma}(\F_q) = \#V(f_{|F_{\sigma}}) (q-1)^{-l}.
\]
Let $\sigma_C$ be the cone of $\Sigma'$ defined by the sequence $C= ((i_1,j_1),\dots ,(i_l,j_l))$. Recall that $\sigma_C$ is the convex hull of the rays $(i_1,j_1),\dots,(i_l,j_l)$, the cone $\sigma_C$ is of dimension $l$, and the first meet locus $F_{C}$ of $\sigma_C$ is the face of $\Delta$ corresponding to the set $S_C = \{i_1,\dots,i_l,j_1,\dots,j_l\}$.
The restriction of $f$ to the face $F_C$ is given by 
\[
f_{|C} \coloneqq \sum_{j \notin S_C} u_j \prod_{i=1}^d x_i^{m_{ij}}.
\]
The hypersurface $V(f_{|C})$ is isomorphic, by Corollary \ref{cor:cover}, to the variety given by 
\begin{align*}
   \sum_{j\notin S_C} w_j&= 0, \quad  w_1\cdots w_{d+2} \neq 0,\\
    \prod_{j=1}^{d+2} w_j^{\gamma_j} &=t, \quad t = \prod_{j=1}^{d+2} u_j^{\gamma_j},\\
    \prod_{j=1}^{d+2} w_j^{\delta_{kj}} &= \sigma_k \prod_{i=1}^d z_i^{\mathcal{N}_{ki}},  \qquad k=1,\dots,d.
\end{align*}
The number of $\F_q$-points  of $W_{C}\coloneqq W_{\sigma_C}$ is given by
\begin{align}
    q  \#W_{C} (\mathbb{F}_q) &= (q-1)^{d-l_C} \nonumber \\
    &+ (-1)^{|S_C|}(q-1)^{|S_C|-l_C-1}\cdot\sum_{\lambda \in \Lambda(q)}  \sum_{m=0}^{q-2}\prod_{j\in S_C} \epsilon\left( -m \gamma_j +\delta(j,\lambda) \right)G_{m,\lambda}, \label{eq:contribution-of-cone}
\end{align}
where, as before, $ G_{m,\lambda}\coloneqq \prod_{j=1}^{d+2}g\left(-m\gamma_j + \delta(j,\lambda)\right) \chi^{m}(t) \chi^{\lambda}(\sigma)$.
Since the varieties $W_C$ are disjoint, we have
\begin{equation}
    \#W(\F_q) = \sum_{ \substack{C \\ |S_C| \le d+1}}  \#W_C(\F_q).
\end{equation}
Here the sum is taken over all sequences  $C= ((i_1,j_1),\dots ,(i_l,j_l))$ with $|S_C| \le d+1$, including the empty the sequence.
To shorten the formulas, we set
\[
A_{m,\lambda,C}\coloneqq (-1)^{|S_C|}(q-1)^{|S_C|-l_C-1}
  \prod_{j\in S_C}\epsilon\left( -m \gamma_j +\delta(j,\lambda) \right).
\]
Then,
\[
q\#W_C(\F_q) = (q-1)^{d-l_C} + \sum_{m=0}^{q-2} \sum_{\lambda \in \Lambda(q)} A_{m,\lambda,C} G_{m,\lambda}.
\]
Thus,
\begin{align*}
    q \# W(\F_q) &= A_1 + A_2, \\
    A_1 &\coloneqq \sum_{ \substack{C \\ |S_C| \le d+1}} (q-1)^{d-l_C},  \\
    A_2 &\coloneqq \sum_{ \substack{C \\ |S_C| \le d+1}}\sum_{m=0}^{q-2} \sum_{\lambda \in \Lambda(q)} A_{m,\lambda,C} G_{m,\lambda}. 
\end{align*}
By \cite[Proposition 5.6 and 5.7]{BCM}, we have that 
\begin{align}
    A_1 &=  (q-1)^{-1} \sum_{k=0}^{\min{(r,s)-1}} {\binom{r-1}{k}}{\binom{s-1}{k}} \left( q^{d+1-k} - q^{k} \right).
\end{align}
To compute $A_2$, 
we fix m and $\lambda$ and consider the sum $\sum_C A_{m,\lambda,C}$.
Let $S_{m,\lambda}$  be the set of all $j$ for which  $-m \gamma_j +\delta(j, \lambda)=0$,    and  $r_{m,\lambda} = \min(|S_+|, |S_-|)$. Then, $r_{m,\lambda} =s_{\delta(\lambda)} (-m)$ (Definition \ref{defi:s-delta}), since
\[ -m \gamma_j +\delta(j, \lambda) =0 \pmod{q^\times} \iff  e^{\frac{2 \pi i (-m)}{q^\times }} \text{ is a root of } T^{-\gamma_j}-  \zeta_{q^\times}^{\delta(j, \lambda) }.\]
By \cite[Proposition 5.5]{BCM}, we have
\begin{align*}
\sum_{ \substack{C \\ |S_C| \le d+2}} A_{m,\lambda,C}& = \sum_{ \substack{C \\ S_C \subset S_{m,\lambda}}} (-1)^{|S_C|}(q-1)^{|S_C|-l_C-1} \\
&=(q-1)^{-1} q^{s_{\delta(\lambda)} (-m)}.
\end{align*}
This sum includes maximal cells, which are not part of the fan $\Sigma'$; we subtract their contribution. 
%Now we consider the sum of $A_{m,\lambda,C}$ when $S_C$ is maximal: 
By the argument in the proof of Corollary \ref{cor:curves-hgm}, we have that 
$\prod_{j=1}^{d+2} \epsilon \left( -m \gamma_j +\delta(j,\lambda)\right) =1$ if and only if $ m=0$ and $\lambda=0$. Thus,
\begin{align*}
  \sum_{ \substack{C \\ |S_C| =d+2}} A_{m,\lambda,C}&=    \sum_{ \substack{C \\ |S_C| =d+2}} (-1)^{d+2}(q-1)^{d+1-l_C}
  \prod_{j=1}^{d+2} \epsilon \left( -m \gamma_j +\delta(j,\lambda)\right) \\
  &=   \sum_{ \substack{C \\ |S_C| = d+2}} (-1)^{d+2}(q-1)^{d+1-l_C}  \epsilon(m)\epsilon(\lambda) . \label{eq:d+2}
\end{align*}
Thus,
\begin{align*}
    A_2 &=  \sum_{ \substack{C \\ |S_C| \le d+2}}\sum_{m=0}^{q-2} \sum_{\lambda \in \Lambda(q)} A_{m,\lambda,C} G_{m,\lambda} -\sum_{ \substack{C \\ |S_C| = d+2}}\sum_{m=0}^{q-2} \sum_{\lambda \in \Lambda(q)} A_{m,\lambda,C} G_{m,\lambda} \\
    &= - \sum_{\lambda \in \Lambda(q)} \sum_{m=0}^{q-2} \sum_{ \substack{C \\ |S_C| = d+2}}  \epsilon(m)\epsilon(\lambda)  A_{m,\lambda,C}    G_{m,\lambda} + \sum_{\lambda \in \Lambda(q)}(q-1)^{-1} \sum_{m=0}^{q-2}  q^{s_{\delta(\lambda)} (-m)}G_{m,\lambda} \\
     &= -   \sum_{ \substack{C \\ |S_C| = d+2}}  A_{0,0,C}   G_{0,\lambda}  +\sum_{\lambda \in \Lambda(q)}(q-1)^{-1}\sum_{m=0}^{q-2}  q^{s_{\delta(\lambda)} (-m)}G_{m,\lambda} \\
    &= - \sum_{ \substack{C \\ |S_C| = d+2}} (q-1)^{d+1-l_C} + \sum_{\lambda \in \Lambda(q)} (q-1)^{-1}\sum_{m=0}^{q-2}  q^{s_{\delta(\lambda)} (-m)}G_{m,\lambda} \\
    &= B_2 + \sum_{\lambda \in \Lambda(q)} q^{s_{\delta(\lambda)} (0)}G_{0,\lambda} \ (q-1)^{-1} \sum_{m=0}^{q-2}  q^{s_{\delta(\lambda)} (-m) -s_{\delta(\lambda)} (0)}\frac{G_{m,\lambda}}{G_{0,\lambda}}, \text{ where }\\
    B_2 &= - \sum_{ \substack{C \\ |S_C| = d+2}} (q-1)^{d+1-l_C}.
\end{align*}
Finally,
\begin{align*}
   A_1+ B_2 &= A_1- \sum_{k=0}^{\min(r,s)-1}  {\binom{r-1}{k}}{\binom{s-1}{k}} \\
    &=(q-1)^{-1} \sum_{k=0}^{\min{(r,s)-1}} {\binom{r-1}{k}}{\binom{s-1}{k}} \left( q^{d+1-k} - q^{k} -(q-1) q^k \right) \\
    &=q \sum_{k=0}^{\min{(r,s)-1}} {\binom{r-1}{k}}{\binom{s-1}{k}} \frac{q^{d-k} - q^{k} }{q-1}.
\end{align*}
Altogether, we have proven the following:
\begin{theo}\label{theo:main-desingularization}
Let $W$ be the closure of $Z$ in $\Ps_{\Sigma'}$. 
If $q$ is coprime to $\ga_1,\dots,\ga_{d+2}$, then 
\begin{align*}
\#W (\mathbb{F}_q) &= \sum_{k=0}^{\min{(r,s)-1}} {\binom{r-1}{k}}{\binom{s-1}{k}} \frac{q^{d-k} - q^{k} }{q-1}\\
&-\sum_{\lambda \in \Lambda(q)}\chi^{\lambda}(\sigma) q^{s_{\delta(\lambda)} (0)-1} g\left(\delta(\lambda)\right) F_q\left(\gamma,\delta(\lambda), q^\times \mid \frac{t}{\gamma^\gamma}\right).
\end{align*}
\end{theo}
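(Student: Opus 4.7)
\emph{Proof proposal.} The plan is to decompose $\Ps_{\Sigma'}$ into its torus orbits indexed by the cones of the simplicial fan $\Sigma'$, count $\F_q$-points on each stratum $W_C \coloneqq W \cap \T_{\sigma_C}$ individually via Proposition \ref{prop:main-count}, and then reassemble using combinatorial identities already available in \cite{BCM}.

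First, I would use Remark \ref{rem:first-meet} to identify the first meet locus of $\sigma_C$ with the face $F_C$ of $\Delta$ indexed by $S_C = \{i_1,\dots,i_l,j_1,\dots,j_l\}$, and invoke the standard toric-geometric relation $V(f_{|F_C}) \cong \G_m^{l_C} \times W_C$ over $\F_q$. Applying Proposition \ref{prop:main-count} to $f_{|F_C}$ (whose monomial support is indexed by $\{1,\dots,d+2\}\setminus S_C$) and dividing by $(q-1)^{l_C}$ writes $\#W_C(\F_q)$ as a "geometric" contribution of size $(q-1)^{d-l_C}/q$ plus a $(1/q)$-multiple of an exponential sum over $(m,\lambda)$ weighted by $\prod_{j\in S_C}\epsilon(-m\gamma_j + \delta(j,\lambda))$ and the Gauss-sum factor $G_{m,\lambda} = \prod_{j=1}^{d+2} g(-m\gamma_j + \delta(j,\lambda))\,\chi^m(t)\,\chi^\lambda(\sigma)$.

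Next, I would sum over all \emph{non-maximal} sequences $C$, since $\Sigma'$ consists precisely of the non-maximal cones, and split the total into its geometric and exponential parts. The geometric aggregate evaluates, by Propositions~5.6 and 5.7 of \cite{BCM}, to a closed-form binomial expression in $r$, $s$ and $q$. For the exponential part, I would swap the order of summation, putting $(m,\lambda)$ outside and summing $C$ inside over sequences with $S_C \subset S_{m,\lambda}$, where $S_{m,\lambda} \coloneqq \{j : -m\gamma_j + \delta(j,\lambda) \equiv 0 \pmod{q^\times}\}$. Proposition~5.5 of \cite{BCM} gives a clean closed form $(q-1)^{-1} q^{s_{\delta(\lambda)}(-m)}$ for the analogous sum \emph{including} maximal sequences, so the maximal-sequence contribution must be subtracted. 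A maximal sequence in the support of $S_{m,\lambda}$ forces $\prod_{j=1}^{d+2}\epsilon(-m\gamma_j + \delta(j,\lambda)) = 1$, and mimicking the argument from the proof of Corollary \ref{cor:curves-hgm} (using $\gcd(\gamma_1,\dots,\gamma_{d+2})=1$ and the duality between the $\rho_k$ and the $f_{kj}$) this forces $(m,\lambda)=(0,0)$.

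Finally, I would combine these pieces. The $(0,0)$ subtraction merges with the geometric aggregate and simplifies, via elementary algebra, into the binomial sum $\sum_k \binom{r-1}{k}\binom{s-1}{k}(q^{d-k} - q^k)/(q-1)$ in the statement, while the remaining double sum over $(m,\lambda)$ reorganises into the hypergeometric terms after factoring out $G_{0,\lambda} = g(\delta(\lambda))\chi^\lambda(\sigma)$ together with $q^{s_{\delta(\lambda)}(0)}$, using Definition \ref{defi:finite-hg-function-gamma}. The main obstacle I foresee is the maximal-sequence bookkeeping in the middle step: Proposition~5.5 of \cite{BCM} bundles maximal and non-maximal sequences together, so one must verify carefully that the only mismatch is at $(m,\lambda)=(0,0)$, and that its removal produces exactly the $-q^k$ correction that turns the aggregate $q^{d+1-k}$ term into the $q^{d-k}$ term appearing in the theorem.
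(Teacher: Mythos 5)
Your proposal follows essentially the same route as the paper: decompose $\Ps_{\Sigma'}$ by torus orbits indexed by cones of $\Sigma'$, compute each stratum via Proposition~\ref{prop:main-count} and the first-meet-locus identification, split into geometric and exponential parts, invoke \cite[Props.~5.5--5.7]{BCM}, and handle the maximal-sequence correction by showing it only contributes at $(m,\lambda)=(0,0)$ via the $\gcd$ argument of Corollary~\ref{cor:curves-hgm}. The bookkeeping you flag as the main obstacle is indeed where the paper does its only nontrivial algebra, and it works out exactly as you describe, with the $-q^k$ correction absorbing the maximal-cone subtraction into the binomial sum.
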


\begin{rem}
    If $Z$ is primitive, then we recover the main result of \cite[Theorem 1.5]{BCM}.
    Indeed, if $Z$ is primitive, then $\Lambda(q)= \{0\}$, $s_{0}(0) =\min(r,s)$ and $g\left(\delta(0)\right) =(-1)^{r+s}$. Thus,
    \begin{align*}
    \#W (\mathbb{F}_q) &= \sum_{k=0}^{\min{(r,s)-1}} {\binom{r-1}{k}}{\binom{s-1}{k}} \frac{q^{d-k} - q^{k} }{q-1}\\
    &+(-1)^{r+s-1} q^{\min(r,s) -1}  F_q\left(\gamma,0, q^\times \bigm\vert  \frac{t}{\gamma^\gamma}\right).
    \end{align*}
\end{rem}
Unlike the compactification $\overline{Z}$, the compactification $W$ is always quasi-smooth whenever $Z$ is.  
\begin{prop}\label{prop:bcm-quasi-smooth}
Assume that the covering degree $\deg(Z)$ of $Z$ is prime to $q$ and that $\prod_{j=1}^{d+2} u_{j}^{\ga_j} \neq \prod_{j=1}^{d+2} \ga_j^{\ga_j}$. Then, $W$ is quasi-smooth. 
\end{prop}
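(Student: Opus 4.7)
The plan is to chain together three results already established in the paper: the $\Delta$-regularity statement of Proposition \ref{prop:cover-smooth}, the simplicial refinement Proposition \ref{prop:sigma-is-a-fan}, and the general toric-compactification criterion of Lemma \ref{lem:quasi-smooth-compactification}. The hypothesis $\prod u_j^{\ga_j} \neq \prod \ga_j^{\ga_j}$ is designed precisely so that the first input is available.

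\textbf{Step 1.} First I would invoke Proposition \ref{prop:cover-smooth}(1): because $\gcd(q,\deg(Z))=1$, all $\ga_j$ are nonzero by the standing assumption at the start of Section \ref{section:compactifications}, and $\prod u_j^{\ga_j} \neq \prod \ga_j^{\ga_j}$, the Laurent polynomial $f$ is $\Delta$-regular. Thus for every face $F$ of $\Delta$, the variety $V(f_{|F}) \subset \G_m^d$ is smooth, which is exactly the input required by Lemma \ref{lem:quasi-smooth-compactification}.

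\textbf{Step 2.} Next I would verify the ambient variety $\Ps_{\Sigma'}$ is quasi-smooth. By Proposition \ref{prop:sigma-is-a-fan} the refinement $\Sigma'$ is a simplicial fan, and by Proposition \ref{prop:sigma-prime-is-a-refinement} it refines the normal fan $\Sigma$ of $\Delta$. It is a standard fact in toric geometry (recalled in the remark right after Proposition \ref{prop:sigma-prime-is-a-refinement}, citing \cite{batyrev-cox}) that a toric variety attached to a simplicial fan has at worst finite quotient singularities, i.e.\ is quasi-smooth.

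\textbf{Step 3.} Finally I would apply Lemma \ref{lem:quasi-smooth-compactification} directly: it takes a $\Delta$-regular Laurent polynomial $f$ and a refinement $\Sigma'$ of the normal fan such that $\Ps_{\Sigma'}$ is quasi-smooth, and concludes that the closure of $V(f)$ in $\Ps_{\Sigma'}$ is quasi-smooth. Applied to $Z = V(f)$ and $W \subset \Ps_{\Sigma'}$, this yields the proposition.

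There is really no significant obstacle here, since all the substantive work has already been carried out: the $\Delta$-regularity is the content of Proposition \ref{prop:cover-smooth} (the only place the numerical condition $\prod u_j^{\ga_j} \neq \prod \ga_j^{\ga_j}$ is used), the combinatorial construction ensuring $\Sigma'$ is simplicial is the content of Section \ref{section:combinatorics}, and the passage from quasi-smoothness of the ambient toric variety to that of the hypersurface is Lemma \ref{lem:quasi-smooth-compactification}. The mild point to be careful about is that Proposition \ref{prop:cover-smooth} is stated under the assumption $\ga_j \neq 0$ for all $j$, but this is precisely the running assumption maintained throughout Section \ref{section:compactifications}, so no extra argument is needed.
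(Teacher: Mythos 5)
Your proposal is correct and follows essentially the same chain of reasoning as the paper: Proposition \ref{prop:cover-smooth} gives $\Delta$-regularity, Proposition \ref{prop:sigma-is-a-fan} gives that $\Sigma'$ is simplicial so $\Ps_{\Sigma'}$ is quasi-smooth, and Lemma \ref{lem:quasi-smooth-compactification} then yields quasi-smoothness of $W$. You are merely a bit more explicit about checking the hypotheses (in particular the running assumption $\ga_j \neq 0$), which is a harmless expansion of the paper's terse three-sentence proof.
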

\begin{proof}
By Proposition \ref{prop:cover-smooth}, the hypersurface $Z$ is smooth and $\Delta$-regular. Since $\Sigma'$ is simplicial, the toric variety $\Ps_{\Sigma'}$ is quasi-smooth. By Lemma \ref{lem:quasi-smooth-compactification}, the compactification $W$ is quasi-smooth.
\end{proof}

\section{Reverse Engineering}\label{section:reverse-engineering}
Theorem \ref{theo:main-desingularization} gives a compactification $W$ of $Z$ whose point count is given by sums of finite hypergeometric sums (plus a polynomial).  Note that finite hypergeometric sums $F_q(\ga,\delta,N)$ arising in such varieties satisfy $\sum_{j} \delta_j =0 \pmod{q^\times}$.
By Proposition \ref{prop:characterization-of-gamma-triple}, this is equivalent to the associated hypergeometric parameters $(\al \,;\be)$ satisfying that $\sum_{i} \al_i - \be_i$ is a half integer.    

In this section, we want to show that the converse also holds: 
Namely, for any hypergeometric parameters $(\al \, ; \be)$ such that $\sum_{i=1}^{n} \al_i - \be_i \in \frac{1}{2}\mathbb{Z}$, the finite hypergeometric sum $F_q(\al \, ; \be \mid t )$ appears in the point count of a toric hypersurface of the form above.

Let $(\ga, \delta,N) \in \Z^{d+2}_{\neq0}\times \Z^{d+2} \times \Z_{\ge1}$ be a gamma triple and suppose that the greatest common divisor of the maximal minors of the matrix
\[
G= \begin{bmatrix}
\ga_1 &\ga_2 & \dots &\ga_{d+2} \\
\delta_1& \delta_2 &  \dots & \delta_{d+2} 
\end{bmatrix}
\]
is $1.$ Consider the variety\footnote{The idea of associating these cyclic covers to the complex hypergeometric local systems arising from "partial twists" is due to Giulia Gugiatti. The paper \cite{partial-twists} will address the geometric realisation of these local systems in such covers.} $V_t$ in $\Ps^{d+1} \times \G_m$ defined by
\begin{align*}
    w_1 + \dots +w_{d+2} &= 0, \qquad w_1\dots w_{d+2} \neq 0,\\
    w_1^{\ga_1}\dots w_{d+2}^{\ga_{d+2}} &= t,\\
    w_1^{\delta_1} \dots w_{d+2}^{\delta_{d+2}} &= z^{N},
\end{align*}
where $w_1, \dots, w_{d+2}$ are coordinates on $\Ps^{d+1}$ and $z$ is a coordinate on $\G_m$. 
Let $r$, respectively $s$, be the number of negative, respectively positive, components of $\ga$.
\begin{theo}\label{theo:realisation}
Let $q$ be a prime power which is coprime to $\ga_1 \cdots\ga_{d+2}$ and such that $N$ divides $q^\times$.  There exists a compactification $\overline{V_t}$ of $V_t$ such that
   \begin{align*}
    \overline{V_t}(\F_q) &=  \sum_{k=0}^{\min{(r,s)-1}} {\binom{r-1}{k}}{\binom{s-1}{k}} \frac{q^{d-k} - q^{k} }{q-1} \\
    &-\sum_{j=0}^{N-1} q^{s_{j\delta}(0)-1} g\left(j\frac{q^\times}{N} \delta \right) F_q\left(\gamma, j \delta, N \mid \frac{t}{\gamma^\gamma}\right).
    \end{align*}
\end{theo}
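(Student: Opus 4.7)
The plan is to realize $V_t$ as the $\Ps_{\Sigma'}$-compactification of a non-primitive toric hypersurface of the type studied in Section~\ref{section:source-varieties}, and then apply Theorem~\ref{theo:main-desingularization}. The compactification $\overline{V_t}$ will be this closure.

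First, I construct the hypersurface. The hypothesis on the maximal minors of $G$ ensures that the restriction $\delta : K \to \Z$ is surjective, where $K = \ker(\ga : \Z^{d+2} \to \Z)$. Set $K' \coloneqq \delta^{-1}(N\Z) \cap K$, a sublattice of $K$ of index $N$. Since the equation $w^\delta = z^N$ on $\Ps^{d+1} \times \G_m$ is homogeneous in the $w_j$'s only if $\sum \delta_j = 0$ (an implicit condition, cf.\ Proposition~\ref{prop:characterization-of-gamma-triple}), we have $\mathbf{1} \in K'$. I choose an integral basis $\mathbf{1} = f_0, f_1, \ldots, f_{d-1}, f_d$ of $K'$ with $\mathbf{1}, f_1, \ldots, f_{d-1}$ forming a basis of $L \coloneqq \ker(\ga) \cap \ker(\delta)$ and with $\delta(f_d) = N$. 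The columns of the matrix whose rows are $\mathbf{1}, f_1, \ldots, f_d$ define the exponents $m_{ij}$ of a Laurent polynomial $f(x) = \sum_j u_j \prod_i x_i^{m_{ij}}$, with coefficients $u_j$ chosen so that $\prod u_j^{\ga_j} = t$ and all $\sigma_k = 1$.

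Second, I identify $Z = V(f)$ with $V_t$. By construction $f$ has Gale dual $\ga$ and covering degree $[K:K'] = N$. By Corollary~\ref{cor:cover1}, $Z$ is isomorphic to the subvariety of $\Ps^{d+1} \times \G_m^d$ cut out by $\sum w_j = 0$, $w^\ga = t$, and $d$ covering equations $w^{\rho_k} = \sigma_k y^{\mathcal{N}_k}$. My choice of basis forces the Smith normal form of $\mathcal{N}$ to be $\operatorname{diag}(1, \ldots, 1, N)$, so after an integer change of variables in $y_1, \ldots, y_d$, the first $d-1$ covering equations become tautologies expressing $y_1, \ldots, y_{d-1}$ as Laurent monomials in the $w_j$'s, while the last reduces to $w^\delta = z^N$ with $z \coloneqq y_d$. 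Hence $Z \cong V_t$, and I define $\overline{V_t}$ as the closure of $Z$ in $\Ps_{\Sigma'}$.

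Finally, I substitute into Theorem~\ref{theo:main-desingularization}. Since the elementary divisors of $\mathcal{N}$ are $(1, \ldots, 1, N)$ and $N \mid q^\times$, the set $\Lambda(q)$ is cyclic of order $N$, indexed by $j = 0, \ldots, N-1$ with representatives $\lambda_j$ satisfying $\delta(\lambda_j) \equiv j(q^\times/N)\delta \pmod{q^\times}$. Since $\sigma = 1$, the characters $\chi^{\lambda_j}(\sigma)$ are trivial. The gamma triples $(\ga, j(q^\times/N)\delta, q^\times)$ and $(\ga, j\delta, N)$ represent the same hypergeometric parameters, so by Proposition~\ref{prop:uniqueness-extensions} their finite hypergeometric sums agree, and $s_{j(q^\times/N)\delta}(0) = s_{j\delta}(0)$ by the same reason. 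This yields the stated formula. The main obstacle is the linear algebra in step two: one must track $\rho$ and $\mathcal{N}$ through the Hermite-normal-form reduction of Proposition~\ref{prop:cover}, and verify that the specific choice $\delta(f_d) = N$ forces $\rho_d$ to represent $\delta$ modulo $K'$, so that the surviving covering equation is exactly $w^\delta = z^N$ rather than a twisted variant.
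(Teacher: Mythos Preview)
Your approach is correct and essentially the same as the paper's: both realise $V_t$ as a non-primitive toric hypersurface with Gale dual $\gamma$ and covering degree $N$, then invoke Theorem~\ref{theo:main-desingularization}. The only organisational difference is that the paper first picks a basis $\mathbf{1},f_1,\dots,f_d$ of the full kernel $K$ with $\delta(f_1)=1$ and $\delta(f_k)=0$ for $k\ge 2$, writes down the associated \emph{primitive} Laurent polynomial, and then substitutes $x_1\mapsto z^N$; this makes $\mathcal N=\operatorname{diag}(N,1,\dots,1)$ and $\rho_1=\delta$ fall out immediately with no Hermite/Smith bookkeeping. Your route---defining the non-primitive hypersurface directly from a basis of $K'=\delta^{-1}(N\Z)\cap K$ and then unwinding Corollary~\ref{cor:cover1}---reaches the same destination but forces you to chase the linear algebra you flag at the end. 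One small over-claim: you do not need (and cannot always easily arrange) \emph{all} $\sigma_k=1$; since $\lambda_k=0$ for $k<d$ in your $\Lambda(q)$, only $\sigma_d=1$ matters, and that follows from taking $u_j=t^{\kappa_j}$ with $G\kappa=(1,0)^T$, exactly as the paper does.
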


\begin{proof}
To prove that such a compactification exists, we prove that $V_t$ is isomorphic to a hypersurface in $\G_m^{d}$ given by the vanishing of a linear combination of $d+2$ monomials and apply Theorem \ref{theo:main-desingularization}.

We claim that the kernel of the map $\Z^{d+2} \to \Z$  given by $x \mapsto \sum_{j=1}^{d+2} \ga_j x_j$ has a $\Z$-basis  $(1,\dots,1), f_1 ,\dots, f_{d}$ such that  
\begin{align}
    \sum_{j=1}^{d+2} f_{kj} \delta_j = \begin{cases}
        1 & \text{ if } k=1, \\
        0 & \text{ if } k=2, \dots, d.
    \end{cases}
\end{align}
Consider the map $m_G :\Z^{d+2} \to \Z^2$ given by $x \mapsto G  x$ . 
By hypothesis, the map is surjective. Thus, there exists a splitting
\[
\Z^{d+2} = \operatorname{Kernel} (m_G) \oplus f_1 \Z \oplus \kappa \Z,
\]
where $f_1 $ and $\kappa$ satisfy
\[
G f_1 = \begin{pmatrix}
        0 \\
        1
\end{pmatrix}, \quad G \kappa = \begin{pmatrix}
        1 \\
        0
\end{pmatrix}.
\]
Let $\{(1, \dots ,1), f_2, \dots,f_d \}$ be a $\Z$-basis of $\operatorname{Kernel}(m_G)$. Then, $(1,\dots,1), f_1, \dots, f_d$ is basis of the kernel of $x \mapsto \sum_{j=1}^{d+2} \ga_j x_j$ with the desired properties. 

We make the change of variables (see Lemma \ref{lem:projective-to-toric})
$w_j = t^{\kappa_j} \prod_{i=1}^{d} x_i^{f_{ij}}$, $j=1,\dots, d+2$.
Note that 
\begin{align*}
     \prod_{j=1}^{d+2} \left( t^{\kappa_j} \prod_{i=1}^{d} x_i^{f_{ij}} \right)^{\ga_j}=t^{\sum_{j=1}^{d+2} \kappa_j \ga_j }   \prod_{i=1}^{d} x_{i}^{\sum_{j=1}^{d+2}f_{ij} \ga_j} &= t, \text { and }\\
    \prod_{j=1}^{d+2} \left( t^{\kappa_j} \prod_{i=1}^{d} x_i^{f_{ij}} \right)^{\delta_j} =  t^{\sum_{j=1}^{d+2} \kappa_j \delta_j }   \prod_{i=1}^{d} x_{i}^{\sum_{j=1}^{d+2}f_{ij} \delta_j} &= x_1.
\end{align*}
Thus, $V_t$ is isomorphic to the subvariety of $\G_m^{d} \times \G_m$ given by
\begin{align*}
 \sum_{j=1}^{d+2} t^{\kappa_j} \prod_{i=1}^{d} x_i^{f_{ij}}&=0,\\   
 x_1 & = z^{N}.
\end{align*}
Substituting $x_1$ with $z^{N}$ gives a hypersurface $Z_t$ in the torus $\G_m^{d}$ defined by the vanishing of a linear combination of $d+2$ monomials. 
We compute the fundamental data for $Z_t$: 
\begin{enumerate}
    \item The Gale dual of $Z$ is $\ga.$
    \item The covering matrix $\mathcal{N}$ is the diagonal matrix $\operatorname{diag}(N,1,\dots,1)$.
    \item  The set of solutions $\Lambda(q)$ is given by 
    \[
\Lambda(q) = \left\{ \, \left(\frac{j}{N} q^\times,0, \dots, 0\right) \in \left(\Z/q^\times Z\right)^{d} \mid j=0, \dots, N-1 \, \right \}.\]
    \item We have $\rho_{1} =\delta$. Furthermore, for each $\lambda \in \Lambda(q)$, $\delta(\lambda)$ is independent of $\rho_2, \dots,\rho_d$.
    \item The coefficient $\sigma_1 =1$, and the point count formulas do not depend on  $\sigma_2, \dots, \sigma_{d}$.
\end{enumerate}
The theorem follows by substituting this data in Theorem \ref{theo:main-desingularization}.
\end{proof}

\begin{prop}\label{prop:maximal-minors}
Let  $(\al \,; \be)$ be hypergeometric parameters which are not defined over $\Q$. There exists a gamma triple $(\ga,\delta,N)$ such that the maximal minors of
\[
G= \begin{bmatrix}
\ga_1 &\ga_2 & \dots &\ga_{d+2} \\
\delta_1& \delta_2 &  \dots & \delta_{d+2} 
\end{bmatrix}
\]
is $1$, and $(\ga,k\delta,N)$ represents $(\al \, ; \be)$ for some $k < N$
\end{prop}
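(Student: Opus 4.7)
The plan is to take any gamma triple representing $(\al\,;\be)$ and modify it by multiplying its defining quotient $Q$ with the trivial factor $(T-\zeta_N^a)/(T-\zeta_N^a) = 1$ for a well-chosen integer $a$. This operation appends the pair of columns $(-1,a)$ and $(1,-a)$ to the original pair $(\ga_0,\delta_0)$, and the whole task reduces to choosing $a$ so that the enlarged matrix has gcd of $2\times 2$ minors equal to $1$; the scaling factor $k$ will simply be $1$.

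By Proposition \ref{prop:extension-field-field-of-definition}, some gamma triple $(\ga_0,\delta_0,N)$ represents $(\al\,;\be)$; since these parameters are not defined over $\Q$, we must have $N>1$. Let $d_0$ denote the gcd of the $2\times 2$ minors of $\begin{bmatrix}\ga_0\\ \delta_0\end{bmatrix}$. Because $\gcd(\ga_{0,1},\dots,\ga_{0,d+2})=1$, a prime $p$ divides $d_0$ if and only if $\delta_0 \equiv c_p\ga_0 \pmod{p}$ for a unique $c_p \in \mathbb{F}_p$. After appending the columns $(-1,a)$ and $(1,-a)$, the new $2\times 2$ minors consist of the old ones, the values $\pm(a\ga_{0,j}+\delta_{0,j})$ for $j=1,\dots,d+2$, and the minor between the two new columns, which vanishes. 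For each $p \mid d_0$ we compute
\[
a\ga_{0,j} + \delta_{0,j} \equiv (a+c_p)\ga_{0,j} \pmod p,
\]
and picking $j$ with $\ga_{0,j}\not\equiv 0 \pmod p$ (which exists because $\gcd(\ga_0)=1$) shows that $p \nmid a\ga_{0,j}+\delta_{0,j}$ whenever $a \not\equiv -c_p \pmod p$. Since only finitely many primes divide $d_0$, the Chinese Remainder Theorem produces an integer $a$ satisfying $a \not\equiv -c_p \pmod p$ for every $p \mid d_0$ simultaneously.

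Set $\ga := (\ga_0,-1,1)$, $\delta := (\delta_0,a,-a)$ and $k := 1$. Then $\ga$ is a gamma vector with $\sum_j\ga_j=0$ and $\gcd(\ga_j)=1$, and the triple $(\ga,\delta,N)$ represents $(\al\,;\be)$ because we only multiplied $Q$ by $1$. A prime $p$ divides the gcd of all new minors only if it divides every single minor; if $p \nmid d_0$ this fails for some old minor, while if $p \mid d_0$ it fails for the new minor $a\ga_{0,j}+\delta_{0,j}$ by construction. Hence the new gcd equals $1$, and $(\ga,k\delta,N)=(\ga,\delta,N)$ represents $(\al\,;\be)$ with $1=k<N$, as required.

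The main obstacle I expect is verifying that a single appended trivial factor suffices to repair the rank deficiency modulo every prime dividing $d_0$; this hinges on the primitivity $\gcd(\ga_0)=1$, which supplies, for each such $p$, a coordinate $\ga_{0,j}$ coprime to $p$ and thereby converts the CRT-freedom in $a$ into the nonvanishing of the new minor $a\ga_{0,j}+\delta_{0,j}$ modulo $p$.
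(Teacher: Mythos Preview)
Your proof is correct and shares the key device with the paper's: append a trivial factor $(T-\zeta_N^a)/(T-\zeta_N^a)=1$ to $Q$, which amounts to adjoining the two columns $(\pm 1,\mp a)$ to $G$ without changing the represented parameters. Where you invoke the Chinese Remainder Theorem to choose $a$ so that the new minors $a\gamma_{0,j}+\delta_{0,j}$ witness coprimality at every prime dividing $d_0$ (and then take $k=1$), the paper instead first divides $\delta$ through by $g=\gcd(\delta_j)$ so that $\gcd(\delta_j)=1$, and then appends with $a=0$: the resulting minors include $\pm\delta_j$, hence have gcd $1$, and one takes $k=g$ (which is $<N$ once $\delta$ is reduced mod $N$, since $(\al\,;\be)$ not being defined over $\Q$ forces some $\delta_j\not\equiv 0$). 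Your route avoids tracking the scaling factor $k$; the paper's avoids the CRT step. Both are short and essentially interchangeable.
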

\begin{proof}
Let $(\al \, ; \be)$ be irreducible hypergeometric parameters defined over $\Q(\zeta_N)$. 
By Proposition \ref{prop:extension-field-field-of-definition}, there exists a gamma triple $(\ga,\delta,N)$ which represents $(\al \, ; \be)$, with $\delta \neq 0$. 
If $g=\gcd(\delta_1, \dots,\delta_{d+2}) \neq 1,$ then, by replacing $(\ga,\delta,N)$ with $(\ga,\frac{1}{g} \delta,N)$, we may assume that $\gcd(\delta_1,\dots,\delta_{d+2})=1$.
If the greatest common divisor of the maximal minors of $G$ is not $ 1$, then we consider $\tilde{\ga}=(\ga_{1},\dots, \ga_{d+2},1,-1)$ and $\tilde{\delta}=(\delta_{1},\dots, \delta_{d+2},0,0)$. Note that the greatest common divisor of the maximal minors of 
\[
\tilde{G}= \begin{bmatrix}
\ga_1 &\ga_2 & \dots &\ga_{d+2} & 1 & -1 \\
\delta_1& \delta_2 &  \dots & \delta_{d+2} & 0 &0
\end{bmatrix}
\]
is a common divisor of $\delta_1,\dots,\delta_{d+2}$, thus it must be $1$.
Furthermore, the finite hypergeometric sums $F_q(\ga,\delta,N \mid t)$ and $F_q(\tilde{\ga},\tilde{\delta},N \mid t)$ are defined for the same prime powers $q$ and are equal.
\end{proof}

Given any hypergeometric parameters $(\al \, ; \be)$ defined over $\Q(\zeta_N)$ such that $\sum_{i=1}^n \al_i -\be_i \in \frac{1}{2}\mathbb{Z}$. 
If $N=1,$ then Theorem \ref{theo:BCM} gives a variety $\overline{V_t}$ such that for all but finitely many primes $p$, the finite hypergeometric sum $F_{p^k}(\al \, ; \be \mid t)$ appears as a summand in the point count $\#\overline{V_t}(\F_{p^k})$ for all $k$.   
If $N>1$, then Theorem \ref{theo:realisation} gives a variety $\overline{V_t}$ such that for all but finitely many primes $p$ of the form $lN+1$, the finite hypergeometric sum $F_{p^k}(\al \, ; \be \mid t)$ appears as a summand in the point count $\#\overline{V}(\F_{p^k})$ for all $k$.   
The restriction on the prime powers when $N>1$ is not surprising, since one expects these finite hypergeometric sums to come from motives over $\Q(\zeta_N)$; the norm $\operatorname{N}(\mathfrak{p})$ of any unramified prime $\mathfrak{p}$ in $\Z[\zeta_N]$ satisfies $\operatorname{N}(\mathfrak{p}) =1 \pmod{N}$.

In the rest of this section, we present two examples to illustrate how the notion of gamma triples allows us to realise the finite hypergeometric sum in different varieties.  We consider two different gamma triples that represent the hypergeometric parameters $((1/3,2/3)\,;(1,1))$ (see Example \ref{exa:example-2}). 

\begin{exa}
Consider the gamma triple
\[
(\ga,\delta,N)= ((-3,1,1,1),(0,0,0,0),1).
\]
An associated family of varieties $Z_t$ in $\Ps^{3}$ is given by
\begin{align*}
x_1x_2x_3x_4 &\neq 0,\\
x_1 + x_2 + x_3 +x_4 &= 0,\\
x_1^{-3} x_2 x_3 x_4 &= t
\end{align*}
in $\Ps^{3}$. The point count of the compactification $\overline{Z}_t$ of $Z_t$ is given by 
\[
\overline{Z}_t(\F_q) = q+1 -  F_q\left((-3,1,1,1),0,1 \, \bigg| \,  \frac{t}{-27}\right)
\]
when $q$ is coprime to $3$ (see also \cite[Corollary 1.7]{BCM}).   
\end{exa}

\begin{exa}
Consider the gamma triple
\[
(\ga,\delta,N)= ((-1,-1,1,1),(1,-1,0,0),3).
\]
An associated family of varieties $X_t$ is given by 
\begin{align*}
x_1 x_2 x_3 x_4 &\neq 0\\
x_1 +x_2 +x_3 +x_4 &= 0,\\
x_1^{-1}x_2^{-1}x_3 x_4&=t,\\
x_1 x_2^{-1} &= z^{3}
\end{align*}
in $\Ps^{3} \times \G_m$
Let $q$ be a prime power such that $q=1 \pmod{3}$. Note that $q=1 \pmod{6}$. By Theorem \ref{theo:realisation}, the point count of the compactification $\overline{X}_t$ of $X_t$ is given by 
    \begin{align*}
    \overline{X}_t(\F_q) &= q+1 -\sum_{j=0}^{2} q^{s_{j\delta}(0)-1} g\left(k\frac{q^\times}{3} \delta \right) F_q\left(\gamma, j \delta, 3 \mid \frac{t}{\gamma^\gamma}\right)\\
    &= q+1 -  2\chi(-1)^{q^{\times}/3} F_q\left(\ga,\delta,3 \mid t\right)+ \begin{cases}
    q & \text{ if } t=1,\\
    0 & \text{ otherwise}.
    \end{cases}\\
    &= q+1 -  2 F_q\left(\ga,\delta,3 \mid t\right)+ \begin{cases}
    q & \text{ if } t=1,\\
    0 & \text{ otherwise}.
    \end{cases}
    \end{align*}
We note that the family $\overline{X}_t$ is a family of genus $2$ curves, and the special fibre at $t=1$, is reducible. It is not hard to see that $\overline{X}_1$ is defined by the bi-homogeneous equation
\[
(y_0+y_1) (y_1x_0^3+y_0x_1^3) =0 
\]
in $\Ps^1\times \Ps^1$.
It is the union of two projective lines which intersect at the three $\F_q$-points 
\[
([1,-1],[1,-1]), ([\zeta,1],[1,-1],([\zeta^2,1],[1,-1],
\]
where $\zeta \in \F_q$ is a primitive third root of unity. Thus, the number of $\F_q$-points of $\overline{X}_1$ is $2q-1$.
\end{exa}

\section{Application: The Dwork Family }\label{section:dwork}

In this section, we show how the Dwork family fits into our framework.
Let $X_u$ be the Dwork family defined by
\[
y_1^{d+1}+\dots + y_{d+1}^{d+1} - u^{-1} (d+1) y_1 \cdots y_{d+1} = 0
\]
in the projective space $\Ps^d$ with homogeneous coordinates $y_1, \dots, y_{d+1}$.  Let $Z_u$ be the intersection of $X_u$ with the maximal torus $\{y_1\cdots y_{d+1} \neq 0\}$ of $\Ps^{d}$. Then, $Z_u$ is given by the vanishing of
\[
f(x)= x_1^{d+1}+\dots + x_{d}^{d+1} +1  - u^{-1} (d+1) x_1 \cdots x_{d}
\]
in the torus $\G_m^{d}$ with coordinates $x_1, \dots, x_{d}$. 
The polynomial $f$ has $d+2$ monomials, so it fits our framework.  The Newton polytope $\Delta$ of $Z_{u}$ is a dilation of the standard $d$-simplex, namely 
\[
\big\{(z_1,\dots ,z_d)\in \R^d \mid  z_1+\dots +z_d \le {d+1} \text{ and }z_i \ge 0 \text{ for each }i \big\}.
\]
Thus, the toric variety $\Ps_{\Delta}$ is the projective space $\Ps^{d}$. Furthermore, the compactification $\overline{Z}_u$ of $Z_{u}$ coincides with $X_u$. Now, the Gale dual of $Z_u$ is the vector $\ga= (1,\dots,1,-(d+1)) \in \Z^{d+2}$. Furthermore, a choice of primitive hypersurface $Z_u^{\operatorname{prim}}$ for $\ga$ is given by
\begin{equation}\label{eq:dwork2}
  y_1^{d+1}y_2^{-1}\cdots y_d^{-1}+y_2 + \dots +y_{d} + 1 -{(d+1}) u^{-1} y_1 =0.  
\end{equation}
It is clear that $Z_u$ is isomorphic to the variety given by
\begin{align*}
     y_1^{d+1}y_2^{-1}\cdots y_d^{-1}+y_2 + \dots +y_{d} + 1 -(d+1) u^{-1} y_1 &=0,\\
     y_1&= x_1\cdots x_d,\\
     y_j&= x_j^{d+1}  \text{ for } j =2,\dots, d
\end{align*}
in $\G_m^{d} \times \G_m^d$.
Thus, the matrix $\mathcal{N}$ encoding the covering $Z_u\to Z_u^{\operatorname{prim}}$ is equal to the $(d \times d)$-matrix
\[
\begin{bmatrix}
1 & 1 & 1& \cdots & 1 \\
0 & d+1 &0& \cdots & 0 \\
0 & 0 &d+1& \cdots & 0 \\
\vdots & \vdots & \vdots & \ddots & \vdots \\
0 & 0 & 0& \cdots & d+1
\end{bmatrix}.
\]
Let $q$ be a prime power, and suppose that $e=\gcd(q-1,d+1)$.
Then, 
\begin{align*}
    \Lambda(q) &= \left\{ \left(0, \frac{q^\times}{e} \lambda_1, \dots, \frac{q^\times}{e}\lambda_{d-1}\right) \mid \lambda_i \in \{1,\dots,e\} \text{ for } i=1, \dots,d-1\right\}.
\end{align*}
Meanwhile, for $k=1,\dots,d$, the vector $\rho_k$ can be taken to be the $k$-th row of the $(d\times(d+2))$-matrix
\[
\begin{bmatrix}
0& 0 & 0 & \cdots & 0 & -1 & 1 \\
0& 1 & 0 & \cdots & 0 & -1 & 0 \\
0& 0 & 1 & \cdots & 0 & -1 & 0 \\
\vdots &\vdots & \vdots & \ddots & \vdots & \vdots & \vdots \\
0& 0 & 0 & \cdots & 1 & -1 & 0
\end{bmatrix}.
\]
Thus, if $\eta= \frac{q^\times}{e}(0,\lambda_1,\dots,\lambda_{d-1}) \in \Lambda(q)$, then $\delta(\eta)  = \frac{q^\times}{e} (0,\lambda_1,\dots,\lambda_{d-1},-\lambda_1-\dots -\lambda_{d-1},0) \in \left( \Z/q^\times\Z\right)^{d+2}$. 
Furthermore, we have
\begin{align*}
    \sigma &=(-(d+1)u^{-1},1,\dots,1) \in \left(\F_q^\times\right)^{d},\\
    t &= (-(d+1) u^{-1})^{-(d+1)},\\
    \ga^{\ga} &= (-(d+1))^{-(d+1)}.
\end{align*}
Substituting these data in Corollary \ref{cor:simplices-hg}, we have, for nonzero $u,$
\begin{align*}
\#X_u(\F_q) &= \frac{q^{d}-1}{q-1} - \sum_{\eta \in \Lambda(q)} g\left(\delta(\eta )\right) F_q\left(\gamma,\delta(\eta), q^\times \bigm\vert u^{d+1}\right)
\end{align*}
As in the introduction, write $\delta(\lambda)  = (0,\lambda_1,\dots,\lambda_{d-1},-\lambda_1-\dots -\lambda_{d-1},0) \in \left( \Z/e\Z\right)^{d+2}$. Then,
\begin{align*}
    \#X_u(\F_q) &= \frac{q^{d}-1}{q-1} - \sum_{\lambda \in (\Z/e\Z)^{d-1}} g\left(\frac{q^\times}{e}\delta(\lambda)\right) F_q\left(\gamma, \frac{q^\times}{e} \delta(\lambda ), q^\times \bigm\vert u^{d+1}\right)\\
    &=\frac{q^{d}-1}{q-1} - \sum_{\eta \in (\Z/e\Z)^{d-1}} g\left(\frac{q^\times}{e}\delta(\lambda)\right) F_q\left(\gamma,  \delta(\lambda ), e \bigm\vert u^{d+1}\right).
\end{align*}
This proves Theorem \ref{theo:Dwork} from the Introduction. 
Note that the hypergeometric parameters $(\al \,;\be)$ associated to the gamma triple $(\ga,\delta(\lambda),e)$ can be computed by cancelling the common factors of the numerator and denominators of 
\[
Q= \frac{T^{d+1}-1}{(T- \zeta_{e}^{-\lambda_1}) \dots (T- \zeta_{e}^{-\lambda_{d-1}}) (T- \zeta_{e}^{\lambda_1+\dots +\lambda_{d-1}}) (T-1)}.
\]

Now, we specialise to $d+1=4$ and prove Theorem \ref{theo:dwork-3}.
The Gale dual $\ga$ is $(1,1,1,1,-4)$.
First we consider the case $q=3 \pmod{4}$: in this case $e=\gcd(q-1,4) = 2,$ and we have
    \[
    Q= \frac{T^{4}-1}{(T- \zeta_2^{-\lambda_1}) (T- \zeta_2^{-\lambda_{2}}) (T- \zeta_2^{\lambda_1+\lambda_{2}})(T-1) }.
    \]
    Thus, 
    \begin{align*}
     \#X_u(\F_q) 
     &= \frac{q^{3}-1}{q-1} - \sum_{\lambda \in (\Z/2\Z)^{2}} g\left(\frac{q-1}{2}\delta(\lambda)\right)F_q\left(\gamma,\delta(\lambda),2 \mid u^{4}\right)\\
    &= \frac{q^{3}-1}{q-1} - g(0)^5 F_q(\ga,0,2 \mid u^4) - 3g(0)^3 g\left(q^\times/2 \right)^2 F_q(\ga, (0,0,1,1,0),2 \mid u^4)\\
     &= q^2+q+1 +  F_q\biggl(\begin{matrix} \frac{1}{4}, \frac{2}{4}, \frac{3}{4} \\ 1,  1, 1 \end{matrix} \:\bigg|\:\: u^{4} \biggr)- 3 q F_q\biggl(\begin{matrix} \frac{1}{4},  \frac{3}{4} \\ \frac{1}{2},  1 \end{matrix} \:\bigg|\:\: u^{4} \biggr).
    \end{align*}
    
Now, when $q=1 \pmod{4}$, then $e=\gcd(q-1,4) = 4.$ We have
    \[
    Q= \frac{T^{4}-1}{(T- \zeta_4^{-\lambda_1}) (T- \zeta_4^{-\lambda_{2}}) (T- \zeta_4^{\lambda_1+\lambda_{2}})(T-1) }, and
    \]
     \begin{equation}\label{eq:dwork-d-3}
        \#X_u(\F_q) 
     = \frac{q^{3}-1}{q-1} - \sum_{\lambda \in (\Z/4\Z)^{2}} g\left(\frac{q-1}{4}\delta(\lambda)\right)F_q\left(\gamma,\delta(\lambda),4 \mid u^{4}\right). 
     \end{equation}
     
    In the following table, we summarise the possible $\delta(\lambda)$, up to permutation, the value of $g(\delta(\lambda))$, and the associated parameters $(\al \, ; \be)$.
    \begin{table}[h!]
    \centering
    \begin{tabular}{c c c c}
    \hline
    $\delta(\lambda)$ & \# perm. & $g(\delta(\lambda))$ & $(\alpha \, ; \beta)$ \\
    \hline
    $(0,0,0,0,0)$      & 1 & $g(0)^5=-1$ &  $((1/4,2/4,3/4)\, ;(1,1,1))$\\
    $(0,2,2,0,0)$      & 3 & $g(0)^3g(q^\times/2)^2=-q$ & $((1/4,3/4)\, ;(1/2,1))$ \\
    $(0,1,-1,0,0)$     & 6 & $g(0)^3g(-q^\times/4)g(q^\times/4)= -\chi(-1)^{q^\times/4}q$  & $((1/2) \, ; (1) )$ \\
    $(0,1,1,2,0)$      & 3 &$g(0)^2g\left(q^\times/4\right)^2g\left(q^\times/2\right)$  & $((1/4)\,;(3/4))$ \\
    $(0,-1,-1,2,0)$    & 3 &$g(0)^2g\left(-q^\times/4\right)^2   g\left(q^\times/2\right)$  & $((3/4)\,;(1/4))$ \\
    \end{tabular}
    \end{table}
    
    Substituting in Equation \eqref{eq:dwork-d-3}, we get 
    \begin{align*}
    \#X_u(\F_q)  &= q^2+q+1 + F_q\biggl(\begin{matrix} \frac{1}{4}, \frac{2}{4}, \frac{3}{4} \\ 1,  1, 1 \end{matrix} \:\bigg|\:\: u^{4} \biggr)+3 q \cdot  F_q\biggl(\begin{matrix} \frac{1}{4},  \frac{3}{4} \\ \frac{1}{2},  1 \end{matrix} \:\bigg|\:\: u^{4} \biggr)+6\chi(-1)^{q^{\times/4}} q \cdot F_q\biggl(\begin{matrix} \frac{1}{2} \\ 1 \end{matrix} \:\bigg|\:\: u^{4} \biggr)\\
     &-3g\left(q^\times/4\right)^2 g\left(q^\times/2\right)F_q\biggl(\begin{matrix} \frac{1}{4} \\ \frac{3}{4} \end{matrix} \:\bigg|\:\: u^{4} \biggr)-3g\left(-q^\times/4\right)^2   g\left(q^\times/2\right)F_q\biggl(\begin{matrix} \frac{3}{4} \\ \frac{1}{4} \end{matrix} \:\bigg|\:\: u^{4} \biggr)\\
     &= q^2+q+1 + F_q\biggl(\begin{matrix} \frac{1}{4}, \frac{2}{4}, \frac{3}{4} \\ 1,  1, 1 \end{matrix} \:\bigg|\:\: u^{4} \biggr)+ 3q \cdot F_q\biggl(\begin{matrix} \frac{1}{4},  \frac{3}{4} \\ \frac{1}{2},  1 \end{matrix} \:\bigg|\:\: u^{4} \biggr) +12\chi(-1)^{q^{\times}/4} q\cdot  F_q\biggl(\begin{matrix} \frac{1}{2} \\ 1 \end{matrix} \:\bigg|\:\: u^{4} \biggr).
    \end{align*}
The last equality was obtained using Lemma \ref{lem:gauss-sums-properties} and the following lemma:
\begin{lem}
Suppose that $q=1 \pmod{4}$, then 
\begin{align}
F_q\biggl(\begin{matrix} \frac{1}{4} \\ \frac{3}{4} \end{matrix} \:\bigg|\:\: u^{4} \biggr) &= -\chi(-1)^{q^\times/4} \frac{g(q^\times/2)}{g(q^\times/4)^2} F_q\biggl(\begin{matrix} \frac{1}{2} \\ 1 \end{matrix} \:\bigg|\:\: u^{4} \biggr), \label{eq:lem-sec-exa-1}\\
F_q\biggl(\begin{matrix} \frac{3}{4} \\ \frac{1}{4} \end{matrix} \:\bigg|\:\: u^{4} \biggr) &= -\chi(-1)^{q^\times/4} \frac{g(q^\times/2)}{g(-q^\times/4)^2} F_q\biggl(\begin{matrix} \frac{1}{2} \\ 1 \end{matrix} \:\bigg|\:\: u^{4} \biggr).\label{eq:lem-sec-exa-2}
\end{align}
\end{lem}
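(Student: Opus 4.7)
The plan is to prove both identities by comparing the two sides as Fourier series in $\chi(u)$. Since each side is a function of $u^4$ only, its expansion in $\chi(u)$ is supported on the characters $\chi(u)^{4l}$ with $l\in \Z/(q^\times/4)\Z$, and matching the coefficient of each such character reduces the claim to a four-term identity of Gauss sum products that can be verified directly.

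First I would use Definition \ref{defi:finite-hg-function}, together with the congruences $-3q^\times/4\equiv q^\times/4\pmod{q^\times}$ and $-q^\times\equiv 0\pmod{q^\times}$ and the identity $g(0)=-1$, to write
\begin{align*}
F_q\biggl(\begin{matrix}\frac{1}{4}\\ \frac{3}{4}\end{matrix}\:\bigg|\: t\biggr) &= \frac{1}{1-q}\sum_{m=0}^{q-2}\frac{g(m+q^\times/4)\,g(-m+q^\times/4)}{g(q^\times/4)^2}\,\chi(-t)^m,\\
F_q\biggl(\begin{matrix}\frac{1}{2}\\ 1\end{matrix}\:\bigg|\: t\biggr) &= \frac{1}{q-1}\sum_{m=0}^{q-2}\frac{g(m+q^\times/2)\,g(-m)}{g(q^\times/2)}\,\chi(-t)^m,
\end{align*}
and similarly $F_q((3/4);(1/4)\mid t)$ with Fourier coefficients $g(m-q^\times/4)\,g(-m-q^\times/4)/g(-q^\times/4)^2$.

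Next I would substitute $t=u^4$, using $\chi(-u^4)^m=\chi(-1)^m\chi(u)^{4m}$, and regroup the resulting series by the Fourier mode $\chi(u)^{4l}$. Setting $\epsilon:=\chi(-1)^{q^\times/4}$, the hypothesis $q\equiv 1\pmod{4}$ gives $\epsilon^{2}=\chi(-1)^{q^\times/2}=1$. For each $l$, only the four indices $m=l+jq^\times/4$ with $j=0,1,2,3$ contribute; after reducing the Gauss sum arguments modulo $q^\times$, the coefficient of $\chi(u)^{4l}$ on the left-hand side of the first identity becomes
\[
\frac{\chi(-1)^l}{(1-q)\,g(q^\times/4)^2}\Bigl[g(l+\tfrac{q^\times}{4})g(-l+\tfrac{q^\times}{4})+\epsilon\, g(l+\tfrac{q^\times}{2})g(-l)+g(l-\tfrac{q^\times}{4})g(-l-\tfrac{q^\times}{4})+\epsilon\, g(l)g(-l+\tfrac{q^\times}{2})\Bigr].
\]
A parallel computation on the right-hand side, in which the overall constant $-\epsilon\,g(q^\times/2)/g(q^\times/4)^2$ absorbs the factor $-1/g(q^\times/2)$ coming from $F_q((1/2);(1)\mid t)$, produces the same four Gauss sum products but cyclically shifted by $j\mapsto j+1$; the extra factor of $\epsilon$ from this shift cancels with the prefactor exactly because $\epsilon^{2}=1$. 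The two coefficients of $\chi(u)^{4l}$ therefore agree term for term, and the first identity follows from the uniqueness of Fourier coefficients (Lemma \ref{lem:fourier-series}).

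The second identity is proved by the same four-term matching, now indexed around $-q^\times/4$ instead of $q^\times/4$, which is what produces the denominator $g(-q^\times/4)^2$ in the prefactor. The main obstacle is the careful bookkeeping of the powers of $\epsilon$ and the reductions of Gauss sum arguments modulo $q^\times$; the hypothesis $q\equiv 1\pmod{4}$ is used crucially so that $\epsilon^{2}=1$ and the cyclic shift on the four-term set is an involution, without which additional sign corrections would obstruct the matching.
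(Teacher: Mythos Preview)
Your argument is correct, and its underlying mechanism is the same as the paper's: the shift $m\mapsto m+q^\times/4$ in the summation index. The paper, however, does this in one stroke rather than via Fourier matching. Substituting $m=n+q^\times/4$ directly in Definition~\ref{defi:finite-hg-function} and using $\chi(u^4)^{q^\times/4}=\chi(u)^{q^\times}=1$ turns the sum for $F_q((1/4);(3/4)\mid u^4)$ term-by-term into the sum for $F_q((1/2);(1)\mid u^4)$, up to the constant $-\chi(-1)^{q^\times/4}g(q^\times/2)/g(q^\times/4)^2$; no grouping into residue classes mod $q^\times/4$ or four-term comparison is needed. Your ``cyclic shift $j\mapsto j+1$'' within each Fourier block is exactly this global index shift viewed one residue class at a time, so the two approaches coincide at heart. (A small terminological slip: the cyclic shift on four elements has order $4$, not $2$; what you need and use is only that $\epsilon^2=1$.)
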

\begin{proof}
    The proof from Definition \ref{defi:finite-hg-function} by making the change of variable $m= \frac{1}{4}q^\times +n$ to prove Equation \eqref{eq:lem-sec-exa-1}, and $m= -\frac{1}{4}q^\times +n$ to prove Equation \eqref{eq:lem-sec-exa-2}.
\end{proof}

\bibliographystyle{amsalpha}
\newcommand{\etalchar}[1]{$^{#1}$}
\providecommand{\bysame}{\leavevmode\hbox to3em{\hrulefill}\thinspace}
\providecommand{\MR}{\relax\ifhmode\unskip\space\fi MR }
% \MRhref is called by the amsart/book/proc definition of \MR.
\providecommand{\MRhref}[2]{%
  \href{http://www.ams.org/mathscinet-getitem?mr=#1}{#2}
}
\providecommand{\href}[2]{#2}

\end{document}